\theoremstyle{plain}
\newtheorem{mainthm}{Theorem}
\newtheorem{thm}{Theorem}[subsection]
\newtheorem{cor}[thm]{Corollary}
\newtheorem{lem}[thm]{Lemma}
\newtheorem{prop}[thm]{Proposition}
\theoremstyle{definition}
\newtheorem{dfn}[thm]{Definition}
\newtheorem{claim}[thm]{Claim}
\newtheorem*{claim-nonum}{Claim}
\theoremstyle{remark}
\newtheorem{rem}[thm]{Remark}
\newtheorem{rems}[thm]{Remarks}
\theoremstyle{plain}
\newcommand{\hooklongrightarrow}{\lhook\joinrel\longrightarrow}
\newcommand{\cobto}{\leadsto}
\newcommand{\id}{\textnormal{id}}
\newcommand{\R}{\mathbb{R}}
\newcommand{\Z}{\mathbb{Z}}
\newcommand{\C}{\mathbb{C}}
\newcommand{\fuk}{\mathcal{F}uk}
\newcommand{\fukcob}{\mathcal{F}uk_{cob}}
\newcommand{\mor}{{\textnormal{Mor\/}}}
\newcommand{\bme}{\bm{\epsilon}}
\newcommand{\eh}{\epsilon^h}
\newcommand{\bmeh}{\bm{\epsilon}^h}
\newcommand{\ea}{\epsilon^{\mathcal{A}}}
\newcommand{\bmea}{\bm{\epsilon}^{\mathcal{A}}}
\newcommand{\emm}{\epsilon^m}
\newcommand{\bmemm}{\bm{\epsilon}^m}
\newcommand{\bmd}{\bm{\delta}}
\newcommand{\ua}{u^{\mathcal{A}}}
\newcommand{\on}{O(\mathcal{N})}
\newcommand{\onx}[1]{O({#1})}
\newcommand{\tmod}{\textnormal{mod}}
\newcommand{\rmod}{\mathrm{mod}}
\newcommand{\mdly}{\mathscr{Y}}
\newcommand{\tcn}{{\mathcal{C}one}}
\newcommand{\assmpe}{\hyperlink{h:asmp-e}{\lbe(\bmemm, \bmea)}}
\newcommand{\assmpen}{\hyperlink{h:asmp-e}{\lbe}}
\newcommand{\pbaddress}{biran@math.ethz.ch}
\newcommand{\ocaddress}{cornea@dms.umontreal.ca}
\newcommand{\esaddress}{shelukhin@dms.umontreal.ca}
\begin{document}

\title{Lagrangian shadows and triangulated categories}

\date{\today}

\thanks{The second author was supported by an individual NSERC Discovery grant. The third author was supported by an individual NSERC Discovery grant, and by the FRQNT start up grant.}

\author{Paul Biran, Octav Cornea, and Egor Shelukhin}

\address{Paul Biran, Department of Mathematics, ETH-Z\"{u}rich,
  R\"{a}mistrasse 101, 8092 Z\"{u}rich, Switzerland}\email{\pbaddress}
 
\address{Octav Cornea, Department of Mathematics and Statistics,
  University of Montreal, C.P. 6128 Succ.  Centre-Ville Montreal, QC
  H3C 3J7, Canada} \email{\ocaddress}

\address{Egor Shelukhin, Department of Mathematics and Statistics,
  University of Montreal, C.P. 6128 Succ.  Centre-Ville Montreal, QC
  H3C 3J7, Canada} \email{\esaddress}

\bibliographystyle{alphanum}

%

%

\begin{abstract} Under certain assumptions (such as weak exacteness or monotonicity) we show that splitting Lagrangians through cobordism has an energy cost and, from this cost being smaller than certain explicit bounds, we deduce some strong forms of rigidity of Lagrangian intersections.  As a consequence, we construct some new pseudo-metrics and metrics on certain classes of Lagrangian submanifolds. We also fit these constructions in a more general setting, independent of Lagrangian cobordism. As a main technical tool, we develop aspects of  the theory of (weakly) filtered $A_{\infty}$ - categories. 
 
\end{abstract}

\maketitle

%
%



\section{Introduction and main results}\label{sb:intro} 
Let $M$ be a symplectic manifold and consider  a collection $\mathcal{L}$ of Lagrangian submanifolds of $M$.  
Given a Lagrangian submanifold $L \subset M$ we are interested in the ``splitting'' (or decomposition) of $L$ into
Lagrangian submanifolds picked from the collection $\mathcal{L}$. The type of splitting that we mainly
focus on is through Lagrangian cobordisms $V$ with a single positive end equal to $L$ and multiple negative ends, $V:L\cobto (L_{1},\ldots, L_{k})$ (the definition of cobordisms is recalled just below, in \S\ref{sb:intro-lcob}). This perspective on cobordism is natural not least because, as is known from previous work \cite{Bi-Co:cob1,Bi-Co:lcob-fuk} and under appropriate constraints on $\mathcal{L}$, such  cobordisms induce genuine (iterated cone) decompositions of $L$ with factors the negative ends $L_{i}$ in the derived Fukaya category of $M$.  Cobordisms $V$  are  Lagrangian
 submanifolds of $\R^{2}\times M$ and  they have an elementary measure called shadow \cite{Co-She:metric}, $\mathcal{S}(V)$, which is the area of their projection on $\R^{2}$ together with all the bounded regions bounded by this projection.

The central point of view for this paper is to view the shadow of a cobordism $V$ as an energy cost for the  splitting corresponding to $V$. We address three natural questions from this point of view: 

\begin{itemize}
\item[I.] Assuming $L$ and $L_{1},\ldots, L_{k}$ fixed, find a lower bound for the minimal
energy cost required to split $L$ in  the factors $L_{i}$  (see Theorem \ref{t:main-A}) ? 
\item[II.] Is there some form of Lagrangian intersections rigidity that is specific to low energy splittings (see Theorem \ref{t:main-B}) ? 
\end{itemize}
A consequence of our answers to I and II is to define some new metrics and pseudo-metrics on certain classes of Lagrangian submanifolds. This leads to a third natural question.

\begin{itemize}
\item[III.] Are there some other types of energy costs that are relevant to this setting and, in particular, that also lead to associated pseudo-metrics but are independent from  cobordisms and their shadows (see \S\ref{sb:intro-wei-frag-metr})?
\end{itemize}

%
%
%

\subsection{Decomposition by Lagrangian cobordism}
\label{sb:intro-lcob}

Let $M$ be a symplectic manifold, compact or convex at infinity. A {\em Lagrangian cobordism}~\cite{Ar:cob-1}
(see~\cite{Bi-Co:cob1} for the formalism in use here) is a Lagrangian
submanifold $V \subset \mathbb{R}^2 \times M$ that, outside a compact
interval $\times \ \mathbb{R}$ looks like a finite disjoint union of
horizontal semi-infinite rays, each of which being multiplied by a
Lagrangian submanifold in $M$. More precisely, there exists a compact
interval $[a_-, a_+] \subset \mathbb{R}$ such that
$$V \setminus ([a_-,a_+] \times \mathbb{R} \times M) = 
\Bigl(\coprod_{i=1}^k \ell_- \times \{i\} \times L_i \Bigr) \coprod \,
\Bigl(\coprod_{j=1}^{k'} \ell_+ \times \{j\} \times L'_j\Bigr),$$
where $\ell_- = (-\infty, a_-]$, $\ell_+ = [a_+, \infty)$ and the
$L_i$'s and $L'_j$'s are Lagrangian submanifolds of $M$.  The $L_i$'s
are called the negative ends of $V$ and the $L'_j$'s the positive ends.
We write $V: (L'_1, \ldots, L'_{k'}) \cobto (L_1, \ldots, L_k)$.  Note
that we allow any of $k'$ or $k$ to be $0$ in which case the
positive or negative end of the cobordism is void. In what follows we
will be mainly interested in cobordisms
$V: L \cobto (L_1, \ldots, L_k)$ between a single Lagrangian $L$ and a
tuple $(L_1, \ldots, L_k)$. We will view such a cobordism as a means to
decompose (or split) $L$ into the ``factors'' (or pieces) $L_1, \ldots, L_k$.

We need to restrict the class of
Lagrangian submanifolds as follows. Denote by $\mathcal{L}ag^*(M)$ either the
class of closed Lagrangian submanifolds in $M$ that are weakly
exact or the class of closed Lagrangians in $M$ that are 
monotone (in the latter case there is an additional
constraint that will be explained later). Similarly, we will work with
cobordisms $V$ that are either weakly exact or monotone (where again,
in the monotone case there will be an additional constraint). We
denote the class of such cobordisms by
$\mathcal{L}ag^*(\mathbb{R}^2 \times M)$.

Our first theorem shows that the shadow of cobordisms
$V: L \cobto (L_1, \ldots, L_k)$ between fixed $L$ and
$(L_1, \ldots, L_k)$ cannot become arbitrarily small unless these
Lagrangians are placed in a very particular position.

\begin{mainthm} \label{t:main-A} Let $L, L_1, \ldots, L_k \subset M$
  be weakly exact Lagrangian submanifolds. Assume that
  $L \not \subset L_1 \cup \cdots \cup L_k$. Then there exists
  $\delta = \delta(L; S) > 0$ which depends only on $L$ and
  $S := L_1 \cup \cdots \cup L_k$, such that for every weakly exact
  Lagrangian cobordism $V : L \cobto (L_1, \ldots, L_k)$ we have
  \begin{equation} \label{eq:delta-L-S}
    \mathcal{S}(V) \geq \tfrac{1}{2}\delta.
  \end{equation}
\end{mainthm}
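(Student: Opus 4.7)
The plan is to combine the iterated cone decomposition in the derived Fukaya category induced by a Lagrangian cobordism (established in \cite{Bi-Co:lcob-fuk}) with a quantitative refinement of this decomposition in the weakly filtered $A_{\infty}$-category developed in the body of the paper. The constant $\delta$ should be built out of a standard symplectic invariant localized at some point $p\in L\setminus S$, and the shadow $\mathcal{S}(V)$ should control the weights of all higher structural maps appearing in the decomposition.

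\textbf{Step 1 (defining $\delta$).} Since $S$ is closed and $L\not\subset S$, fix $p\in L\setminus S$ together with a Darboux chart around $p$ containing a closed symplectic ball $\overline{B}\subset M$ with $\overline{B}\cap S=\emptyset$. Define $\delta=\delta(L;S)$ as the displacement energy $e(\overline{B})$ (or equivalently, up to a universal constant, the Gromov width of $\overline{B}$). This quantity is strictly positive and depends only on $L$ and $S$, as required.

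\textbf{Step 2 (quantitative cone decomposition).} Using the weakly filtered Fukaya category set up in the main body of the paper, the cobordism $V:L\cobto(L_{1},\ldots,L_{k})$ produces an iterated cone decomposition of the Yoneda module of $L$ in terms of the Yoneda modules of the $L_{i}$, with the crucial quantitative feature that every structural map appearing in the cones is weakly filtered of weight bounded by $\mathcal{S}(V)$. This is the main technical contribution needed to upgrade the qualitative decomposition of \cite{Bi-Co:lcob-fuk} to a quantitative one, and rests on identifying the shadow with the area of the region in $\mathbb{R}^{2}$ responsible for the action shifts appearing in the Floer theory of $V$.

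\textbf{Step 3 (testing against a local probe).} Pair the decomposition from Step~2 with a probe $N$ concentrated in $\overline{B}$: concretely, take $N=\phi_{1}^{h}(L)$ for a Hamiltonian $h$ with support in $\overline{B}$ and $\|h\|_{Hofer}$ slightly below $\delta$, so that $N$ is indistinguishable from $L$ outside $\overline{B}$ and yet the local Floer complex inside $\overline{B}$ is arranged to detect the point $p$. Since $\overline{B}$ is disjoint from $S$, the probe is Floer-theoretically compatible with each $L_{i}$ with weight controlled purely by $\mathcal{S}(V)$ (no local contribution from $\overline{B}$). The iterated cone decomposition then exhibits a non-vanishing Floer class in $HF(N,L)\cong H^{*}(L)$ as an iterated cone built from terms each shifted by at most $\mathcal{S}(V)$ in filtration, while the displacement-type choice of $h$ forces any such representation to require a total action shift of at least $\delta-\|h\|_{Hofer}$, hence at least $\delta/2$ after optimizing over $h$. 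This is where the factor $\tfrac{1}{2}$ enters: shadow $s$ contributes to two separate weight terms in the boundary-depth inequality, each bounded by $s$.

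The main obstacle is clearly Step~2: promoting the algebraic cone decomposition of \cite{Bi-Co:lcob-fuk} to a weakly filtered one in which the weights of \emph{all} higher structure maps are uniformly controlled by $\mathcal{S}(V)$ requires the careful treatment of curvature and discrepancy terms in the weakly filtered $A_{\infty}$-formalism, and it is precisely this issue that motivates the development of the filtered $A_{\infty}$-category theory announced in the abstract. Once this is achieved, Steps~1 and 3 reduce to a standard displacement-energy / spectral-invariant argument in the weakly exact setting, where the absence of disk bubbling makes the boundary depth estimates sharp.
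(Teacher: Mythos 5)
Your Steps~1--2 are broadly on the right track (the constant is indeed a relative Gromov width as in \eqref{eq:delta1}, and the whole point of the weakly filtered apparatus is a quantitative form of the iterated cone decomposition from \cite{Bi-Co:lcob-fuk}), but Step~3 contains a genuine gap and does not capture the actual mechanism of the proof.

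First, a small but substantive imprecision in Step~2: the shadow $\mathcal{S}(V)$ does \emph{not} bound the action shifts of the structural maps $\phi_j$ in the cone decomposition --- those shift action by $\leq 0$ (Proposition~\ref{p:icones-M_j}), and their discrepancies are controlled only by the perturbation data. What $\mathcal{S}(V)$ controls is the homotopical boundary level of the identity on the complex $\mathscr{C}_{p,h}=CF(\gamma\times L, W;\cdot)$, via the Hamiltonian isotopy in the plane carrying $\gamma$ to the curve $\gamma'$ lying entirely to the right of the shadow; this yields the single inequality \eqref{eq:Bh-C}. (The paper also first bends the positive end of $V$ to obtain $W:\emptyset\cobto(L,L_1,\dots,L_k)$ so that the relevant module is acyclic.)

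The real gap is Step~3. You propose to probe the decomposition with a nearby Lagrangian $N=\phi^h_1(L)$ supported in the ball and then invoke a ``displacement-energy / spectral-invariant'' comparison. But nothing in your sketch explains why displacing $\overline{B}$ should force the purported action-shift lower bound, and the proposal never produces a pseudoholomorphic object whose area can be compared to the ball. The paper's actual mechanism is different: it chooses the Floer datum for $(L,L)$ using a $C^1$-small Morse function with its unique maximum at the center $q$ of the ball, so that the homology unit $e_L$ is the generator $q$; the bounded boundary depth \eqref{eq:Bh-C} forces $q$ to bound a chain $b$ of controlled action; then the structure Theorem~\ref{t:itcones} --- showing the differential of a suitable model $\mathcal{M}(L_0)$ is given by explicit $\mu_d^{\mathcal{A}}$-operations \eqref{eq:aij} --- together with Lemma~\ref{l:unit-we} (the $\mu_1$-component cannot hit the Morse maximum) forces some $\mu_d$ with $d\geq 2$ to contribute. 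Taking the perturbations to zero yields a genuine $J$-holomorphic polygon $v_0$ through $q$ with $\omega(v_0)\leq\mathcal{S}(W)+\epsilon$. Only then is geometry invoked: restricting $v_0$ to the ball, reflecting across $\mathbb{R}^n$, and applying the Lelong inequality to the doubled curve gives $\pi r^2 \leq 2\omega(v_0)$. That doubling is the actual source of the factor $\tfrac{1}{2}$; your explanation (``shadow $s$ contributes to two separate weight terms'') is incorrect. In particular, the argument you sketch for Theorem~\ref{t:main-A} blends in ideas that the paper reserves for Theorem~\ref{t:main-B} (where a second Lagrangian $N$ is indeed used), and without the Morse/unit step, the explicit-differential form from Theorem~\ref{t:itcones}, and the Lelong inequality, the conclusion does not follow.
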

The proof is given~\S\ref{s:main-geom}. A non-technical outline of the
proof is presented in~\S\ref{sb:outline-prf-A}.

The next theorem establishes relations between $L$ and
$L_1, \ldots, L_k$ in case they are related by a Lagrangian cobordism
with small shadow.

\begin{mainthm} \label{t:main-B} Let $L, L_1, \ldots, L_k \subset M$
  be weakly exact Lagrangians and $S$ as in
  Theorem~\ref{t:main-A}. Let $N \subset M$ be another weakly exact
  Lagrangian and  assume that the family of Lagrangians $N,L, L_{1}\ldots, L_{k}$ are in general position.
  \begin{enumerate} [label=(\alph*),ref=\alph*]
  \item There
    exists $\delta' = \delta'(N, S)>0$ that depends on $N$ and
    $S$ (but not on $L$) such that for every weakly Lagrangian
    cobordism $V : L \cobto (L_1, \ldots, L_k)$ with
    $\mathcal{S}(V) < \tfrac{1}{2}\delta'$ we have:
    \begin{equation} \label{eq:N-cap-L-1} \# (N \cap L) \geq
      \sum_{i=1}^k \# (N \cap L_i).
    \end{equation}
  \item There exists $\delta'' = \delta''(N, S)>0$ that
    depends on $N$ and $S$ (but not on $L$) such that for every weakly
    exact Lagrangian cobordism $V: L \cobto (L_1, \ldots, L_k)$ with $\mathcal{S}(V) < \frac{1}{4}\ \delta''$ we
    have:
    \begin{equation} \label{eq:N-cap-L-2} \# (N \cap L) \geq
      \sum_{i=1}^k \dim_{\Lambda} HF(N, L_i).
    \end{equation}
    Here $HF(N,L_i)$ is the Floer homology of $(N,L_i)$ defined with
    coefficients in the Novikov field $\Lambda$.
  \end{enumerate}
\end{mainthm}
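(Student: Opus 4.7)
The plan is to derive both inequalities from a single underlying construction: a weakly exact Lagrangian cobordism $V : L \cobto (L_1,\ldots,L_k)$ with shadow $\mathcal{S}(V)=\epsilon$ should induce, inside the weakly filtered derived Fukaya category developed earlier in the paper, an iterated cone decomposition of $L$ with factors $L_1,\ldots,L_k$ whose connecting morphisms and quasi-isomorphisms all sit at filtration level of order $\epsilon$. This is a filtered refinement of the cone decomposition of~\cite{Bi-Co:cob1,Bi-Co:lcob-fuk} and is the same mechanism that should already be in play in the proof of Theorem~\ref{t:main-A}. The constants $\delta'(N,S)$ and $\delta''(N,S)$ will be defined purely in terms of the minimal areas of Floer strips/discs for the Lagrangians $N$ and $L_1,\ldots,L_k$ (hence independent of $L$ and $V$).

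First I would apply $CF(N,-)$, viewed at the chain/filtered $A_\infty$-module level, to this iterated cone. This yields a filtered iterated mapping cone presentation of $CF(N,L)$ in terms of the complexes $CF(N,L_i)$, together with a filtered quasi-isomorphism whose ``discrepancy'' is controlled by $\epsilon$. The underlying graded $\Lambda$-vector space of an iterated mapping cone is $\bigoplus_{i=1}^{k} CF(N,L_i)$, whose total dimension is $\sum_{i=1}^{k}\#(N\cap L_i)$.

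For part (a), I would choose $\delta'$ so that for $\mathcal{S}(V)<\tfrac{1}{2}\delta'$ the filtered quasi-isomorphism is an isomorphism at the level of underlying modules: concretely, $\delta'$ should be smaller than the smallest area of a non-constant strip with boundary on $N$ and any $L_i$, and smaller than the analogous thresholds for the perturbation data. Under this choice, all ``quantum'' corrections fall outside the relevant action window and the identification of generators becomes essentially combinatorial, yielding
\[
\#(N\cap L) = \dim_\Lambda CF(N,L) \geq \sum_{i=1}^{k}\#(N\cap L_i).
\]
For part (b), the larger threshold $\tfrac{1}{4}\delta''$ no longer forces a chain-level identification, but the filtered mapping cone structure still produces an iterated long exact sequence, or equivalently a filtration spectral sequence converging to $HF(N,L)$ whose $E_1$-page is $\bigoplus_i HF(N,L_i)$. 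Inside the single action window of width $\delta''$ into which the whole decomposition is placed, there is no room for cross-window cancellations, so this $E_1$-page injects into $CF(N,L)$ and we get $\#(N\cap L)\geq \sum_i \dim_\Lambda HF(N,L_i)$.

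I expect the real obstacle to lie neither in the final numerical inequalities nor in the categorical bookkeeping, but in verifying that the weakly filtered $A_\infty$-category framework genuinely produces a filtered iterated cone decomposition of $L$ with the filtration level bounded linearly in $\mathcal{S}(V)$ (with constants depending only on $N$ and $S$, not on $L$ or on $V$); this is where the technology of curvature estimates and filtered homological perturbation for weakly filtered $A_\infty$-modules is essential, and it is the step that dictates how $\delta'$ and $\delta''$ must be chosen.
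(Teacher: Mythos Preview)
Your high-level architecture is right: one evaluates the weakly filtered iterated-cone module coming from $V$ at $N$, compares it to $CF(N,L)$, and extracts numerical inequalities from the smallness of the comparison. But the mechanisms you propose for both (a) and (b) do not go through as stated, and your candidates for $\delta'$, $\delta''$ are not the right ones.

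For~(a), the claim that the filtered quasi-isomorphism becomes ``an isomorphism at the level of underlying modules'' cannot be correct: the underlying spaces have dimensions $\sum_i\#(N\cap L_i)$ and $\#(N\cap L)$, and these are generally unequal---that inequality is exactly what we want to prove. What the paper does is produce, via a Hamiltonian isotopy $\gamma\to\gamma'$ of Hofer length $\leq\mathcal{S}(V)+\epsilon$, chain maps $\phi:\mathcal{C}_{p,h}\to\mathcal{C}'_{p,h}$ and $\psi:\mathcal{C}'_{p,h}\to\mathcal{C}_{p,h}$ between $\mathcal{C}_{p,h}\cong\bigoplus_i CF(N,L_i)$ and $\mathcal{C}'_{p,h}\cong CF(N,L)$, with $\psi\circ\phi$ chain-homotopic to $\id$ by a homotopy of action-shift $\leq\mathcal{S}(V)+\epsilon$. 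One then shows that the differential on $\mathcal{C}_{p,h}$ has action-drop strictly larger than this, so $\psi\circ\phi$ and hence $\phi$ is injective (Lemma~\ref{l:rig-cplx-n}). The step you are missing is how to bound this action-drop independently of $L$ and of all choices: one invokes Theorem~\ref{t:itcones} to replace the iterated-cone differential by one built explicitly from higher $\mu_d$-operations of $\fuk(\mathcal{C})$, then chooses $J$ standard on disjoint balls around the points of $N\cap S$, so that every contributing polygon has area $\geq\tfrac{1}{2}\pi r^2$ by the Lelong inequality. Thus $\delta'=\delta^{\Sigma'}(N\cup S)$ is a Gromov-width of the double-point set $\Sigma'=N\cap S$, not ``the smallest area of a non-constant strip'', which is neither $J$-independent nor bounded away from zero a priori.

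For~(b), the spectral-sequence heuristic does not yield what you claim: the $E_1$-page $\bigoplus_i HF(N,L_i)$ is a subquotient of the cone complex, not a subspace of $CF(N,L)$, and nothing prevents higher differentials from killing classes. The paper instead splits the differential from Theorem~\ref{t:itcones} as $d_0+d_1$ with $d_0$ diagonal and $d_1$ strictly upper-triangular; each term of $d_1$ is a $\mu_d$, $d\geq2$, having a corner at some $L_i\cap L_j$. A Lelong argument at \emph{those} double points (balls now required to avoid $N$) gives $\delta_{d_1}\geq\tfrac{1}{4}\delta^{\Sigma''}(S;N)$, so $\delta''=\delta^{\Sigma''}(S;N)$ is attached to the self-intersections of $S$, not to $N\cap S$. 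One then applies Proposition~\ref{prop:rig-cplx2}, a $\delta$-robust subspace argument \`a la Usher--Zhang: the map $f=\sigma_1\psi\phi\sigma_1^{-1}$ on $\mathcal{M}(N)$ satisfies $B_h(f-\id)<\delta_{d_1}$, hence $\dim_\Lambda\textnormal{image}(f)\geq\dim_\Lambda H_*(\mathcal{M}(N),d_0)=\sum_i\dim_\Lambda HF(N,L_i)$, while the factorization of $\psi\phi$ through $CF(N,L)$ bounds $\dim_\Lambda\textnormal{image}(f)$ by $\#(N\cap L)$.
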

The numbers $\delta$, $\delta'$ and $\delta''$ are variants of the
Gromov width from~\cite{Bar-Cor:Serre}.  Namely, $\delta$ is the Gromov width of $L$ in the complement of $S$, $\delta'$ is a symplectic measure of the intersections $S\cap L$ and $\delta''$ is a symplectic measure of the double points of $S$ in the complement of $N$.
The precise definitions are given in~\S\ref{s:main-geom}, where more precise versions of
Theorems~\ref{t:main-A} and~\ref{t:main-B} are restated and proved as parts of a single statement,  Theorem \ref{thm:fission}.
A similar statement holds also in the monotone case,
see~\S\ref{s:main-geom}.

\subsection{From shadows to metrics on the space of
  Lagrangians} \label{sb:intro-shadow-metric}

Interestingly, the notion of shadow gives rise to a new class of
pseudo-metrics on the space of Lagrangians $\mathcal{L}ag^*(M)$. The
simplest of them, which we call the {\em shadow pseudo-metric}, is defined as
follows. Fix a family of Lagrangians
$\mathcal{F} \subset \mathcal{L}ag^*(M)$. The shadow pseudo-metric
$d^{\mathcal{F}}$ relative to $\mathcal{F}$ is defined
by
\begin{equation} \label{eq:dF-intro} d^{\mathcal{F}}(L,L'):= \inf_{V}
  \bigr\{ \mathcal{S}(V) \mid V : L \cobto (F_1, \ldots, F_{i-1}, L',
  F_i, \ldots, F_k), \, k \geq 0, F_i \in \mathcal{F} \bigr\},
\end{equation}
for every $L, L' \in \mathcal{L}ag^*(M)$. The infimum here is taken
only over cobordisms $V \in \mathcal{L}ag^*(\mathbb{R}^2 \times M)$,
i.e. the $V$'s are assumed to satisfy the same constraints as the
Lagrangians in $\mathcal{L}ag^*(M)$. These cobordisms are not assumed to be connected.
Here and in what follows we use
the convention that $\inf \emptyset = \infty$, so that
$d^{\mathcal{F}}(L,L') = \infty$ if there is no cobordism
$V \in \mathcal{L}ag^*(\mathbb{R}^2 \times M)$ as
in~\eqref{eq:dF-intro}.

It is easy to see that $d^{\mathcal{F}}$ is a pseudo-metric (possibly
with infinite values). However, unless $\mathcal{F} = \emptyset$,
$d^{\mathcal{F}}$ is generally degenerate (yet not identically zero).

Now fix a second family of Lagrangians
$\mathcal{F}' \subset \mathcal{L}ag^*(M)$. The preceding recipe
yields another pseudo-metric $d^{\mathcal{F}'}$. Consider now
\begin{equation} \label{eq:d-ff'} \widehat{d}^{\mathcal{F},
    \mathcal{F}'} = \max\{ d^{\mathcal{F}}, d^{\mathcal{F}'} \}.
\end{equation}

An easy consequence of Theorem~\ref{t:main-A} is that if
$(\overline{\cup_{K \in \mathcal{F}} K}) \cap (\overline{\cup_{K' \in
    \mathcal{F'}} K'})$ is totally disconnected (e.g. discrete), then
$\widehat{d}^{\mathcal{F}, \mathcal{F}'}$ is a non-degenerate metric
(possibly with infinite values) on $\mathcal{L}ag^*(M)$. We call it
the shadow metric associated to the pair of families $\mathcal{F}$,
$\mathcal{F}'$.

Shadow metrics can be viewed as generalizations of the ``simple
cobordism metric'' from~\cite{Co-She:metric} (which coincides with
$d^{\emptyset}$), which itself generalizes the Lagrangian Hofer
metric~\cite{Chek:finsler}. However, the shadow metrics are in general
different from these two metrics. In particular with appropriate
choices for the families $\mathcal{F}$, $\mathcal{F}'$, the shadow
metrics are finite for a larger collection of Lagrangians in
$\mathcal{L}ag^*(M)$. It is already known from \cite{Co-She:metric} that, 
without appropriate rigidity constraints  (such as those imposed here) on the class of 
Lagrangians under consideration, even the simple cobordism metric is degenerate.

\subsection{Weighted fragmentation pseudo-metrics on triangulated
  categories} \label{sb:intro-wei-frag-metr} The construction of the shadow pseudo-metrics
 can be generalized to a more abstract setting, as discussed in \S\ref{subsubsec:weights1}. 
 In short, given a triangulated category $\mathcal{X}$ there is an associated category, $T^{S}\mathcal{X}$ \cite{Bi-Co:lcob-fuk}, whose  objects are ordered finite  families $(K_{1},\ldots, K_{r})$, $K_{i}\in \mathcal{O}b(\mathcal{X})$, and whose morphisms parametrize the  iterated cone-decompositions of the objects in $\mathcal{X}$.  A {\em weight} on $\mathcal{X}$ associates a value $\in [0,\infty]$ to the morphisms in $T^{S}\mathcal{X}$ in a way that is sub-additive with respect to composition. The weight of a morphism $\bar{\phi}\in \mor_{T^{S}\mathcal{X}}$
 can be viewed as a measure of the energy required for the (iterated cone)-decomposition represented by $\bar{\phi}$ to take place.
   Given such a weight $w$, and  for a fixed family  $\mathcal{F}\subset \mathcal{O}b(\mathcal{X})$, we can compare two objects
 $K,K'$ in $\mathcal{X}$ by  infimizing $w(\bar{\phi})$ among all morphisms $\bar{\phi}$ in $T^{S}\mathcal{X}$,  $\bar{\phi}: K\to (F_{1},\ldots, F_{i}, K', F_{i+1},\ldots, F_{r})$, $F_{i}\in \mathcal{F}$. The resulting measurement satisfies the triangle inequality and can be symmetrized formally, thus providing a pseudo-metric
 on the objects of $\mathcal{X}$. These pseudo-metrics are called {\em weighted fragmentation pseudo-metrics}. This construction recovers the shadow pseudo-metrics by using a functor $\widetilde{\Phi}:\mathcal{C}ob^{\ast}(M)\to T^{S}D\fuk^{\ast}(M)$ from
 \cite{Bi-Co:lcob-fuk} to define a shadow-weight, $w_{\mathcal{S}}$,  on  $D\fuk^{\ast}(M)$ - see (\ref{eq:shadow-weight}). The shadow pseudo-metrics are the weighted fragmentation pseudo-metrics associated to $w_{\mathcal{S}}$.

More interestingly, this abstract construction is applied in \S\ref{subsubsec:alg-we} to a different type of weight on $D\fuk^{\ast}(M)$ whose definition is based on algebraic measurements associated to filtered chain-complexes and modules and is independent of cobordism.  By applying the method above to this weight we deduce the  existence of additional pseudo-metrics on  $\mathcal{L}ag^{\ast}(M)$,  that are all more algebraic in nature compared to their shadow counterparts. 
An additional key point, contained in Corollary \ref{cor:alg-weights}, is that, by carefully separating the geometric steps from the algebraic ones in the proof of Theorem \ref{t:main-A}, we show that an 
 analogue of this theorem remains valid for the algebraic weights.  As a consequence, the algebraic weights can also be used to define actual metrics (possibly taking infinite values) on $\mathcal{L}ag^{\ast}(M)$.

\subsection{Outline of the proof of
  Theorem~\ref{t:main-A}} \label{sb:outline-prf-A}   
  We focus here on the proof of Theorem \ref{t:main-A}. The proof of Theorem \ref{t:main-B} makes use 
of the similar ideas.  We consider a symplectic embedding
of a standard ball $e:B(r)\to M$ such that $e^{-1}(L)=B_{\R}(r)$, $e(B(r))\cap (L_{1}\cup L_{1}\cup\ldots \cup L_{k})=\emptyset$ and we put $P=e(0)$. 
The purpose of the proof is to show that for any almost complex structure $J$ on $M$ there exists a $J$-holmorphic polygon $u$ in $M$ with a boundary edge on $L$ (and possibly on the other $L_{i}$'s) going through $P$ and of symplectic area smaller or equal than $\mathcal{S}(V)$.  By a suitable choice of $J,$ an application of the Lelong inequality, and the definition of $\delta=\delta(L;S)$ as in (\ref{eq:delta1}) this implies the claim.

To be able to control energy bounds in our considerations we first set up in \S\ref{s:wf-ai-theory} the 
machinery of $A_{\infty}$-categories and modules in the (weakly) filtered setting.  Variants of this already appear in the literature,
for instance in \cite{FO3:book-vol1,FO3:book-vol2} (in somewhat different form), but we give enough details such as to be able to extend - in \S\ref{s:floer-theory} - the results from \cite{Bi-Co:lcob-fuk} to this setting. The wording {\em weakly} means that, to achieve regularity, we allow for small Hamiltonian perturbations in the definition of the various algebraic structures. As  a consequence, these structures are 
filtered only up to a system of small, controllable errors. We also prove in \S\ref{sb:wf-ic} a structural result,  Theorem \ref{t:itcones}, concerning 
iterated cones $\mathcal{K}$ of (weakly) filtered $A_{\infty}$ modules and, in particular, we show 
that each such cone admits a quasi-isomorphic model $\mathcal{K}'$ which is an iterated cone with the same factors as $\mathcal{K}$ and such that $\mathcal{K}'$ has a filtration that is well controlled with respect  to that of $\mathcal{K}$ 
and, most importantly for us,  the $\mu_{1}$ operation of  $\mathcal{K}'$ can be written explicitly in terms of higher
$\mu_{k}$'s of the underlying $A_{\infty}$-category  - see (\ref{eq:aij}). This result is based on a (weakly) filtered version of the following property of the Yoneda embedding \cite{Se:book-fukaya-categ}: for an $A_{\infty}$-module $\mathcal{M}$ and an object $Y$ there is a natural quasi-isomorphism $\mathcal{M}(Y) \cong\mor (\mathcal{Y}, \mathcal{M})$ (where $\mathcal{Y}$ is  the Yoneda module of $Y$). 
We prove  in \S\ref{sb:l-map} a weakly filtered version of this property which seems to be new 
(and somewhat delicate to prove).

With this preparation, the proof of the theorem is given in \S\ref{s:main-geom}. By neglecting a number of technicalities, the 
argument can be sketched as follows. We consider a new cobordism 
$W:\emptyset \cobto (L,L_{1},\ldots, L_{k})$ obtained from $V$ by bending the end $L$ of $V$  clokwise half a turn, as in Figure \ref{f:cob-v-w}. The main result in
\cite{Bi-Co:lcob-fuk} implies that  the Yoneda modules $\mathcal{L}$, $\mathcal{L}_{i}$ associated to the negative 
ends of the cobordism $W$ fit into an iterated cone of $A_{\infty}$-modules over the Fukaya category, $\fuk^{\ast}(M)$:
$$\mathcal{M}=\tcn(\mathcal{L}_{k} \stackrel{\varphi_{k}}{\longrightarrow} \tcn (\mathcal{L}_{k-1}\stackrel{\varphi_{k-1}}{\longrightarrow}\ldots \tcn(\mathcal{L}_{1}\stackrel{\varphi_{1}}{\longrightarrow} \mathcal{L})\ldots )~,~$$  and moreover the module $\mathcal{M}$ is acyclic. In view of our preparatory step
all the modules and structures involved here are filtered. The complex $\mathcal{M}(L)$ admits a geometric description
as follows. Let $\gamma$ be a curve in $\C$ that lies to left of the ``bulb'' of the cobordism $W$ and is crossing the negative ends of $W$ in a vertical way - as in 
Figure \ref{f:gamma-gamma'}. Then,  by neglecting in this sketch the other choices (of perturbations etc) required, we have
$\mathcal{M}(L)=CF(\gamma\times L, W)$.  There is a Hamiltonian isotopy of energy very close 
to $\mathcal{S}(W)$ that isotopes the curve $\gamma$ to a curve $\gamma'$ that passes to the right of the projection of $W$ onto $\C$ and thus $\gamma'\times L$ is disjoint from $W$ - see again Figure \ref{f:gamma-gamma'}. As a consequence, 
by standard Floer theoretic considerations, we deduce that there exists a null-homotopy $\xi$ of the identity of $\mathcal{M}(L)$ that shifts filtrations by at most $\rho\leq \mathcal{S}(V)+\epsilon$ where we can take $\epsilon$ as small as desired. 
A cycle $e_L$ in $CF(L,L)$ representing the fundamental class $[L] \in HF(L,L)$ still remains a cycle in $\mathcal{M}(L)$. We deduce that it has to be the boundary of some element in $\mathcal{M}(L)$ of filtration higher than that of $e_L$ by not more than $\rho$. In a more streamlined formalism, we say that the boundary depth (a notion introduced by Usher in \cite{Usher1} - see also \S\ref{s:filt-ch}) of 
$e_L$ is at most $\rho$.  By suitable choices of perturbation data, we arrange it so that $e_L$ is the maximum point of a Morse function on $L,$ which is achieved at $P$. For the next step it is crucial that $\mathcal{M}$ is an iterated cone of (weakly) filtered {\em modules} (in contrast to  $\mathcal{M}(L)$ only being an iterated cone of filtered chain complexes). We now use the structural Theorem \ref{t:itcones} to associate to $\mathcal{M}$ the quasi-isomorphic module $\mathcal{M}'$. Because the filtrations
on $\mathcal{M}'$ and $\mathcal{M}$ are tightly related, we deduce that the boundary depth of $e_L$ in $\mathcal{M}'(L)$
is at most $\rho+\epsilon'$ where $\epsilon'>0$ can be taken arbitrarily small. From the special form  of the differential of $\mathcal{M}'(L)$ which involves the higher order $A_{\infty}$-operations $\mu_{d}$, we conclude that there
is a pseudoholomorphic polygon $u$ in $M$ with boundary on $L$ and on some of the $L_{i}$'s that  appears in the differential of $\mathcal{M}'(L)$ and that passes through $P$. Moreover, the area of this polygon is not more than $\rho+\epsilon'$. This concludes
 the proof by making $\epsilon,\epsilon'\to 0$ (in practice, the last step of finding the $J$-holomorphic curve $u$ 
 is a bit more complicated as, for the complex $CF(\gamma\times L, W)$ to be defined, some perturbations need to be introduced
 and these have to be sent to $0$ to deduce the existence of $u$).

\begin{rem}
From the fact that  $e_L$ becomes a boundary in $\mathcal{M}(L)$ one can directly deduce that there is a holomorphic curve
passing through $P$. However, this curve is in $\R^{2}\times M$ and its boundary
is along $\gamma\times L$ and $W$ - this is not the curve $u$ we are looking for. To obtain a curve $u$ in $M$ with the 
desired properties, the algebraic Theorem \ref{t:itcones} is essential in our argument. 
We do not know a geometric argument bypassing this theorem and 
providing directly the specific form of differential of $\mathcal{M}'$ (except for the very special case of a 
cobordism $W$ with at most three ends). Notice also that the curve $u$ that is identified in the proof can be shown to be
associated to an operation $\mu_{d}$ with $d\geq 2$. \end{rem}

\subsection{Structure of the
  paper} \label{sb:intro-further-rems}
  After two sections, \S \ref{s:wf-ai-theory} algebraic 
  and \S\ref{s:floer-theory} that adjusts the Fukaya category constructions to
  the weakly filtered setting,  we assemble  in \S\ref{s:main-geom} all these tools to prove Theorem \ref{thm:fission} -  that combines the statements of both Theorems \ref{t:main-A} and \ref{t:main-B}.
The last section of the paper, \S\ref{sec:examples-metr}, contains a number of examples and calculations as well as the construction of the weighted fragmentation pseudo-metrics mentioned earlier in the introduction. 
For the most part, this section can be read directly after the introduction. Except for Corollary \ref{cor:alg-weights}, it only uses the statements of Theorems \ref{t:main-A} and \ref{t:main-B} and not their proofs
 and just a few algebraic notions  from \S\ref{s:filt-ch}.

 \subsection*{Aknowledgements.} We thank Misha Khanevsky for mentioning to us the argument in Remark \ref{rem:Misha-etc} b.  The second author thanks Luis Diogo for useful comments. The last two authors thank
 the {\em Institute for Advanced Study} and Helmut Hofer for generously hosting them there for a part of this work. The second author also thanks the {\em Forschungsinstitut f\"ur Mathematik}  for support during repeated visits to Z\"urich.

\tableofcontents 

\section{Weakly filtered $A_{\infty}$-theory}
\label{s:wf-ai-theory}

In this section we develop a general framework for weakly filtered
$A_{\infty}$-categories, with an emphasis on weakly filtered modules
over such categories. In our context ``weakly filtered'' generally
means that the morphisms in the category are filtered chain complexes
but the higher $A_{\infty}$-operations do not necessarily preserve
these filtrations. Rather they preserve them up to prescribed errors
which we call {\em discrepancies}. In the same vein one can consider
also weakly filtered $A_{\infty}$-functors and modules. Related
notions of filtered $A_{\infty}$-structures have been considered in
the literature (e.g.~\cite{FO3:book-vol1, FO3:book-vol2}), but the
existing theory seems to differ from ours in its scope
and applications.

Below we will cover only the most basic concepts of
$A_{\infty}$-theory in the weakly filtered setting. In particular we
will not go into the topics of derived categories, split closure or
generation in the weakly filtered framework. Our main goal is in fact
much more modest: to provide an effective description of iterated
cones of modules in the weakly filtered setting in terms of weakly
filtered twisted complexes.

Some readers may find the details of the weakly filtered setting
somewhat overwhelming, especially in what concerns keeping track of
the discrepancies. If one assumes all the discrepancies to vanish, the
theory becomes ``genuinely filtered'' and is easier to
follow. However, the additional difficulty due to the weakly filtered
setting is in large superficial. Indeed, significant parts of the
theory developed in this section do not become easier if one
works in the genuinely filtered setting, except in terms of notational convenience. 
We also remark that, as far
as we know, a good part of the theory developed in this section, particularly the study 
of iterated cones, is new even
in the genuinely filtered case. The reason for developing the theory
in the weakly filtered setting (rather than filtered) has to do with
the geometric applications we aim at which have to do with Fukaya
categories of symplectic manifolds. For technical reasons, the weakly
filtered framework fits better with the standard implementations of
these categories.

\subsection{Weakly filtered $A_{\infty}$-categories}
\label{sb:wf-ai-categ}

In the following we will often deal with sequences
\label{pg:conv-seq}
$\bme = (\epsilon_1, \ldots, \epsilon_d, \ldots)$ of real numbers that
we will refer to as discrepancies. We will use the following
abbreviations and conventions. For two sequences $\bme$, $\bme'$ we
write $\bme \leq \bme'$ in order to say that
$\epsilon_d \leq \epsilon'_d$ for all $d$. For $c \in \mathbb{R}$ we
write $\bme + c$ for the sequence
$(\epsilon_1+c, \ldots, \epsilon_d+c, \ldots)$.  For a finite number
of sequences $\bme^{(1)}, \ldots, \bme^{(r)}$ we define
$\max\{\bme^{(1)}, \ldots, \bme^{(r)} \}$ to be the sequence
$\bme = (\epsilon_1, \ldots, \epsilon_d, \ldots)$ with
$\bme_d := \max\{\epsilon^{(1)}_d, \ldots, \epsilon^{(r)}_d\}$.

Fix a commutative ring $R$, which for simplicity we will henceforth
assume to be of characteristic $2$ (i.e. $2r=0$ for all $r \in R$).
Unless otherwise stated all tensor products will be taken over $R$.

The $A_{\infty}$-theory developed below will be carried out in the
ungraded framework. Also, in contrast to standard texts on the subject
such as~\cite{Se:book-fukaya-categ}, we will work in a homological
(rather than cohomological) setting, following the conventions
from~\cite{Bi-Co:lcob-fuk}.

Let $\mathcal{A}$ be an $A_{\infty}$-category over $R$. To simplify
notation, in what follows we will denote the morphisms between two
objects $X,Y \in \textnormal{Ob}(\mathcal{A})$ by
$C(X,Y) := \hom_{\mathcal{A}}(X,Y)$. We denote the composition maps of
$\mathcal{A}$ by $\mu^{\mathcal{A}}_d$, $d \geq 1$.

Let
$\bm{\epsilon}^{\mathcal{A}} = (\epsilon_1^{\mathcal{A}},
\epsilon_2^{\mathcal{A}}, \ldots, \epsilon_d^{\mathcal{A}}, \ldots)$
be an infinite sequence of non-negative real numbers, with
$\epsilon_1^{\mathcal{A}}=0$. We call $\mathcal{A}$ a {\em weakly
  filtered} $A_{\infty}$-category with discrepancy
$\leq \bm{\epsilon}^{\mathcal{A}}$ if the following holds:
\begin{enumerate}
\item For every $X, Y \in \textnormal{Ob} (\mathcal{A})$, $C(X,Y)$ is
  endowed with an increasing filtration of $R$-modules indexed by the
  real numbers. We denote by $C^{\leq \alpha}(X,Y) \subset C(X,Y)$ the
  part of the filtration corresponding to $\alpha \in \mathbb{R}$. By
  {\em increasing filtration} we mean that
  $C^{\leq \alpha'}(X,Y) \subset C^{\leq \alpha''}(X,Y)$ for every
  $\alpha'\leq \alpha''$.
\item The $\mu_d$-operation preserves the filtration up to an
  ``error'' of $\epsilon_d^{\mathcal{A}}$. More precisely, for every
  $X_0, \ldots, X_d \in \textnormal{Ob}(\mathcal{A})$ and
  $\alpha_1, \ldots, \alpha_d \in \mathbb{R}$ we have:
  $$\mu_d \bigl( C^{\leq \alpha_1}(X_0, X_1) \otimes \cdots \otimes C^{\leq
    \alpha_d}(X_{d-1},X_d) \bigr) \subset C^{\leq \alpha_1 + \cdots +
    \alpha_d + \epsilon_d^{\mathcal{A}}} (X_0, X_d).$$
\end{enumerate}
Note that since $\epsilon_1^{\mathcal{A}}=0$, $\mu_1^{\mathcal{A}}$
preserves the filtration, hence each $C^{\leq \alpha}(X,Y)$,
$\alpha \in \mathbb{R}$, is a sub-complex of $C(X,Y)$. Note also that
the discrepancy is not uniquely defined -- in fact we can always
increase it if needed. Namely, if
$\bm{\epsilon}' = (\epsilon'_1=0, \epsilon'_2, \ldots, \epsilon'_d,
\ldots)$ is another sequence like $\bm{\epsilon}^{\mathcal{A}}$ but
with $\bmea \leq \bme'$ then $\mathcal{A}$ is also weakly filtered
with discrepancy $\leq \bm{\epsilon}'$.

By analogy with symplectic topology we will often refer to the index of
the filtration as an {\em action} and say that elements of
$C^{\leq \alpha}(X,Y)$ have action $\leq \alpha$.

\subsubsection{Unitality} \label{sbsb:unitality-A} Let $\mathcal{A}$
be a weakly filtered $A_{\infty}$-category and assume that
$\mathcal{A}$ is homologically unital (h-unital for
short). \label{pg:h-unital-categ} We say that $\mathcal{A}$ is
h-unital in the weakly filtered sense if the following holds. There
exists $u^{\mathcal{A}} \in \mathbb{R}_{\geq 0}$ such that for every
$X \in \textnormal{Ob}(\mathcal{A})$ we have a cycle
$e_X \in C^{\leq u^{\mathcal{A}}}(X,X)$ representing the homology unit $[e_X] \in H(C(X,X), \mu^{\mathcal{A}}_1)$. We view the choices of
$e_X$, $X \in \textnormal{Ob}(\mathcal{A})$ and $u^{\mathcal{A}}$ as
part of the data of a weakly filtered h-unital
$A_{\infty}$-category. We call $u^{\mathcal{A}}$ the discrepancy of
the units.

Occasionally we will have to impose the following additional
assumption on $\mathcal{A}$.

\newcommand{\lbue}{U^e} \hypertarget{h:asmp-ue}{\noindent
  \textbf{Assumption~$\lbue$.}}  \label{l:assm-ue} Let $\mathcal{A}$
be a weakly filtered $A_{\infty}$-category which is h-unital in the
weakly filtered sense. Let
$2u^{\mathcal{A}} + \epsilon_2^{\mathcal{A}} \leq \zeta \in
\mathbb{R}$. We say that $\mathcal{A}$ satisfies
Assumption~$\lbue(\zeta)$ if for every
$X \in \textnormal{Ob}(\mathcal{A})$ we have:
$$\mu_2^{\mathcal{A}}(e_X, e_X) = e_X + \mu_1^{\mathcal{A}}(c)$$ for some 
$c \in C^{\leq \zeta}(X,X)$. Put in different words, the assumption
$\lbue$ says that $[e_X] \cdot [e_X] = [e_X]$ in
$H_*(C^{\leq \zeta}(X,X))$, where $\cdot$ stands for the product
induced by $\mu_2^{\mathcal{A}}$ in homology. (The superscript $e$ in
$\lbue$ indicates that the assumption deals with the cycles $e_X$
representing the units.) Below we will sometimes write
$\mathcal{A} \in \lbue(\zeta)$ to say that $\mathcal{A}$ satisfies
Assumption~$\lbue(\zeta)$.

\subsection{Typical classes of examples} \label{sb:typ-exp} Before we
go on with the general algebraic theory of weakly filtered
$A_{\infty}$-structures, it might be useful to make a short digression
in order to exemplify what types of filtrations will actually occur in
our applications. We resume with the general algebraic theory
in~\S\ref{sb:functors} below.

The weakly filtered $A_{\infty}$-categories that will appear in our
applications are Fukaya categories associated to symplectic
manifolds. They will mostly be of the following types, described
in~\S\ref{sbsb:filtration-action} - \ref{sbsb:families-class} below.

\subsubsection{Filtrations induced by an ``action'' functional on the
  generators} \label{sbsb:filtration-action} In this class of weakly
filtered $A_{\infty}$-categories the morphisms $C(X,Y)$ between two
objects are assumed to be free $R$-modules with a distinguished basis
$B(X,Y)$, i.e.  $C(X,Y) = \bigoplus_{b \in B(X,Y)} Rb$. We also have a
function $\mathbf{A}: B(X,Y) \longrightarrow \mathbb{R}$, which (by
analogy to symplectic topology) we call the {\em action function},
defined for every $X, Y \in \textnormal{Ob}(\mathcal{A})$, and this
function induces the filtration, namely:
$$C^{\leq \alpha}(X,Y) = \bigoplus_{b \in B(X,Y), \mathbf{A}(b)\leq \alpha} Rb.$$
We will mostly assume that $C(X,Y)$ has finite rank and that $R$ is a
field.

\subsubsection{Filtration coming from the Novikov
  ring} \label{sbsb:filtration-nov} Here we fix a commutative ring $A$ and
consider the (full) Novikov ring over $A$:
\begin{equation} \label{eq:Nov-ring}
  \Lambda = \Bigl\{ \sum_{k=0}^{\infty} a_k T^{\lambda_k} \mid a_k \in A, 
  \lim_{k \to \infty} \lambda_k = \infty \Bigr\},
\end{equation}
as well as the positive Novikov ring:
\begin{equation} \label{eq:Nov-ring-0}
  \Lambda_0 = \Bigl\{ \sum_{k=0}^{\infty} a_k T^{\lambda_k} \mid a_k \in A, 
  \lambda_k\geq 0, \lim_{k \to \infty} \lambda_k = \infty \Bigr\}.
\end{equation}

The weakly filtered $A_{\infty}$-categories $\mathcal{A}$ of the type
discussed here are defined over $\Lambda$, but the weakly filtered
structure is only over the ring $R = \Lambda_0$. 

As in~\S\ref{sbsb:filtration-action} above, we have
$C(X,Y) = \bigoplus_{b \in B(X,Y)} \Lambda b$. The filtration on
$C(X,Y)$ is then defined by
$$C^{\leq \alpha}(X,Y) = 
\bigoplus_{b \in B(X,Y)} T^{-\alpha} \Lambda_0 b.$$ Note that
$C^{\leq a}(X,Y)$ is not a $\Lambda$-module but rather a
$\Lambda_0$-module.

We will mostly assume that $B(X,Y)$ are finite (hence $C(X,Y)$ have
finite rank) and that $A$ is a field (in which case $\Lambda$ is a
field too).

\subsubsection{Mixed filtration} \label{sbsb:filtration-mixed} In some
situations the filtrations on our $A_{\infty}$-categories occur as
combination of~\S\ref{sbsb:filtration-action}
and~\S\ref{sbsb:filtration-nov} above.  More specifically, we have
$C(X,Y) = \Lambda B(X,Y)$ as in~\S\ref{sbsb:filtration-nov} and an
action functional $\mathbf{A}: B(X,Y) \longrightarrow \mathbb{R}$ as
in~\S\ref{sbsb:filtration-action}. We then extend $\mathbf{A}$ to a
functional
$\mathbf{A}: C(X,Y) = \Lambda \cdot B(X,Y) \longrightarrow \mathbb{R}
\cup \{-\infty\}$ by first setting $\mathbf{A}(0) = -\infty$. Then for
$P(T) \in \Lambda$ and $b \in B(X,Y)$ we define:
$$\mathbf{A}(P(T)b) := -\lambda_0 + \mathbf{A}(b),$$ where
$\lambda_0 \in \mathbb{R}$ is the minimal exponent that appears in the
formal power series of $P(T) \in \Lambda$, i.e.
$P(T) = a_0 T^{\lambda_0} + \sum_{i=1}^{\infty} a_i T^{\lambda_i}$
with $a_0 \neq 0$ and $\lambda_i > \lambda_0$ for every $i\geq 1$.
Finally, for a general non-trivial element
$c = P_1(T)b_1 + \cdots + P_l(T)b_l \in C(X,Y)$, define
$$\mathbf{A}(c) = \max \{ \mathbf{A}(P_k(T) b_k) \mid 1\leq k \leq
l\}.$$ The filtration on $C(X,Y)$ is then induced by $\mathbf{A}$:
$$C^{\leq \alpha}(X,Y) = 
\{c \in C(X,Y) \mid \mathbf{A}(c) \leq \alpha \}.$$ It is easy to see
that $C^{\leq \alpha}(X,Y)$ is a $\Lambda_0$-module.

\subsubsection{Families of weakly filtered
  $A_{\infty}$-categories} \label{sbsb:families-class}

The weakly filtered $A_{\infty}$-categories in our applications will
naturally occur in families $\{ \mathcal{A}_s \}_{s \in \mathcal{P}}$
parametrized by choices of auxiliary structures $s$ needed to define
the $A_{\infty}$-structure. The parameter $s$ will typically vary over
a subset $\mathcal{P} \subset E \setminus \{0\}$ where $E$ is a
neighborhood of $0$ in a Banach (or Fr\'{e}chet) space.  The subset
$\mathcal{P}$ will usually be residual (in the sense of Baire) so that
$0$ is in the closure of $\mathcal{P}$.

Typically all the members of the family
$\{ \mathcal{A}_s\}_{s \in \mathcal{P}}$ will be mutually
quasi-equivalent (see~\cite[Section~10]{Se:book-fukaya-categ} for
several approaches to families of $A_{\infty}$-categories). Of course,
in the weakly filtered setting the quasi-equivalences between
different $\mathcal{A}_s$'s are supposed to bear some compatibility
with respect to the weakly filtered structures on the
$\mathcal{A}_s$'s.

Apart from the above, in our applications the families
$\{\mathcal{A}_s \}_{s \in \mathcal{P}}$ will enjoy the following
additional property which will be crucial. The bounds
$\bme^{\mathcal{A}_s}$ for the discrepancies of the $\mathcal{A}_s$'s
can be chosen such that:
$$\lim_{s \to 0} \epsilon_d^{\mathcal{A}_s} = 0, \; \forall d.$$
Moreover, the categories $\mathcal{A}_s$ will mostly be h-unital with
discrepancy of units $u^{\mathcal{A}_s}$ and satisfy
Assumption~$\hyperlink{h:asmp-ue}{\lbue(\zeta_s)}$. The latter two
quantities will satisfy
$$\lim_{s \to 0} u^{\mathcal{A}_s} = \lim_{s \to 0} \zeta_s = 0.$$

Below we will encounter further notions in the framework of weakly
filtered $A_{\infty}$-categories such as weakly filtered functors and
modules. Each of these comes with its own discrepancy sequence $\bme$. In our
applications everything will occur in families and we will usually
have $\lim_{s \to 0} \epsilon_d(s) = 0$ for each $d$.

While the algebraic theory below is developed without a priori
assumptions on the size of discrepancies, it might be useful to view
the discrepancies as quantities that can be made arbitrarily small.

\subsubsection{The case of Fukaya
  categories} \label{sbsb:fukaya-class} The general description
in~\S\ref{sbsb:families-class} applies to the case of Fukaya
categories which will be central in our applications. More
specifically, in order to define the $A_{\infty}$-structure of Fukaya
categories one has to make choices of perturbation data (e.g. choices
of almost complex structures as well as Hamiltonian perturbation --
see e.g.~\cite[Sections~8, 9]{Se:book-fukaya-categ}). The space
$\mathcal{P}$ will consist of those perturbation data that are regular
(or admissible). This is normally a 2'nd category subset of the space
of all perturbations $E$.  The discrepancies occur as ``error''
curvature terms (associated to the perturbations) when defining the
$\mu_d$-operations. These discrepancies can be made arbitrarily small
(for a fixed $d$) by choosing smaller and smaller perturbations. The
same holds for discrepancy of the units and the $\zeta_s$'s.

\subsection{Weakly filtered $A_{\infty}$-functors and modules} \label{sb:functors}
Let $\mathcal{A}$, $\mathcal{B}$ two weakly filtered
$A_{\infty}$-categories and
$\mathcal{F}: \mathcal{A} \longrightarrow \mathcal{B}$ an
$A_{\infty}$-functor. Let
$\bme^{\mathcal{F}} = (\epsilon_1^{\mathcal{F}},
\epsilon_2^{\mathcal{F}}, \ldots, \epsilon_d^{\mathcal{F}}, \ldots)$
be a sequence of non-negative real numbers. In contrast to $\bmea$ and
$\bme^{\mathcal{B}}$ we do allow here that
$\epsilon_1^{\mathcal{F}} \neq 0$. We say that $\mathcal{F}$ is a
weakly filtered $A_{\infty}$-functor with discrepancy
$\leq \bme^{\mathcal{F}}$ if for all
$X_0, \ldots, X_d \in \textnormal{Ob}(\mathcal{A})$ and
$\alpha_1, \ldots, \alpha_d \in \mathbb{R}$ we have:
\begin{equation} \label{eq:wf-func}
  \mathcal{F}_d \bigl( C_{\mathcal{A}}^{\leq \alpha_1}(X_0, X_1) 
  \otimes \cdots \otimes C_{\mathcal{A}}^{\leq \alpha_d}(X_{d-1},X_d)
  \bigr) \subset C_{\mathcal{B}}^{\leq \alpha_1 + \cdots + \alpha_d +
    \epsilon_d^{\mathcal{F}}} (\mathcal{F}X_0, \mathcal{F}X_d).
\end{equation}
Here we have denoted by $C_{\mathcal{A}}$ and $C_{\mathcal{B}}$ the
$\hom$'s in $\mathcal{A}$ and $\mathcal{B}$ respectively and by
$\mathcal{F}_d$ the higher order terms of the functor $\mathcal{F}$.

There is also a notion of weakly filtered natural transformations
between weakly filtered functors but we will not go into this now as
our main focus will be on a special case -- weakly filtered modules
and weakly filtered morphisms between them.

\begin{rem} \label{r:wf-func} One could attempt to generalize the
  notion of weakly filtered functors to allow also an additional shift
  $\rho \in \mathbb{R}$ in the action filtration, namely replace the
  right-hand side of~\eqref{eq:wf-func} by
  $C_{\mathcal{B}}^{\leq \alpha_1 + \cdots + \alpha_d + \rho +
    \epsilon_d^{\mathcal{F}}} (\mathcal{F}X_0, \mathcal{F}X_d)$. While
  this might make sense theoretically, it does not seem to be in line
  with our approach (viewing the discrepancies as small errors) since
  the $A_{\infty}$-identities defining $A_{\infty}$-functors do not
  have the correct ``homogeneity'' properties with respect to $\rho$.
\end{rem}

\subsubsection{Weakly filtered modules} \label{sb:mod} Let $\mathcal{A}$
be a weakly filtered $A_{\infty}$-category with discrepancy
$\bm{\epsilon}^{\mathcal{A}}$. Let $\mathcal{M}$ be an
$\mathcal{A}$-module with composition maps $\mu^{\mathcal{M}}_d$,
$d \geq 1$. Let
$\bm{\epsilon}^{\mathcal{M}} = (\epsilon^{\mathcal{M}}_1,
\epsilon^{\mathcal{M}}_2, \ldots, \epsilon^{\mathcal{M}}_d, \ldots)$
be an infinite sequence of non-negative real numbers with
$\epsilon^{\mathcal{M}}_1=0$. We say that $\mathcal{M}$ is weakly
filtered with discrepancy $\leq \bm{\epsilon}^{\mathcal{M}}$ if the
following holds:
\begin{enumerate}
\item For every $X \in \textnormal{Ob}(\mathcal{A})$, $\mathcal{M}(X)$
  is endowed with an increasing filtration
  $\mathcal{M}^{\leq \alpha}(X)$ indexed by $\alpha \in \mathbb{R}$.
\item The $\mu^{\mathcal{M}}_d$-operation respects the filtration up
  to an ``error'' of $\epsilon^{\mathcal{M}}_d$. Namely, for all
  $X_0, \ldots, X_{d-1} \in \textnormal{Ob}(\mathcal{A})$ and
  $a_1, \ldots, a_d \in \mathbb{R}$ we have:
  $$\mu^{\mathcal{M}}_d \bigl( C^{\leq \alpha_1}(X_0, X_1) \otimes 
  \cdots \otimes C^{\leq \alpha_{d-1}}(X_{d-2}, X_{d-1}) \otimes
  \mathcal{M}^{\leq \alpha_d}(X_{d-1}) \bigr) \subset
  \mathcal{M}^{\leq \alpha_1 + \cdots + \alpha_d +
    \epsilon^{\mathcal{M}}_d} (X_0).$$
\end{enumerate}
Again, since $\epsilon^{\mathcal{M}}_1=0$, every
$(\mathcal{M}^{\leq \alpha}(X), \mu^{\mathcal{M}}_1)$ is a sub-complex
of $(\mathcal{M}(X), \mu^{\mathcal{M}}_1)$.

\begin{rems} \label{r:modules} 
  \begin{enumerate}
  \item It is easy to see that weakly filtered $\mathcal{A}$-modules
    are the same as weakly filtered functors
    $\mathcal{F}: \mathcal{A} \longrightarrow
    Ch_{\textnormal{f}}^{\textnormal{opp}}$ (having some
    discrepancy). Here $Ch_{\textnormal{f}}$ is the dg-category of
    filtered chain complexes (of $R$-modules) and
    $Ch_{\textnormal{f}}^{\textnormal{opp}}$ stands for its opposite
    category. (Note that $Ch_{\textnormal{f}}$ and
    $Ch_{\textnormal{f}}^{\textnormal{opp}}$ are in fact {\em
      filtered} dg-categories, i.e. they have discrepancies $0$.)  The
    correspondence between weakly filtered functors and weakly
    filtered modules is the same as in the ``unfiltered''
    case~\cite[Section~(1j)]{Se:book-fukaya-categ}. Note that if
    $\mathcal{F}: \mathcal{A} \longrightarrow
    Ch_{\textnormal{f}}^{\textnormal{opp}}$ has discrepancy
    $\leq \bme^{\mathcal{F}}$ then the weakly filtered module
    $\mathcal{M}$ corresponding to it has discrepancy
    $\leq \bme^{\mathcal{M}}$ with
    $\epsilon^{\mathcal{M}}_d = \epsilon^{\mathcal{F}}_{d-1}$ for
    every $d \geq 2$.
  \item In what follows it would be sometimes convenient to fix one
    discrepancy which applies to several weakly filtered modules
    together. In that case we will denote it by $\bmemm$ rather than
    by $\bm{\epsilon}^{\mathcal{M}}$ (the superscript $m$ stands for
    ``modules''). Note that we can always increase the bound on
    discrepancy in the sense that if $\mathcal{M}$ has discrepancy
    $\leq \bm{\epsilon}^{\mathcal{M}}$ than it also has discrepancy
    $\leq \bm{\epsilon}$ for every sequence $\bm{\epsilon}$ with
    $\bme \geq \bme^{\mathcal{M}}$. Therefore, whenever dealing with a
    finite collection of weakly filtered modules we can bound from
    above all their discrepancies by a single sequence.
  \end{enumerate}
\end{rems}

Next we define morphisms between weakly filtered
$\mathcal{A}$-modules. Let $\mathcal{M}_0, \mathcal{M}_1$ be two
weakly filtered $\mathcal{A}$-modules, both with discrepancy
$\leq \bm{\epsilon}^m$. Let
$f: \mathcal{M}_0 \longrightarrow \mathcal{M}_1$ be a pre-module
homomorphism. We write $f=(f_1, \ldots, f_d, \ldots)$ where the
$f_d$-component is a an $R$-linear map
$$f_d: C(X_0, X_1) \otimes \cdots C(X_{d-2}, X_{d-1}) \otimes 
\mathcal{M}_0(X_{d-1}) \longrightarrow \mathcal{M}_1(X_0).$$ Let
$\alpha \in \mathbb{R}$ and
$\bm{\epsilon}^f = (\epsilon^f_1, \ldots, \epsilon^f_d, \ldots)$ be a
vector of non-negative real numbers. In contrast to
$\bm{\epsilon}^{\mathcal{A}}$ and $\bm{\epsilon}^m$ we {\em do allow}
that $\epsilon^f_1 \neq 0$. We say that $f$ shifts action by
$\leq \rho$ and has discrepancy $\leq \bm{\epsilon}^f$ if for every
$d$, the map $f_d$ shifts action by not more than $\rho+\epsilon_d^f$,
namely:
$$f_d \bigl( C^{\leq \alpha_1} (X_0, X_1) \otimes \cdots 
C^{\leq \alpha_{d-1}}(X_{d-2}, X_{d-1}) \otimes \mathcal{M}_0^{\leq
  \alpha_d}(X_{d-1}) \bigr) \subset \mathcal{M}_1^{a_1+\cdots+a_d +
  \rho + \epsilon_d^f}(X_0).$$ We will generally refer to such $f$'s
as weakly filtered pre-module homomorphisms. 

As before, if $\rho \leq \rho'$ and $\bme^f \leq \bme$ then $f$ also
shifts filtration by $\leq \rho'$ and has discrepancy
$\leq \bm{\epsilon}$.

We will now attempt to define a filtration on the totality of
pre-module homomorphisms. Denote by
$\hom(\mathcal{M}_0, \mathcal{M}_1)$ the pre-module homomorphisms
$\mathcal{M}_0 \longrightarrow \mathcal{M}_1$. The filtration will
depend on an additional ``discrepancy'' parameter $\bmeh$ which is a
sequence
$\bm{\epsilon}^h = (\epsilon^h_1, \epsilon^h_2, \ldots, \epsilon^h_d,
\ldots)$ of non-negative real numbers (the superscript $h$ stands for
``homomorphisms''). Again, we {\em do not} assume here that
$\epsilon^h_1$ is $0$. Our filtration is indexed by $\mathbb{R}$ and
is defined as follows. The part of the filtration corresponding to
$\rho \in \mathbb{R}$ is denoted by
$\hom^{\leq \rho; \bm{\epsilon}^h}(\mathcal{M}_0, \mathcal{M}_1)$ and
consists of all pre-module homomorphisms
$f: \mathcal{M}_0 \longrightarrow \mathcal{M}_1$ which shift action by
not more than $\rho$ and have discrepancy $\leq \bmeh$. Clearly this
yields an increasing filtration on
$\hom(\mathcal{M}_0, \mathcal{M}_1)$. Note however, that in general
this filtration might not be exhaustive since not every pre-module
homomorphism must be weakly filtered.

A few words are in order about the structure of the totality of weakly
filtered $\mathcal{A}$-modules. Recall that $\mathcal{A}$-modules form
a dg-category $\textnormal{mod}_{\mathcal{A}}$ with composition maps
$\mu_1^{\textnormal{mod}}$, $\mu_2^{\textnormal{mod}}$
(See~\cite[Section~(1j)]{Se:book-fukaya-categ} for the
definitions). We would like to view the weakly filtered modules (say
of given discrepancy) as a sub-category and define a weakly filtered
structure on it. However, when trying do that one encounters the
following problem. For general choices of
$\bm{\epsilon}^{\mathcal{A}}$, $\bm{\epsilon}^{m}$ and
$\bm{\epsilon}^h$ and two weakly filtered modules $\mathcal{M}_0$,
$\mathcal{M}_1$ with discrepancy $\leq \bm{\epsilon}^m$ the
differential $\mu_1^{\textnormal{mod}}$ does not preserve
$\hom^{\leq \rho; \bm{\epsilon}^h}(\mathcal{M}_0,
\mathcal{M}_1)$. Nevertheless it is possible to correct this problem
by restricting the choice of $\bm{\epsilon}^h$ as follows:

\medskip \newcommand{\lbe}{\mathcal{E}}
\hypertarget{h:asmp-e}{\noindent \textbf{Assumption~$\lbe$.}}  
\label{pp:assump-E}
A sequence
$\bm{\varepsilon} = (\varepsilon_1, \ldots, \varepsilon_d, \ldots)$ is
said to satisfy Assumption~$\lbe$ if for every $d \geq 1$ we have
$\varepsilon_d \geq \max \bigl\{ \epsilon^m_i + \varepsilon_j, \,
\epsilon_i^{\mathcal{A}} + \varepsilon_j \mid i+j=d+1 \bigr\}$.

Sometimes we will need to emphasize the dependence of
Assumption~$\lbe$ on the choices of $\bmea$ and $\bmemm$ in which case
we will refer to it as 
Assumption~\hyperlink{h:asmp-e}{$\lbe(\bmemm, \bmea)$}. Alternatively
we will sometimes write
$\bm{\varepsilon} \in~\hyperlink{h:asmp-e}{\lbe(\bmemm, \bmea)}$.

\medskip

An inspection of definition of $\mu_1^{\textnormal{mod}}$ (see
e.g.~\cite[Section~(1j)]{Se:book-fukaya-categ}) shows that if
$\bm{\epsilon}^h$ satisfies Assumption~\hyperlink{h:asmp-e}{$\lbe$}
then $\hom^{\leq \rho; \bm{\epsilon}^h}(\mathcal{M}_0, \mathcal{M}_1)$
is preserved by $\mu_1^{\textnormal{mod}}$ hence is a chain complex.

The following will be useful later on:
\begin{lem} \label{l:choice-eps-1} Given $\bm{\epsilon}^{\mathcal{A}}$
  and $\bm{\epsilon}^m$ it is always possible to find
  $\bm{\varepsilon}$ that satisfies
  Assumption~\hyperlink{h:asmp-e}{$\lbe$}. Moreover, there exists a
  sequence of real numbers $\{A_d\}_{d \in \mathbb{N}}$ which is
  universal in the sense that it does not depend on $\bmea$ or
  $\bmemm$ and has the following property: for any given sequence
  $\bm{\delta} = (\delta_1, \ldots, \delta_d, \ldots)$ of non-negative
  real numbers there exists an $\bm{\varepsilon}$ that satisfies
  {Assumption~\hyperlink{h:asmp-e}{$\lbe$}} and such that for all $d$:
  \begin{equation} \label{eq:ineq-Ad} \delta_d \leq \varepsilon_d \leq
    A_d \sum_{j=1}^d (\ea_j + \emm_j + \delta_j).
  \end{equation}
\end{lem}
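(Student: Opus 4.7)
The plan is to construct $\bm{\varepsilon}$ by an explicit induction. Observe first that since $\epsilon_1^{\mathcal{A}} = \epsilon_1^m = 0$, the pairs $(i,j) = (1,d)$ in the max defining Assumption~$\lbe$ contribute only the trivial inequality $\varepsilon_d \geq \varepsilon_d$. Every remaining pair $(i,j)$ with $i+j = d+1$ and $i \geq 2$ satisfies $j \leq d-1$, so the constraint on $\varepsilon_d$ refers only to $\varepsilon_1, \ldots, \varepsilon_{d-1}$. This already yields existence: set $\varepsilon_1 := \delta_1$ and, for $d \geq 2$, define
\[
  \varepsilon_d := \max\Bigl\{\delta_d,\; \max_{\substack{i+j=d+1 \\ i \geq 2,\, j \geq 1}} \bigl( \epsilon^m_i + \varepsilon_j,\; \epsilon_i^{\mathcal{A}} + \varepsilon_j \bigr) \Bigr\}.
\]
By construction $\bm{\varepsilon} \in \lbe(\bmemm, \bmea)$ and $\varepsilon_d \geq \delta_d$, giving the lower inequality in~\eqref{eq:ineq-Ad}.

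For the upper bound, put $S_d := \sum_{j=1}^d (\ea_j + \emm_j + \delta_j)$, which is non-decreasing in $d$. I claim that $\varepsilon_d \leq A_d S_d$ where the universal constants are defined recursively by $A_1 := 1$ and $A_d := 3 + 2 \sum_{j=1}^{d-1} A_j$ for $d \geq 2$. This is proved by induction on $d$. The base case is immediate from $\varepsilon_1 = \delta_1 \leq S_1$. For the inductive step, bound the max by a sum of at most $2(d-1)$ non-negative terms:
\[
  \varepsilon_d \leq \delta_d + \sum_{\substack{i+j=d+1 \\ i \geq 2}} \bigl( \emm_i + \ea_i \bigr) + 2 \sum_{\substack{i+j=d+1 \\ i \geq 2}} \varepsilon_j.
\]
The first two sums are dominated by $S_d$ each (since every $\ea_i, \emm_i$ is one of the non-negative summands defining $S_d$), while the inductive hypothesis gives $\varepsilon_j \leq A_j S_j \leq A_j S_d$ for $j \leq d-1$. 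Combining,
\[
  \varepsilon_d \leq S_d + 2 S_d + 2 S_d \sum_{j=1}^{d-1} A_j = A_d S_d,
\]
which is the desired estimate. Crucially, the sequence $\{A_d\}$ depends only on the recursion and on nothing else, so it is universal in the required sense.

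The only point that requires attention is ensuring the recursion defining $\varepsilon_d$ really does not involve $\varepsilon_d$ itself, and this is precisely what the hypotheses $\epsilon_1^{\mathcal{A}} = \epsilon_1^m = 0$ buy us. I do not expect any real obstacle; the lemma is essentially a bookkeeping statement, and the substance is just the observation that the Assumption~$\lbe$ inequalities form a triangular system whose diagonal is trivial.
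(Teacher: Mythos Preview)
Your proof is correct and follows essentially the same inductive approach as the paper's proof, which is little more than a sketch: the paper simply says to set $\varepsilon_1 := \delta_1$, then $\varepsilon_2 := \max\{\epsilon^m_2+\varepsilon_1, \epsilon^{\mathcal{A}}_2 + \varepsilon_1, \delta_2\}$, ``and so on,'' noting only that $\epsilon^{\mathcal{A}}_1=\epsilon^m_1=0$ makes the $(i,j)=(1,d)$ constraint trivial. You have filled in the details the paper omits, in particular giving an explicit recursion $A_d = 3 + 2\sum_{j=1}^{d-1} A_j$ and verifying the upper bound; this is exactly what the paper leaves to the reader.
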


\begin{rem} \label{r:universal-1} The bound from above on
  $\varepsilon_d$ in~\eqref{eq:ineq-Ad} becomes useful if for every
  $d$ all the quantities
  $\epsilon^{\mathcal{A}}_j, \epsilon^m_j, \delta_j$,
  $j=1, \ldots, d$, can simultaneously be made arbitrarily small. This
  will indeed be the case in our applications where the weakly
  filtered $A_{\infty}$-categories and modules appear in families -
  see~\S\ref{sbsb:families-class} for further details.
\end{rem}

\begin{proof}[Proof of Lemma~\ref{l:choice-eps-1}]
  One can easily construct $\varepsilon_d$ and $A_d$ inductively:
  start with $\varepsilon_1 := \delta_1$ then set
  $\varepsilon_2 := \max\{\epsilon^m_2+\varepsilon_1,
  \epsilon^{\mathcal{A}}_2 + \varepsilon_1, \delta_2\}$ and so
  on. (Note that $\epsilon^{\mathcal{A}}_1=\epsilon^m_1=0$ so that the
  inequality in Assumption~\hyperlink{h:asmp-e}{$\lbe$} is obviously
  satisfied for $i=1$, $j=d$.)
\end{proof}

\begin{rems} \label{r:eh}
  \begin{enumerate}
  \item If $\bm{\varepsilon}$ satisfies
    {Assumption~\hyperlink{h:asmp-e}{$\lbe$}} then so does
    $\widetilde{\bm{\varepsilon}}: = \bm{\varepsilon} +
    \textnormal{const}$.
  \item When dealing with $\hom^{\leq \rho; \bmeh}$ we can always
    arrange that $\epsilon^h_1=0$ by applying the following
    procedure. Suppose that $\bm{\epsilon}^h$ satisfies
    Assumption~\hyperlink{h:asmp-e}{$\lbe$}. Put
    $$\widetilde{\rho} := \rho + \epsilon^h_1, \quad 
    \widetilde{\epsilon}^h_d := \epsilon^h_d - \epsilon^h_1.$$ Note
    that $\widetilde{\epsilon}^h_1 = 0$,
    $\widetilde{\epsilon}^h_d \geq 0$ and that
    $\widetilde{\bm{\epsilon}}^h$ still satisfies
    Assumption~\hyperlink{h:asmp-e}{$\lbe$}. It is easy to see that
    $$\hom^{\leq \widetilde{\rho};
      \widetilde{\bm{\epsilon}}^h}(\mathcal{M}_0, \mathcal{M}_1) =
    \hom^{\leq \rho; \bm{\epsilon}^h}(\mathcal{M}_0,
    \mathcal{M}_1).$$ \label{i:rems-eh}
  \end{enumerate}
\end{rems}

We now turn to the $\mu_2^{\tmod}$ operation. Let $\mathcal{M}_0$,
$\mathcal{M}_1$, $\mathcal{M}_2$ be weakly filtered
$\mathcal{A}$-modules with discrepancy $\leq \bmemm$. Let
$f: \mathcal{M}_0 \longrightarrow \mathcal{M}_1$,
$g: \mathcal{M}_1 \longrightarrow \mathcal{M}_2$ be two weakly
filtered pre-module homomorphism with
$f \in \hom^{\leq \rho^f; \bme^f}(\mathcal{M}_0, \mathcal{M}_1)$,
$g \in \hom^{\leq \rho^g; \bme^g}(\mathcal{M}_1, \mathcal{M}_2)$. Set
$\varphi := \mu_2^{\tmod}(f,g): \mathcal{M}_0 \longrightarrow
\mathcal{M}_2$. A simple calculation shows that $\varphi$ is weakly
filtered and that
$\varphi \in \hom^{\leq \rho^f + \rho^g; \bme^f *
  \bme^g}(\mathcal{M}_0, \mathcal{M}_2)$, where the sequence of
discrepancies $\bme^f * \bme^g$ is defined as:
\begin{equation} \label{eq:disc-f*g}
  (\bme^f * \bme^g)_d = \max \{\epsilon_i^f + \epsilon_j^g \mid i+j=d+1\}.
\end{equation}
Moreover, a simple calculation shows that if
$\bme^f, \bme^g \in~\assmpe$ then the same holds for
$\bme^f * \bme^g$.

\subsubsection{Do weakly filtered $\mathcal{A}$-modules form a
  dg-category?} \label{sbsb:wfm-categ} Fix $\bmemm$ and
$\bmeh \in \assmpe$. As explained in the preceding section, for weakly
filtered $\mathcal{A}$-modules
$\mathcal{M}_0, \mathcal{M}_1, \mathcal{M}_2$ with discrepancy
$\leq \bmemm$, the operation $\mu_2^{\tmod}$ satisfies:
$$\mu_2^{\tmod}: \hom^{\leq \rho'; \bmeh}(\mathcal{M}_0, \mathcal{M}_1) \otimes 
\hom^{\leq \rho''; \bmeh}(\mathcal{M}_1, \mathcal{M}_2)
\longrightarrow \hom^{\leq \rho'+\rho''; \bmeh * \bmeh}(\mathcal{M}_0,
\mathcal{M}_2),$$ where $\bmeh*\bmeh$ is defined
by~\eqref{eq:disc-f*g}.  Thus in order for the $\mu_2^{\tmod}$
operation to have values in
$\hom^{\rho'+\rho''; \bmeh}(\mathcal{M}_0, \mathcal{M}_2)$ we need
$\bmeh*\bmeh \leq \bmeh$. It is easy to see that such an inequality
cannot hold unless $\eh_1=0$ (the latter condition being only a
necessary one).

One way to solve this problem is based on the fact that
$$\hom^{\leq \rho'+\rho''; \bmeh * \bmeh}(\mathcal{M}_0, \mathcal{M}_2)
= \hom^{\leq \rho' + \rho'' + \eh_1; \bmeh*\bmeh-\eh_1}(\mathcal{M}_0,
\mathcal{M}_2).$$ Therefore, we only need to arrange that
$\bmeh*\bmeh-\eh_1 \leq \bmeh$. Unwrapping this gives the following
system of inequalities. For all $d\geq 1$:
\begin{equation} \label{eq:eheh-eh1} \eh_d +\eh_1 \geq \eh_i + \eh_j,
  \; \; \forall \; i+j=d+1.
\end{equation}

It is easy to see (by induction on $d$) that sequences $\bmeh$
satisfying both Assumption~$\assmpen$ and~\eqref{eq:eheh-eh1} do
exist. In fact, Lemma~\ref{l:choice-eps-1} continues to hold (for
$\bm{\varepsilon} = \bmeh$) with the stronger assertion that $\bmeh$
satisfies both $\lbe$ as well as~\eqref{eq:eheh-eh1}.

Now let $\bmeh$ be a sequences that satisfies both $\lbe$
and~\eqref{eq:eheh-eh1}. We define a dg-category of weakly filtered
modules $\tmod^{\textnormal{wf}}_{\mathcal{A}}(\bmea, \bmemm, \bmeh)$
as follows. (The superscript wf stands for ``weakly filtered''). The
objects of this category are the weakly filtered $\mathcal{A}$-modules
with discrepancy $\leq \bmemm$. The morphisms
$\hom^{\bmeh}(\mathcal{M}_0, \mathcal{M}_1)$ between two such modules
$\mathcal{M}_0, \mathcal{M}_1$ are defined by:
$$\hom^{\bmeh}(\mathcal{M}_0, \mathcal{M}_1) := 
\bigcup_{\rho \in \mathbb{R}} \hom^{\leq \rho; \bmeh}(\mathcal{M}_0,
\mathcal{M}_1).$$ We endow
$\tmod^{\textnormal{wf}}_{\mathcal{A}}(\bmea, \bmemm, \bmeh)$ with the
operations $\mu_1^{\tmod}$ and $\mu_2^{\tmod}$ coming from the
dg-category $\rmod_{\mathcal{A}}$. Finally, we consider the obvious
filtration on the $\hom^{\bmeh}$'s given by $\hom^{\leq \rho; \bmeh}$,
$\rho \in \mathbb{R}$.

The following is easy to check.
\begin{prop} \label{p:wf-mod-categ} With the structures defined above
  $\tmod^{\textnormal{wf}}_{\mathcal{A}}(\bmea, \bmemm, \bmeh)$ is a
  weakly filtered dg-category with discrepancy
  $\leq (0, \eh_1, 0, \ldots, 0, \ldots)$. In particular, if $\eh_1=0$
  this category is a filtered dg-category.
\end{prop}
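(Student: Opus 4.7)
The plan is to verify the three ingredients required to package $\tmod^{\textnormal{wf}}_{\mathcal{A}}(\bmea, \bmemm, \bmeh)$ as a weakly filtered dg-category: (a) that $\mu_1^{\tmod}$ preserves each filtered piece $\hom^{\leq \rho; \bmeh}(\mathcal{M}_0, \mathcal{M}_1)$ strictly (discrepancy $0$ at level $d=1$); (b) that $\mu_2^{\tmod}$ respects the filtration up to a shift of exactly $\eh_1$ (discrepancy $\eh_1$ at level $d=2$); and (c) that all dg-relations are inherited from $\rmod_{\mathcal{A}}$, so that in particular $\mu_d^{\tmod} = 0$ for $d \geq 3$, giving trivially $0$ discrepancy in degrees $\geq 3$.

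For (a), I would take $f \in \hom^{\leq \rho; \bmeh}(\mathcal{M}_0, \mathcal{M}_1)$ and expand $\mu_1^{\tmod}(f)_d$ as the alternating sum of composites of $f_i$'s with $\mu_j^{\mathcal{M}_1}$, $\mu_j^{\mathcal{M}_0}$, and $\mu_j^{\mathcal{A}}$ (as in~\cite[Section~(1j)]{Se:book-fukaya-categ}). A term of the form $\mu^{\mathcal{M}_1}_i \circ (\id \otimes \cdots \otimes f_j)$ acting on a tensor of total action $\alpha_1+\cdots+\alpha_d$ lands in filtration $\leq \alpha_1+\cdots+\alpha_d + \rho + \emm_i + \eh_j$, and similarly the other two kinds of terms produce shifts $\rho + \emm_i + \eh_j$ or $\rho + \ea_i + \eh_j$ with $i+j=d+1$. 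By Assumption~\hyperlink{h:asmp-e}{$\lbe$} each of these is bounded by $\rho + \eh_d$, so $\mu_1^{\tmod}(f) \in \hom^{\leq \rho; \bmeh}(\mathcal{M}_0, \mathcal{M}_1)$ as required.

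For (b), I would use the computation already recorded in~\eqref{eq:disc-f*g}: for $f \in \hom^{\leq \rho'; \bmeh}$ and $g \in \hom^{\leq \rho''; \bmeh}$, the composition $\mu_2^{\tmod}(f, g)$ lies in $\hom^{\leq \rho' + \rho''; \bmeh * \bmeh}(\mathcal{M}_0, \mathcal{M}_2)$. Rewriting this space as $\hom^{\leq \rho' + \rho'' + \eh_1; \bmeh*\bmeh - \eh_1}(\mathcal{M}_0, \mathcal{M}_2)$ and applying the hypothesis~\eqref{eq:eheh-eh1} (which gives $\bmeh*\bmeh - \eh_1 \leq \bmeh$ componentwise), I conclude that $\mu_2^{\tmod}(f,g) \in \hom^{\leq \rho' + \rho'' + \eh_1; \bmeh}(\mathcal{M}_0, \mathcal{M}_2)$. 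This is precisely the statement that $\mu_2^{\tmod}$ has discrepancy $\leq \eh_1$ in the sense of~\S\ref{sb:wf-ai-categ}.

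For (c), the Leibniz rule $\mu_1^{\tmod}\mu_2^{\tmod} + \mu_2^{\tmod}(\mu_1^{\tmod} \otimes \id + \id \otimes \mu_1^{\tmod}) = 0$ and associativity of $\mu_2^{\tmod}$ hold in $\rmod_{\mathcal{A}}$, hence a fortiori on the subclass of weakly filtered modules. The higher $\mu_d^{\tmod}$ vanish on the nose, so their defining inclusions are trivially satisfied. Collecting (a)--(c), the discrepancy sequence is $(0, \eh_1, 0, 0, \ldots)$, proving the claim; if moreover $\eh_1 = 0$, every $\mu_d^{\tmod}$ strictly preserves the filtration and the structure is a genuinely filtered dg-category. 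I do not expect a genuine obstacle: the only subtle point is the bookkeeping around $\bmeh*\bmeh - \eh_1$, which has already been arranged via~\eqref{eq:eheh-eh1}.
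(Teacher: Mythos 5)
Your proposal is correct and follows the same route the paper implicitly sketches: the $\mu_1^{\tmod}$-stability under Assumption~$\lbe$ and the $\mu_2^{\tmod}$-bound via $\bmeh*\bmeh-\eh_1\leq\bmeh$ are both established in the paragraphs of~\S\ref{sbsb:wfm-categ} immediately preceding the proposition, and your (c) just records that the dg-relations and vanishing of $\mu_{\geq 3}^{\tmod}$ are inherited from $\rmod_{\mathcal{A}}$. The paper simply declares the result ``easy to check'' with that groundwork in place, so your write-up is essentially an unpacking of the intended argument rather than a different one.
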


\begin{rems}
  \begin{enumerate}
  \item One can attempt to enlarge this category by taking unions over
    different choices of $\bmeh$ (say with fixed $\eh_1$) and possibly
    get a category that does not depend on $\bmeh$, but we will not do
    that here.
  \item The category
    $\tmod^{\textnormal{wf}}_{\mathcal{A}}(\bmea, \bmemm, \bmeh)$ is
    in general not triangulated. The reason is that while cones of
    morphisms in that category are weakly filtered they might have
    higher discrepancy (see~\S\ref{sb:wf-mc} below). One can attempt
    to rectify this issue by enlarging this category to contain weakly
    filtered modules of arbitrary discrepancy and labeling each
    pre-module morphisms by a pair of quantities indicating the bounds
    on their action shift and their discrepancy. We will not pursue
    this direction in this paper since it will not be needed for our
    applications.
  \item With the above definitions one can see that the Yoneda
    embedding restricts to a weakly filtered functor from
    $\mathcal{A}$ to the category of weakly filtered
    $\mathcal{A}$-modules with discrepancy $\leq \bmea$ and where
    $\bmeh$ is taken to be any sequences
    in~$\hyperlink{h:asmp-e}{\lbe(\bmea,\bmea)}$, with the additional
    condition that
    $\eh_d \geq \epsilon^{\mathcal{A}}_{d+1} \; \forall d$.
  \end{enumerate}
\end{rems}

At the same time, for the applications intended in this paper, we will
not really need the weakly filtered modules to form a
dg-category. Therefore we will not insist that $\bmeh$ satisfies
condition~\eqref{eq:eheh-eh1}. In contrast, Assumption~$\assmpen$ will
continue to play an important role since it assures that the
$\hom^{\leq \rho; \bmeh}$'s are preserved by $\mu_1^{\tmod}$. Thus we
will mostly continue to assume~$\assmpen$.

\subsubsection{Action-shifts} \label{sbsb:action-shifts} Let
$\mathcal{M}$ be a weakly filtered module over a weakly filtered
$A_{\infty}$-category $\mathcal{A}$. Let $\nu_0 \in \mathbb{R}$.
Define a new weakly filtered $\mathcal{A}$-module
$S^{\nu_0}\mathcal{M}$ to be the same module as $\mathcal{M}$ only
that its filtration is shifted by $\nu_0$, namely:
$$(S^{\nu_0}\mathcal{M})^{\leq \alpha}(N) := 
\mathcal{M}^{\leq \alpha + \nu_0}(N) \; \; \forall \; N \in
\textnormal{Ob}(\mathcal{A}), \; \alpha \in \mathbb{R}.$$ Clearly
$S^{\nu_0}\mathcal{M}$ has the same discrepancy as $\mathcal{M}$. We
call $S^{\nu_0}\mathcal{M}$ the ``action-shift of $\mathcal{M}$ by
$\nu_0$''.

In what follows we will use the same notation $S^{\nu_0}$ also for
action shifts of other filtered objects such as filtered chain
complexes or more generally filtered $R$-modules.

\subsubsection{Homologically unital
  $\mathcal{A}$-modules} \label{sbsb:hu-mod} We have already discussed
h-unital $A_{\infty}$ categories in the weakly filtered sense on
page~\pageref{pg:h-unital-categ}. In what follows we will sometimes
need an analogous, yet somewhat stronger, notion for modules.

\medskip \newcommand{\lbh}{U} \hypertarget{h:asmp-H}{\noindent
  \textbf{Assumptions~$\lbh_w$ and~$\lbh_s$.}}  Let $\mathcal{A}$ be a
weakly filtered $A_{\infty}$-category with discrepancy $\leq
\bmea$. Assume that $\mathcal{A}$ is h-unital in the weakly filtered
sense as defined in~\S\ref{sb:wf-ai-categ}, i.e. we have
$u^{\mathcal{A}} \geq 0$ and choices of cycles
$e_X \in C^{\leq u^{\mathcal{A}}}(X,X)$ for every
$X \in \textnormal{Ob}(\mathcal{A})$ representing the units in
homology.

Let $\mathcal{M}$ be a weakly filtered $\mathcal{A}$-module with
discrepancy $\leq \bmemm$, and let
$u^{\mathcal{A}} + \epsilon_2^{\mathcal{M}} \leq \kappa \in
\mathbb{R}$. We say that $\mathcal{M}$ satisfies
{Assumption~$\lbh_w(\kappa)$} if for every
$X \in \textnormal{Ob}(\mathcal{A})$ and every $\alpha \in \mathbb{R}$
the map
\begin{equation} \label{eq:mu-2-e}
  \mathcal{M}^{\leq \alpha}(X) \longrightarrow 
  \mathcal{M}^{\leq \alpha + \kappa}(X), \quad b \longmapsto
  \mu_2^{\mathcal{M}}(e_X, b)
\end{equation}
induces in homology the same map as the one induced by the inclusion
$\mathcal{M}^{\leq \alpha}(X) \longrightarrow \mathcal{M}^{\leq \alpha
  + \kappa}(X)$. Note that in particular, $\mathcal{M}$ is an h-unital
module.

Sometimes the module $\mathcal{M}$ will be a Yoneda module
$\mathscr{Y}$ associated to an object
$Y \in \textnormal{Ob}(\mathcal{A})$. In that case we will sometimes
write $Y \in \lbh_w(\kappa)$ instead of
$\mathscr{Y} \in \lbh_w(\kappa)$. Note that in this case the map
in~\eqref{eq:mu-2-e} becomes
\begin{equation*} \label{eq:mu-2-e-2} C^{\leq \alpha}(Y, X)
  \longrightarrow C^{\leq \alpha + \kappa}(Y, X), \quad b \longmapsto
  \mu_2^{\mathcal{A}}(e_X, b).
\end{equation*}

There is also a homotopical version of $\lbh_w$ which is
stronger. Namely, if we replace Assumption~$\lbh_w$ by the stronger
one requiring that~\eqref{eq:mu-2-e} is chain homotopic to the
inclusion, then we say that $\mathcal{M}$ satisfies
Assumption~$\lbh_s(\kappa)$. (The subscripts $s$ and $w$ stand for
``strong'' and ``weak'' -- $\lbh_w$ is weaker than $\lbh_s$.) For
modules $\mathcal{M}$ satisfying~$\lbh_w(\kappa)$ we will sometimes
write $\mathcal{M} \in \lbh_w(\mathcal{M})$ and similarly for $\lbh_s$.

\begin{rem} \label{r:us-uw} If the ground ring $R$ is a field then
  $U_s(\kappa) = U_w(\kappa)$. This is so because for chain complexes
  $C_{\bullet}, D_{\bullet}$ over a field, two chain maps
  $C_{\bullet} \longrightarrow D_{\bullet}$ induce the same map in
  homology iff they are chain homotopic. Or in other words, the
  canonical map
  $$H_*(\hom(C_{\bullet}, D_{\bullet})) \longrightarrow \hom(H_*(C),
  H_*(D))$$ is an isomorphism.
\end{rem}

\subsubsection{Pulling back weakly filtered
  modules} \label{sbsb:pull-back-mod} Let $\mathcal{A}, \mathcal{B}$
be two weakly filtered $A_{\infty}$-categories and
$\mathcal{F}: \mathcal{A} \longrightarrow \mathcal{B}$ a weakly
filtered $A_{\infty}$-functor with discrepancy
$\leq \bme^{\mathcal{F}}$. Let $\mathcal{M}$ be a weakly filtered
$\mathcal{B}$-module with discrepancy $\leq \bme^{\mathcal{M}}$.
Consider the $\mathcal{A}$-module $\mathcal{F}^*\mathcal{M}$ which is
obtained by pulling back $\mathcal{M}$ via $\mathcal{F}$.
We filter $\mathcal{F}^*\mathcal{M}$ by setting
$$(\mathcal{F}^*\mathcal{M})^{\leq \alpha}(N) = 
\mathcal{M}^{\leq \alpha}(\mathcal{F} N).$$ The following can be
easily proved.
\begin{lem} \label{l:pull-back-M} The module
  $\mathcal{F}^*\mathcal{M}$ is weakly filtered with discrepancy
  $\leq \bme^{\mathcal{F}^*\mathcal{M}}$, where
  $$\epsilon_d^{\mathcal{F}^*\mathcal{M}} = 
  \max \bigl\{ \epsilon_{s_1}^{\mathcal{F}} + \cdots +
  \epsilon_{s_k}^{\mathcal{F}} + \epsilon_{k+1}^{\mathcal{M}} \bigm| 1
  \leq k \leq d-1, \;\; s_1 + \cdots + s_k = d-1 \bigr\}, \; \;
  \forall \; d \geq 2.$$ In particular, if the higher order terms of
  $\mathcal{F}$ vanish, i.e. $\mathcal{F}_s = 0$ for all $s \geq 2$,
  then
  $$\epsilon_d^{\mathcal{F}^*\mathcal{M}} = (d-1)
  \epsilon_1^{\mathcal{F}} + \epsilon_d^{\mathcal{M}}, \; \; \forall
  \; d.$$
\end{lem}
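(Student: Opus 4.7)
The plan is a direct bookkeeping argument using the explicit formula for the pulled-back module. Recall that for $d\geq 2$ the composition maps of $\mathcal{F}^*\mathcal{M}$ are given by
$$\mu_d^{\mathcal{F}^*\mathcal{M}}(a_1, \ldots, a_{d-1}, m) = \sum_{k,\,(s_1,\ldots,s_k)} \mu_{k+1}^{\mathcal{M}}\bigl(\mathcal{F}_{s_1}(a_1,\ldots,a_{s_1}),\, \ldots,\, \mathcal{F}_{s_k}(a_{d-s_k},\ldots,a_{d-1}),\, m\bigr),$$
where the sum is over all $1 \leq k \leq d-1$ and ordered partitions $s_1+\cdots+s_k = d-1$ with $s_j\geq 1$ (and for $d=1$ the formula reduces to $\mu_1^{\mathcal{F}^*\mathcal{M}} = \mu_1^{\mathcal{M}}$, so $\epsilon_1^{\mathcal{F}^*\mathcal{M}}=0$ automatically).

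Given inputs $a_i \in C_{\mathcal{A}}^{\leq \alpha_i}(N_{i-1},N_i)$ and $m \in (\mathcal{F}^*\mathcal{M})^{\leq \alpha_d}(N_{d-1}) = \mathcal{M}^{\leq \alpha_d}(\mathcal{F}N_{d-1})$, I would estimate a single summand by applying the weakly filtered structures successively. By the defining inequality~\eqref{eq:wf-func} for weakly filtered functors, each $\mathcal{F}_{s_j}$ output lies in $C_{\mathcal{B}}^{\leq (\alpha_{j'}+\cdots+\alpha_{j''}) + \epsilon_{s_j}^{\mathcal{F}}}$ for the appropriate consecutive block of indices. Feeding these $k$ outputs together with $m$ into $\mu_{k+1}^{\mathcal{M}}$ and using the weakly filtered structure of $\mathcal{M}$ introduces an additional shift of $\epsilon_{k+1}^{\mathcal{M}}$. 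Summing the contributions, the summand lies in
$$\mathcal{M}^{\leq \alpha_1+\cdots+\alpha_d \,+\, \epsilon_{s_1}^{\mathcal{F}}+\cdots+\epsilon_{s_k}^{\mathcal{F}} \,+\, \epsilon_{k+1}^{\mathcal{M}}}(\mathcal{F}N_0).$$
Taking the maximum over all admissible partitions $(k,s_1,\ldots,s_k)$ yields precisely the claimed bound on $\epsilon_d^{\mathcal{F}^*\mathcal{M}}$.

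For the second assertion, if $\mathcal{F}_s = 0$ for all $s\geq 2$, then only partitions with $s_1 = \cdots = s_k = 1$ survive in the sum, which forces $k = d-1$. The corresponding discrepancy contribution is $(d-1)\epsilon_1^{\mathcal{F}} + \epsilon_d^{\mathcal{M}}$, as asserted.

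There is no serious obstacle here; the content is purely notational bookkeeping of partition indices, and the estimate follows term by term from the definitions of weakly filtered functor (Section~\ref{sb:functors}) and weakly filtered module (Section~\ref{sb:mod}). The only mild care needed is to track which block of the $a_i$'s each $\mathcal{F}_{s_j}$ consumes so that the total action shift $\alpha_1+\cdots+\alpha_d$ appears cleanly, with the discrepancies $\epsilon_{s_j}^{\mathcal{F}}$ and $\epsilon_{k+1}^{\mathcal{M}}$ accumulating additively.
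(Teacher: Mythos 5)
Your proof is correct and is precisely the direct bookkeeping argument the paper has in mind; the paper actually omits the proof of this lemma, stating only that it ``can be easily proved,'' so there is nothing to compare against. Your estimate of each summand via the weakly filtered bounds on the $\mathcal{F}_{s_j}$'s and then on $\mu_{k+1}^{\mathcal{M}}$, followed by taking the maximum over partitions, is exactly the intended argument, and your handling of the vanishing-higher-order-terms case (forcing $s_1=\cdots=s_k=1$, hence $k=d-1$) is correct as well.
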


Let $\mathcal{M}_0, \mathcal{M}_1$ be two weakly filtered
$\mathcal{B}$-modules and
$f: \mathcal{M}_0 \longrightarrow \mathcal{M}_1$ a weakly filtered
module homomorphism that shifts action by $\leq \rho$ and has
discrepancy $\leq \bme^f$. Pulling back $\mathcal{M}_0, \mathcal{M}_1$
and $f$ by $\mathcal{F}$ we obtain a homomorphism of
$\mathcal{A}$-modules
$\mathcal{F}^*f: \mathcal{F}^*\mathcal{M}_0 \longrightarrow
\mathcal{F}^* \mathcal{M}_1$. The following can be easily verified.
\begin{lem} \label{l:pull-back-f} The module homomorphism
  $\mathcal{F}^*f$ is weakly filtered with action shift $\leq \rho$
  and discrepancy $\leq \bme^{\mathcal{F}^*f}$, where
  $\epsilon_1^{\mathcal{F}^*f} = \epsilon_1^f$ and
  $$\epsilon_d^{\mathcal{F}^*f} = 
  \max \bigl\{ \epsilon_{s_1}^{\mathcal{F}} + \cdots +
  \epsilon_{s_k}^{\mathcal{F}} + \epsilon_{k+1}^{f} \bigm| 1 \leq k
  \leq d-1, \;\; s_1 + \cdots + s_k = d-1 \bigr\}, \; \; \forall \; d
  \geq 2.$$ In particular, if the higher order terms of $\mathcal{F}$
  vanish, i.e. $\mathcal{F}_s = 0$ for all $s \geq 2$, then
  $$\epsilon_d^{\mathcal{F}^*f} = (d-1)
  \epsilon_1^{\mathcal{F}} + \epsilon_d^{f}, \; \; \forall \; d.$$
\end{lem}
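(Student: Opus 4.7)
The plan is to compute the discrepancy of $\mathcal{F}^*f$ directly from its explicit formula, in complete parallel with the proof of Lemma~\ref{l:pull-back-M}. Recall that the pull-back of a pre-module homomorphism $f: \mathcal{M}_0 \to \mathcal{M}_1$ under an $A_\infty$-functor $\mathcal{F}$ is defined component-wise by
\[
(\mathcal{F}^*f)_d(a_1, \ldots, a_{d-1}, m) \;=\; \sum f_{k+1}\bigl(\mathcal{F}_{s_1}(a_1,\ldots,a_{s_1}),\; \ldots,\; \mathcal{F}_{s_k}(\ldots, a_{d-1}),\; m\bigr),
\]
where the sum ranges over all decompositions $s_1 + \cdots + s_k = d-1$ with $s_j \geq 1$ and $k \geq 0$. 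The case $k=0$ occurs only when $d=1$, and contributes the single term $f_1(m)$.

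First I would treat the case $d = 1$: the only summand is $f_1(m)$, so $(\mathcal{F}^*f)_1$ shifts action by $\leq \rho + \epsilon_1^f$, which gives $\epsilon_1^{\mathcal{F}^*f} = \epsilon_1^f$ as claimed. Next, for $d \geq 2$ and fixed inputs $a_j \in C_{\mathcal{A}}^{\leq \alpha_j}(X_{j-1}, X_j)$, $m \in \mathcal{M}_0^{\leq \alpha_d}(X_{d-1})$, I would apply the weakly filtered functor bound~\eqref{eq:wf-func} to each $\mathcal{F}_{s_j}$, concluding that
\[
\mathcal{F}_{s_j}(a_{\sigma_{j-1}+1}, \ldots, a_{\sigma_j}) \;\in\; C_{\mathcal{B}}^{\leq \alpha_{\sigma_{j-1}+1} + \cdots + \alpha_{\sigma_j} + \epsilon_{s_j}^{\mathcal{F}}}(\mathcal{F}X_{\sigma_{j-1}}, \mathcal{F}X_{\sigma_j}),
\]
where $\sigma_j = s_1 + \cdots + s_j$. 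Then the weakly filtered hypothesis on $f$, applied with its $(k+1)$-th discrepancy $\epsilon_{k+1}^f$ and action shift $\rho$, yields that each summand lies in
\[
\mathcal{M}_1^{\leq \alpha_1 + \cdots + \alpha_d + (\epsilon_{s_1}^{\mathcal{F}} + \cdots + \epsilon_{s_k}^{\mathcal{F}}) + \rho + \epsilon_{k+1}^f}(\mathcal{F}X_0),
\]
which is precisely what we want, after taking the maximum over all decompositions.

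Finally, taking the maximum over all decompositions $(s_1, \ldots, s_k)$ with $s_1 + \cdots + s_k = d-1$ and $1 \leq k \leq d-1$ reproduces the stated formula for $\epsilon_d^{\mathcal{F}^*f}$. For the ``in particular'' case where $\mathcal{F}_s = 0$ for $s \geq 2$, the only surviving partition has $s_1 = \cdots = s_k = 1$, forcing $k = d-1$, and so the max collapses to $(d-1)\epsilon_1^{\mathcal{F}} + \epsilon_d^f$.

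There is no real obstacle here — the content of the lemma is purely bookkeeping once the pull-back formula is spelled out. The only mild pitfall to guard against is the correct treatment of the boundary case $d=1$, where no $\mathcal{F}_{s_j}$'s appear and the action shift comes entirely from $f_1$; this is why $\epsilon_1^{\mathcal{F}^*f}$ is simply $\epsilon_1^f$ rather than being given by the max formula (which would be an empty max). I would also remark that the verification that $\mathcal{F}^*f$ is indeed a pre-module homomorphism is a standard $A_\infty$ calculation independent of filtrations, so it need not be repeated.
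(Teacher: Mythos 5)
Your proof is correct. The paper gives no explicit proof of this lemma (it is stated as one that ``can be easily verified''), and your argument — writing out the component formula for $\mathcal{F}^*f$, applying the functor bound~\eqref{eq:wf-func} to each $\mathcal{F}_{s_j}$-factor and the weakly filtered bound on $f_{k+1}$, and taking the max over partitions of $d-1$ — is exactly the intended bookkeeping, in direct parallel with Lemma~\ref{l:pull-back-M}.
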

  
\subsection{Weakly filtered mapping cones} \label{sb:wf-mc} Let
$\mathcal{M}_0$, $\mathcal{M}_1$ be two weakly filtered
$\mathcal{A}$-modules with discrepancies $\leq \bme^{\mathcal{M}_0}$
and $\leq \bme^{\mathcal{M}_1}$ respectively.  Let
$f : \mathcal{M}_0 \longrightarrow \mathcal{M}_1$ be a {\em module
  homomorphism}, i.e. $f$ is a pre-module homomorphisms which is a
cycle $\mu_1^{\textnormal{mod}}(f)=0$. Assume that $f$ shifts action
by $\leq \rho$ and has discrepancy $\leq \bme^{f}$, or in other words
$f \in \hom^{\leq \rho; \bme^f}(\mathcal{M}_0, \mathcal{M}_1)$. We
{\em generally do not} assume that $\bme^f$ satisfies
Assumption~$\hyperlink{h:asmp-e}{\lbe(\bmemm, \bmea)}$ unless
explicitly specified.

Consider the mapping cone $\mathcal{C} := \mathcal{C}one(f)$ viewed as
an $\mathcal{A}$-module and endowed with its standard
$A_{\infty}$-composition maps $\mu_d^{\mathcal{C}}$. We endow
$\mathcal{C}$ with a weakly filtered structure as follows.  For
$X \in \textnormal{Ob}(\mathcal{A})$ and $a \in \mathbb{R}$, put
\begin{equation} \label{eq:fcone} \mathcal{C}^{\leq \alpha}(X) :=
  \mathcal{M}_0^{\leq \alpha - \rho - \epsilon^f_1}(X) \oplus
  \mathcal{M}_1^{\leq \alpha}(X).
\end{equation}
Define \label{pg:disc-cone}
$\bme^{\mathcal{C}} := \max \{\bme^{\mathcal{M}_0},
\bme^{\mathcal{M}_1}, \bme^f-\epsilon^f_1\}$. (See
page~\pageref{pg:conv-seq} for the precise meaning of this notation.)
Then $\mathcal{C}$ is weakly filtered with discrepancy
$\leq \bme^{\mathcal{C}}$. This follows from~\eqref{eq:fcone} and the
fact that
$$\mu^{\mathcal{C}}_d \bigl( a_1, \ldots, a_{d-1}, (b_0, b_1) \bigr) = 
\bigl( \mu_d^{\mathcal{M}_0}(a_1, \ldots, a_{d-1}, b_0), f_d(a_1,
\ldots, a_{d-1}, b_0) + \mu^{\mathcal{M}_1}_d(a_1, \ldots, a_{d-1},
b_1) \bigr).$$

\begin{rem} \label{r:epsN}
  If we assume in addition that
  $\bme^{\mathcal{M}_0}, \bme^{\mathcal{M}_1} \leq \bmemm$ and that
  $\bme^f \in \assmpe$ then we have
  $\bme^f - \epsilon^f_1 \geq \bmemm$, hence
  $\bme^{\mathcal{C}} = \bme^f - \epsilon^f_1$.
\end{rem}

It is important to note that the filtration we have defined on
$\mathcal{C}one(f)$ in~\eqref{eq:fcone} strictly depends on the
choices of $\rho$ and $\epsilon^f_1$. Therefore, whenever these
dependencies are relevant we will denote the weakly filtered cone of
$f$ by
\begin{equation} \label{eq:notation-cone} \mathcal{C}one(f; \rho,
  \bme^f) \; \; \textnormal{or by } \; \tcn ( \mathcal{M}_0
  \xrightarrow{ \; (f; \rho, \bme^f) \;} \mathcal{M}_1).
\end{equation}

\begin{rem} \label{r:cone-filtrations} 
  \begin{enumerate}
  \item The filtration~\eqref{eq:fcone} might look ad hoc since it can
    be shifted in an arbitrary way. However, we opted for the one
    in~\eqref{eq:fcone} so that the inclusion
    $\mathcal{M}_1 \longrightarrow \mathcal{C}$ would be a filtered
    map (rather than just weakly filtered).
  \item If $\rho' \geq \rho$ and $\bme' \geq \bme^f$ then clearly
    $f \in \hom^{\leq \rho'; \bme'}(\mathcal{M}_0, \mathcal{M}_1)$ and
    we can also consider
    $\tcn (\mathcal{M}_0 \xrightarrow{\; (f; \rho', \bme') \;}
    \mathcal{M}_1)$ which is the same $\mathcal{A}$-module but with a
    different weak filtration structure. It is easy to see that the
    identity homomorphism $\tcn(f) \longrightarrow \tcn(f)$ becomes a
    weakly filtered module homomorphism
    $\tcn(f; \rho, \bme^f) \longrightarrow \tcn(f; \rho', \bme')$ that
    shifts action by $\leq \rho'-\rho$ and has discrepancy
    $\leq (\epsilon'_1 - \epsilon^f_1, 0, \ldots, 0, \ldots)$. In the
    other direction the identity map becomes a weakly filtered module
    homomorphism
    $\tcn(f; \rho', \bme') \longrightarrow \tcn(f; \rho, \bme^f)$ that
    shifts action by $\leq \rho-\rho'$ and has discrepancy $0$ (so it
    is in fact a filtered homomorphism).
  \end{enumerate}
\end{rem}

We will now derive several elementary properties of weakly filtered
mapping cones that will be useful later on. We begin with the effect
of action-shifts (see~\S\ref{sbsb:action-shifts}) on mapping cones.
The following follows immediately from the definitions.
\begin{lem} \label{l:action-shift-cone} Let
  $f: \mathcal{M}_0 \longrightarrow \mathcal{M}_1$ be a weakly
  filtered module homomorphism between two weakly filtered
  $\mathcal{A}$-modules. Assume that $f$ shifts action by $\leq \rho$
  and has discrepancy $\leq \bme^f$. Let $\nu_0 \in \mathbb{R}$. Then
  we have the following equality of weakly filtered
  $\mathcal{A}$-modules:
  \begin{equation*}
    \begin{aligned}
      & S^{\nu_0} \bigl(\tcn(\mathcal{M}_0 \xrightarrow{\; (f; \rho,
        \bme^f) \;} \mathcal{M}_1) \bigr) =
      \tcn(S^{\nu_0}\mathcal{M}_0 \xrightarrow{\; (f; \rho, \bme^f)
        \;} S^{\nu_0}\mathcal{M}_1) =
      \\
      & \tcn(\mathcal{M}_0 \xrightarrow{\; (f; \rho, \bme^f-\nu_0) \;}
      S^{\nu_0}\mathcal{M}_1) = \tcn(\mathcal{M}_0 \xrightarrow{\; (f;
        \rho-\nu_0, \bme^f) \;} S^{\nu_0}\mathcal{M}_1).
    \end{aligned}
  \end{equation*}
\end{lem}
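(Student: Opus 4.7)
The plan is to observe that all four objects in the chain of equalities share the same underlying $\mathcal{A}$-module and the same $A_\infty$-operations $\mu_d^{\mathcal{C}}$, so that only the filtrations need to be compared. Indeed, the action-shift operator $S^{\nu_0}$ of \S\ref{sbsb:action-shifts} and the cone construction of~\eqref{eq:fcone} both leave the underlying $R$-modules and all higher operations intact; only the \emph{indexing} of the filtration is affected by the numerical parameters $(\rho, \bme^f, \nu_0)$. It therefore suffices to evaluate, for an arbitrary $X \in \textnormal{Ob}(\mathcal{A})$ and $\alpha \in \mathbb{R}$, the sub-$R$-module at filtration level $\leq \alpha$ of each of the four expressions and check that they all coincide.

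Before computing the second and subsequent terms I would record the minor but necessary point that $f$ admits each of the relevant re-interpretations: viewed as a morphism $S^{\nu_0}\mathcal{M}_0 \to S^{\nu_0}\mathcal{M}_1$ it still shifts action by $\leq \rho$ with discrepancy $\leq \bme^f$ (the $\nu_0$-shift cancels between source and target); viewed as a morphism $\mathcal{M}_0 \to S^{\nu_0}\mathcal{M}_1$ it shifts action by $\leq \rho$ with discrepancy $\leq \bme^f-\nu_0$, or equivalently by $\leq \rho-\nu_0$ with discrepancy $\leq \bme^f$. These are immediate from the definitions, since shifting the target's filtration index down by $\nu_0$ is equivalent to enlarging the permissible action shift (or the first slot of the discrepancy) by $\nu_0$; so all three cone expressions on the right-hand side of the asserted equality are well-posed as weakly filtered cones.

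The final step is to unwind~\eqref{eq:fcone} together with the definition of $S^{\nu_0}$ in each of the four cases. For the leftmost one,
\begin{equation*}
[S^{\nu_0}\tcn(\mathcal{M}_0 \xrightarrow{(f;\rho,\bme^f)} \mathcal{M}_1)]^{\leq \alpha}(X) = \mathcal{M}_0^{\leq \alpha + \nu_0 - \rho - \epsilon^f_1}(X) \oplus \mathcal{M}_1^{\leq \alpha + \nu_0}(X).
\end{equation*}
Each of the remaining three unwinds to the same direct sum: for instance the second becomes $(S^{\nu_0}\mathcal{M}_0)^{\leq \alpha - \rho - \epsilon^f_1}(X) \oplus (S^{\nu_0}\mathcal{M}_1)^{\leq \alpha}(X)$, the third becomes $\mathcal{M}_0^{\leq \alpha - \rho - (\epsilon^f_1 - \nu_0)}(X) \oplus (S^{\nu_0}\mathcal{M}_1)^{\leq \alpha}(X)$, and the fourth becomes $\mathcal{M}_0^{\leq \alpha - (\rho - \nu_0) - \epsilon^f_1}(X) \oplus (S^{\nu_0}\mathcal{M}_1)^{\leq \alpha}(X)$; all three equal the displayed expression by the definition of $S^{\nu_0}$. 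The whole argument is a direct unwinding of definitions, and there is no substantive obstacle — the only conceptual subtlety is the bookkeeping of which of $\rho$, $\epsilon^f_1$, $\nu_0$ ``absorbs'' the shift in each re-interpretation, and this is resolved by the observation above.
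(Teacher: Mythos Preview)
Your proof is correct and is precisely the direct unwinding of the definitions that the paper has in mind; the paper itself does not give a proof, stating only that the lemma ``follows immediately from the definitions.'' Your filtration computations for all four expressions are accurate, and the verification that $f$ admits each of the required reinterpretations (as a map between shifted modules, or into a shifted target with adjusted $\rho$ or $\epsilon^f_1$) is the only point that needs checking and you handle it correctly.
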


Next, we analyze (a special case of) cones over composition of module
homomorphisms, from the weakly filtered perspective. Let
$f:\mathcal{M}_0 \longrightarrow \mathcal{M}_1$ be as at the beginning
of the present section. Let $\mathcal{M}'_1$ be another weakly
filtered $\mathcal{A}$-module with discrepancy
$\leq \bme^{\mathcal{M}'}$ and let
$\xi: \mathcal{M}_1 \longrightarrow \mathcal{M}'_1$ be a weakly
filtered module homomorphism with
$\xi \in \hom^{\leq s; \bme^{\xi}}(\mathcal{M}_1,
\mathcal{M}'_1)$. Denote by
$$f' = \xi \circ f := \mu_2^{\tmod}(f,\xi): \mathcal{M}_0 \longrightarrow
\mathcal{M}'_1$$ the composition of $f$ and $\xi$. We have
$f' \in \hom^{\rho+s; \bme^{f'}}(\mathcal{M}_0, \mathcal{M}_1)$, where
$$\epsilon^{f'}_d = (\bme^f * \bme^{\xi})_d =: 
\max\{ \epsilon^f_i + \epsilon^{\xi}_j \mid i+j=d+1\}.$$ 


\begin{lem} \label{l:con-f-g} There exists a weakly filtered module
  homomorphism
  $$\psi: \tcn(f;\rho, \bme^f) \longrightarrow \tcn(f'; \rho+s, \bme^{f'})$$
  that shifts action by $\leq s$ and has discrepancy
  $\leq \bme^{\xi}$. The homomorphism $\psi$ fits into the following
  (chain level) commutative diagram of $\mathcal{A}$-modules:
  \begin{equation} \label{eq:triangles-psi} 
    \begin{gathered}
      \xymatrix{ \mathcal{M}_0
        \ar[r]^{f} \ar[d]^{\id} & \mathcal{M}_1 \ar[r]^{\,} \ar[d]^{\xi}
        & \tcn(f)
        \ar[r]^{\,} \ar[d]^{\psi} & T \mathcal{M}_0 \ar[d]^{\id} \\
        \mathcal{M}_0 \ar[r]^{f'} & \mathcal{M}'_1 \ar[r]^{\,} & \tcn(f')
        \ar[r]^{\,} & T \mathcal{M}_0 }
    \end{gathered}
  \end{equation}
  where the horizontal unlabeled maps are the standard inclusion and
  projection maps (with $0$ higher order terms). Moreover, if $\xi$ is
  a quasi-isomorphism then so is $\psi$.
\end{lem}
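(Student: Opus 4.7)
The plan is to define $\psi$ explicitly by the formula
$$\psi_d(a_1, \ldots, a_{d-1}, (b_0, b_1)) = \bigl(\, \delta_{d,1}\, b_0, \; \xi_d(a_1, \ldots, a_{d-1}, b_1) \,\bigr), \quad d \geq 1,$$
where $\delta_{d,1}$ is the Kronecker symbol, and then to verify, in order: the commutativity of~\eqref{eq:triangles-psi}, the identity $\mu_1^{\tmod}(\psi) = 0$, the filtration/discrepancy bound, and (under the quasi-isomorphism hypothesis on $\xi$) the quasi-isomorphism property of $\psi$. Informally, $\psi$ is the block-diagonal pre-module homomorphism acting as $\id_{\mathcal{M}_0}$ on the first summand of $\tcn(f) = \mathcal{M}_0 \oplus \mathcal{M}_1$ and as $\xi$ on the second. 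Commutativity of the diagram is immediate: the left square holds by the hypothesis $f' = \mu_2^{\tmod}(f,\xi)$, and, because the horizontal inclusion and projection maps have vanishing higher-order terms, the middle and right squares reduce to $\psi_d(\ldots,(0,b_1)) = (0, \xi_d(\ldots,b_1))$ and to the fact that the $\mathcal{M}_0$-component of $\psi_d$ is $\delta_{d,1} b_0$, both built into the formula.

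For $\mu_1^{\tmod}(\psi) = 0$ I would project onto the two summands of $\tcn(f')$. The $\mathcal{M}_0$-projection collapses the three standard groups of terms appearing in $\mu_1^{\tmod}$ (those involving a $\mu^{\mathcal{A}}$, a $\mu^{\tcn(f)}$, or a $\mu^{\tcn(f')}$) to two copies of $\mu^{\mathcal{M}_0}_d(a_1, \ldots, a_{d-1}, b_0)$, which cancel in characteristic~$2$. The $\mathcal{M}'_1$-projection splits into two blocks: the terms built purely from $\xi$ and from the module structures on $\mathcal{M}_1, \mathcal{M}'_1$ reassemble into $(\mu_1^{\tmod}(\xi))_d$ evaluated on $(a_1, \ldots, a_{d-1}, b_1)$, which vanishes because $\xi$ is a module homomorphism; the ``mixed'' terms coming from the $f$ in $\mu^{\tcn(f)}$ versus the $f'$ in $\mu^{\tcn(f')}$ add up to $(\xi \circ f)_d(a_1, \ldots, a_{d-1}, b_0) + f'_d(a_1, \ldots, a_{d-1}, b_0)$, which vanishes precisely because of the hypothesis $f' = \xi \circ f$.

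For the filtration bounds, the key identity is $\epsilon^{f'}_1 = \epsilon^f_1 + \epsilon^\xi_1$, the $d=1$ case of $\bme^f * \bme^\xi$. Given $(b_0, b_1) \in \tcn(f; \rho, \bme^f)^{\leq \alpha}$ and $a_i \in C^{\leq \alpha_i}$, the $\mathcal{M}'_1$-component $\xi_d(a_1, \ldots, a_{d-1}, b_1)$ has action at most $\alpha_1 + \cdots + \alpha_{d-1} + \alpha + s + \epsilon^\xi_d$ directly from the weakly filtered structure of $\xi$; for $d=1$ the $\mathcal{M}_0$-component $b_0$ has action $\leq \alpha - \rho - \epsilon^f_1 = \alpha + s + \epsilon^\xi_1 - (\rho+s) - \epsilon^{f'}_1$, exactly the threshold for the $\mathcal{M}_0$-summand of $\tcn(f'; \rho+s, \bme^{f'})^{\leq \alpha + s + \epsilon^\xi_1}$. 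Hence $\psi \in \hom^{\leq s;\, \bme^\xi}(\tcn(f), \tcn(f'))$. Finally, if $\xi$ is a quasi-isomorphism, evaluating~\eqref{eq:triangles-psi} at each $N \in \textnormal{Ob}(\mathcal{A})$ yields a map of distinguished triangles of chain complexes, and the Five Lemma applied to the induced long exact sequences in homology forces $\psi$ to be a quasi-isomorphism as well. I do not expect any genuine obstacle, only the combinatorial bookkeeping in the verification of $\mu_1^{\tmod}(\psi) = 0$, which requires carefully matching the summands coming from $\mu^{\tcn(f)}$ and $\mu^{\tcn(f')}$ against the two components of $\psi$.
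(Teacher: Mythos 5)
Your formula for $\psi$ is exactly the one in the paper ($\psi_1(b_0,b_1)=(b_0,\xi_1(b_1))$ and $\psi_d(\ldots,(b_0,b_1))=(0,\xi_d(\ldots,b_1))$ for $d\geq 2$), and your verification of the commutativity, the closedness under $\mu_1^{\tmod}$, the filtration bounds, and the quasi-isomorphism statement all amount to the ``direct calculation'' the paper invokes without spelling out. The proposal is correct and follows the same route as the paper's proof.
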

\begin{proof}
  Simply define $\psi_1(b_0, b_1) = \bigl( b_0, \xi_1(b_1) \bigr)$ and
  for $d\geq 2$:
  $$\psi_d\bigl (a_1, \ldots, a_{d-1}, (b_0, b_1) \bigr) := 
  \bigl(0, \xi_d(a_1, \ldots, a_{d-1}, b_1)\bigr).$$ All the
  statements asserted by the lemma can be verified by direct
  calculation.
\end{proof}

Next we discuss how the weakly filtered mapping cone changes if we
alter the cycle $f$ by a boundary. Assume now that $\mathcal{M}_0$ and
$\mathcal{M}_1$ have both discrepancies $\leq \bmemm$. Fix a sequence
$\bmeh$ that satisfies Assumption~$\assmpe$. Let
$f \in \hom^{\leq \rho; \bmeh}(\mathcal{M}_0, \mathcal{M}_1)$ be a
module homomorphism and $f' = f + \mu_1^{\tmod}(\theta)$ for some
$\theta \in \hom^{\leq \rho; \bmeh}(\mathcal{M}_0,
\mathcal{M}_1)$. Consider the two weakly filtered mapping cones
$\mathcal{C}one(f; \rho, \bmeh)$ and
$\mathcal{C}one(f'; \rho, \bmeh)$.

\begin{lem} \label{l:cone-f-f'} There exists a module homomorphism
  $\vartheta: \mathcal{C}one(f; \rho, \bmeh) \longrightarrow
  \mathcal{C}one(f'; \rho, \bmeh)$ with the following properties:
  \begin{enumerate}
  \item $\vartheta$ is a quasi-isomorphism.
  \item $\vartheta$ does not shift action (i.e. it shifts the action
    filtration by $\leq 0$) and has discrepancy
    $\leq \bme^{\vartheta} := \bmeh - \eh_1$. In particular (since
    $\epsilon^{\vartheta}_1=0$) the chain map
    $$\vartheta_1: \mathcal{C}one(f; \rho, \bmeh)(X) \longrightarrow
    \mathcal{C}one(f'; \rho, \bmeh)(X)$$ preserves the action
    filtration for every $X \in \textnormal{Ob}(\mathcal{A})$.
  \end{enumerate}
\end{lem}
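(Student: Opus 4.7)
The plan is to produce $\vartheta$ by the standard formula for "deforming a cone along a coboundary", namely
$$\vartheta_1(b_0, b_1) = \bigl(b_0,\, b_1 + \theta_1(b_0)\bigr),$$
and, for $d \geq 2$,
$$\vartheta_d\bigl(a_1, \ldots, a_{d-1}, (b_0, b_1)\bigr) = \bigl(0,\, \theta_d(a_1, \ldots, a_{d-1}, b_0)\bigr).$$
The key algebraic input is the hypothesis $f' - f = \mu_1^{\tmod}(\theta)$, which, when written out componentwise, is precisely the relation needed to make $\vartheta$ a cycle in $\hom(\tcn(f;\rho,\bmeh), \tcn(f';\rho,\bmeh))$. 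I would verify the cycle equation $\mu_1^{\tmod}(\vartheta) = 0$ by direct expansion: on $\tcn(f)$ and $\tcn(f')$ the composition maps $\mu_d^{\mathcal{C}}$ are given by the explicit block formula recalled above \eqref{eq:notation-cone}, and matching the components of $\mu_1^{\tmod}(\vartheta)$ against the $A_\infty$-module relations for the $\theta_k$'s yields precisely the cancellations coming from $f-f' = \mu_1^{\tmod}(\theta)$ (in characteristic $2$ all sign bookkeeping disappears). A short warm-up at level $d=1$ is already instructive: both sides of $\vartheta_1 \mu_1^{\mathcal{C}} = \mu_1^{\mathcal{C}'} \vartheta_1$ reduce to $(\mu_1^{\mathcal{M}_0}b_0,\, \mu_1^{\mathcal{M}_1}b_1 + f_1(b_0) + \theta_1\mu_1^{\mathcal{M}_0}(b_0))$ once one cancels the two copies of $\mu_1^{\mathcal{M}_1}\theta_1(b_0)$ produced on the right.

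Next I would handle the filtration bookkeeping, which is the step most sensitive to the weakly filtered conventions. By \eqref{eq:fcone}, an element of $\tcn(f;\rho,\bmeh)^{\leq\alpha}(X)$ has first component in $\mathcal{M}_0^{\leq \alpha - \rho - \eh_1}(X)$ and second component in $\mathcal{M}_1^{\leq \alpha}(X)$; and $\theta \in \hom^{\leq \rho;\bmeh}$ means $\theta_d$ raises total action by at most $\rho + \eh_d$. Thus $\theta_1(b_0) \in \mathcal{M}_1^{\leq (\alpha - \rho - \eh_1) + \rho + \eh_1} = \mathcal{M}_1^{\leq \alpha}$, so $\vartheta_1$ is strictly filtration-preserving. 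For $d \geq 2$, the $(0,\theta_d(\cdots,b_0))$-component lands in $\mathcal{M}_1^{\leq \alpha_1 + \cdots + \alpha_d + (\eh_d - \eh_1)}$, so $\vartheta_d$ shifts action by at most $\eh_d - \eh_1 = \epsilon_d^{\vartheta}$, as claimed. This gives $\rho^{\vartheta} = 0$ and $\bme^{\vartheta} = \bmeh - \eh_1$, with $\epsilon_1^{\vartheta}=0$.

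For the quasi-isomorphism statement I would simply observe that, as a map of $R$-modules, $\vartheta_1$ has the block form $\bigl(\begin{smallmatrix} \id & 0 \\ \theta_1 & \id \end{smallmatrix}\bigr)$ with respect to the decomposition $\mathcal{M}_0 \oplus \mathcal{M}_1$, and is therefore a bijection whose inverse is $(b_0,b_1) \mapsto (b_0, b_1 - \theta_1(b_0))$. Hence $\vartheta_1$ is an isomorphism of chain complexes at each $X \in \textnormal{Ob}(\mathcal{A})$, which is strictly stronger than being a quasi-isomorphism.

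The main obstacle I anticipate is purely bookkeeping: writing the cycle condition $\mu_1^{\tmod}(\vartheta)=0$ at level $d$ in terms of the structure maps of $\mathcal{M}_0$, $\mathcal{M}_1$, $\mathcal{A}$, $f$, $f'$, and $\theta$, and recognizing the resulting identity as exactly the degree-$d$ component of the module relation $f-f' = \mu_1^{\tmod}(\theta)$. Once this is organized (for example by grouping the terms of $\mu_1^{\tmod}(\vartheta)_d$ into those coming from differentials on $\tcn(f)$ versus $\tcn(f')$), the verification is mechanical and proceeds by the same pattern as the $d=1$ calculation above. No Assumption~$\assmpe$ hypothesis is needed beyond ensuring $\bmeh - \eh_1 \geq 0$, which follows from $\eh_d \geq \eh_1$ (a consequence of $\bmeh \in \assmpe$).
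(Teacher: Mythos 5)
Your proposal is correct and uses essentially the same construction as the paper: the same formulas for $\vartheta_1$ and $\vartheta_d$ (the paper adds the signs $(-1)^{|b_0|-1}$, $(-1)^{|b_1|}$ to $\vartheta_1$, which it immediately notes are immaterial in characteristic $2$), and the paper's proof simply records these formulas with a reference to Seidel, leaving the verifications to the reader. Your filtration bookkeeping and the observation that $\vartheta_1$ is in fact a chain isomorphism (via the unipotent block form) are exactly the details the paper elides, so the two proofs coincide in substance.
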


\begin{proof}
  Define
  $\vartheta_1(b_0, b_1) := \bigl( (-1)^{|b_0|-1} b_0, (-1)^{|b_1|}
  b_1 + \theta_1(b_0) \bigr)$ and for $d \geq 2$ define:
  $$\vartheta_d \bigl( a_1, \ldots, a_{d-1}, (b_0, b_1) \bigr) = 
  \bigl(0, \theta_d(a_1, \ldots, a_{d-1}, b_0) \bigr).$$
  Cf.~\cite[Formula~{3.7}, page~35]{Se:book-fukaya-categ}.
  
  Note that in this paper we work with a base ring $R$ of
  characteristic $2$, hence the signs in the preceding formula for
  $\vartheta_1$ can actually be ignored. Nevertheless we included
  them, just as an indication for a possible extension to more general
  rings.
\end{proof}

The next lemma shows that weakly filtered cones are preserved under
pulling back by weakly filtered functors.
\begin{lem} \label{l:pull-back-cone} Let $\mathcal{A}, \mathcal{B}$ be
  two weakly filtered $A_{\infty}$-categories,
  $\mathcal{M}_0, \mathcal{M}_1$ weakly filtered $\mathcal{B}$-modules
  and $f: \mathcal{M}_0 \longrightarrow \mathcal{M}_1$ a weakly
  filtered module homomorphism which shifts action by $\leq \rho$ and
  has discrepancy $\leq \bme^f$. Then we have the following equality
  of weakly filtered $\mathcal{A}$-modules:
  $$\mathcal{F}^* \bigl( 
  \tcn(\mathcal{M}_0 \xrightarrow{\; (f; \rho, \bme^f) \;}
  \mathcal{M}_0) \bigr) = \tcn(\mathcal{F}^* \mathcal{M}_0
  \xrightarrow{\; (\mathcal{F}^*f; \rho, \bme^{\mathcal{F}^*f}) \;}
  \mathcal{F}^*\mathcal{M}_1),$$ where $\bme^{\mathcal{F}^*f}$ is
  given in Lemma~\ref{l:pull-back-f}.
\end{lem}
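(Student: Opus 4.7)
The plan is to verify the claimed equality by unwinding the definitions of pullback and of the weakly filtered mapping cone, checking first the equality of underlying $\mathcal{A}$-modules and then the equality of filtrations. The key numerical input needed will be the identity $\epsilon_1^{\mathcal{F}^*f} = \epsilon_1^f$ furnished by Lemma~\ref{l:pull-back-f}, which guarantees that the action-shift in the cone formula~\eqref{eq:fcone} is the same on both sides.

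First I would observe that, as ordinary (unfiltered) $\mathcal{A}$-modules, both sides of the asserted equality are obtained from the underlying $\mathcal{B}$-module $\tcn(f)$ by precomposition with $\mathcal{F}$; the commutation of pullback with the mapping-cone construction at the level of $A_\infty$-modules is a standard (purely algebraic) fact, obtained by writing out the composition maps $\mu_d^{\tcn(f)}$ explicitly (as displayed right after~\eqref{eq:fcone}) and applying the pullback formula for $A_\infty$-functors term by term. This step is routine and uses only that the higher components $\mathcal{F}_s$ act on inputs coming from $\hom_{\mathcal{A}}$ and leave the last module argument in place.

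Next I would check the filtrations. For $N \in \textnormal{Ob}(\mathcal{A})$ and $\alpha \in \mathbb{R}$, the definition of the pulled-back filtration (from~\S\ref{sbsb:pull-back-mod}) together with~\eqref{eq:fcone} gives
\[
\bigl(\mathcal{F}^*\tcn(f;\rho,\bme^f)\bigr)^{\leq \alpha}(N) = \tcn(f;\rho,\bme^f)^{\leq \alpha}(\mathcal{F}N) = \mathcal{M}_0^{\leq \alpha - \rho - \epsilon_1^f}(\mathcal{F}N) \oplus \mathcal{M}_1^{\leq \alpha}(\mathcal{F}N).
\]
On the other hand, directly applying~\eqref{eq:fcone} to the homomorphism $\mathcal{F}^*f$ between the pulled-back modules yields
\[
\tcn(\mathcal{F}^*f;\rho,\bme^{\mathcal{F}^*f})^{\leq \alpha}(N) = (\mathcal{F}^*\mathcal{M}_0)^{\leq \alpha - \rho - \epsilon_1^{\mathcal{F}^*f}}(N) \oplus (\mathcal{F}^*\mathcal{M}_1)^{\leq \alpha}(N),
\]
which, upon unwinding the definition of the pullback filtrations, becomes
\[
\mathcal{M}_0^{\leq \alpha - \rho - \epsilon_1^{\mathcal{F}^*f}}(\mathcal{F}N) \oplus \mathcal{M}_1^{\leq \alpha}(\mathcal{F}N).
\]
Invoking Lemma~\ref{l:pull-back-f} to identify $\epsilon_1^{\mathcal{F}^*f} = \epsilon_1^f$, the two filtrations coincide term by term.

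I do not expect any genuine obstacle here: the two sides are built from the same underlying module $\tcn(f)$, the $A_\infty$-operations are functorial under pullback, and the cone filtration is defined by a single shift constant that is preserved by Lemma~\ref{l:pull-back-f}. The only mildly delicate point is bookkeeping: one must ensure that the discrepancy sequence $\bme^{\mathcal{F}^*f}$ used on the right-hand side has the correct first term $\epsilon_1^{\mathcal{F}^*f}$ so that the shifts agree, and that we are using consistent weakly filtered discrepancy bounds for $\mathcal{F}^*\mathcal{M}_0$ and $\mathcal{F}^*\mathcal{M}_1$ as in Lemma~\ref{l:pull-back-M}. Once this is observed, the lemma follows by direct comparison with no further argument needed.
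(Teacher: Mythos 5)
Your proof is correct and fills in precisely the routine verification the paper deliberately omits (the paper states only ``the proof is straightforward, hence omitted''). The key points you isolate — that pullback commutes with cones at the level of $A_\infty$-module operations, and that $\epsilon_1^{\mathcal{F}^*f}=\epsilon_1^f$ from Lemma~\ref{l:pull-back-f} makes the cone filtration shifts agree on both sides — are exactly what is needed, and the two-line calculation of the filtrations is the heart of the matter.
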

The proof is straightforward, hence omitted.

We now return briefly to unitality of modules, more specifically to
Assumptions~$\hyperlink{h:asmp-H}{\lbh_s, \lbh_w}$.  The following
lemma shows that these two assumptions are preserved under certain
quasi-isomorphisms of weakly filtered modules.
\begin{lem} \label{l:ass-usw-iso} Let $\mathcal{M}'$, $\mathcal{M}''$
  be weakly filtered $\mathcal{A}$-modules with discrepancies
  $\leq \bme^{\mathcal{M}'}$ and $\leq \bme^{\mathcal{M}''}$
  respectively. Let
  $\phi': \mathcal{M}' \longrightarrow \mathcal{M}''$ be a weakly
  filtered module homomorphism with
  $\phi' \in \hom^{\rho'; \bme^{\phi'}}(\mathcal{M}',
  \mathcal{M}'')$. Let $\phi''$ be a collection of chain maps
  $\phi''_X: \mathcal{M}''(X) \longrightarrow \mathcal{M}'(X)$,
  defined for all $X \in \textnormal{Ob}(\mathcal{A})$, and assume
  $$\phi''_X \bigl( \mathcal{M}''^{\leq \alpha}(X) \bigr) \subset 
  \mathcal{M}'^{\leq \alpha + \rho'' + \epsilon''}(X), \; \; \forall
  \, X \in \textnormal{Ob}(\mathcal{A}), \, \alpha \in \mathbb{R},$$
  for some fixed $\rho'', \epsilon'' \in \mathbb{R}$. (For example, if
  $\phi'': \mathcal{M}'' \longrightarrow \mathcal{M}'$ is a weakly
  filtered module homomorphism with
  $\phi'' \in \hom^{\rho''; \bme^{\phi''}}(\mathcal{M}'',
  \mathcal{M}')$, where $\epsilon_1^{\phi''} \leq \epsilon''$, then
  the assumptions on $\phi''$ are clearly satisfied.) Let
  $\nu, \kappa'' \in \mathbb{R}$ and assume further that:
  \begin{enumerate}
  \item For every $X \in \textnormal{Ob}(\mathcal{A})$ and every
    $\alpha \in \mathbb{R}$ the composition of chain maps
    $$\mathcal{M}'^{\leq \alpha}(X) \xrightarrow{\; \phi''_X \circ \phi'_1 \;}
    \mathcal{M}'^{\leq \alpha + \rho'+\rho''+\epsilon_1^{\phi'} +
      \epsilon''}(X) \xrightarrow{\; \text{inc} \;} \mathcal{M}'^{\leq
      \alpha + \rho'+\rho''+\epsilon_1^{\phi'} + \epsilon''+\nu}(X)$$
    induces in homology the same map as the one induced by the
    inclusion
    $$\mathcal{M}'^{\leq \alpha}(X) \longrightarrow
    \mathcal{M}'^{\leq \alpha + \rho'+\rho''+\epsilon_1^{\phi'} +
      \epsilon''+\nu}(X).$$
    \label{i:phi-phi}
  \item $\mathcal{M}'' \in \hyperlink{h:asmp-H}{\lbh_w(\kappa'')}$.
    \label{i:us-kappa''}
  \end{enumerate}
  Then $\mathcal{M}' \in \hyperlink{h:asmp-H}{\lbh_w(\kappa')}$, where
  \begin{equation} \label{eq:kappa'-1} \kappa' = \rho' + \rho'' +
    \epsilon'' + \max \{ \epsilon_1^{\phi'} +
    u^{\mathcal{A}}+\epsilon_2^{\mathcal{M}'} + \nu,
    \epsilon_1^{\phi'} + \kappa'', \epsilon_2^{\phi'} +
    u^{\mathcal{A}} \}.
  \end{equation}
  
  If we replace~(\ref{i:phi-phi}) above by the assumption that for
  every $X \in \textnormal{Ob}(\mathcal{A})$, $\alpha \in \mathbb{R}$,
  the map
  $\phi''_X \circ \phi'_1: \mathcal{M'}(X) \longrightarrow
  \mathcal{M'}(X)$ is chain homotopic to $\textnormal{id}$ via a chain
  homotopy that shifts action by $\leq \nu$ and instead of
  assuming~(\ref{i:us-kappa''}) we assume
  $\mathcal{M}'' \in \hyperlink{h:asmp-H}{\lbh_s(\kappa'')}$, then
  $\mathcal{M}' \in \hyperlink{h:asmp-H}{\lbh_s(\kappa')}$ with
  \begin{equation} \label{eq:kappa'-2}
    \kappa' = \max\{\rho'+\rho''+\epsilon_1^{\phi'} + 
    \epsilon'' + \kappa'', \rho'+\rho''+\ua +
    \epsilon_2^{\phi'}+\epsilon'', \epsilon_2^{\mathcal{M}'} + \ua +
    \nu\}.
  \end{equation}
\end{lem}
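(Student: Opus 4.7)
The plan is to reduce unitality of $\mathcal{M}'$ to that of $\mathcal{M}''$ via the pair $(\phi', \phi'')$ by assembling, for any cycle $b\in \mathcal{M}'^{\leq \alpha}(X)$, a chain of four homologies in $\mathcal{M}'$:
\[
b \;\sim\; \phi''_X\phi'_1(b) \;\sim\; \phi''_X\bigl(\mu_2^{\mathcal{M}''}(e_X,\phi'_1(b))\bigr) \;\sim\; \phi''_X\bigl(\phi'_1(\mu_2^{\mathcal{M}'}(e_X,b))\bigr) \;\sim\; \mu_2^{\mathcal{M}'}(e_X,b).
\]
The outer two links are direct applications of hypothesis~(\ref{i:phi-phi}) to the cycles $b$ and $\mu_2^{\mathcal{M}'}(e_X,b)\in \mathcal{M}'^{\leq \alpha+\ua+\epsilon_2^{\mathcal{M}'}}(X)$, respectively. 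The second link is obtained by applying $\mathcal{M}''\in \lbh_w(\kappa'')$ to $\phi'_1(b)\in \mathcal{M}''^{\leq \alpha+\rho'+\epsilon_1^{\phi'}}(X)$ and then pushing the resulting homology into $\mathcal{M}'$ via the chain map $\phi''_X$.

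The heart of the argument is the third link, which is the only genuinely $A_\infty$-theoretic input. Evaluating the module-homomorphism identity $\mu_1^{\tmod}(\phi')=0$ on the input $(e_X,b)$, and using that $e_X, b$ are both $\mu_1$-cycles with $\mu_1^{\mathcal{A}}(e_X)=0$, this identity collapses (in characteristic~$2$) to
\[
\phi'_1\bigl(\mu_2^{\mathcal{M}'}(e_X,b)\bigr) + \mu_2^{\mathcal{M}''}\bigl(e_X,\phi'_1(b)\bigr) \;=\; \mu_1^{\mathcal{M}''}\bigl(\phi'_2(e_X,b)\bigr),
\]
so that, after applying the chain map $\phi''_X$, the element $\phi''_X(\phi'_2(e_X,b)) \in \mathcal{M}'^{\leq \alpha + \rho'+\rho''+\ua+\epsilon_2^{\phi'}+\epsilon''}(X)$ serves as a primitive in $\mathcal{M}'$ witnessing the third homology. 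Tracking the filtration level at which each of the four homologies takes place and taking the maximum yields precisely~\eqref{eq:kappa'-1}: after factoring out the common offset $\rho'+\rho''+\epsilon''$, the fourth link contributes $\epsilon_1^{\phi'}+\ua+\epsilon_2^{\mathcal{M}'}+\nu$, the second contributes $\epsilon_1^{\phi'}+\kappa''$, and the third contributes $\epsilon_2^{\phi'}+\ua$, while the first (contributing only $\epsilon_1^{\phi'}+\nu$) is absorbed into the fourth since $\ua+\epsilon_2^{\mathcal{M}'}\geq 0$.

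For the strong version, I would upgrade each of the four homologies to an explicit chain homotopy on $\mathcal{M}'(X)$. The outer two come from the given $\lbh_s$-type homotopy $H_X$ (composed on the right with $\mu_2^{\mathcal{M}'}(e_X,-)$ for the fourth one); the second is $\phi''_X\circ K_X\circ \phi'_1$, where $K_X$ is the $\lbh_s(\kappa'')$ chain homotopy on $\mathcal{M}''(X)$; and the third is $\phi''_X\circ \phi'_2(e_X,-)$, which is automatically a chain homotopy for the pair of maps in question by the $A_\infty$ identity above read for arbitrary~$b$ (rather than cycles). Summing the four and retracking action shifts gives~\eqref{eq:kappa'-2}, the three surviving terms being the shifts of $h_2$, $h_3$, and $h_4$, with the shift $\nu$ of $h_1$ absorbed into that of $h_4$. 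The main difficulty is purely notational bookkeeping: the algebraic content is one line, and the delicate part is to propagate the parameters $\rho',\rho'',\epsilon'',\epsilon_1^{\phi'},\epsilon_2^{\phi'},\ua,\epsilon_2^{\mathcal{M}'},\kappa'',\nu$ through each of the four steps and verify the absorption that reduces the four contributions to the stated three-term maximum.
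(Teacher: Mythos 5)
Your proposal is correct and is essentially the paper's own argument, recast as a chain of four homologies (respectively chain homotopies) rather than as a sequence of substitutions into a single equation. The four primitives you identify — the one from assumption~(\ref{i:phi-phi}) applied to $b$, the one from $\lbh_w(\kappa'')$ pushed through $\phi''_X$, the primitive $\phi''_X\phi'_2(e_X,b)$ coming from the module-homomorphism identity, and the one from assumption~(\ref{i:phi-phi}) applied to $\mu_2^{\mathcal{M}'}(e_X,b)$ — coincide exactly with the elements $z$, $\phi''_X(y)$, $\phi''_X\phi'_2(e_X,b)$, $x$ in the paper, and your action bookkeeping, including the observation that the step-1 contribution is absorbed into step 4 because $\ua+\epsilon_2^{\mathcal{M}'}\geq 0$, reproduces~\eqref{eq:kappa'-1}; the same holds for the strong version and~\eqref{eq:kappa'-2}.
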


\begin{proof}
  Fix $X \in \textnormal{Ob}(\mathcal{A})$, $\alpha \in \mathbb{R}$
  and let $b \in \mathcal{M}'^{\leq \alpha}(X)$ be a {\em
    cycle}. Since $\phi'$ is a module homomorphism
  (i.e. $\mu_1^{\textnormal{mod}}(\phi')=0$) we have:
  \begin{equation*}  \phi_1'
    \mu_2^{\mathcal{M}'}(e_X, b) = \mu_2^{\mathcal{M}''}(e_X,
    \phi'_1(b)) \pm \mu_1^{\mathcal{M}''} \phi'_2(e_X, b).
  \end{equation*}
  Applying $\phi''_X$ to both sides of this identity we obtain:
  \begin{equation} \label{eq:phi''-phi'-1} \phi''_X \phi_1'
    \mu_2^{\mathcal{M}'}(e_X, b) = \phi''_X \mu_2^{\mathcal{M}''}(e_X,
    \phi'_1(b)) \pm \mu_1^{\mathcal{M}'} \phi''_X \phi'_2(e_X, b).
  \end{equation}
  Since
  $\mu_2^{\mathcal{M}'}(e_X, b) \in \mathcal{M}'^{\leq \alpha + \ua +
    \epsilon_2^{\mathcal{M}'}}(X)$ our assumption on
  $\phi''_X \circ \phi'_1$ implies that there exists
  $x \in \mathcal{M}'^{\leq \alpha + \ua +
    \epsilon_2^{\mathcal{M}'}+\rho'+\rho''+\epsilon_1^{\phi'} +
    \epsilon''+\nu}(X)$ such that
  \begin{equation} \label{eq:phi''-phi'-mu_2}
    \phi''_X \phi_1' \mu_2^{\mathcal{M}'}(e_X, b) =
    \mu_2^{\mathcal{M}'}(e_X, b) - \mu_1^{\mathcal{M}'}(x).
  \end{equation}
  Since $\mathcal{M}'' \in \hyperlink{h:asmp-H}{\lbh_w(\kappa'')}$
  there exists an element
  $y \in \mathcal{M}''^{\leq \alpha +
    \rho'+\epsilon_1^{\phi'}+\kappa''}(X)$ such that
  $$\mu_2^{\mathcal{M}''}(e_X, \phi_1'(b)) = \phi_1'(b) + 
  \mu_1^{\mathcal{M}''}(y).$$ 
  Substituting the last identity together
  with~\eqref{eq:phi''-phi'-mu_2} into~\eqref{eq:phi''-phi'-1} yields:
  \begin{equation} \label{eq:mu-2-e-b} \mu_2^{\mathcal{M}'}(e_X, b) =
    \mu_1^{\mathcal{M}'}(x) + \phi''_X \phi'_1(b) +
    \mu_1^{\mathcal{M}'}(\phi''_X(y)) + \mu_1^{\mathcal{M}'}( \phi''_X
    \phi_2' (e_X, b)).
  \end{equation}
  Using our assumption on $\phi''_X \circ \phi_1'$, we can write the
  2'nd term of~\eqref{eq:mu-2-e-b} as
  $\phi''_X \phi'_1(b) = b + \mu_1^{\mathcal{M}'}(z)$ for some
  $z \in \mathcal{M}'^{\leq \alpha + \rho'+\rho''+\epsilon_1^{\phi'} +
    \epsilon'' + \nu}(X)$. Substituting this in~\eqref{eq:mu-2-e-b} we
  obtain:
  \begin{equation} \label{eq:mu-2-e-b-II} \mu_2^{\mathcal{M}'}(e_X, b)
    = b + \mu_1^{\mathcal{M}'}(x) + \mu_1^{\mathcal{M}'}(z) +
    \mu_1^{\mathcal{M}'}(\phi''_X(y)) + \mu_1^{\mathcal{M}'}( \phi''_X
    \phi_2' (e_X, b)),
  \end{equation}
  where
  \begin{equation*}
    \begin{aligned}
      & x \in \mathcal{M}'^{\leq \alpha + \ua +
        \epsilon_2^{\mathcal{M}'}+\rho'+\rho''+\epsilon_1^{\phi'} +
        \epsilon''+\nu}(X), \quad \phi''_X (y) \in \mathcal{M}''^{\leq
        \alpha +
        \rho'+\rho''+\epsilon_1^{\phi'}+\epsilon''+\kappa''}(X), \\
      & z \in \mathcal{M}'^{\leq \alpha +
        \rho'+\rho''+\epsilon_1^{\phi'} + \epsilon'' + \nu}(X), \quad
      \phi''_X \phi_2' (e_X, b) \in \mathcal{M}'^{\leq \alpha + \ua +
        \rho' + \rho'' + \epsilon'' + \epsilon_2^{\phi'}}.
    \end{aligned}
  \end{equation*}
  The estimate~\eqref{eq:kappa'-1} for $\kappa'$ readily follows.

  We now turn to the proving the second statement, concerning
  $\mathcal{M}' \in \hyperlink{h:asmp-H}{\lbh_s(\kappa')}$.  Let
  $X \in \textnormal{Ob}(\mathcal{A})$ and $b \in \mathcal{M}'(X)$ a
  chain. Since $\phi'$ is a module homomorphism we have the following
  identity:
  \begin{equation} \label{eq:ident-phi'} \phi'_1
    \mu_2^{\mathcal{M}'}(e_X, b) = \mu_2^{\mathcal{M}''}(e_X,
    \phi'_1(b)) - \mu_1^{\mathcal{M}''} \phi'_2(e_X, b) + \phi_2'(e_X,
    \mu^{\mathcal{M}'}_1(b)).
  \end{equation}

  Since $\mathcal{M}'' \in \hyperlink{h:asmp-H}{\lbh_s(\kappa'')}$, we
  can write
  $$\mu_2^{\mathcal{M}''}(e_X, \phi'_1(b)) = b + g \mu_1^{\mathcal{M}''}
  \phi'_1(b) + \mu_1^{\mathcal{M}''} g \phi'_1(b)$$ for some map
  $g:\mathcal{M}''(X) \longrightarrow \mathcal{M}''(X)$ that shifts
  action by $\leq \kappa''$. Substituting this
  into~\eqref{eq:ident-phi'} and then applying $\phi_X''$ to both
  sides of the equation we get:
  \begin{equation} \label{eq:phi''-phi'-s1}
    \begin{aligned}
      \phi''_X \phi'_1 \mu_2^{\mathcal{M}'}(e_X, b) = & \; \phi''_X
      \phi'_1(b) + \phi''_X g \mu_1^{\mathcal{M}''}
      \phi'_1(b) + \phi''_X \mu_1^{\mathcal{M}''} g \phi'_1(b) \\
      & - \mu_1^{\mathcal{M}''} \phi''_X \phi'_2(e_X, b) + \phi''_X
      \phi'_2 (e_X, \mu_1^{\mathcal{M}'}(b)).
    \end{aligned}
  \end{equation}
  By assumption
  $\phi''_X \circ \phi'_1 = id + h \circ \mu_1^{\mathcal{M}'} +
  \mu_1^{\mathcal{M}'} \circ h$ for some map
  $h: \mathcal{M}'(X) \longrightarrow \mathcal{M}'(X)$ that shifts
  action by $\leq \nu$. Thus we can write:
  $$\phi''_X \phi'_1 \mu_2^{\mathcal{M}'}(e_X, b) = \mu_2^{\mathcal{M}'}(e_X, b) + 
  h \mu_1^{\mathcal{M}'} \mu_2^{\mathcal{M}'}(e_X, b) +
  \mu_1^{\mathcal{M}'} h \mu_2^{\mathcal{M}'}(e_X, b).$$ Substituting
  $\phi''_X \circ \phi'_1(b) = b + h \mu_1^{\mathcal{M}'}(b) +
  \mu_1^{\mathcal{M}'} h(b)$ together with the latter equality
  into~\eqref{eq:phi''-phi'-s1} we obtain:
  \begin{equation} \label{eq:mu_2-e-b-s} 
    \begin{aligned}
      \mu_2^{\mathcal{M}'}(e_X, b) = b & + h \mu_1^{\mathcal{M}'}(b) +
      \mu_1^{\mathcal{M}'} h(b) + \phi''_X g \phi'_1
      \mu_1^{\mathcal{M}'}(b) + \mu_1^{\mathcal{M}'}
      \phi''_X g \phi'_1(b) \\
      & - \mu_1^{\mathcal{M}''} \phi''_X \phi'_2(e_X, b) + \phi''_X
      \phi'_2(e_X, \mu_1^{\mathcal{M}'}(b)) \\
      & - h \mu_2^{\mathcal{M}'}(e_X, \mu_1^{\mathcal{M}'}(b)) -
      \mu_1^{\mathcal{M}'} h \mu_2^{\mathcal{M}'}(e_X, b).
    \end{aligned}
  \end{equation}
  Here we have also used the fact that $\phi''_X$ and $\phi'_1$ are
  chain maps and that
  $\mu_1^{\mathcal{M}'} \mu_2^{\mathcal{M}'}(e_X, b) =
  \mu_2^{\mathcal{M}'}(e_X, \mu_1^{\mathcal{M}'}(b))$.  The
  estimate~\eqref{eq:kappa'-2} for $\kappa'$ now easily follows.
\end{proof}

It is known that h-unitality is preserved under mapping
cones~\cite[Section~3e]{Se:book-fukaya-categ}. The following Lemma is
a weakly filtered analog concerning
Assumptions~$\hyperlink{h:asmp-H}{\lbh_s, \lbh_w}$.

\begin{lem} \label{l:assh-cones} Assume that $\mathcal{A}$ satisfies
  Assumption~\hyperlink{h:asmp-ue}{$\lbue(\zeta)$} (see
  Page~\pageref{l:assm-ue}). Let $\mathcal{M}_0$, $\mathcal{M}_1$ be
  weakly filtered $\mathcal{A}$-modules with discrepancies
  $\leq \bme^{\mathcal{M}_0}$ and $\leq \bme^{\mathcal{M}_1}$
  respectively and assume that
  $\mathcal{M}_0 \in
  \hyperlink{h:asmp-H}{\lbh_w(\kappa^{\mathcal{M}_0})}$ and
  $\mathcal{M}_1 \in
  \hyperlink{h:asmp-H}{\lbh_w(\kappa^{\mathcal{M}_1})}$. Let
  $f \in \hom^{\leq \rho; \bme^f}(\mathcal{M}_0, \mathcal{M}_1)$ be a
  module homomorphism. Then the weakly filtered module
  $\tcn(f; \rho, \bme^f)$ satisfies
  Assumption~$\hyperlink{h:asmp-H}{\lbh_w(\kappa)}$, where
  \begin{equation} \label{eq:kappa-cone} \kappa = \max
    \{2\kappa^{\mathcal{M}_0}, 2\kappa^{\mathcal{M}_1},
    2u^{\mathcal{A}} + \epsilon_3^{\mathcal{C}},
    2u^{\mathcal{A}}+2\epsilon_2^{\mathcal{C}},
    \zeta+\epsilon_2^{\mathcal{C}}\},
  \end{equation}
  and
  $\bme^{\mathcal{C}} := \max\{\bme^{\mathcal{M}_0},
  \bme^{\mathcal{M}_1}, \bme^f-\epsilon_1^f\}$. (Recall that
  $\bme^{\mathcal{C}}$ is the standard bound on the discrepancy of
  $\mathcal{C} = \tcn(f; \rho, \bme^f)$ -- see
  Page~\pageref{pg:disc-cone}.) If the ground ring $R$ is a field then
  we actually have
  $\tcn(f; \rho, \bme^f) \in~\hyperlink{h:asmp-H}{\lbh_s(\kappa)}$.
\end{lem}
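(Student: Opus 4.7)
The plan is to prove the $\lbh_w(\kappa)$ statement for the cone; the $\lbh_s(\kappa)$ version over a field then follows at once from Remark~\ref{r:us-uw}. Take any cycle $x = (b_0, b_1) \in \mathcal{C}^{\leq \alpha}(X)$ and write $y := \mu_2^{\mathcal{C}}(e_X, x) - x$. I will exhibit $y$ as the $\mu_1^{\mathcal{C}}$-image of an explicit element of $\mathcal{C}^{\leq \alpha + \kappa}(X)$ through three reductions: first, $\lbh_w$ on $\mathcal{M}_0$ eliminates the $\mathcal{M}_0$-coordinate of $y$; second, $\lbh_w$ on $\mathcal{M}_1$ converts what remains into $\mu_2^{\mathcal{C}}(e_X, y)$ modulo boundaries; third, the order-three $A_\infty$ relation together with $\lbue(\zeta)$ exhibits $\mu_2^{\mathcal{C}}(e_X, y)$ itself as a controlled boundary.

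For the first reduction, the cycle condition on $(b_0, b_1)$ forces $\mu_1^{\mathcal{M}_0}(b_0) = 0$ and $\mu_1^{\mathcal{M}_1}(b_1) = f_1(b_0)$ (characteristic two); the first coordinate of $y$ equals $\mu_2^{\mathcal{M}_0}(e_X, b_0) - b_0$, which by $\lbh_w(\kappa^{\mathcal{M}_0})$ is $\mu_1^{\mathcal{M}_0}(c_0)$ for some $c_0 \in \mathcal{M}_0^{\leq \alpha - \rho - \epsilon_1^f + \kappa^{\mathcal{M}_0}}(X)$. Then $y - \mu_1^{\mathcal{C}}(c_0, 0) = (0, R)$ where
\[ R = \mu_2^{\mathcal{M}_1}(e_X, b_1) - b_1 + f_2(e_X, b_0) - f_1(c_0). \]
The order-two $A_\infty$ identity $\mu_1^{\tmod}(f) = 0$ evaluated at $(e_X, b_0)$, combined with the cycle conditions and $\mu_1^{\mathcal{A}}(e_X) = 0$, shows $\mu_1^{\mathcal{M}_1}(R) = 0$.

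For the second reduction, $\lbh_w(\kappa^{\mathcal{M}_1})$ applied to the cycle $R$ yields $w \in \mathcal{M}_1(X)$ with $\mu_1^{\mathcal{M}_1}(w) = \mu_2^{\mathcal{M}_1}(e_X, R) - R$; since $\mu_2^{\mathcal{C}}(e_X, (0, R)) = (0, \mu_2^{\mathcal{M}_1}(e_X, R))$, this yields $(0, R) = \mu_2^{\mathcal{C}}(e_X, (0, R)) - \mu_1^{\mathcal{C}}(0, w)$. Substituting $(0, R) = y - \mu_1^{\mathcal{C}}(c_0, 0)$ on the right and using $\mu_2^{\mathcal{C}}(e_X, \mu_1^{\mathcal{C}}(c_0, 0)) = \mu_1^{\mathcal{C}}(\mu_2^{\mathcal{C}}(e_X, (c_0, 0)))$ (the order-two $A_\infty$ identity with $\mu_1^{\mathcal{A}}(e_X) = 0$) reduces the task to showing that $\mu_2^{\mathcal{C}}(e_X, y)$ is a controlled boundary. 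For the third reduction, $\mu_2^{\mathcal{C}}(e_X, y) = \mu_2^{\mathcal{C}}(e_X, \mu_2^{\mathcal{C}}(e_X, x)) - \mu_2^{\mathcal{C}}(e_X, x)$; the order-three $A_\infty$ relation at $(e_X, e_X, x)$, cleaned up by $\mu_1^{\mathcal{A}}(e_X) = 0$ and $\mu_1^{\mathcal{C}}(x) = 0$, rewrites this as $\mu_2^{\mathcal{C}}(\mu_2^{\mathcal{A}}(e_X, e_X) - e_X, x) + \mu_1^{\mathcal{C}}(\mu_3^{\mathcal{C}}(e_X, e_X, x))$. By $\lbue(\zeta)$ the first summand equals $\mu_2^{\mathcal{C}}(\mu_1^{\mathcal{A}}(c), x)$ with $c \in C^{\leq \zeta}(X, X)$, and a further $A_\infty$ application identifies it with $\mu_1^{\mathcal{C}}(\mu_2^{\mathcal{C}}(c, x))$. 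Hence $\mu_2^{\mathcal{C}}(e_X, y) = \mu_1^{\mathcal{C}}(\mu_2^{\mathcal{C}}(c, x) + \mu_3^{\mathcal{C}}(e_X, e_X, x))$. Assembling everything expresses $y$ as the $\mu_1^{\mathcal{C}}$-image of a sum of five explicit elements whose filtrations lie, respectively, in $\alpha + \kappa^{\mathcal{M}_0}$, $\alpha + \kappa^{\mathcal{M}_0} + u^{\mathcal{A}} + \epsilon_2^{\mathcal{C}}$, $\alpha + \max\{u^{\mathcal{A}} + \epsilon_2^{\mathcal{C}}, \kappa^{\mathcal{M}_0}\} + \kappa^{\mathcal{M}_1}$, $\alpha + \zeta + \epsilon_2^{\mathcal{C}}$ and $\alpha + 2u^{\mathcal{A}} + \epsilon_3^{\mathcal{C}}$; the cross-terms of the form $\kappa^{\mathcal{M}_0} + u^{\mathcal{A}} + \epsilon_2^{\mathcal{C}}$ and $\kappa^{\mathcal{M}_i} + (u^{\mathcal{A}} + \epsilon_2^{\mathcal{C}})$ are absorbed into the right-hand side of~\eqref{eq:kappa-cone} by the elementary inequality $a + b \leq 2 \max\{a, b\}$.

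The main obstacle is conceptual: the cycle $R$ produced after the first reduction is not of the form $\mu_2^{\mathcal{M}_1}(e_X, z) - z$ for any visible cycle $z$, so $\lbh_w$ on $\mathcal{M}_1$ cannot be used to kill it directly. One must instead use $\lbh_w(\kappa^{\mathcal{M}_1})$ only to ``upgrade'' $R$ to $\mu_2^{\mathcal{M}_1}(e_X, R)$, loop the computation back onto $\mu_2^{\mathcal{C}}(e_X, y)$, and then close the argument by the cubic $A_\infty$-relation together with $\lbue(\zeta)$; this is precisely where the summands $2u^{\mathcal{A}} + \epsilon_3^{\mathcal{C}}$ and $\zeta + \epsilon_2^{\mathcal{C}}$ in~\eqref{eq:kappa-cone} originate.
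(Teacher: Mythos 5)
Your proof is correct, and takes a genuinely different --- chain-level --- route from the paper's homological argument. The paper compares the map $v(b)=\mu_2^{\mathcal{C}}(e_X,b)$ and the inclusion at the level of the long exact sequences of the cone, taken at the three filtration levels $\alpha$, $\alpha+\delta$, and $\alpha+\kappa$ (with $\delta := \max\{u^{\mathcal{A}}+\epsilon_2^{\mathcal{C}}, \kappa^{\mathcal{M}_0}, \kappa^{\mathcal{M}_1}\}$), and concludes $V'''_{\mathcal{C}}(x)=i'''_{\mathcal{C}}(x)$ by combining exactness with Proposition~\ref{p:v_M} (which shows $v$ and $v\circ v$ are chain homotopic via a homotopy with controlled action shift). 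You instead construct an explicit $\mu_1^{\mathcal{C}}$-primitive of $\mu_2^{\mathcal{C}}(e_X,x)-x$: your first reduction (killing the $\mathcal{M}_0$-coordinate via $\lbh_w(\kappa^{\mathcal{M}_0})$) and second reduction (handling the cycle $R$ via $\lbh_w(\kappa^{\mathcal{M}_1})$) are chain-level counterparts of the paper's two exactness steps, and your third reduction re-derives the content of Proposition~\ref{p:v_M} inline from the order-three $A_\infty$ relation and $\lbue(\zeta)$. The two approaches rest on the same three inputs and yield the same bound~\eqref{eq:kappa-cone}; yours has the virtue of exhibiting the bounding chain as an explicit sum of five elements and makes transparent which summand of~\eqref{eq:kappa-cone} controls each one (the $2\kappa^{\mathcal{M}_i}$ and $2u^{\mathcal{A}}+2\epsilon_2^{\mathcal{C}}$ terms absorb the cross-terms via $a+b\le 2\max\{a,b\}$, while $2u^{\mathcal{A}}+\epsilon_3^{\mathcal{C}}$ and $\zeta+\epsilon_2^{\mathcal{C}}$ arise in the cubic step), whereas the paper's LES argument is more compact but conceals the primitives. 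Both arguments obtain the $\lbh_s$ conclusion over a field via Remark~\ref{r:us-uw}.
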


\begin{proof}[Proof of Lemma~\ref{l:assh-cones}]
  Denote $\mathcal{C} = \tcn (f; \rho, \bme^f)$. Recall that this
  module has discrepancy
  $\leq \bme^{\mathcal{C}} := \max\{\bme^{\mathcal{M}_0},
  \bme^{\mathcal{M}_1}, \bme^f-\epsilon_1^f\}$. Put
  $$\delta := 
  \max \{\ua + \epsilon_2^{\mathcal{C}}, \kappa^{\mathcal{M}_0},
  \kappa^{\mathcal{M}_1} \}, \quad \kappa := \max\{2\delta,
  2\ua+\epsilon_3^{\mathcal{C}}, \zeta + \epsilon_2^{\mathcal{C}}
  \}.$$ It is easy to see that the latter expression for $\kappa$
  coincides with~\eqref{eq:kappa-cone}.

  For an $A_{\infty}$-module $\mathcal{M}$ and
  $X \in \textnormal{Ob}(\mathcal{A})$ we will typically denote by
  $$V_{\mathcal{M}}: H_*(\mathcal{M}^{\leq \alpha}(X)) 
  \longrightarrow H_*(\mathcal{M}^{\leq \alpha + r}(X))$$ the map
  induced in homology by the chain map
  $$v_{\mathcal{M}}: \mathcal{M}^{\leq \alpha}(X) \longrightarrow
  \mathcal{M}^{\leq \alpha + r}(X), \quad b \longmapsto
  \mu_2^{\mathcal{M}}(e_X, b).$$ Here $r \in \mathbb{R}$ is chosen
  such that $\ua + \epsilon_2^{\mathcal{M}} \leq r$ so that
  $v_{\mathcal{M}}$ is well defined with the above given target. We
  will need to consider such maps for different values of $r$, and
  whenever a need to distinguish between them arises we will use
  additional ``decorations'' such as
  $V'_{\mathcal{M}}, V''_{\mathcal{M}}$ etc.

  Fix $\alpha \in \mathbb{R}$, $X \in
  \textnormal{Ob}(\mathcal{A})$. Since
  $$\mathcal{C}^{\leq \alpha}(X) = Cone \bigl( 
  \mathcal{M}_0^{\leq \alpha - \rho -\epsilon_1^f}(X) \xrightarrow{\;
    f_1 \;} \mathcal{M}_1^{\leq \alpha}(X) \bigr)$$ we have a long
  exact sequence in homology:
  $$\cdots \longrightarrow H_k(\mathcal{M}_1^{\leq \alpha}(X)) 
  \xrightarrow{\;\; \iota \;\;} H_k(\mathcal{C}^{\leq \alpha}(X))
  \xrightarrow{\;\; \pi \;\;} H_k(\mathcal{M}_0^{\leq \alpha - \rho
    -\epsilon_1^f}(X)) \longrightarrow \cdots$$ where $\iota$ and
  $\pi$ are the maps in homology induced by the inclusion
  $\mathcal{M}_1(X) \longrightarrow \mathcal{C}(X)$ and the projection
  $\mathcal{C}(X) \longrightarrow \mathcal{M}_0(X)$ respectively.

  Replacing $\alpha$ by $\alpha+\delta$ and by $\alpha+\kappa$ we
  obtain two similar long exact sequences. These three sequences are
  mapped on to the other via maps induced from the inclusions of the
  coming from raising the action level from $\alpha$ to
  $\alpha+\delta$ and then to $\alpha+\kappa$. In particular, the
  degree-$k$ chunks of these exact sequences gives the following
  commutative diagram with exact rows:

  \begin{equation} \label{eq:3-LES}
    \begin{gathered}
      \xymatrix{ H_k \bigl( \mathcal{M}_1^{\leq \alpha}(X) \bigr)
        \ar[r] \ar[d]_{V'_{\mathcal{M}_1} = i'_{\mathcal{M}_1}} &
        H_k(\mathcal{C}^{\leq \alpha}(X)) \ar[r]
        \ar[d]_{V'_{\mathcal{C}}}^{i'_{\mathcal{C}}} &
        H_k(\mathcal{M}_0^{\leq \alpha-\rho-\epsilon_1^f}(X))
        \ar[d]^{V'_{\mathcal{M}_0} = i'_{\mathcal{M}_0} } \\
        H_k(\mathcal{M}_1^{\leq \alpha + \delta}(X)) \ar[r]^{\iota}
        \ar[d]_{V''_{\mathcal{M}_1} = i''_{\mathcal{M}_1}} &
        H_k(\mathcal{C}^{\leq \alpha+\delta}(X)) \ar[r]^{\pi \quad \; }
        \ar[d]_{V''_{\mathcal{C}}}^{i''_{\mathcal{C}}} &
        H_k(\mathcal{M}_0^{\leq \alpha - \rho - \epsilon_1^f +
          \delta}(X)) \ar[d]^{V''_{\mathcal{M}_0} = i''_{\mathcal{M}_0}} \\
        H_k(\mathcal{M}_1^{\leq \alpha + \kappa}(X)) \ar[r] &
        H_k(\mathcal{C}^{\leq \alpha + \kappa}(X)) \ar[r] &
        H_k(\mathcal{M}_0^{\leq \alpha-\rho-\epsilon_1^f+\kappa}(X)) }
    \end{gathered}
  \end{equation}
  The maps $i'_{\mathcal{C}}$, $i''_{\mathcal{C}}$ are induced by the
  corresponding inclusions and similarly for
  $i'_{\mathcal{M}_0}, i''_{\mathcal{M}_0}, i'_{\mathcal{M}_1},
  i''_{\mathcal{M}_1}$. By assumption (and by the choices of $\delta$
  and $\kappa$) we have
  $$V'_{\mathcal{M}_j} = i'_{\mathcal{M}_j}, 
  \quad V''_{\mathcal{M}_j} = i''_{\mathcal{M}_j}, \; \text{for} \;
  j=0,1.$$ Note also that each of the maps $V'_{\mathcal{C}}$ and
  $i'_{\mathcal{C}}$ makes the above diagram commutative, and similarly
  for $V''_{\mathcal{C}}$ and $i''_{\mathcal{C}}$.

  Denote by
  $V'''_{\mathcal{C}}: H_k(\mathcal{C}^{\leq \alpha}(X))
  \longrightarrow H_k(\mathcal{C}^{\leq \alpha + \kappa})$ the map
  induced in homology by
  $v_{\mathcal{C}}: \mathcal{C}^{\leq \alpha}(X) \longrightarrow
  \mathcal{C}^{\leq \alpha+\kappa}(X)$, and by 
  $$i'''_{\mathcal{C}}: H_k(\mathcal{C}^{\leq \alpha}(X))
  \longrightarrow H_k(\mathcal{C}^{\leq \alpha + \kappa})$$ the map
  induced by the inclusion. Clearly we have:
  $$V'''_{\mathcal{C}} = i''_{\mathcal{C}} \circ V'_{\mathcal{C}} = 
  V''_{\mathcal{C}} \circ i'_{\mathcal{C}}.$$

  We will need soon the following proposition.
  \begin{prop} \label{p:v_M} Assume that
    $\mathcal{A} \in \hyperlink{h:asmp-ue}{\lbue(\zeta)}$. Let
    $\mathcal{M}$ be a weakly filtered $\mathcal{A}$-module with
    discrepancy $\leq \bme^{\mathcal{M}}$ and let
    $X \in \textnormal{Ob}(\mathcal{A})$. Then the chain maps
    $$v: \mathcal{M}(X) \longrightarrow \mathcal{M}(X),
    \quad v(b) := \mu_2^{\mathcal{M}}(e_X, b)$$ and
    $v \circ v : \mathcal{M}(X) \longrightarrow \mathcal{M}(X)$ are
    chain homotopic via a chain homotopy that shifts action by
    $\leq \max\{2\ua + \epsilon_3^{\mathcal{M}}, \zeta +
    \epsilon_2^{\mathcal{M}} \}$.
  \end{prop}
  We postpone the proof of this Proposition to later in this section
  and proceed with the proof of the current lemma.

  To prove the lemma, we need to show that
  $V'''_{\mathcal{C}}(x) = i'''_{\mathcal{C}}(x)$ for all
  $x \in H_k(\mathcal{C}^{\leq \alpha}(X))$.  To prove the latter
  equality, we first note that since both $V'_{\mathcal{C}}$ and
  $i'_{\mathcal{C}}$ make diagram~\eqref{eq:3-LES} commutative, we
  have:
  $$V'_{\mathcal{C}}(x) - i'_{\mathcal{C}}(x) \in \ker \pi = 
  \textnormal{image\,} \iota.$$ Now write
  $V'_{\mathcal{C}}(x) - i'_{\mathcal{C}}(x) = \iota(y)$ for some
  $y \in H_k(\mathcal{M}_1^{\leq \alpha + \delta}(X))$. As both
  $V''_{\mathcal{C}}$ and $i''_{\mathcal{C}}$ make
  diagram~\eqref{eq:3-LES} commutative we also have:
  $V''_{\mathcal{C}} \circ \iota (y) = i''_{\mathcal{C}} \circ
  \iota(y)$.  It follows that
  $$V''_{\mathcal{C}} 
  \bigl( V'_{\mathcal{C}}(x) - i'_{\mathcal{C}}(x) \bigr) =
  i''_{\mathcal{C}}(V'_{\mathcal{C}}(x) - i'_{\mathcal{C}}(x)).$$
  Applying Proposition~\ref{p:v_M} with $\mathcal{M} = \mathcal{C}$ we
  obtain
  $$V'''_{\mathcal{C}}(x) - V''_{\mathcal{C}} \circ i'_{\mathcal{C}}(x)
  = i''_{\mathcal{C}} \circ V'_{\mathcal{C}}(x) -
  i'''_{\mathcal{C}}(x).$$ Since
  $V'''_{\mathcal{C}} = V''_{\mathcal{C}} \circ i'_{\mathcal{C}} =
  i''_{\mathcal{C}} \circ V'_{\mathcal{C}}$ the lemma follows.
\end{proof}

It still remains to prove the preceding Proposition.
\begin{proof}[Proof of Proposition~\ref{p:v_M}]
  The $A_{\infty}$-identities for $\mathcal{M}$ (+ the fact that $e_X$
  is a cycle) imply that for every $b \in \mathcal{M}(X)$ we have:
  \begin{equation} \label{eq:ai-M} \mu_1^{\mathcal{M}}
    \mu_3^{\mathcal{M}}(e_X, e_X, b) - \mu_2^{\mathcal{M}}(e_X,
    \mu_2^{\mathcal{M}}(e_X,b)) + \mu_3^{\mathcal{M}}(e_X, e_X,
    \mu_1^{\mathcal{M}}(b)) +
    \mu_2^{\mathcal{M}}(\mu_2^{\mathcal{A}}(e_X, e_X), b) = 0.
  \end{equation}
  Since $\mathcal{A} \in \hyperlink{h:asmp-ue}{\lbue(\zeta)}$ we have
  $\mu_2^{\mathcal{A}}(e_X, e_X) = e_X + \mu_1^{\mathcal{A}}(c)$, for
  some $c \in C^{\leq \zeta}(X,X)$. Substituting this
  in~\eqref{eq:ai-M} together with the fact that
  $\mu_2^{\mathcal{M}}(\mu_1^{\mathcal{A}}(c), b) +
  \mu_2^{\mathcal{M}}(c, \mu_1^{\mathcal{M}}(b)) +
  \mu_1^{\mathcal{M}}\mu_2^{\mathcal{M}}(c,b) = 0$ yields:
  \begin{equation*}
    \mu_2^{\mathcal{M}}(e_X, \mu_2^{\mathcal{M}}(e_X,b)) -
    \mu_2^{\mathcal{M}}(e_X, b) = \mu_1^{\mathcal{M}} h(b) + h
    \mu_1^{\mathcal{M}}(b),
  \end{equation*}
  where
  $h(b) = \mu_3^{\mathcal{M}}(e_X,e_X,b) - \mu_2^{\mathcal{M}}(c,b)$.
  Clearly the chain homotopy $h$ shifts action by not more than
  $\max \{2\ua + \epsilon_3^{\mathcal{M}},
  \zeta+\epsilon_2^{\mathcal{M}} \}$.
\end{proof}

\subsection{The $\lambda$-map} \label{sb:l-map} Let $\mathcal{A}$ be
an $A_{\infty}$-category and $\mathcal{M}$ and
$\mathcal{A}$-module. Let $Y \in \textnormal{Ob}(\mathcal{A})$ and
denote by $\mdly$ the Yoneda module corresponding to $Y$.  Consider
the following map:
\begin{equation} \label{eq:lambda} 
  \begin{aligned}
    & \lambda: \mathcal{M}(Y) \longrightarrow \hom(\mdly,
    \mathcal{M}), \quad c \longmapsto \lambda(c) = \bigl(\lambda(c)_1,
    \lambda(c)_2, \ldots, \lambda(c)_d,
    \ldots\bigr), \\
    & \textnormal{where} \quad \lambda(c)_d \bigl(a_1, \ldots,
    a_{d-1}, b \bigr) = \mu_{d+1}^{\mathcal{M}}(a_1, \ldots, a_{d-1},
    b, c).
  \end{aligned}
\end{equation}
This map was defined by
Seidel~\cite[Section~(1l)]{Se:book-fukaya-categ} in the context of the
Yoneda embedding of $A_{\infty}$-categories. A straightforward
calculation shows that it is a chain map. We will refer to it from now
on as the {\em $\lambda$-map}.

Seidel proves in~\cite[Lemma~2.12]{Se:book-fukaya-categ} that, under
the additional assumptions that $\mathcal{A}$ and $\mathcal{M}$ are
h-unital, the $\lambda$-map is a quasi-isomorphism. Our goal is to
establish a weakly filtered analog of this result.

We begin with a technical assumption on a given object
$Y \in \textnormal{Ob}(\mathcal{A})$.

\newcommand{\lbur}{U^{R,e}} \newcommand{\lbul}{U^{e,L}}
\hypertarget{h:asmp-ur}{\noindent \textbf{Assumption~$\lbur$.}}
\label{l:assm-ur} Let $\kappa \geq \epsilon_2^{\mathcal{A}} + \ua$ be
a real number. We say that $Y$ satisfies Assumption~$\lbur(\kappa)$
(or $Y \in \lbur(\kappa)$ for short) if for every
$X \in \textnormal{Ob}(\mathcal{A})$ the map
$$C(X,Y) \longrightarrow C(X,Y), \quad b \longmapsto \mu_2(b, e_Y)$$
is chain homotopic to the identity via a chain homotopy $h_X$ that
shifts action by $\leq \kappa$. The superscript $R,e$ stand for
``Right''-multiplication with $e_Y$.

\medskip

We now define the right setting for the $\lambda$-map in the weakly
filtered case. Assume that $\mathcal{A}$ and $\mathcal{M}$ are both
weakly filtered with discrepancies $\leq \bmea$ and
$\leq \bme^{\mathcal{M}}$ respectively. Clearly $\mdly$ is also a
weakly filtered module with discrepancy $\leq \bmea$.

Without loss of generality we assume from now on that
$\bme^{\mathcal{M}} \geq \bmea$ so that $\mdly$ can be regarded also
as a weakly filtered module with discrepancy
$\leq \bme^{\mathcal{M}}$. (If needed, we can always increase
$\bme^{\mathcal{M}}$ and $\mathcal{M}$ will continue being weakly
filtered with discrepancy $\leq$ than the increased
$\bme^{\mathcal{M}}$.)

Let $\bmeh$ be any sequence that satisfies
Assumption~\hyperlink{h:asmp-e}{$\lbe$} and assume in addition that:
\begin{equation} \label{eq:eh-emm} \eh_d \geq
  \epsilon^{\mathcal{M}}_{d+1} \; \forall \, d.
\end{equation}

Under these assumptions, the $\lambda$-map restricts to maps:
\begin{equation} \label{eq:lambda-alpha} \lambda: \mathcal{M}^{\leq
    \alpha}(Y) \longrightarrow \hom^{\leq \alpha; \bmeh}(\mdly,
  \mathcal{M}),
\end{equation}
defined for all $\alpha \in \mathbb{R}$.
Since $\bmeh$ satisfies Assumption~\hyperlink{h:asmp-e}{$\lbe$}, the
right-hand side of~\eqref{eq:lambda-alpha} is a chain complex with
respect to $\mu_1^{\tmod}$ and the $\lambda$-map
from~\eqref{eq:lambda-alpha} is a chain map.

Let $\mathcal{A}$, $\mathcal{M}$, $Y$ and $\mdly$ be as at the
beginning of~\S\ref{sb:l-map}. Fix also $\bme^{\mathcal{M}}$, $\bmeh$
as above.  For every $\alpha \in \mathbb{R}$ set
$\mathcal{H}^{\leq \alpha} := \hom^{\leq \alpha; \bmeh}(\mdly,
\mathcal{M})$ and for every $k \geq 1$:
\begin{equation*}
  Q^{\leq \alpha}_{(k)} := \bigl\{ t \in \mathcal{H}^{\leq \alpha} 
  \mid t_1 = \cdots = t_k = 0 \bigr \}, \quad 
  \mathcal{H}^{\leq \alpha}_{(k)} := 
  \mathcal{H}^{\leq \alpha} \big/ Q^{\leq \alpha}_{(k)}.
\end{equation*}
As explained above, the $\lambda$-map restricts to maps
$\lambda^{\alpha} : \mathcal{M}^{\leq \alpha}(Y) \longrightarrow
\mathcal{H}^{\leq \alpha}$ for every $\alpha \in \mathbb{R}$ and we
also have the following induced maps:
$$\lambda^{\alpha}_{(k)} : \mathcal{M}^{\leq \alpha}(Y) 
\longrightarrow \mathcal{H}^{\leq \alpha}_{(k)}$$ defined by composing
$\lambda^{\alpha}$ with the quotient map
$\pi_{(k)}: \mathcal{H}^{\leq \alpha} \longrightarrow
\mathcal{H}^{\leq \alpha}_{(k)}$.

\begin{prop} \label{p:lmap} Suppose that $\mathcal{A}$ is h-unital in
  the weakly filtered sense with discrepancy of units
  $\leq u^{\mathcal{A}}$. Let $\kappa \in \mathbb{R}$ such that
  $\kappa \geq u^{\mathcal{A}} + \epsilon_2^{\mathcal{M}}, \ua +
  \epsilon_2^{\mathcal{A}}$ and assume that
  $\mathcal{M} \in \hyperlink{h:asmp-H}{\lbh_w(\kappa)}$ and
  $Y \in \hyperlink{h:asmp-ur}{\lbur(\kappa)}$.  Let
  $\alpha \in \mathbb{R}$. Fix $1 \leq \ell \in \mathbb{Z}$ and put
  $\alpha' := \alpha + \ell \kappa$. Consider the following
  commutative diagram in cohomology:
  \begin{equation} \label{eq:H-M} 
    \begin{gathered}
      \xymatrix{ H^*
        \bigl(\mathcal{M}^{\leq \alpha}(Y) \bigr) \ar[r]^{\quad
          \lambda^{\alpha}_*} \ar[d]_{i^H} &
        H^* \bigl(\mathcal{H}^{\leq \alpha} \bigr) \ar[d]^{i^H} \\
        H^* \bigl(\mathcal{M}^{\leq \alpha'}(Y) \bigr) \ar[r]^{\quad
          \lambda^{\alpha'}_*} \ar[d]_{\id}&
        H^* \bigl(\mathcal{H}^{\leq \alpha'} \bigr) \ar[d]^{\pi^H_{(\ell)}}\\
        H^* \bigl(\mathcal{M}^{\leq \alpha'}(Y) \bigr) \ar[r]^{\quad
          \lambda^{\alpha'}_{(\ell)*}} & H^*
        \bigl(\mathcal{H}_{(\ell)}^{\leq \alpha'}\bigr) }
      \end{gathered}
  \end{equation}
  where the $i^H$ maps are induced by the inclusions
  $\mathcal{M}^{\leq \alpha}(Y) \longrightarrow \mathcal{M}^{\leq
    \alpha'}(Y)$ and
  $\mathcal{H}^{\leq \alpha} \longrightarrow \mathcal{H}^{\leq
    \alpha'}$ and $\pi^H_{(\ell)}$ is induced by the projection
  $\pi_{(\ell)}: \mathcal{H}^{\leq \alpha'} \longrightarrow
  \mathcal{H}^{\leq \alpha'}_{(\ell)}$. Then for every
  $b \in H^*(\mathcal{H}^{\leq \alpha})$ there exists
  $c \in H^*(\mathcal{M}^{\leq \alpha'}(Y))$ such that
  $\pi^H_{(\ell)} \circ i^H(b) = \lambda^{\alpha'}_{(\ell) *}(c)$. In
  other words, for every cycle $\beta \in \mathcal{H}^{\leq \alpha}$
  there exists a cycle $\gamma \in \mathcal{M}^{\leq \alpha'}(Y)$ such
  that
  \begin{equation} \label{eq:beta-gamma} \beta = \lambda(\gamma) +
    \mu_1^{\tmod}(\theta) + \tau,
  \end{equation}
  for some $\theta \in \mathcal{H}^{\leq \alpha'}$ and some cycle
  $\tau = (\tau_1, \tau_2, \ldots) \in \mathcal{H}^{\leq \alpha'}$
  with $\tau_1 = \cdots = \tau_{\ell} = 0$.
\end{prop}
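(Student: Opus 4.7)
The plan is to mimic Seidel's proof of the unfiltered quasi-isomorphism of the Yoneda $\lambda$-map~\cite[Lemma~2.12]{Se:book-fukaya-categ}, carried out iteratively so that each step raises the action by $\kappa$. Invoking Remark~\ref{r:eh}(\ref{i:rems-eh}), I first reduce without loss of generality to the case $\eh_1 = 0$ (at the expense of replacing $\alpha$ by $\alpha + \eh_1$). The base case $\ell = 0$ is vacuous. The candidate for $\gamma$ is furnished by the chain map $\mu : \mathcal{H}^{\leq \alpha} \to \mathcal{M}^{\leq \alpha + \ua}(Y)$, $\mu(t) := t_1(e_Y)$: for a cycle $\beta$, set $\gamma := \mu(\beta) = \beta_1(e_Y)$, which is automatically a cycle (since $\beta_1$ is a chain map and $e_Y$ is a cycle) and lies in $\mathcal{M}^{\leq \alpha + \ua}(Y) \subseteq \mathcal{M}^{\leq \alpha'}(Y)$, because $\kappa \geq \ua$. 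The task then reduces to constructing $\theta$ and $\tau$.

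I would construct $\theta$ by induction on $k = 0, 1, \ldots, \ell$: at stage $k$, produce $\theta^{(k)} \in \mathcal{H}^{\leq \alpha + k\kappa}$ such that the residual $\beta^{(k)} := \beta - \lambda(\gamma) - \mu_1^{\tmod}(\theta^{(k)})$ has $\beta^{(k)}_1 = \cdots = \beta^{(k)}_k = 0$. Take $\theta^{(0)} = 0$. For $k = 1$, the key computation expands the module cycle identity $\mu_1^{\tmod}(\beta)_2(b, e_Y) = 0$, invokes the $\lbur(\kappa)$-homotopy $\mu_2^{\mathcal{A}}(b, e_Y) = b + \mu_1^{\mathcal{A}}(h(b)) + h(\mu_1^{\mathcal{A}}(b))$, and uses that $\beta_1$ is a chain map, yielding
$$\lambda(\gamma)_1(b) - \beta_1(b) = \mu_1^{\mathcal{M}}(\phi(b)) + \phi(\mu_1^{\mathcal{A}}(b)), \quad \phi(b) := \beta_2(b, e_Y) + \beta_1(h(b));$$
setting $\theta^{(1)}$ to have first component $\phi$ (and higher components prescribed consistently with $\bmeh$) kills $\beta^{(1)}_1$. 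For the inductive step $k \to k+1$, one applies the analogous identity $\mu_1^{\tmod}(\beta^{(k)})_{k+2}(a_1, \ldots, a_k, b, e_Y) = 0$; the vanishing of lower components of $\beta^{(k)}$ simplifies most terms, the $\lbur(\kappa)$-homotopy handles the last $\mathcal{A}$-slot via the unit, and $\lbh_w(\kappa)$ takes care of the term $\mu_2^{\mathcal{M}}(e_Y, -)$ that arises from the module multiplication with $\gamma$. This isolates $\beta^{(k)}_{k+1}$ as a $\mu_1^{\tmod}$-boundary modulo a ``tail'' involving $\beta^{(k)}_{k+1}$ and $\beta^{(k)}_{k+2}$ paired with units; the tail is packaged as the $(k+1)$-st component of $\delta\theta_k := \theta^{(k+1)} - \theta^{(k)}$, with lower components set to $0$.

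The main obstacle lies in the delicate bookkeeping of action shifts and discrepancies throughout the iteration. Each step must cost exactly $\kappa$ in action (and not more), despite involving a product with $e_Y$ (shift $\ua$) and a $\lbur$ or $\lbh_w$ homotopy (shift $\leq \kappa - \ua$); this is precisely why the hypothesis requires $\kappa \geq \ua + \epsilon_2^{\mathcal{A}}, \ua + \epsilon_2^{\mathcal{M}}$. Since Assumption~$\lbh_w$ is only a homology-level statement (not a chain-level homotopy), chain-level representatives of the homotopy must be chosen, with the resulting boundary terms absorbed into subsequent $\delta\theta_k$'s. Finally, verifying that $\theta^{(k)}$ lies in $\mathcal{H}^{\leq \alpha + k\kappa}$ with all higher components respecting the prescribed discrepancy sequence $\bmeh$ requires systematic use of Assumption~$\lbe$, which ensures that discrepancies compose subadditively along the inductive composition of corrections. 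Setting $\theta := \theta^{(\ell)}$ and $\tau := \beta^{(\ell)}$ yields the claimed decomposition~\eqref{eq:beta-gamma}.
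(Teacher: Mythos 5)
Your proposal trades the paper's spectral-sequence argument for an explicit inductive construction of $\theta$ and the residuals $\beta^{(k)}$, and the starting moves are correct: taking $\gamma := \beta_1(e_Y)$ and, in the base step, extracting $\phi(b)=\beta_2(b,e_Y)+\beta_1(h(b))$ from the identity $\mu_1^{\tmod}(\beta)_2(b,e_Y)=0$ together with $\lbur(\kappa)$ is exactly the right computation, and it does reproduce what the paper packages as the chain homotopy $S$ on the $E_1$-page of the mapping cone of $\lambda$. Conceptually the two proofs are equivalent: your induction ``unrolls'' the spectral sequence of the length filtration $F^\bullet$ from the paper's proof, and that is a legitimate (and more concrete) way to organize things.

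However, the description of the inductive step has a real error. You state that $\delta\theta_k := \theta^{(k+1)} - \theta^{(k)}$ has all components of order $\leq k$ equal to $0$, with only the $(k+1)$-st component carrying the ``tail''. With that choice, $(\mu_1^{\tmod}(\delta\theta_k))_{k+1}$ reduces to the \emph{internal} differential applied to $(\delta\theta_k)_{k+1}$ (all terms involving $\mu_{\geq 2}^{\mathcal{M}}$ and $\mu_{\geq 2}^{\mathcal{A}}$ drop out, because they pair against lower components of $\delta\theta_k$, which you have set to zero). So killing $\beta^{(k)}_{k+1}$ with such a $\delta\theta_k$ would require $\beta^{(k)}_{k+1}$ to be a \emph{$\partial_0$-boundary}, and there is no reason for that: $\beta^{(k)}_{k+1}$ represents a generally nonzero class on the $E_1$-page. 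What the unitality hypothesis buys you (after increasing the action by $\kappa$) is that $\beta^{(k)}_{k+1}$ becomes a $\partial_1$-boundary, i.e.\ equals $\partial_1(g) + \partial_0(\cdot)$ with $g(a_2,\ldots,a_k,b) = \beta^{(k)}_{k+1}(a_2,\ldots,a_k,b,e_Y)$ (plus the $\lbur$-corrections). To realize $\partial_1(g)$ as $(\mu_1^{\tmod}(\delta\theta_k))_{k+1}$ you must set $(\delta\theta_k)_k := g$ — nonzero — and only then does $\mu_2^{\mathcal{M}}(a_1,(\delta\theta_k)_k(a_2,\ldots,b))$ produce the needed $\partial_1$-type term. (That $(\delta\theta_k)_k$ is a $\partial_0$-cycle — so that $\beta^{(k+1)}_k$ stays zero — follows from $\mu_1^{\tmod}(\beta^{(k)})_{k+1}(a_2,\ldots,a_k,b,e_Y)=0$ and the vanishing of the lower $\beta^{(k)}_j$.) Your base step $k=0$ conceals this because the $0$-th component of a pre-module homomorphism does not exist; the issue appears from $k=1$ onward.

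Two smaller points. The invocation of $\lbh_w(\kappa)$ is misplaced: in the paper's proof $\lbh_w$ is what makes the chain homotopy work at $p=0$ (the $\mathcal{M}(Y)$-column of the cone, where the relevant identity is $[\mu_2^{\mathcal{M}}(e_Y,m)]=[m]$), a step your argument sidesteps by fixing $\gamma$ from the start; you never actually encounter $\mu_2^{\mathcal{M}}(e_Y,-)$ in the computation you outline, and the inductive step you describe uses only $\lbur$. And the action bound on $\gamma$ is not $\alpha+\ua$: since $\beta_1$ shifts action by $\leq \alpha+\eh_1$, one gets $\gamma=\beta_1(e_Y)\in\mathcal{M}^{\leq \alpha+\eh_1+\ua}(Y)$, so the containment $\subseteq \mathcal{M}^{\leq\alpha'}(Y)$ requires $\eh_1+\ua\leq\ell\kappa$ rather than merely $\kappa\geq\ua$. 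Your proposed reduction to $\eh_1 = 0$ does not help, because it relabels $\alpha$ to $\alpha+\eh_1$ and the same term reappears. This is exactly the sort of bookkeeping you flag as the ``main obstacle''; as written, the claim is not established.
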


\begin{proof}
  The proof below follows the general scheme of the proof of
  Lemma~2.12 from of~\cite{Se:book-fukaya-categ}, however the weakly
  filtered setting entails significant adjustments with respect
  to~\cite{Se:book-fukaya-categ}.

  We begin with some preparations regarding the weakly filtered
  version of the $\lambda$-map. Let $\rho \in \mathbb{R}$,
  $d \in \mathbb{N}$. Recall that we have the chain map
  $\lambda^{\rho}: \mathcal{M}^{\leq \rho}(Y) \longrightarrow
  \mathcal{H}_{(d)}^{\leq \rho}$ and consider its mapping cone:
  $$\mathcal{K}_{(d)}^{\rho} = Cone \bigl( \mathcal{M}^{\leq \rho}(Y)
  \xrightarrow{\; \lambda^{\rho} \;} \mathcal{H}_{(d)}^{\leq
    \rho}\bigr).$$ Define a decreasing filtration
  $F^r \mathcal{K}_{(d)}^{\rho}$, $r \in \mathbb{Z}_{\geq 0}$ on this
  chain complex by setting
  \begin{equation} \label{eq:FK}
    F^r \mathcal{K}_{(d)}^{\rho} =
    \begin{cases}
      \mathcal{K}_{(d)}^{\rho}, & \text{if $r = 0$;} \\
      \mathcal{H}_{(d)}^{\leq \rho}, & \text{if $r=1$;} \\
      \bigl\{f \in \mathcal{H}_{(d)}^{\leq \rho} \mid f_1 = \cdots =
      f_{r-1} = 0\bigr\}, & \text{if $2\leq r$.}
    \end{cases}
  \end{equation}
  Note that this is a bounded filtration and we actually have
  $F^r\mathcal{K}_{(d)}^{\rho} = 0$ for every $r \geq d+1$.

  Consider now the cohomological spectral sequence
  $\{E_r^{p,q}(\rho), \partial_r\}_{r \in \mathbb{Z}_{\geq 0}}$
  associated to the filtration $F^{\bullet}$. Since the filtration is
  bounded the spectral sequence converges to
  $H^*(\mathcal{K}_{(d)}^{\rho})$.

  Note also that for $\rho \leq \rho'$ we have an obvious inclusion of
  chain complexes
  $i: \mathcal{K}_{(d)}^{\rho} \longrightarrow
  \mathcal{K}_{(d)}^{\rho'}$. Moreover, this inclusion preserves the
  filtrations $F^{\bullet}$ on the corresponding chain
  complexes. Therefore, $i$ induces a map of spectral
  sequences
  $$i_r^E: E_r^{p,q}(\rho) \longrightarrow E_{r+1}^{p,q}(\rho'), \quad
  \forall \; r \geq 0, \; \forall \; p,q.$$

  We now describe more explicitly the first two pages of
  $E_r^{p,q}(\rho)$. A simple calculation gives the following
  description of the $E_0$-page of this spectral sequence. We have
  $E_0^{p,\bullet}(\rho) = 0$ for $p > d$ and for $p<0$. Next we have:
  $E_0^{0,\bullet}(\rho) = \mathcal{M}^{\leq \rho}(Y)^{\bullet}$,
  where the superscript $^{\bullet}$ stands here for the
  (cohomological) grading of the chain complex
  $\mathcal{M}^{\leq \rho}(Y)$. The differential
  $\partial_0: E_0^{0,q}(\rho) \longrightarrow E_0^{0, q+1}(\rho)$ is
  simply $\mu_1^{\mathcal{M}}$.

  The rest of the columns,
  $1 \leq p \leq d$, are:
  \begin{align} \label{eq:E_0^p}
    & E_0^{p,\bullet}(\rho) = \\
    & \prod_{X_0, \ldots, X_{p-1} \in \textnormal{Ob}(\mathcal{A})}
      \hom_{R}^{\leq \rho+\epsilon_p^h; \bullet} \bigl( C(X_0, X_1)
      \otimes \cdots \otimes C(X_{p-2}, X_{p-1}) \otimes C(X_{p-1},
      Y), \mathcal{M}(X_0) \bigr), \notag
  \end{align}
  where the superscript $^{\bullet}$ stand again for (cohomological)
  grading and $\hom_R^{\leq \rho+\epsilon_p^h}$ stands for $R$-linear
  homomorphism that shift action by not more than
  $\rho+\epsilon_p^h$. (Recall that $\bmeh$ has been fixed at the
  beginning of~\S\ref{sb:l-map} and is used in the definitions of
  $\mathcal{H}^{\leq \rho}$ and $\mathcal{H}^{\leq \rho}_{(d)}$.)  The
  differentials
  $\partial_0: E_0^{p,q}(\rho) \longrightarrow E_0^{p,q+1}(\rho)$ for
  $1\leq p\leq d$ are induced in a standard way from
  $\mu_1^{\mathcal{A}}$ and $\mu_1^{\mathcal{M}}$.

  The $E_1$-page is consequently the following:
  $E_1^{p,\bullet}(\rho) = 0$ for all $p > d$ and for $p<0$. For $p=0$
  we have $E_1^{0,q}(\rho) = H^q(\mathcal{M}^{\leq \rho}(Y))$ for all
  $q$. And for $1\leq p\leq d$ we have:
  \begin{align} \label{eq:E_1^pq}
    & E_1^{p,q}(\rho) = \\ 
    & \prod_{X_0, \ldots, X_{p-1} \in
      \textnormal{Ob}(\mathcal{A})} H^q \Bigl( \hom_{R}^{\leq
      \rho+\epsilon_p^h} \bigl( C(X_0, X_1) \otimes \cdots \otimes
    C(X_{p-2}, X_{p-1}) \otimes C(X_{p-1}, Y), \mathcal{M}(X_0) \bigr)
    \Bigr). \notag
  \end{align}

  We now describe the differentials
  $\partial_1: E_1^{p,q} \longrightarrow E_1^{p+1,q}$ on the
  $E_1$-page.  We start with $p=0$. Let
  $[c] \in E_1^{0,q} = H^q(\mathcal{M}^{\leq \rho}(Y))$, where $c$ is
  a cycle.  Then
  $$\partial_1 [c] \in E_1^{1,q} = 
  \prod_{X \in \textnormal{Ob}(\mathcal{A})} H^q \Bigl(\hom_R^{\leq
    \rho+\epsilon_1^h} \bigl(C(X,Y), \mathcal{M}(X) \bigr) \Bigr)$$ is
  the cycle represented by the following homomorphism:
  $$C(X,Y) \ni b \longmapsto \mu_2^{\mathcal{M}}(b,c) \in \mathcal{M}(X).$$
  It is easy to check that this homomorphism is a cycle and that it
  shifts action by not more than $\rho+\epsilon_1^h$. (The latter hold
  because $\epsilon_1^h \geq \epsilon_2^{\mathcal{M}}$
  by~\eqref{eq:eh-emm}.)  The formula for $\partial_1$ for
  $1\leq p \leq d-1$ is the following. Let $f$ be an element in the
  RHS of~\eqref{eq:E_0^p} which is a cycle. Then
  $\partial_1 [f] =[g] \in E_1^{p+1,q}$, where $g$ is a collection of
  $R$-linear homomorphism
  $$g: C(X_0, X_1) \otimes \cdots \otimes C(X_{p-1}, X_{p}) \otimes
  C(X_{p}, Y) \longrightarrow \mathcal{M}(X_0),$$ defined for all
  objects $X_0, \ldots, X_p \in \textnormal{Ob}(\mathcal{A})$ and is
  given by the following formula:
  \begin{equation} \label{eq:del_1-2}
    \begin{aligned}
      g(a_1, \ldots, a_p, b) \longmapsto & \pm
      \mu_2^{\mathcal{M}}(a_1, f(a_2, \ldots, a_p, b)) \pm f(a_1,
      \ldots, a_{p-1},
      \mu_2^{\mathcal{A}}(a_p, b)) \\
      & + \sum_{n=0}^{p-2} \pm f(a_1, \ldots,
      \mu_2^{\mathcal{A}}(a_{n+1}, a_{n+2}), \ldots, b).
    \end{aligned}
  \end{equation}
  This follows from a direct calculation. See the proof of Lemma~2.12
  in~\cite{Se:book-fukaya-categ} for the precise signs in
  formula~\eqref{eq:del_1-2}. Note also that $g$ shifts action by
  $\leq \max \{\rho + \epsilon^h_{p-1} + \epsilon^{\mathcal{M}}_2,
  \rho + \epsilon^{\mathcal{A}}_{p-1}\} \leq \rho + \epsilon^h_p$,
  where the latter inequality follows from
  Assumption~\hyperlink{h:asmp-e}{$\lbe(\bmemm, \bmea)$}.
  
  Consider now the inclusion
  $i: \mathcal{K}_{(d)}^{\rho} \longrightarrow \mathcal{K}_{(d)}^{\rho
    + \kappa}$. As indicated earlier this induces a map of spectral
  sequences $i^E: E(\rho) \longrightarrow E(\rho+\kappa)$, namely
  $$i^E_r: E_r^{p,q}(\rho) \longrightarrow 
  E_r^{p,q}(\rho+ \kappa), \; \forall r\geq 0.$$

  \begin{claim} \label{cl:homotopic} For every $q$ the chain map
    $i_1^E : E_1^{\bullet, q}(\rho) \longrightarrow E_1^{\bullet,
      q}(\rho+\kappa)$ is chain homotopic to $0$ in the degree range
    $0 \leq \bullet \leq d-1$. In other words, for every $q$ there
    exist homomorphisms
    $S^{p,q}: E_1^{p, q}(\rho) \longrightarrow E_1^{p-1,
      q}(\rho+\kappa)$, defined for all $p$, such that
    \begin{equation} \label{eq:i_1-S_pq}
      i_1^E|_{E_1^{p,q}(\rho)} = \partial_1 \circ S^{p,q} + S^{p+1,q}
      \circ \partial_1, \; \; \forall \, 0\leq p \leq d-1.
    \end{equation}
  \end{claim}
  We postpone the proof of this claim till later in this section and
  continue now with the proof of Proposition~\ref{p:lmap}.

  Claim~\ref{cl:homotopic} implies that
  $i^E_2: E_2^{p,q}(\rho) \longrightarrow E_2^{p,q}(\rho + \kappa)$ is
  the $0$ map for every $0\leq p \leq d-1$ and every $q$. It follows
  that the same holds for the maps
  $i^E_r: E_r^{p,q}(\rho) \longrightarrow E_r^{p,q}(\rho+\kappa)$ for
  every $r \geq 2$.

  Since both the spectral sequences converge after a finite number of
  pages (in fact they collapse at page $r=d+1$) we conclude that
  $i_{\infty}^E : E_{\infty}^{p,q}(\rho) \longrightarrow
  E_{\infty}^{p,q}(\rho+\kappa)$ is $0$ for all $0\leq p \leq d-1$ and
  all $q$. Denote by $F^{\bullet} H^*(\mathcal{K}_{(d)}^{\rho})$ the
  filtration on $H^*(\mathcal{K}_{(d)}^{\rho})$ induced by
  $F^{\bullet} \mathcal{K}_{(d)}^{\rho}$. Since
  $$E_{\infty}^{p,q}(\rho) = F^p H^{p+q}(\mathcal{K}_{(d)}^\rho) /
  F^{p+1}H^{p+q}(\mathcal{K}_{(d)}^{\rho}),$$ and similarly for
  $E_{\infty}^{p,q}(\rho+\kappa)$, we have proved the following:
  \begin{cor} \label{c:i^H-map} The map
    $i^H: H^n(\mathcal{K}_{(d)}^{\rho}) \longrightarrow
    H^n(\mathcal{K}_{(d)}^{\rho+\kappa})$ induced in homology by the
    inclusion
    $i: \mathcal{K}_{(d)}^{\rho} \longrightarrow
    \mathcal{K}_{(d)}^{\rho+\kappa}$ sends
    $F^pH^n(\mathcal{K}_{(d)}^{\rho})$ to
    $F^{p+1}H^n(\mathcal{K}_{(d)}^{\rho+\kappa})$ for every $n$ and
    $0 \leq p \leq d-1$.
  \end{cor}

  We are now in position to conclude the proof of
  Proposition~\ref{p:lmap}. Fix the parameters $\alpha$, $\ell$ and
  $\alpha'$ as in the statement of the proposition. Choose
  $d \gg \ell$ and apply what we have proved above to
  $\mathcal{K}_{(d)}^{\alpha}$ (i.e. take $\rho =
  \alpha$). Corollary~\ref{c:i^H-map}, applied with $p=0$, implies
  that $i^H$ maps $H^n(\mathcal{K}_{(d)}^{\alpha})$ to
  $F^1 H^n(\mathcal{K}_{(d)}^{\alpha+\mathcal{K}^{\mathcal{M}}})$ for
  all $n$. We now apply the Corollary~\ref{c:i^H-map} again, this time
  with $p=1$, $\rho = \alpha + \mathcal{K}^{\mathcal{M}}$ and the
  inclusion
  $\mathcal{K}_{(d)}^{\alpha+\mathcal{K}^{\mathcal{M}}}
  \longrightarrow
  \mathcal{K}_{(d)}^{\alpha+2\mathcal{K}^{\mathcal{M}}}$ and together
  with the previous conclusion infer that $i^H$ maps
  $H^n(\mathcal{K}_{(d)}^{\alpha})$ to
  $F^2 H^n(\mathcal{K}_{(d)}^{\alpha+2\mathcal{K}^{\mathcal{M}}})$ for
  all $n$. (By a slight abuse of notation we have denoted here by the
  same symbol, $i^H$, the maps induced in homology by the following
  different inclusions:
  $\mathcal{K}_{(d)}^{\rho} \longrightarrow
  \mathcal{K}_{(d)}^{\rho+\kappa}$,
  $\mathcal{K}_{(d)}^{\rho+\mathcal{K}^{\mathcal{M}}} \longrightarrow
  \mathcal{K}_{(d)}^{\rho+2\kappa}$ and
  $\mathcal{K}_{(d)}^{\rho} \longrightarrow
  \mathcal{K}_{(d)}^{\rho+2\kappa}$. Below we will continue with this
  notation.)

  Applying the same argument over and over again, $\ell$ times, we
  conclude that the map
  $i^H: H^n(\mathcal{K}_{(d)}^{\alpha}) \longrightarrow
  H^n(\mathcal{K}_{(d)}^{\alpha+ \ell \kappa})$ induced by the
  inclusion
  $\mathcal{K}_{(d)}^{\alpha} \longrightarrow
  \mathcal{K}_{(d)}^{\alpha+ \ell \kappa}$ maps
  $H^n(\mathcal{K}_{(d)}^{\alpha})$ to
  $F^{\ell}H^n(\mathcal{K}_{(d)}^{\alpha+\ell \kappa})$.

  Let now $\beta \in \mathcal{H}^{\leq \alpha}$ be a cycle and denote
  by $\overline{\beta}$ its image in
  $\mathcal{H}_{(d)}^{\leq \alpha'}$, where
  $\alpha' = \alpha + \ell \kappa$. Consider the cycle
  $(0, \overline{\beta}) \in \mathcal{K}_{(d)}^{\alpha'}$. By what we
  have proved before we have
  $[(0, \overline{\beta})] \in F^{\ell}
  H^*(\mathcal{K}_{(d)}^{\alpha'})$.

  It follows that there exists
  $\tau' \in \hom^{\leq \alpha'; \bmeh}(\mathscr{Y}, \mathcal{M})$
  with $\tau'_1 = \cdots = \tau'_{\ell} = 0$ such that
  $[(0, \overline{\beta})] = [(0, \tau')]$ in
  $H^*(\mathcal{K}_{(d)}^{\alpha'})$. Therefore, there exist
  $\gamma \in \mathcal{M}^{\leq \alpha'}(Y)$ and
  $\theta \in \hom^{\leq \alpha'; \bmeh}(\mathscr{Y}, \mathcal{M})$
  such that
  $$(0, \overline{\beta}) = (0, \tau') + 
  \bigl( \mu_1^{\mathcal{M}}(\gamma),
  \lambda_{(d)}^{\alpha'}(\gamma)+\mu_1^{\text{hom}}(\theta) \bigr)$$
  in $\mathcal{K}_{(d)}^{\alpha'}$.  In order to lift the last
  equation from $\mathcal{K}_{(d)}^{\alpha'}$ to
  $Cone \bigl( \mathcal{M}^{\leq \alpha'}(Y) \xrightarrow{\;
    \lambda^{\alpha'} \;} \mathcal{H}^{\leq \alpha'}\bigr)$ we can
  correct the terms beyond order $d$ if necessary by replacing $\tau'$
  with a suitable $\tau$ that coincides with $\tau'$ up to order $d$
  (recall that $d \gg \ell$).

  Summing up, we have proved that there exists a cycle
  $\gamma \in \mathcal{M}^{\leq \alpha'}(Y)$, and a pre-module
  homomorphism
  $\theta \in \hom^{\leq \alpha'; \bmeh}(\mathscr{Y}, \mathcal{M})$
  such that
  $$\beta = \lambda(\gamma) + \mu_1^{\text{mod}}(\theta) + \tau,$$
  where
  $\tau \in \hom^{\leq \alpha'; \bmeh}(\mathscr{Y}, \mathcal{M})$ is a
  cycle with $\tau_1 = \cdots = \tau_{\ell} = 0$, as claimed by the
  Proposition. (The statement in the Proposition concerning
  diagram~\eqref{eq:H-M} is a rephrasing of what we have just proved.)

  This concludes the proof of Proposition~\ref{p:lmap}, modulo the
  proof of Claim~\ref{cl:homotopic} which we will carry out next.
\end{proof}

\begin{proof}[Proof of Claim~\ref{cl:homotopic}]
  Fix $q$. We define the chain homotopy $S^{\bullet, q}$ as
  follows. Define $S^{0,q}=0$. (Note that $E_1^{-1,q}(\rho) = 0$.)
  Next, to define $S^{1,q}$, let
  $$f \in E_0^{1,q}(\rho) = \prod_{X \in \textnormal{Ob}(\mathcal{A})} 
  \hom_R^{\leq \rho+\epsilon_1^h} \bigl( C(X,Y), \mathcal{M}(X)
  \bigr)$$ be a $\partial_0$-cycle. We define
  $$S^{1,q}[f] := [f(e_Y)] \in E_1^{0,q}(\rho + \kappa) = 
  H^q(\mathcal{M}^{\leq \rho + \epsilon_1^h + \kappa}(X)).$$ Since
  $\kappa \geq \ua$, $f(e_Y)$ indeed belongs to
  $E_0^{0,q}(\rho + \kappa)$. Moreover, a straightforward calculation
  shows that $f(e_Y)$ is a $\partial_0$-cycle and that its homology
  class $[f(e_Y)]$ depends only on the homology class
  $[f] \in E_1^{1,q}(\rho)$.

  for the range of degrees $2 \leq p \leq d$ we define $S^{p,q}$ by a
  similar formula: let $f \in E_0^{p,q}(\rho)$, i.e. a collection of
  $R$-linear homomorphism as in~\eqref{eq:E_0^p}. Assume that $f$ is a
  $\partial_0$-cycle. Define $S^{p,q}[f]$ to be the homology class
  $[g] \in E_1^{p-1,q}(\rho + \kappa)$ of the element
  $g \in E_0^{p-1,q}(\rho+\kappa)$ given by:
  \begin{equation*}
    \begin{aligned}
      & g(a_1, \ldots, a_{p-2}, b) = f(a_1, \ldots, a_{p-2}, b, e_Y), \\
      & \forall \; a_i \in C(X_{i-1}, X_i), \; i=1, \ldots, p-2,
      \quad b \in C(X_{p-2}, Y).
    \end{aligned}
  \end{equation*}
  Since $\kappa \geq \ua$, we have $g \in E_0^{p-1,q}(\rho+\kappa)$. A
  straightforward calculation shows that $g$ is a $\partial_0$-cycle and
  moreover its homology class, $[g] \in E_1^{p-1,q}(\rho + \kappa)$
  depends only on the homology class $[f]$ of $f$. This concludes the
  definition of the maps $S^{p,q}$. (For $p$'s outside of the range
  $0,\ldots, d$ we can define $S^{p,q}$ in an arbitrary way.)

  We verify now the identity~\eqref{eq:i_1-S_pq}. We begin with
  $2 \leq p \leq d-1$. Let $f \in E_0^{p,q}(\rho)$ be a cycle. A
  straightforward calculation shows that
  $(\partial_1 S^{p,q} + S^{p+1,q} \partial_1 )[f] = [\widetilde{f}],$
  where
  $$\widetilde{f}(a_1, \ldots, a_{p-1}, b) = f(a_1, \ldots, a_{p-1},
  \mu_2^{\mathcal{A}}(b, e_Y)).$$

  We claim that $[\widetilde{f}] = [f]$ in
  $E_1^{p,q}(\rho+\kappa)$. Indeed, since
  $Y \in \hyperlink{h:asmp-ur}{\lbur(\kappa)}$ there exists a chain
  homotopy $h_{X_{p-1}}: C(X_{p-1}, Y) \longrightarrow C(X_{p-1}, Y)$
  that shifts action by $\leq \kappa$ such that
  $$\mu_2(b, e_Y) = b + h_{X_{p-1}} \mu_1^{\mathcal{A}}(b) + 
  \mu_1^{\mathcal{A}} h_{X_{p-1}}(b), \; \; \forall \; b \in
  C(X_{p-1},Y).$$ Define $\psi \in E_0^{p,q-1}(\rho+\kappa)$ by
  $$\psi(a_1, \ldots, a_{p-1}, b) := f(a_1, \ldots, a_{p-1}, 
  h_{X_{p-1}}(b)).$$ A straightforward calculation shows that
  $\widetilde{f} - f = \partial_0 \psi$, hence $[\widetilde{f}] = [f]$
  in $E_1^{p,q}(\rho+\kappa)$. This proves~\eqref{eq:i_1-S_pq} for
  $2 \leq p \leq d$. A similar argument shows that~\eqref{eq:i_1-S_pq}
  holds also for $p=1$.

  It remains to verify~\eqref{eq:i_1-S_pq} the case $p=0$.  Let
  $m \in \mathcal{M}^{\leq \rho}(Y)$ be a cycle. We have:
  $$(\partial_1 S^{0,q} + S^{1,q} \partial_1)[m] = 
  S^{1,q} \partial_1[m) = (\partial_1[m])(e_Y) =
  [\mu_2^{\mathcal{M}}(e_Y, m)].$$ By assumption
  $\mathcal{M} \in \hyperlink{h:asmp-H}{\lbh_w(\kappa)}$, hence
  $[\mu_2^{\mathcal{M}}(e_Y, m)] = [m]$ in
  $H^q(\mathcal{M}^{\leq \rho + \kappa}(Y))$. This
  proves~\eqref{eq:i_1-S_pq} for $p=0$ and concludes the proof of
  Claim~\ref{cl:homotopic}. 
\end{proof}

\subsection{Structure theorem for weakly filtered iterated cones} \label{sb:wf-ic}

Let $\mathcal{A}$ be a an h-unital weakly filtered
$A_{\infty}$-category with discrepancy $\leq \bmea$ and discrepancy of
units $\ua$. Let $L_0, \ldots, L_r \in \textnormal{Ob}(\mathcal{A})$
and denote by $\mathcal{L}_i$ the Yoneda module associated to $L_i$,
viewed as weakly filtered modules. In this section we analyze iterated
cones in the weakly filtered framework. By iterated cones we mean
modules of the type:
\begin{equation} \label{eq:itcone}
  \tcn (\mathcal{L}_r \xrightarrow{\; \phi_r \;} \tcn(\mathcal{L}_{r-1} 
  \xrightarrow{\; \phi_{r-1} \;} \tcn( \cdots \tcn(\mathcal{L}_2
  \xrightarrow{\; \phi_2 \;} \tcn(\mathcal{L}_1 \xrightarrow{\; \phi_1
    \;} \mathcal{L}_0 )) {\cdot}{\cdot}{\cdot}))).
\end{equation}
The weakly filtered structure is defined by iterating the construction
from~\S\ref{sb:wf-mc}. More precisely, we define a sequence of weakly
filtered $\mathcal{A}$-modules $\mathcal{K}_0, \ldots, \mathcal{K}_r$
as follows. We start by setting $\mathcal{K}_0 := \mathcal{L}_0$ which
is a weakly filtered module with discrepancy
$\leq \bme^{\mathcal{K}_0} := \bmea$. Note that all the modules
$\mathcal{L}_i$ have discrepancy $\leq \bmea$ too. Suppose that
$\phi_1 \in \hom^{\leq \rho_1; \bm{\delta}^{\phi_1}}(\mathcal{L}_1,
\mathcal{K}_0)$ is a module homomorphism, where
$\rho_1 \in \mathbb{R}$ and $\bm{\delta}^{\phi_1}$ is some
sequence. We {\em do not} assume that $\bm{\delta}^{\phi_1}$ satisfies
anything like Assumption~$\assmpen$. We define
$\mathcal{K}_1 = \tcn(\mathcal{L}_1 \xrightarrow{\; (\phi_1; \rho_1,
  \bmd^{\phi_1}) \;} \mathcal{K}_0)$. Since
$\bme^{\mathcal{K}_0} = \bmea$, the discrepancy of $\mathcal{K}_1$ is
$\leq \bme^{\mathcal{K}_1} := \max \{\bmea,
\bmd^{\phi_1}-\delta^{\phi_1}_1\}$.

Let $i\geq 1$ and suppose that we have already defined the weakly
filtered modules $\mathcal{K}_0, \ldots, \mathcal{K}_i$. Let
$\phi_{i+1} : \mathcal{L}_{i+1} \longrightarrow \mathcal{K}_i$ be a
module homomorphism that shifts action by $\leq \rho_{i+1}$ and has
discrepancy $\leq \bmd^{\phi_{i+1}}$. Again, we {\em do not} assume
that $\bmd^{\phi_{i+1}}$ satisfies any assumption of the
type~$\assmpen$. We define
$\mathcal{K}_{i+1} = \tcn( \mathcal{L}_{i+1} \xrightarrow{\;
  (\phi_{i+1}; \rho_{i+1}, \bmd^{\phi_{i+1}})\;} \mathcal{K}_i)$. The
$\mathcal{A}$-module $\mathcal{K}_{i+1}$ has discrepancy
$\leq \bme^{\mathcal{K}_{i+1}} := \max\{ \bme^{\mathcal{K}_i},
\bmd^{\phi_{i+1}}-\delta^{\phi_{i+1}}_1 \}$ because (by induction)
$\bme^{\mathcal{K}_i} \geq \bmea$.  The final $\mathcal{A}$-module
$\mathcal{K}_r$ is precisely the one described by~\eqref{eq:itcone}
and moreover now it also has the structure of a weakly filtered
module.

The following expressions will be used frequently in what follows:
\begin{equation} \label{eq:chi-xi}
  \begin{aligned}
    \chi_{m,d} & := \sum_{j=1}^m \sum_{i=1}^{d+m} \delta_i^{\phi_j} +
    \sum_{i=1}^{d+m} \epsilon_i^{\mathcal{A}},\\
    \xi_q & := \kappa + \sum_{i=1}^{q+3} \epsilon_i^{\mathcal{A}} +
    \sum_{j=1}^q \sum_{i=1}^{q+2} \delta_i^{\phi_j}.
  \end{aligned}
\end{equation}

\begin{thm} \label{t:itcones} Let $\mathcal{K}_i$, $0 \leq i \leq r$
  be as above. Assume that $\mathcal{A}$ is h-unital in the weakly
  filtered sense with discrepancy of units $\leq u^{\mathcal{A}}$. Let
  $\kappa \geq 2\ua + \epsilon_2^{\mathcal{A}}$ be a real number and
  assume that $\mathcal{A}$ and the objects $L_i$ satisfy the
  following two conditions:
  \begin{enumerate}  
  \item $\mathcal{A} \in \hyperlink{h:asmp-ue}{\lbue(\kappa)}$.
  \item For every $0 \leq i \leq r$,
    $L_i \in \hyperlink{h:asmp-ur}{\lbur(\kappa)}$, and
    $L_i \in \hyperlink{h:asmp-H}{\lbh_w(\kappa)}$.
  \end{enumerate}

  Then there exists a weakly filtered $\mathcal{A}$-module
  $\mathcal{M}$ with the following properties:
  \begin{enumerate}
  \item For every $X \in \textnormal{Ob}(\mathcal{A})$, we have
    $\mathcal{M}(X) = \mathcal{K}_r(X)$ as $R$-modules, namely the
    $R$-module $\mathcal{M}(X)$ is a direct sum:
    \begin{equation}
      \label{eq:M-direct-sum}
      \mathcal{M}(X) = C(X, L_0) \oplus C(X, L_1) \oplus \cdots 
      \oplus C(X, L_r).
    \end{equation}
  \item Denote by $\mu_1^{\mathcal{M}}$ the differential of the chain
    complex $\mathcal{M}(X)$. Then the matrix of $\mu_1^{\mathcal{M}}$
    with respect to the splitting~\eqref{eq:M-direct-sum} has the
    following shape:
    $\mu_1^{\mathcal{M}} = (a_{ij})_{0\leq i, j \leq r}$ with
    $a_{i,j}: C(X,L_j) \longrightarrow C(X,L_i)$, where:
    \begin{enumerate}
    \item $a_{i,j}=0$ for every $i>j$. In other words, the matrix of
      $\mu_1^{\mathcal{M}}$ is upper triangular.
    \item
      $a_{i,i} = \mu^{\mathcal{A}}_1: C(X,L_i) \longrightarrow
      C(X,L_i)$.
    \item There exist elements $c_{q,p} \in C(L_q,L_p)$ for all
      $0\leq p < q \leq r$, such that for for every $i<j$ the
      $(i,j)$'th entry of the matrix of $\mu_1^{\mathcal{M}}$ is given
      by:
      \begin{equation} \label{eq:aij} a_{i,j}(-) = \sum_{2 \leq d,\,
          \underline{k}} \mu^{\mathcal{A}}_{d}(-, c_{k_{d}, k_{d-1}},
        \ldots, c_{k_2,k_1}),
      \end{equation}
      where $\underline{k}=(k_1, \ldots, k_d)$ runs over all
      partitions $i=k_1 < k_2 < \cdots < k_{d-1} < k_d=j$. (Note that
      the sum in~\eqref{eq:aij} is finite. In fact, the index $d$ in
      this sum cannot exceed $j-i$ which in turn is $\leq r$.)
    \item $c_{q,p} \in C^{\leq \alpha_{q,p}}(L_q,L_p)$, where
      \begin{equation} \label{eq:alpha_qp}
        \alpha_{q,p} = \rho_q - \rho_p + B_q \xi_q,
      \end{equation}
      where $B_q$ is a universal constant in the sense that it depends
      only on $q$, but not on $\mathcal{A}$, the modules
      $\mathcal{K}_i$ or their discrepancy
      data. (In~\eqref{eq:alpha_qp} and in what follows we use the
      convention that $\rho_0 = 0$.)
      \label{i:univ-const-E}
    \end{enumerate}
  \item There exists a quasi-isomorphism of $\mathcal{A}$-modules
    $\sigma: \mathcal{K}_r \longrightarrow \mathcal{M}$ which shifts
    action by $\leq \rho^{\sigma}$ and has discrepancy
    $\leq \bme^{\sigma}$. The latter quantities admit the following
    estimates:
    \begin{equation} \label{eq:rho-sigma} \rho^{\sigma} \leq C_r
      \xi_r, \quad \epsilon_d^{\sigma} \leq D_{r,d} \chi_{r,d},
    \end{equation}
    where the constants $C_r$ and $\{D_{r,d}\}_{d \in \mathbb{N}}$ are
    universal in the sense mentioned at point~\eqref{i:univ-const-E}
    above.
  \item The 1'st order part
    $\sigma_1: \mathcal{K}_r(X) \longrightarrow \mathcal{M}(X)$ of the
    quasi-isomorphism $\sigma$ is an isomorphism of chain complexes
    for all $X \in \textnormal{Ob}(\mathcal{A})$, and the matrix
    corresponding to $\sigma_1$ with respect to the
    splitting~\eqref{eq:M-direct-sum} (taken both for
    $\mathcal{K}_r(X)$ and $\mathcal{M}(X)$) is upper triangular with
    $\id$-maps along its diagonal. \label{pp:sigma-1-iso}
  \item The inverse
    $\sigma_1^{-1}:\mathcal{M}(X) \longrightarrow \mathcal{K}_r(X)$ of
    $\sigma_1$ is action preserving (i.e. it is filtered and shifts
    action by $\leq 0$).
    \label{i:sigma_1-inv}
  \item For every $0 \leq j \leq r$ the diagonal element
    $$\Delta_j = \textnormal{pr}_{C(X,L_j)} 
    \circ \sigma_1|_{C(X,L_j)}: C(X,L_j) \longrightarrow C(X,L_j)$$ is
    the identity map (as follows from point~\eqref{pp:sigma-1-iso}
    above). However, when the domain inherits filtration from
    $\mathcal{K}_r(X)$ and the target from $\mathcal{M}(X)$ this map
    shifts action by $\leq \rho^{\sigma}$. (Note that for $j\geq 1$,
    $C(X,L_j)$ is in general not a subcomplex of either
    $\mathcal{K}_r(X)$ or of $\mathcal{M}(X)$). For $j=0$, $C(X,L_0)$
    is a subcomplex of both $\mathcal{K}_r(X)$ and of $\mathcal{M}(X)$
    and the two inherited filtrations on $C(X,L_0)$ coincide, hence
    $\Delta_0 = \id$ preserves filtration (i.e. shifts action by
    $\leq 0$).
    \label{pp:Delta_j-Delta_0}
  \end{enumerate}
\end{thm}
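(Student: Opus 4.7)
The plan is to prove the theorem by induction on $r$, using the weakly filtered $\lambda$-map (Proposition~\ref{p:lmap}) at each step to replace the module homomorphism $\phi_{i+1}:\mathcal{L}_{i+1}\to\mathcal{K}_i$ by the explicit form $\lambda(c_{i+1})$ up to a boundary and a tail $\tau$ with $\tau_1=0$, and then rewriting the cone using Lemmas~\ref{l:con-f-g} and~\ref{l:cone-f-f'}. The components of the cycle $c_{i+1}$ will become the new connecting morphisms $c_{i+1,p}$. The base case $r=0$ is trivial: take $\mathcal{M}=\mathcal{L}_0$ and $\sigma=\id$, whence $\mu_1^{\mathcal{M}}=\mu_1^{\mathcal{A}}$ and there are no off-diagonal entries to construct.

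For the inductive step, suppose the statement holds for $r$, providing $\mathcal{M}_r$, connecting morphisms $\{c_{q,p}\}_{0\le p<q\le r}$, and a quasi-isomorphism $\sigma^{(r)}:\mathcal{K}_r\to\mathcal{M}_r$. Set $\phi'_{r+1}:=\mu_2^{\tmod}(\phi_{r+1},\sigma^{(r)}):\mathcal{L}_{r+1}\to\mathcal{M}_r$; by~\eqref{eq:disc-f*g} this is weakly filtered with controlled discrepancy. Lemma~\ref{l:con-f-g} then gives a quasi-isomorphism $\mathcal{K}_{r+1}\to\tcn(\mathcal{L}_{r+1}\xrightarrow{\phi'_{r+1}}\mathcal{M}_r)$. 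To invoke Proposition~\ref{p:lmap} on $\phi'_{r+1}$ one first checks that $\mathcal{M}_r\in\lbh_w(\kappa'_r)$ with a controlled~$\kappa'_r$: iterating Lemma~\ref{l:assh-cones} along the cones $\mathcal{K}_0\subset\cdots\subset\mathcal{K}_r$, starting from the hypothesis $L_i\in\lbh_w(\kappa)$, yields $\mathcal{K}_r\in\lbh_w(\kappa_r)$, and Lemma~\ref{l:ass-usw-iso} then transfers this property along $\sigma^{(r)}$ using the action-preserving inverse of $\sigma^{(r)}_1$ supplied by item~\ref{i:sigma_1-inv} of the inductive hypothesis. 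Combined with $L_{r+1}\in\lbur(\kappa)$, Proposition~\ref{p:lmap} with $\ell=1$ produces a cycle $c_{r+1}\in\mathcal{M}_r^{\le\alpha'}(L_{r+1})$, a homotopy $\theta$, and a tail $\tau$ with $\tau_1=0$ such that $\phi'_{r+1}=\lambda(c_{r+1})+\mu_1^{\tmod}(\theta)+\tau$. Lemma~\ref{l:cone-f-f'} then yields the model $\mathcal{M}_{r+1}:=\tcn(\mathcal{L}_{r+1}\xrightarrow{\lambda(c_{r+1})+\tau}\mathcal{M}_r)$ via a further quasi-isomorphism whose first-order transfer map is upper triangular with identities on the diagonal (by the explicit formula for $\vartheta_1$ in the proof of that lemma, recalling that we work in characteristic~$2$).

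Next I extract the explicit differential. Using the induction identification $\mathcal{M}_r(L_{r+1})=\bigoplus_{p\le r}C(L_{r+1},L_p)$, write $c_{r+1}=(c_{r+1,0},\ldots,c_{r+1,r})$; these are the new connecting morphisms. Since $\tau_1=0$ only the $\lambda$-term contributes to $\mu_1^{\mathcal{M}_{r+1}}$ beyond the block diagonal, and the identity $\lambda(c_{r+1})_1(b)=\mu_2^{\mathcal{M}_r}(b,c_{r+1})$, combined with the inductively known shape of $\mu_2^{\mathcal{M}_r}$ on the decomposition $\bigoplus_p C(X,L_p)$ (itself produced recursively from our cone construction with the $c_{q,p}$'s playing the role of twisting morphisms), yields exactly the partition sum~\eqref{eq:aij} for the new top-row entries $a_{i,r+1}$. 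The action bound $c_{r+1,p}\in C^{\le\alpha_{r+1,p}}(L_{r+1},L_p)$ follows by projecting the action estimate for $c_{r+1}$ onto its $p$-component, taking into account the shift $\alpha'=\rho_{r+1}+\kappa+(\text{small})$ introduced by Proposition~\ref{p:lmap} together with the summand-level action-preservation guaranteed by item~\ref{i:sigma_1-inv} applied to $\sigma^{(r)}_1$; the cumulative effect of these shifts is absorbed by the universal constant $B_{r+1}$.

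The remaining items reduce to bookkeeping. The composition of the three quasi-isomorphisms produced in the inductive step defines $\sigma^{(r+1)}$, and its action shift and discrepancy bounds~\eqref{eq:rho-sigma} follow by combining the cone discrepancy formula on page~\pageref{pg:disc-cone}, Lemma~\ref{l:choice-eps-1}, Lemmas~\ref{l:pull-back-M}--\ref{l:pull-back-f}, and~\eqref{eq:disc-f*g}; the resulting recursions in $r$ and $d$ produce the universal constants $C_r, D_{r,d}$. By construction each of the three quasi-isomorphisms at stage $r+1$ has first-order part that is the identity on the new $\mathcal{L}_{r+1}$-summand and upper triangular with identities on the diagonal on the $\mathcal{M}_r$-summand, so $\sigma^{(r+1)}_1$ inherits the same form globally, making it an isomorphism of chain complexes with action-preserving inverse and verifying items~\ref{pp:sigma-1-iso}--\ref{pp:Delta_j-Delta_0}. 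The main obstacle is precisely the coordinated bookkeeping: one must ensure that Assumption~$\lbh_w$ survives each passage from $\mathcal{K}_i$ to $\mathcal{M}_i$ with a value of $\kappa$ still compatible with~\eqref{eq:alpha_qp}--\eqref{eq:rho-sigma}, so that Proposition~\ref{p:lmap} remains applicable on the next round; this is handled by the interlocking estimates of Lemmas~\ref{l:assh-cones}, \ref{l:ass-usw-iso}, and~\ref{l:choice-eps-1}.
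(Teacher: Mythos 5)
Your overall strategy --- induction on the number of factors, using the weakly filtered $\lambda$-map (Proposition~\ref{p:lmap}) at each step to write the attaching morphism as $\lambda(c)+\mu_1^{\tmod}(\theta)+\tau$ and then forming a new cone over $\lambda(c)+\tau$ --- is the paper's strategy, and the choice of auxiliary lemmas is essentially right. However, your choice of $\ell=1$ in Proposition~\ref{p:lmap} is a genuine gap.

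With $\ell=1$ you only guarantee $\tau_1=0$, which suffices to show that $\mu_1^{\mathcal{M}_{r+1}}$ does not see $\tau$ \emph{directly}. But $\mu_1^{\mathcal{M}_{r+1}}$ involves, via $\lambda(c_{r+1})_1(b)=\mu_2^{\mathcal{M}_r}(b,c_{r+1})$, the \emph{second-order} operation $\mu_2^{\mathcal{M}_r}$. That operation has a term $(\tau_r)_2$ coming from the cone structure $\mathcal{M}_r=\tcn(\mathcal{L}_r\xrightarrow{\lambda(c_r)+\tau_r}\mathcal{M}_{r-1})$, and expanding further, $\mu_2^{\mathcal{M}_{r-1}}$ (reached through $\lambda(c_r)_2$) contains $(\tau_{r-1})_2$, and so on down the tower. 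To obtain the pure partition-sum formula~\eqref{eq:aij} for $\mu_1^{\mathcal{M}_r}$ one needs the higher operations $\mu_d^{\mathcal{M}_j}$ for $d$ up to roughly $r+1-j$ to be expressible purely in terms of $\mu_{\bullet}^{\mathcal{A}}$ and the cycles $c_{q,p}$, and this forces $(\tau_j)_d=0$ in that range. With $\ell=1$ you have no control over $(\tau_j)_d$ for $d\geq 2$, so the entries $a_{i,j}$ of $\mu_1^{\mathcal{M}_r}$ will be contaminated by higher components of the tails and the formula~\eqref{eq:aij} fails. Relatedly, your inductive hypothesis only asserts the shape of $\mu_1^{\mathcal{M}_r}$, while the argument implicitly appeals to an ``inductively known shape of $\mu_2^{\mathcal{M}_r}$'' which is neither stated nor established.

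The fix, which is what the paper does, is to fix $\ell:=r+2$ once and for all \emph{before} starting the induction (possible since $r$ is given). Then $(\tau_j)_1=\cdots=(\tau_j)_{r+2}=0$ for every $j$, and one checks, as in the explicit formulas~\eqref{eq:mu_d-M_1} and~\eqref{eq:mu_d-M_2}, that $\mu_d^{\mathcal{M}_j}$ for $d\leq r+1-j$ depends only on $\mu_{\bullet}^{\mathcal{A}}$ and the $c_{q,p}$'s with $q\leq j$, and not on any $\tau_i$. To make your induction run you would likewise need to add to the inductive hypothesis a description of the higher $\mu_d^{\mathcal{M}_j}$ in the relevant degree range. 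Apart from this, the rest of your plan (Lemmas~\ref{l:con-f-g}, \ref{l:cone-f-f'}, the constant-tracking via Lemma~\ref{l:choice-eps-1}, and the upper-triangularity of $\sigma_1$) matches the paper; a small stylistic difference is that you propagate $\lbh_w$ from $\mathcal{K}_r$ to $\mathcal{M}_r$ via Lemma~\ref{l:ass-usw-iso}, while the paper applies Lemma~\ref{l:assh-cones} directly to $\mathcal{M}_j$, which is itself a weakly filtered cone --- the latter is more direct and avoids having to verify the hypotheses of Lemma~\ref{l:ass-usw-iso}.
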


\newcommand{\ephia}{(1)}
\newcommand{\ephib}{(2)}
\newcommand{\ephic}{(3)}
\newcommand{\ephii}{(i)}
\newcommand{\ephij}{(j)}
\newcommand{\ephip}{(p)}

\begin{proof}[Proof of Theorem~\ref{t:itcones}]
  We will construct inductively a sequence of weakly filtered modules
  $\mathcal{M}_i$, $i=1, \ldots, r$ such that $\mathcal{M}_i$ is
  quasi-isomorphic to $\mathcal{K}_i$ and whose differential
  $\mu_1^{\mathcal{M}_i}$ has a matrix of the type describe
  by~\eqref{eq:aij}. The desired module $\mathcal{M}$ will then be
  $\mathcal{M}_r$. In the course of the construction we will
  successively apply Proposition~\ref{p:lmap} and
  Lemmas~\ref{l:cone-f-f'},~\ref{l:con-f-g}. 

  Fix once and for all $\ell := r+2$. We begin the construction with
  $i=1$. Put $\mathcal{M}_0 = \mathcal{K}_0 = \mathcal{L}_0$,
  $\mathcal{K}'_1 = \mathcal{K}_1$. Set also $\kappa_0 = \kappa$, so
  that
  $\mathcal{L}_1, \mathcal{K}_0 \in
  \hyperlink{h:asmp-H}{\lbh_w(\kappa_0)}$. Define an auxiliary weakly
  filtered module
  $$\mathcal{K}_1'' := \tcn (\mathcal{L}_1 
  \xrightarrow{\; (\phi_1; \rho_1 + \ell \kappa_0, \bme^{\ephia}) \;}
  \mathcal{K}_0),$$ where $\bme^{\ephia}$ is chosen such that:
  $$\bme^{\ephia} \geq \bm{\delta}^{\phi_1}, \quad 
  \bme^{\ephia} \in \hyperlink{h:asmp-e}{\lbe(\bmea,
    \bme^{\mathcal{K}_0})}, \quad \epsilon_d^{\ephia} \geq
  \epsilon_{d+1}^{\mathcal{K}_0} \;\; \forall \; d.$$ By
  Proposition~\ref{p:lmap} there exists a cycle
  $c_1 \in \mathcal{K}_0^{\leq \rho_1 + \ell \kappa_0} (L_1) = C^{\leq
    \rho_1 + \ell \kappa_0}(L_1, L_0)$ as well as
  $\theta_1, \tau_1 \in \hom^{\leq \rho_1 + \ell \kappa_0;
    \bme^{\ephia}}(\mathcal{L}_1, \mathcal{K}_0)$ with $\tau_1$ a
  cycle and $(\tau_1)_1 = \cdots = (\tau_1)_{\ell} = 0$, such that:
  $$\phi_1 = \lambda(c_1) + \mu_1^{\textnormal{mod}}(\theta) + \tau_1$$
  in
  $\hom^{\leq \rho_1 + \ell \kappa_0; \bme^{\ephia}}(\mathcal{L}_1,
  \mathcal{K}_0)$.
  Define now
  $$\mathcal{M}_1 := \tcn ( \mathcal{L}_1 \xrightarrow{\; (\phi_1 - 
    \mu_1^{\textnormal{mod}}(\theta_1); \rho_1 + \ell \kappa_0,
    \bme^{\ephia}) \;} \mathcal{K}_0 ).$$ Note that
  $\bme^{\mathcal{M}_1} = \max \{\bmea, \bme^{\mathcal{K}_0},
  \bme^{\ephia}-\epsilon_1^{\ephia} \} = \bme^{\ephia} -
  \epsilon_1^{\ephia}$ because
  $\bme^{\ephia} \in \hyperlink{h:asmp-e}{\lbe(\bmea,
    \bme^{\mathcal{K}_0})}$.
  
  For later use we will need to address Assumption~$\lbh_w$ for the
  module $\mathcal{M}_1$. Indeed, by Lemma~\ref{l:assh-cones} we have
  $\mathcal{M}_1 \in \hyperlink{h:asmp-H}{\lbh_w(\kappa_1)}$, where
  $$\kappa_1 := \max \{ 2\kappa_0, 
  2\ua+ \epsilon_3^{\ephia} - \epsilon_1^{\ephia}, 2\ua +
  2\epsilon_2^{\ephia} - 2\epsilon_1^{\ephia}, \kappa_0 +
  \epsilon_2^{\ephia}-\epsilon_1^{\ephia} \}.$$

  The modules $\mathcal{K}'_1 = \mathcal{K}_1$, $\mathcal{K}_1''$ and
  $\mathcal{M}_1$ are related by weakly filtered quasi-isomorphisms as
  follows. The identity homomorphism can be viewed as a weakly
  filtered quasi-isomorphism
  $I_1: \mathcal{K}_1 \longrightarrow \mathcal{K}_1''$ which shifts
  action by $\leq \ell \kappa_0$ and has discrepancy
  $\leq (\epsilon_1^{\ephia} - \delta_1^{\phi_1}, 0, \ldots, 0,
  \ldots)$. Lemma~\ref{l:cone-f-f'} provides a quasi-isomorphism
  $\vartheta_1: \mathcal{K}_1'' \longrightarrow \mathcal{M}_1$ which
  shifts action by $\leq 0$ and has discrepancy
  $\leq \bme^{\ephia} - \epsilon_1^{\ephia}$. Consider
  $\eta_1: \mathcal{K}_1 \to \mathcal{M}_1$ given by the composition
  $$\eta_1 := \vartheta_1 \circ I_1.$$ 
  This is a quasi-isomorphism which shifts action by
  $\leq \ell \kappa_0$ and has discrepancy
  $\leq \bme^{\ephia} - \delta_1^{\phi_1}$.

  The first order part
  $(\eta_1)_1:\mathcal{K}_1(X) \longrightarrow \mathcal{M}_1(X)$ of
  the module homomorphism $\eta_1$ is an isomorphism of chain
  complexes for all $X$ and its matrix (with respect to the splitting
  $C(X,L_0) \oplus C(X,L_1)$ of $\mathcal{K}_1(X)$ and
  $\mathcal{M}_1(X)$ as $R$-modules) is upper triangular with $\id$'s
  along the diagonal. This follows from the explicit formula of
  $(\vartheta_1)_1$ from the proof of Lemma~\ref{l:cone-f-f'}. The
  same formula also shows that $(\vartheta_1)_1^{-1}$ shift action by
  $\leq 0$ and the same holds for $(I_1)_1^{-1}$. It follows that the
  inverse $(\eta_1)_1^{-1}$ of $(\eta_1)_1$ shifts action by $\leq 0$.

  Next, consider the composition
  $\eta_1 \circ \phi_2: \mathcal{L}_2 \longrightarrow \mathcal{M}_1$.
  This is a module homomorphism that shifts action by
  $\leq \rho_2 + \ell \kappa_0$ and has discrepancy
  $\leq \bme^{\eta_1 \circ \phi_2} = \bme^{\eta_1} \ast
  \bm{\delta}^{\phi_2}$. Define now:
  \begin{equation*}
    \begin{aligned}
      \mathcal{K}_2' & = \tcn ( \mathcal{L}_2 \xrightarrow{\;
        (\eta_1 \circ \phi_2; \rho_2 + \ell \kappa_0,
        \bme^{\eta_1} \ast \bm{\delta}^{\phi_2}) \;}
      \mathcal{M}_1 ), \\
      \mathcal{K}_2'' & = \tcn ( \mathcal{L}_2 \xrightarrow{\;
        (\eta_1 \circ \phi_2; \rho_2 + \ell \kappa_1 + \ell
        \kappa_0, \bme^{\ephib}) \;} \mathcal{M}_1 ),
    \end{aligned}
  \end{equation*}
  where $\bme^{\ephib}$ is chosen such that:
  $$\bme^{\ephib} \geq \bme^{\eta_1} \ast \bm{\delta}^{\phi_2}, \quad 
  \bme^{\ephib} \in \hyperlink{h:asmp-e}{\lbe(\bmea,
    \bme^{\mathcal{M}_1})}, \quad \epsilon^{\ephib}_d \geq
  \epsilon^{\mathcal{M}_1}_{d+1} \;\; \forall \; d.$$
  
  Applying Proposition~\ref{p:lmap} we can write:
  $$\eta_1 \circ \phi_2 = \lambda(c_2) + 
  \mu_1^{\textnormal{mod}}(\theta_2) + \tau_2,$$ where
  $c_2 \in \mathcal{M}_1^{\leq \rho_2 + \ell \kappa_1 + \ell
    \kappa_0}(L_2)$, is a cycle and
  $\theta_2, \tau_2 \in \hom^{\leq \rho_2 + \ell \kappa_1 + \ell
    \kappa_0; \bme^{\ephib}}(\mathcal{L}_2, \mathcal{M}_1)$ with
  $\tau_2$ being a cycle such that
  $(\tau_2)_1 = \cdots = (\tau_2)_{\ell} = 0$.

  We define now
  $$\mathcal{M}_2 := \tcn ( \mathcal{L}_2 \xrightarrow{\; 
    (\eta_1 \circ \phi_2 - \mu_1^{\textnormal{mod}}(\theta_2);
    \rho_2 + \ell \kappa_1 + \ell \kappa_0, \bme^{\ephib})}
  \mathcal{M}_1).$$ The discrepancy of $\mathcal{M}_2$ is
  $\leq \bme^{\mathcal{M}_2} := \max \{\bmea, \bme^{\mathcal{M}_1},
  \bme^{\ephib} - \epsilon_1^{\ephib} \} = \max\{\bme^{\ephia} -
  \epsilon_1^{\ephia}, \bme^{\ephib} - \epsilon_1^{\ephib}\}$.

  By Lemma~\ref{l:assh-cones} we have
  $\mathcal{M}_2 \in \hyperlink{h:asmp-H}{\lbh_w(\kappa_2)}$, where
  \begin{equation*}
    \begin{aligned}
      \kappa_2 := \max \{ 2\kappa_1, &
      2\ua+\epsilon_3^{\ephia}-\epsilon_1^{\ephia}, 2\ua +
      \epsilon_3^{\ephib}-\epsilon_1^{\ephib}, \\
      & 2\ua + 2\epsilon_2^{\ephia}-2\epsilon_1^{\ephia},
      2\ua+2\epsilon_2^{\ephib}-2\epsilon_1^{\ephib}, \\
      & \kappa_0+\epsilon_2^{\ephia}-\epsilon_1^{\ephia}, \kappa_0 +
      \epsilon_2^{\ephib} - \epsilon_1^{\ephib} \}.
    \end{aligned}
  \end{equation*}

  The modules $\mathcal{K}_2$, $\mathcal{K}_2'$, $\mathcal{K}_2''$ and
  $\mathcal{M}_2$ are related by weakly filtered quasi-isomorphisms:
  $$\mathcal{K}_2 \xrightarrow[\simeq]{\; \psi_2 \;} \mathcal{K}_2' 
  \xrightarrow[\simeq]{\; I_2 \;} \mathcal{K}_2''
  \xrightarrow[\simeq]{\; \vartheta_2 \;} \mathcal{M}_2,$$ where the
  shifts in action and discrepancies of these maps are given by:
  \begin{equation*}
    \begin{aligned}
      & \text{shift}(\psi_2) \leq \ell \kappa_0, \quad \bme^{\psi_2}
      \leq
      \bme^{\ephia}-\delta_1^{\phi_1}, \\
      & \text{shift}(I_2) \leq \ell \kappa_1, \quad \bme^{I_2} \leq
      (\epsilon_1^{\ephib}-\delta_1^{\phi_2} -
      \epsilon_1^{\ephia}+\delta_1^{\phi_1}, 0, \ldots, 0, \ldots), \\
      & \text{shift}(\vartheta_2) \leq 0, \quad \bme^{\vartheta_2}
      \leq \bme^{\ephib}-\epsilon_1^{\ephib}.
    \end{aligned}
  \end{equation*}
  The quasi-isomorphism $\psi_2$ is obtained from
  Lemma~\ref{l:con-f-g} and $\vartheta_2$ from
  Lemma~\ref{l:cone-f-f'}. The quasi-isomorphism $I_2$ is basically
  the identity map, relating the same module with two (slightly)
  different structures of weakly filtered module.

  Consider now the composition
  $\eta_2 = \vartheta_2 \circ I_2 \circ \psi_2: \mathcal{K}_2
  \longrightarrow \mathcal{M}_2$. This quasi-isomorphism has the
  following action shift and discrepancy:
  $$\text{shift}(\eta_2) \leq \ell (\kappa_1+\kappa_0), 
  \quad \bme^{\eta_2} \leq \bme^{\ephib} \ast \bme^{\ephia} -
  (\delta_1^{\phi_2} + \epsilon_1^{\ephia}).$$

  As in the previous step, the first order part
  $(\eta_2)_1: \mathcal{K}_2(X) \longrightarrow \mathcal{M}_2(X)$ of
  $\eta_2$ is an isomorphism of chain complexes and its matrix (with
  respect to the splitting $C(X,L_0) \oplus C(X,L_1) \oplus C(X,L_2)$
  of $\mathcal{K}_2(X)$ and $\mathcal{M}_2(X)$ as $R$-modules) is
  upper triangular with $\id$'s along the diagonal. Moreover, the
  inverse $(\eta_2)_1^{-1}$ of $(\eta_2)_1$ shifts action by $\leq
  0$. These assertions easily follows from the explicit formulas of
  $(\psi_2)_1$ and $(\vartheta_2)_1$ given in the proofs of
  Lemmas~\ref{l:con-f-g} and~\ref{l:cone-f-f'} respectively and the
  fact, already shown in the previous step, that $(\eta_1)_1$ is a
  chain isomorphism represented by an upper triangular matrix with
  $\id$'s along the diagonal. Recall also from the previous step that
  $(\eta_1)_1^{-1}$ shifts action by $\leq 0$. An examination of the
  action shifts shows that each of the maps $(I_2)_1^{-1}$,
  $(\psi_2)_1^{-1}$ and $(\vartheta_2)_1^{-1}$ shifts action by
  $\leq 0$, hence the same holds for $(\eta_2)_1^{-1}$.

  To exemplify the structure of the proof, let us work out yet another
  step - i.e. the construction of $\mathcal{M}_3$.  Consider the
  homomorphism
  $\eta_2 \circ \phi_3: \mathcal{L}_3 \longrightarrow \mathcal{M}_2$. Its
  action-shift and discrepancy are given by:
  $$\text{shift}(\eta_2 \circ \phi_3) \leq 
  \rho_3 + \ell (\kappa_1+\kappa_0), \quad \bme^{\eta_2 \circ
    \phi_3} \leq \bm{\delta}^{\phi_3} \ast \bme^{\eta_2}.$$ Choose
  $\bme^{\ephic}$ such that:
   $$\bme^{\ephic} \geq \bme^{\eta_2} \ast \bm{\delta}^{\phi_3}, \quad 
   \bme^{\ephic} \in \hyperlink{h:asmp-e}{\lbe(\bmea,
     \bme^{\mathcal{M}_2})}, \quad \epsilon^{\ephic}_d \geq
   \epsilon^{\mathcal{M}_2}_{d+1} \;\; \forall \; d.$$ Define:

  \begin{equation*}
    \begin{aligned}
      \mathcal{K}_3' & = \tcn ( \mathcal{L}_3 \xrightarrow{\;
        (\eta_2 \circ \phi_3; \rho_3 + \ell (\kappa_1+\kappa_0),
        \bme^{\eta_2} \ast \bm{\delta}^{\phi_3}) \;}
      \mathcal{M}_2 ), \\
      \mathcal{K}_3'' & = \tcn ( \mathcal{L}_3 \xrightarrow{\;
        (\eta_2 \circ \phi_3; \rho_3 + \ell (\kappa_2 + \kappa_1 +
        \kappa_0), \bme^{\ephic}) \;} \mathcal{M}_2 ).
    \end{aligned}
  \end{equation*}
  By Proposition~\ref{p:lmap} we can write:
  $$\eta_2 \circ \phi_3 = \lambda(c_3) + 
  \mu_1^{\textnormal{mod}}(\theta_3) + \tau_3,$$ where
  $c_3 \in \mathcal{M}_2^{\leq \rho_3 + \ell (\kappa_2 + \kappa_1 +
    \kappa_0)}(L_3)$ is a cycle,
  $\theta_3, \tau_3 \in \hom^{\leq \rho_3 + \ell (\kappa_2 + \kappa_1
    + \kappa_0);\bme^{\ephic}}(\mathcal{L}_3, \mathcal{M}_2)$ and
  $\tau$ is a cycle such that:
  $(\tau_3)_1 = \cdots = (\tau_3)_{\ell} = 0$.

  We now define:
  $$\mathcal{M}_3 := 
  \tcn ( \mathcal{L}_3 \xrightarrow{\; (\eta_2 \circ \phi_3 -
    \mu_1^{\textnormal{mod}}(\theta_3); \rho_3 + \ell (\kappa_2 +
    \kappa_1 + \kappa_0), \bme^{\ephic})} \mathcal{M}_2).$$ The
  discrepancy of $\mathcal{M}_3$ is
  $\leq \bme^{\mathcal{M}_3} := \max \{\bmea, \bme^{\mathcal{M}_2},
  \bme^{\ephic} - \epsilon_1^{\ephic} \} = \max\{\bme^{\ephii} -
  \epsilon_1^{\ephii} \mid 1 \leq i \leq 3 \}.$

  By Lemma~\ref{l:assh-cones} we have
  $\mathcal{M}_3 \in \hyperlink{h:asmp-H}{\lbh_w(\kappa_3)}$, where
  \begin{equation*}
    \kappa_3 := \max \{ 2\kappa_2,
    2\ua+\epsilon_3^{\ephii}-\epsilon_1^{\ephii}, 
    2\ua + 2\epsilon_2^{\ephii}-2\epsilon_1^{\ephii}, 
    \kappa_0+\epsilon_2^{\ephii}-\epsilon_1^{\ephii} \mid 1 \leq i \leq 3 \}.
  \end{equation*}

  The modules $\mathcal{K}_3$, $\mathcal{K}_3'$, $\mathcal{K}_3''$ and
  $\mathcal{M}_3$ are related by weakly filtered quasi-isomorphisms:
  $$\mathcal{K}_3 \xrightarrow[\simeq]{\; \psi_3 \;} \mathcal{K}_3' 
  \xrightarrow[\simeq]{\; I_3 \;} \mathcal{K}_3''
  \xrightarrow[\simeq]{\; \vartheta_3 \;} \mathcal{M}_3,$$ where the
  shifts in action and discrepancies of these maps are given by:
  \begin{equation*}
    \begin{aligned}
      & \text{shift}(\psi_3) \leq \ell (\kappa_1 + \kappa_0), \quad
      \bme^{\psi_3} \leq
      \bme^{\ephib}-\delta_1^{\phi_2}, \\
      & \text{shift}(I_3) \leq \ell \kappa_2, \quad \bme^{I_3} \leq
      (\epsilon_1^{\ephic}-\delta_1^{\phi_3} -
      \epsilon_1^{\ephib}+\delta_1^{\phi_2}, 0, \ldots, 0, \ldots), \\
      & \text{shift}(\vartheta_3) \leq 0, \quad \bme^{\vartheta_3}
      \leq \bme^{\ephic}-\epsilon_1^{\ephic}.
    \end{aligned}
  \end{equation*}
  The quasi-isomorphism $\psi_3$ is obtained from
  Lemma~\ref{l:con-f-g} and $\vartheta_3$ from
  Lemma~\ref{l:cone-f-f'}. The quasi-isomorphism $I_3$ is basically
  the identity map, relating the same module with two (slightly)
  different structures of weakly filtered module.  Define
  $\eta_3: \mathcal{K}_3 \longrightarrow \mathcal{M}_3$ to be the
  composition $\eta_3 = \vartheta_3 \circ I_3 \circ \psi_3$. We
  have:
  $$\text{shift}(\eta_3) \leq \ell (\kappa_2+\kappa_1+\kappa_0), 
  \quad \bme^{\eta_3} \leq \bme^{\ephic} \ast \bme^{\ephib} \ast
  \bme^{\ephia} - (\delta_1^{\phi_3} + \epsilon_1^{\ephib} +
  \epsilon_1^{\ephia}).$$

  As in the previous step, the first order part $(\eta_3)_1$ of
  $\eta_3$ is a chain isomorphism represented by an upper triangular
  matrix with $\id$'s along the diagonal and its inverse
  $(\eta_3)_1^{-1}$ shifts action by $\leq 0$. This is proved in the
  same way as in the previous step for $(\eta_2)_1$, keeping in mind
  that $(\eta_2)_1^{-1}$ shifts action by $\leq 0$.

 \medskip

  Continuing as above by induction we obtain the following things for
  every $1 \leq j \leq r$:
  \begin{enumerate}
  \item A weakly filtered module $\mathcal{M}_j$.
  \item Two sequences of non-negative real numbers $\bme^{\ephij}$ and
    $\bme^{\eta_j}$ that satisfy: \label{i:seqs-sig-phi}
    \begin{enumerate}
    \item
      $\bme^{\ephij} \geq \bme^{\eta_{j-1}} \ast
      \bm{\delta}^{\phi_j}, \quad \bme^{\ephij} \in
      \hyperlink{h:asmp-e}{\lbe(\bmea, \bme^{\mathcal{M}_{j-1}})},
      \quad \epsilon^{\ephij}_d \geq
      \epsilon^{\mathcal{M}_{j-1}}_{d+1} \;\; \forall \; d.$ \label{i:eps-i}
    \item
      $\bme^{\eta_j} \leq \bme^{\ephij} \ast \cdots \ast
      \bme^{\ephia} - (\delta_1^{\phi_j} + \sum_{i=1}^{j-1}
      \epsilon_1^{\ephii}).$ \label{i:eps-sig-j}
    \end{enumerate}
    We use the convention that
    $\bme^{\eta_0} = (0, \ldots, 0, \ldots)$.
  \item A positive real number $\kappa_j$ defined (inductively) by:
    \begin{equation*}
      \kappa_j := \max \{ 2\kappa_{j-1},
      2\ua+\epsilon_3^{\ephii}-\epsilon_1^{\ephii}, 
      2\ua + 2\epsilon_2^{\ephii}-2\epsilon_1^{\ephii}, 
      \kappa_0+\epsilon_2^{\ephii}-\epsilon_1^{\ephii} \mid 1 \leq i \leq j \}.
    \end{equation*}
    (Recall that $\kappa_0 = \kappa$.)
  \item A cycle
    $c_j \in \mathcal{M}_{j-1}^{\leq \rho_j + \sum_{i=0}^{j-1}
      \kappa_i}(L_i)$.
  \item The module $\mathcal{M}_j$ is related to $\mathcal{M}_{j-1}$ by:
    \begin{equation} \label{eq:cone-M_j}
      \mathcal{M}_j = \tcn \bigl( \mathcal{L}_j \xrightarrow{\; (
        \lambda(c_j) + \tau_j; \rho_j + \ell \sum_{i=0}^{j-1} \kappa_i,
        \bme^{\ephij}) \;} \mathcal{M}_{j-1} \bigr),
    \end{equation}
    where
    $\tau_j \in \hom^{\leq \rho_j + \ell(\sum_{i=0}^{j-1} \kappa_i);
      \bme^{\ephij}}(\mathcal{L}_j, \mathcal{M}_{j-1})$ is a cycle
    with $(\tau_j)_1 = \cdots = (\tau_j)_{\ell} = 0$.
  \item The discrepancy of $\mathcal{M}_j$ is given by:
    $$\bme^{\mathcal{M}_j} \leq \max \{ \bme^{\ephii} -
    \epsilon_1^{\ephii} \mid 1 \leq i \leq j\}.$$
  \item $\mathcal{M}_j \in \hyperlink{h:asmp-H}{\lbh_w(\kappa_j)}$.
  \item A weakly filtered quasi-isomorphism
    $\eta_j: \mathcal{K}_j \longrightarrow \mathcal{M}_j$ which shifts
    action by $\leq \ell(\kappa_0 + \cdots + \kappa_{j-1})$ and with
    discrepancy $\leq \bme^{\eta_j}$, where the sequences
    $\bme^{\eta_j}$ is the one from point~(\ref{i:seqs-sig-phi})
    above. Moreover, the first order part $(\eta_j)_1$ is a chain
    isomorphism represented by an upper triangular matrix with $\id$'s
    along the diagonal (with respect to the splitting
    $CF(X,L_0) \oplus \cdots \oplus C(X,L_r)$) and its inverse
    $(\eta_j)_1^{-1}$ shifts action by $\leq 0$.
  \end{enumerate}

  The module $\mathcal{M}$ claimed in the statement of the theorem is
  the module $\mathcal{M}_r$, and the quasi-isomorphism of
  $\mathcal{A}$-modules
  $\sigma:\mathcal{K}_r \longrightarrow \mathcal{M}$ is $\eta_r$.

  \medskip

  Next, we analyze the differential $\mu_1^{\mathcal{M}_j}$ on the
  modules $\mathcal{M}_j$. We begin with the module
  \begin{equation} \label{eq:cone-M_1}
    \mathcal{M}_1 = \tcn ( \mathcal{L}_1 
    \xrightarrow{\; (\lambda(c_1) + \tau_1; \rho_1 + \ell \kappa_0,
      \bme^{\ephia}) \;} \mathcal{K}_0 ).
  \end{equation}
  Recall that
  $c_1 \in \mathcal{K}_0^{\leq \rho_1 + \ell \kappa_0} (L_1) = C^{\leq
    \rho_1 + \ell \kappa_0}(L_1, L_0)$.  For further use, we will
  write $c_{1,0} := c_1$.

  Let $X \in \textnormal{Ob}(\mathcal{A})$. Write
  $$\mathcal{M}_1(X) = C(X, L_1) \oplus C(X,L_0)$$ as $R$-modules.
  By the definition of the map $\lambda$ we have according to this
  splitting:
  $$\mu_1^{\mathcal{M}_1}(b_1, b_0) = 
  \bigl( \mu_1^{\mathcal{A}}(b_1), \mu_1^{\mathcal{A}}(b_0) +
  \mu_2^{\mathcal{A}}(b_1, c_{1,0}) \bigr), \quad \forall \; b_1 \in
  C(X, L_1), b_0 \in C(X, L_0).$$ More generally, the higher
  operations $\mu_d^{\mathcal{M}_1}$ have the following form. Let
  $1 \leq d \leq \ell-1$ and
  $X_0, \ldots, X_{d-1} \in \textnormal{Ob}(\mathcal{A})$. Then:
  \begin{equation} \label{eq:mu_d-M_1}
    \begin{aligned}
      & \mu_d^{\mathcal{M}_1}(a_1, \ldots, a_{d-1}, (b_1, b_0)) \\
      & \quad = \bigl( \mu_d^{\mathcal{A}}(a_1, \ldots, a_{d-1}, b_1),
      \mu_d^{\mathcal{A}}(a_1, \ldots, a_{d-1}, b_0) +
      \mu_{d+1}^{\mathcal{A}}(a_1, \ldots, a_{d-1}, b_1, c_{1,0}) \bigr), \\
      & \forall a_i \in C(X_{i-1}, X_i), \; i=1, \ldots, d, \; \;
      \forall \; (b_1, b_0) \in C(X_d, L_1) \oplus C(X_d, L_0).
    \end{aligned}
  \end{equation}
  Note that the term $\tau_1$ in~\eqref{eq:cone-M_1} does not play any
  role in the expression for $\mu_d^{\mathcal{M}_1}$ as long as
  $d \leq \ell-1$, since $(\tau_1)_1 = \cdots = (\tau_1)_{\ell} =
  0$. Recall also that $\ell \gg r$.

  We now analyze $\mathcal{M}_2$. Recall that:
  \begin{equation} \label{eq:cone-M_2} \mathcal{M}_2 := \tcn (
    \mathcal{L}_2 \xrightarrow{\; (\lambda(c_2) + \tau_2; \rho_2 +
      \ell \kappa_1 + \ell \kappa_0, \bme^{\ephib})} \mathcal{M}_1),
  \end{equation}
  where
  $c_2 \in \mathcal{M}_1^{\leq \rho_2 + \ell (\kappa_1 +
    \kappa_0)}(L_2)$. Recall that
  $$\mathcal{M}_1^{\leq \rho_2 + \ell (\kappa_1 + 
    \kappa_0)}(L_2) = C^{\leq \rho_2 - \rho_1 + \ell \kappa_1 -
    \epsilon_1^{\ephia}}(L_2, L_1) \oplus C^{\leq \rho_2 + \ell
    (\kappa_1 + \kappa_0)}(L_2, L_0)$$ as $R$-modules. Write
  $c_2 = (c_{2,1}, c_{2,0})$ with respect to this splitting.
  
  Let $X \in \textnormal{Ob}(\mathcal{A})$ and write
  \begin{equation} \label{eq:M_2-splitting-1} \mathcal{M}_2(X) = C(X,
    L_2) \oplus \mathcal{M}_1(X) = C(X, L_2) \oplus C(X, L_1) \oplus
    C(X, L_0)
  \end{equation}
  as $R$-modules. By the definition of $\lambda$ together
  with~\eqref{eq:mu_d-M_1} we have:
  \begin{equation} \label{eq:mu_1-M_2}
    \begin{aligned}
      & \mu_1^{\mathcal{M}_2}(b_2, b_1, b_0) = \bigl(
      \mu_1^{\mathcal{A}}(b_2), \mu_1^{\mathcal{M}_1}(b_1, b_0) +
      \mu_2^{\mathcal{M}_1}(b_2, c_2) \bigr) \\
      & = \bigl( \mu_1^{\mathcal{A}}(b_2), \mu_1^{\mathcal{A}}(b_1)
      + \mu_2^{\mathcal{A}}(b_2, c_{2,1}), \mu_1^{\mathcal{A}}(b_0)
      + \mu_2^{\mathcal{A}}(b_1, c_{1,0}) + \mu_2^{\mathcal{A}}(b_2,
      c_{2,0}) + \mu_3^{\mathcal{A}}(b_2, c_{2,1}, c_{1,0}) \bigr).
    \end{aligned}
  \end{equation}
  In other words, the matrix of $\mu_1^{\mathcal{M}_2}$ has the
  following shape:

  \begin{equation} \label{eq:mat-mu_1-M_2}
    \mu_1^{\mathcal{M}_2} = 
    \begin{pmatrix}
      \mu_1^{\mathcal{A}}(-) & \mu_2^{\mathcal{A}}(-, c_{1,0}) &
      \mu_2^{\mathcal{A}}(-, c_{2,0}) + \mu_3^{\mathcal{A}}(-,
      c_{2,1}, c_{1,0}) \\
      0 & \mu_1^{\mathcal{A}}(-) & \mu_2^{\mathcal{A}}(-, c_{2,1}) \\
      0 & 0 & \mu_1^{\mathcal{A}}(-)
    \end{pmatrix}
  \end{equation}
  Here the matrix has been calculated with respect to the splitting
  $$\mathcal{M}_2(X) = 
  C(X, L_0) \oplus C(X, L_1) \oplus C(X, L_2)$$ (in contrast
  to~\eqref{eq:M_2-splitting-1} and~\eqref{eq:mu_1-M_2}) in order to
  be compatible with~\eqref{eq:M-direct-sum}.

  A similar formula holds also for the higher operations
  $\mu_d^{\mathcal{M}_2}$. More precisely, Let $1 \leq d \leq \ell-2$
  and $X_0, \ldots, X_{d-1} \in \textnormal{Ob}(\mathcal{A})$. Then:
  \begin{equation} \label{eq:mu_d-M_2}
    \begin{aligned}
      \mu_d^{\mathcal{M}_2}(\underline{a}, b_2, b_1, b_0)
      \\
      = \bigl( \mu_d^{\mathcal{A}}(\underline{a}, b_2), \,  &
      \mu_d^{\mathcal{A}}(\underline{a}, b_1) +
      \mu_{d+1}^{\mathcal{A}}(\underline{a}, b_2, c_{2,1}),  \\
      & \mu_d^{\mathcal{A}}(\underline{a}, b_0) +
      \mu_{d+1}^{\mathcal{A}}(\underline{a}, b_1, c_{1,0}) +
      \mu_{d+1}^{\mathcal{A}}(\underline{a}, b_2, c_{2,0}) + 
      \mu_{d+2}^{\mathcal{A}}(\underline{a}, b_2, c_{2,1}, c_{1,0})
      \bigr),
    \end{aligned}
  \end{equation}
  for all
  $\underline{a} \in C(X_0, X_1) \otimes \cdots \otimes C(X_{d-2},
  X_{d-1})$.

  Continuing by induction as above, we obtain the elements
  $c_{q,p} \in C(L_q,L_p)$ for all $0\leq q < p \leq r$ and the
  operators $a_{i,j}$, $i>j$, as described in~\eqref{eq:aij}, which
  form the matrix of the differentials $\mu_1^{\mathcal{M}}$ for the
  module $\mathcal{M} = \mathcal{M}_r$.

  Note that the $\mu_k^{\mathcal{M}_j}$-operation of the intermediate
  module $\mathcal{M}_j$ involves expressions containing
  $\mu_d^{\mathcal{A}}$ for $d \leq j+k$ but no higher order $\mu$'s.
  It is also important to remark that at every step of the
  construction, the operations $\mu_d^{\mathcal{M}_j}$ for
  $d \leq r+1-j$ will depends on the cycles $c_{q,p}$ with
  $0\leq p<q\leq j$ but {\em not} on the elements $\tau_i$ that appear
  in~\eqref{eq:cone-M_j}. The reason is that
  $(\tau_i)_1 = \cdots = (\tau_i)_{\ell} = 0$ and we have chosen in
  advance $\ell = r+2$.

  Next, we estimate the action levels $\alpha_{q,p}$ of $c_{q,p}$
  from~\eqref{eq:alpha_qp} and the action shift and discrepancy of the
  quasi-isomorphism $\sigma = \eta_r$ as claimed
  in~\eqref{eq:rho-sigma}.

  An inspection of the previous steps in the proof shows that
  $$c_{q,p} \in C^{\leq \rho_q-\rho_p + 
    \ell(\kappa_p+ \cdots + \kappa_{q-1}) -
    \epsilon_1^{\ephip}}(L_q, L_p).$$ Thus we need to estimate the
  $\kappa_j$'s. This, in turn, would require to estimate the
  $\bme^{\ephii}$'s.
  
  Note that we can choose at every step of the previous inductive
  construction the sequence $\bme^{\ephij}$ at~(\ref{i:eps-i}) to
  satisfy:
  $$\epsilon_d^{\ephij} \leq 
  \epsilon_{d+1}^{\mathcal{M}_{j-1}} + \sum_{i=1}^d \bigl(
  \epsilon_i^{\eta_{j-1}} + \delta_i^{\phi_j} +
  \epsilon_i^{\mathcal{A}} + \epsilon_i^{\mathcal{M}_{j-1}}
  \bigr).$$ A simple inductive argument now implies the desired
  estimates for the $\bme_d^{\ephij}$'s the $\kappa_j$'s as well as
  for the action shift of $\eta_j$ and its discrepancy.

  \medskip Finally, the first statement at
  point~\eqref{pp:Delta_j-Delta_0} follows easily from the induction
  process defining the maps $\eta_i$, $i=1, \ldots, r$, by examining
  the filtrations induced on $CF(X,L_j)$ by each of $\mathcal{K}_i(X)$
  and $\mathcal{M}_i(X)$ for $j \leq i \leq r$.  That $C(X,L_0)$ is a
  subcomplex of both $\mathcal{K}_i(X)$ and $\mathcal{M}_i(X)$ follows
  from the fact that $\mathcal{K}_i$ and $\mathcal{M}_i$ are both
  iterated cones starting with the object $\mathcal{L}_0$.

\end{proof}

\subsection{Invariants and measurements for filtered chain
  complexes} \label{s:filt-ch}

As a supplement to the previous material we describe
here a number of numerical invariants of filtered chain complexes that
will be useful in~\S\ref{s:main-geom} when we prove our main geometric
results.

We begin with basic definitions. Fix a commutative ring $\mathcal{R}$
with unity. By a filtered chain complex we mean a chain complex
$(C, d^C)$ of $\mathcal{R}$-modules endowed with an increasing
filtration by sub-chain complexes $C^{\leq \alpha} \subset C$, indexed
by the real numbers $\alpha \in \mathbb{R}$. An $\mathcal{R}$-linear
map $f: C \longrightarrow D$ between two filtered chain complexes
$(C, d^C)$, $(D, d^D)$ is called {\em filtered} if there exists
$\rho \in \mathbb{R}$ such that
$f(C^{\leq \alpha}) \subset D^{\leq \alpha + \rho}$ for every
$\alpha$. In that case we also say that $f$ shifts action by
$\leq \rho$.  In case $f$ preserves the filtrations (i.e.~it shifts
filtration by $\leq 0$) we say that $f$ is {\em strictly filtered}.

Let $C, D$ be two chain complexes and assume $D$ is filtered. Every
chain map $\phi:C \longrightarrow D$ induces a filtration on $C$ by
$C^{\leq \alpha} := \phi^{-1}(D^{\leq \alpha})$ which we call the {\em
  pull-back filtration} by $\phi$. Endowing $C$ with this filtration
makes the map $\phi$ strictly filtered.

Let $C$ be a filtered chain complex, and $x \in C$. Define
$A(x) \in \mathbb{R} \cup \{-\infty, \infty\}$ to be the infimal
filtration level of $C$ which contains $x$, i.e.
$A(x) := \inf \{\alpha \in \mathbb{R} \mid x \in C^{\leq \alpha}\}$.
We call $A(x)$ the {\em action level} of $x$. Sometimes we will write
$A(x;C)$ instead of $A(x)$ in order to keep track of the chain complex
that $x$ belongs to. By our conventions we have $A(0) = -\infty$ and
if $\cap_{\alpha \in \mathbb{R}} C^{\leq \alpha} = \{0\}$ then
$A(x)=-\infty$ iff $x=0$. Also, if the filtration on $C$ is
exhaustive, i.e. $\cup_{\alpha \in \mathbb{R}} C^{\leq \alpha} = C$,
then $A(x) < \infty$ for every $x \in C$.

Another measurement relevant for our considerations is the following.
\label{pp:delta-f}
Let $(C,d^{C})$, $(D, d^{D})$ be filtered chain complexes and
$f: C \longrightarrow D$ a strictly filtered $\mathcal{R}$-linear
map. Define the ``{\em action drop}'' of $f$ as
$\delta_{f} = \sup\{r\in [0,\infty) \mid \forall a\in \R, \ f(C^{\leq
  a}) \subset D^{\leq a-r}\}$. An important special case is when $C=D$
and $f = d^C$ the differential of $C$:
\begin{equation} \label{eq:delta-dC} \delta_{d^C} = \sup\{r\in
  [0,\infty) \mid \forall a\in \R, \ d^C(C^{\leq a}) \subset C^{\leq
    a-r}\}~.~
\end{equation}

A central measurement in our framework is the following. Let
$\psi: C \longrightarrow D$ be a filtered chain map and assume that
$\psi$ is null-homotopic. Define its {\em homotopical boudary level}
$B_h(\psi)$ to be the infimal action shift needed for a chain homotopy
between $\psi$ and $0$. More precisely:
\begin{equation} \label{eq:Bh-1}
  \begin{aligned}
    B_h(\psi) = \inf \bigl\{ \rho \in \mathbb{R} \mid \, & \exists \;
    \text{an} \; \mathcal{R} \text{-linear map} \; h: C
    \longrightarrow D\; \text{which shifts action by} \, \leq \rho, \\
    & \text{and such that} \; \psi = hd^C + d^Dh \bigr\}.
  \end{aligned}
\end{equation}
This notion is closely related to the boundary depth measurement
introduced by Usher~\cite{Usher1, Usher2}. In order to put things in
the right context we will explain this relation and further notions in
the next section.

\subsubsection{Boundary depth and related algebraic
  notions} \label{sec:bdry-depth} 

Let $(C,d^C)$, $(D, d^D)$ be filtered chain complexes and
$\phi: C \longrightarrow D$ a {\em strictly} filtered chain
map. Denote by $Z_C^{\leq \alpha} \subset C^{\leq \alpha}$ the cycles
of $C^{\leq \alpha}$ and by $B_D \subset D$ the boundaries of $D$.
\begin{dfn}\label{def:b-depth1}
  The boundary depth of $\phi$ is defined by:
  \begin{equation}
    \begin{aligned}
      \beta (\phi)=\inf \Bigl\{ r \geq 0 \ \bigm| \ & \forall \,
      \alpha \in \R, \forall \, x \in Z_C^{\leq \alpha} \; \text{with}
      \;
      \phi(x) \in B_D, \\
      & \exists \, b \in D^{\leq \alpha+r} \; \text{such that} \;
      \phi(x) = d^D(b)\Bigr \}.
  \end{aligned}
  \end{equation}
\end{dfn}

This notion was introduced and studied extensively in symplectic
topology (in a slightly different formulation) by Usher~\cite{Usher1,
  Usher2}.

A special case of interest is the following. Let $c\in (C,d^{C})$ be a
boundary. Denote by $\langle c \rangle$ be the chain complex with a
single generator $c$ and zero differential and let
$j : \langle c \rangle \longrightarrow C$ be the inclusion. Endow
$\langle c \rangle$ with the pull-back filtration by $j$. Note that
with this filtration $j$ is a strictly filtered map.  Define now
\begin{equation} \label{eq:beta-c}
  \beta(c;C) := \beta \big(\langle c \rangle \stackrel{j}{\longrightarrow}
  C \bigr)~.~
\end{equation}
It is easy to see that
$\beta(c;C) = \inf \{ r \geq 0 \mid c \; \text{is a boundary in} \;
C^{\leq A(c)+r} \}$.  Note also that for every boundary $c\in C$ we
have $\beta(c;C)\geq \delta_{d^C}$, where $\delta_{d^C}$ is defined
in~\eqref{eq:delta-dC}.

Sometimes it would be more convenient to work with the following
quantity instead of $\beta(c; C)$. Let $c \in C$ be a boundary. Define
\begin{equation} \label{eq:B-c} B(c;C) = \inf \bigl\{ \alpha \in
  \mathbb{R} \mid \, \exists \, b \in C^{\leq \alpha} \; \text{such
    that} \; c=d^Cb \bigr\}.
\end{equation}
Clearly, $B(c;C) = A(c;C) + \beta(c;C)$. We call $B(c;C)$ the {\em
  boundary level} of the element $c$.

Returning to the boundary depth measurement, another special case is
the boundary depth of a chain complex $C$. This is the boundary depth
of the identity of $C$, i.e. $\beta(C)=\beta(id_{C})$. This
measurement is especially useful when $C$ is acyclic.

Yet another instance of the same notion appears in the following
setting. Let $(C,d^C)$ and $(D,d^D)$ be two filtered chain
complexes. Denote by $\hom_{\mathcal{R}}(C,D)$ the
$\mathcal{R}$-linear maps $C \longrightarrow D$.  Then
$\hom_{\mathcal{R}}(C,D)$ is a chain complex with differential
$d^{\hom}(f) = d^D \circ f - f \circ d^C$. (In case $C$ and $D$ are
graded we take $\hom_{\mathcal{R}}(C,D)$ to consist only of linear
maps that preserve degree up to a shift, and the differential is
modified to be $d^{\hom}(f) = d^D \circ f - (-1)^{|f|} f \circ d^C$,
where $|f|$ is the degree-shift of $f$.) The cycles in
$\hom_{\mathcal{R}}(C,D)$ are chain maps and the boundaries are the
null-homotopic chain maps. The chain complex $\hom_{\mathcal{R}}(C,D)$
is filtered where $\hom_{\mathcal{R}}^{\leq \gamma}(C,D)$ is the
subcomplex consisting of all $\mathcal{R}$-linear maps
$C \longrightarrow D$ that shift action by $\leq \gamma$.

Now let $\psi: C \longrightarrow D$ be a filtered chain map and assume
that $\psi$ is null-homotopic (i.e. $\psi$ is a boundary in
$\hom_{\mathcal{R}}(C,D)$). We define the {\em homotopical boundary
  depth} $\beta_h(\psi)$ of $\psi$ by
\begin{equation} \label{eq:betta-h}
  \beta_{h}(\psi) := \beta(\psi; 
  \hom_{\mathcal{R}}(C,D))~.~
\end{equation}
More explicitly, $\beta_{h}(\psi)$ is the infimal $r \geq 0$ for which
$\psi$ is null-homotopic via a chain homotopy that shifts filtration
by $\leq A(\psi) + r$. As before, we define also the {\em homotopical
  boundary level} $$B_h(\psi) = B(\psi; \hom_{\mathcal{R}}(C,D)).$$

Note that if $\psi: C \longrightarrow D$ is a strictly filtered chain
map which is null homotopic then:
\begin{equation} \label{eq:beta-beta_h} \beta(\psi) \leq B_h(\psi) =
  A(\psi) + \beta_h(\psi) \leq \beta_h(\psi),
\end{equation}
where the last inequality holds because $\psi$ is strictly filtered,
hence $A(\psi)\leq 0$.

%

\begin{rem} \label{r:beta-filtered} We have defined the boundary depth
  $\beta(\psi)$ only for {\em strictly} filtered chain maps. However,
  the homotopical boundary depth $\beta_h(\psi)$ is defined for all
  filtered (null-homotopic) chain maps $\psi$, not only for the
  strictly filtered ones. This is because, in contrast to
  $\beta(\psi)$, the homotopical boundary depth $\beta_h(\psi)$ is
  defined as the boundary depth of the element $\psi$ inside the
  filtered chain complex $\hom_{\mathcal{R}}(C,D)$.
\end{rem}

Finally, here is another variant of the same measurement.  Let
$\mathcal{A}$ be a weakly filtered $A_{\infty}$-category with
discrepancy $\leq \bmea$ (see~\S\ref{s:wf-ai-theory}). Let
$\bmemm = (\epsilon_1^m=0, \epsilon_2^m, \ldots, \epsilon_d^m,
\ldots)$ be a sequence of non-negative real numbers, and let
$\mathcal{M}_0, \mathcal{M}_1$ be two weakly filtered
$\mathcal{A}$-modules with discrepancy $\leq \bmemm$. Let $\bmeh$ be
another sequence of non-negative real numbers, and assume that
$\bmeh \in~\hyperlink{h:asmp-e}{\lbe(\bmemm,
  \bmea)}$. (See~\S\ref{sb:mod}, page~\pageref{pp:assump-E}.)

Denote by $\hom^{\bmeh}(\mathcal{M}_0, \mathcal{M}_1)$ the weakly
filtered pre-module homomorphisms
$\mathcal{M}_0 \longrightarrow \mathcal{M}_1$ with discrepancy
$\leq \bmeh$ (and arbitrary action shift). As explained
in~\S\ref{sb:mod}, $\hom^{\bmeh}(\mathcal{M}_0, \mathcal{M}_1)$ is a
chain complex when endowed with the differential $\mu_1^{\text{mod}}$
of the dg-category of $\mathcal{A}$-modules. Moreover, this chain
complex is filtered by
$\hom^{\leq \rho; \bmeh}(\mathcal{M}_0, \mathcal{M}_1)$,
$\rho \in \mathbb{R}$.

Now let $\psi: \mathcal{M}_0 \longrightarrow \mathcal{M}_1$ be a
weakly filtered module homomorphism with discrepancy $\leq \bmeh$, and
assume that $\psi$ is a boundary in
$\hom^{\bmeh}(\mathcal{M}_0, \mathcal{M}_1)$ (i.e. $\psi$ is chain
homotopic to $0$ via a chain homotopy of pre-module maps with
discrepancy $\leq \bmeh$. Then we can define
$\beta_h(\psi; \bmeh) := \beta \bigl(\psi; \hom^{\bmeh}(\mathcal{M}_0,
\mathcal{M}_1)\bigr)$. Now, let $X \in \textnormal{Ob}(\mathcal{A})$
and denote by
$\psi_1^X: \mathcal{M}_0(X) \longrightarrow \mathcal{M}_1(X)$ the
chain map which is the 1'st order component of $\psi$ (corresponding
to the object $X$). By the preceding discussion we can associate to
$\psi_1^X$ its homotopical boundary depth
$\beta_h(\psi_1^X) = \beta \bigl(\psi_1^X;
\hom_{\mathcal{R}}(\mathcal{M}_0(X), \mathcal{M}_1(X)) \bigr)$ by the
recipe~\eqref{eq:betta-h}.  The two versions of the boundary depth
$\beta_h(\psi; \bmeh)$ and $\beta_h(\psi_1^X)$ are related as follows:
$$\beta_h(\psi_1^X) + 
A(\psi_1^X) \leq \beta_h(\psi; \bmeh) + A(\psi) + \epsilon_1^h.$$ Here
$A(\psi_1^X)$ is the action-level of $\psi_1^X$ viewed as an element
of the filtered chain complex
$\hom_{\mathcal{R}}(\mathcal{M}_0(X), \mathcal{M}_1(X))$ while
$A(\psi)$ is the action level of $\psi$, viewed as an element of the
filtered chain complex $\hom^{\bmeh}(\mathcal{M}_0, \mathcal{M}_1)$.

In case $\psi_1^X$ is strictly filtered, then we also have the
following two inequalities:
$$\beta(\psi_1^X) \leq \beta_h(\psi; \bmeh) + A(\psi) + \epsilon_1^h, 
\quad \beta(\psi_1^X) \leq \beta_h(\psi_1^X),$$ where the 2'nd
inequality comes from~\eqref{eq:beta-beta_h}.

%
%

\subsubsection{Algebraic implications} \label{sb:alg-impl}

We begin with a simple algebraic approximation lemma which says that a
altering a chain isomorphism by a chain homotopy yields an injective
map provided that the chain homotopy shifts action by a small enough
amount.

\begin{lem} \label{l:rig-cplx-n} Let $(C, d^C)$ and $(D, d^D)$ be
  filtered chain complexes, and assume that the filtration on $C$ is
  exhaustive (i.e.
  $\cup_{\alpha \in \mathbb{R}} C^{\leq \alpha} = C$) and separated
  (i.e.  $\cap_{\alpha \in \mathbb{R}} C^{\leq \alpha} = 0$). Let
  $f, g: C \longrightarrow D$ be chain maps with the following
  properties:
  \begin{enumerate}
  \item $g$ is an isomorphism.
  \item Both $g$ and $g^{-1}$ are strictly filtered.
  \item $f-g$ is null-homotopic and
    $B_h(f-g) < \min\{\delta_{d^C}, \delta_{d^D}\}$.
  \end{enumerate}
  Then $f$ is strictly filtered and moreover $f$ is injective.
\end{lem}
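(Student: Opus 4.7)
The plan is as follows. By the definition of $B_h(f-g)$ we may choose $\rho \in \mathbb{R}$ with $B_h(f-g) < \rho < \min\{\delta_{d^C}, \delta_{d^D}\}$ and a chain homotopy $h : C \longrightarrow D$ that shifts action by $\leq \rho$ such that $f - g = h d^C + d^D h$. Set $\eta := \min\{\delta_{d^C}, \delta_{d^D}\} - \rho > 0$; this is the key positive ``gap'' that drives the argument.

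First I would verify that $f$ is strictly filtered. For $x \in C^{\leq \alpha}$ we write $f(x) = g(x) + h d^C(x) + d^D h(x)$. By hypothesis $g(x) \in D^{\leq \alpha}$. Since $d^C(x) \in C^{\leq \alpha - \delta_{d^C}}$, we get $h d^C(x) \in D^{\leq \alpha - \delta_{d^C} + \rho} \subset D^{\leq \alpha - \eta} \subset D^{\leq \alpha}$; likewise $d^D h(x) \in D^{\leq \alpha + \rho - \delta_{d^D}} \subset D^{\leq \alpha - \eta} \subset D^{\leq \alpha}$. Hence $f(x) \in D^{\leq \alpha}$, as required.

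For injectivity, suppose $f(x) = 0$ and $x \in C^{\leq \beta}$ (such $\beta$ exists because the filtration on $C$ is exhaustive). Then
\begin{equation*}
  g(x) = -h d^C(x) - d^D h(x),
\end{equation*}
and the same estimates as above give $g(x) \in D^{\leq \beta - \eta}$. Applying the strictly filtered map $g^{-1}$ yields $x = g^{-1}(g(x)) \in C^{\leq \beta - \eta}$. Iterating, $x \in C^{\leq \beta - n\eta}$ for every $n \in \mathbb{N}$, so by separatedness $x \in \bigcap_{\alpha} C^{\leq \alpha} = \{0\}$, and $x = 0$.

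There is no real obstacle here; the only subtle point is the strict inequality $B_h(f-g) < \min\{\delta_{d^C}, \delta_{d^D}\}$, which is precisely what allows us to pick $\rho$ with a genuine positive gap $\eta$ so that the iteration actually ratchets the action of $x$ down to $-\infty$.
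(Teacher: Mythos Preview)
Your proof is correct and follows essentially the same approach as the paper: choose a homotopy with action shift $\rho$ strictly below $\min\{\delta_{d^C},\delta_{d^D}\}$, use this to see that the perturbation term $h d^C + d^D h$ strictly lowers action, and conclude both strict filteredness of $f$ and injectivity. The only cosmetic difference is that the paper works directly with the action level $A(x)$ and obtains a one-step contradiction $A(x)\leq A(g(x))<A(x)$ (using that $g^{-1}$ is strictly filtered), whereas you iterate to push $x$ into $\bigcap_\alpha C^{\leq\alpha}$; both arguments are equivalent.
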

In our geometric applications $D=C$, $g$ will be the identity, and $f$
will be the composition of two chain morphisms
$C\stackrel{f_{1}}{\longrightarrow}
C'\stackrel{f_{2}}{\longrightarrow} C$ that are constructed
geometrically. The Lemma shows in this case that the middle complex
$C'$ contains $C$ as a retract. Results of this sort are familiar in symplectic topology  since 
\cite{Co-Ra:Morse-Novikov}. 

\begin{proof}[Proof of Lemma~\ref{l:rig-cplx-n}]
  Since the filtartion on $C$ is both exhaustive and separated, we
  have $-\infty < A(x) < \infty$ for every $x \neq 0$, and
  $A(0) = -\infty$.

  Set $\rho := B_h(f-g) + \epsilon$, where $\epsilon>0$ is small
  enough such that $\rho < \min \{\delta_{d^C}, \delta_{d^D}\}$.
  Write $f = g + \eta d^C + d^D \eta$, where
  $\eta : C \longrightarrow D$ is an $\mathcal{R}$-linear map that
  shifts action by $\leq \rho$.  Since
  $\rho < \min\{\delta_{d^C}, \delta_{d^D}\}$ and $g$ is strictly
  filtered we have
  $$A(f(x)) = A\bigl(g(x) + \eta d^C(x) + d^D \eta(x)\bigr) \leq A(x), 
  \; \forall x \in C,$$ hence $f$ is strictly filtered.

  For the injectivity of $f$, assume that $f(x)=0$ for some
  $x \neq 0$. Then $$g(x) = - (\eta d^C(x) + d^D \eta(x)),$$ and using
  again that $\rho < \min\{\delta_{d^C}, \delta_{d^D}\}$ we obtain
  that
  $$A(g(x)) = A(\eta d^C(x) + d^D \eta(x)) < A(x).$$ 
  The last inequality together with the assumption that $g^{-1}$ is
  strictly filtered imply that
  $$A(x) = A(g^{-1}g(x)) \leq A(g(x)) < A(x).$$ A contradiction.
\end{proof}

Under additional assumptions we can obtain a somewhat stronger
result. Before we state it, here are a couple of relevant notions.
The filtration $C^{\leq \alpha} \subset C$, $\alpha \in \mathbb{R}$
induces a topology on $C$ which is generated by the cosets of
$C^{\leq \alpha}$, $\alpha \in \mathbb{R}$, as basic open subsets. The
assumption that the filtration is separated (i.e.
$\cap_{\alpha \in \mathbb{R}} C^{\leq \alpha} = 0$) implies that $C$
is Hausdorff in this topology.

The filtration on $C$ is called complete if the obvious map
$C \longrightarrow \displaystyle{\varprojlim_{\alpha}} (C / C^{\leq
  \alpha})$ is surjective. This assumption implies that the previously
mentioned topology on $C$ turns $C$ into a complete topological space
(in the sense that every Cauchy sequence converges).

\begin{lem} \label{lem:rig-cplx-c} Let $(C,d^C)$, $(D,d^D)$, $f$, $g$
  be as in Lemma~\ref{l:rig-cplx-n} and assume in addition that the
  filtration on $C$ is complete. Then $f$ is a strictly filtered
  isomorphism and moreover $f^{-1}$ is also strictly filtered.
\end{lem}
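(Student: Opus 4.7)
The plan is to build on the decomposition obtained in the proof of Lemma~\ref{l:rig-cplx-n} and produce an explicit inverse for $f$ using a Neumann series, whose convergence is guaranteed by the completeness hypothesis. Writing $f = g + \eta d^C + d^D\eta$ with $\eta$ shifting action by $\leq \rho < \min\{\delta_{d^C}, \delta_{d^D}\}$, set $h := \eta d^C + d^D \eta$ and observe that $h$ shifts action by at most $\sigma := \rho - \min\{\delta_{d^C}, \delta_{d^D}\} < 0$. Factor $f = g \circ (\id_C + k)$ where $k := g^{-1} \circ h : C \longrightarrow C$. Because $g^{-1}$ is strictly filtered, $k$ also shifts action by $\leq \sigma < 0$, and consequently its iterates satisfy $k^n(C^{\leq \alpha}) \subset C^{\leq \alpha + n\sigma}$ for every $n \geq 0$.

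Next, I would define a candidate inverse for $\id_C + k$ by the formal series $S := \sum_{n=0}^\infty (-1)^n k^n$ (signs included for generality, though the paper works in characteristic~$2$ so they can be dropped in our setting). For each $x \in C$ with $A(x) = \alpha$, the partial sums $S_N(x) = \sum_{n=0}^N (-1)^n k^n(x)$ form a Cauchy sequence in the filtration topology on $C$, since for $M > N$ one has $S_M(x) - S_N(x) \in C^{\leq \alpha + (N+1)\sigma}$ and $\alpha + (N+1)\sigma \to -\infty$. By completeness of the filtration, this Cauchy sequence converges to some $S(x) \in C$, and a routine check shows $S$ is $\mathcal{R}$-linear. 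Continuity of $k$ in the filtration topology (which follows from $k$ being contracting) together with the telescoping identity $(\id_C + k) S_N(x) = x + (-1)^N k^{N+1}(x)$ yields $(\id_C + k) S(x) = x$; an analogous computation on the other side gives $S (\id_C + k) = \id_C$.

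To conclude, I would check that $S$ is strictly filtered. Each $C^{\leq \alpha}$ is closed in the filtration topology, since its complement is a union of cosets $x + C^{\leq \alpha'}$ with $\alpha' < \alpha$ small enough, which are open. Given $y \in C^{\leq \alpha}$, every partial sum $S_N(y)$ lies in $C^{\leq \alpha}$ (using $k^n(y) \in C^{\leq \alpha + n\sigma} \subset C^{\leq \alpha}$), so the limit $S(y)$ lies in $C^{\leq \alpha}$ as well. Therefore $f^{-1} = S \circ g^{-1}$ is defined on all of $D$ and strictly filtered, which together with the strictly filtered property of $f$ from Lemma~\ref{l:rig-cplx-n} proves the statement.

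The main obstacle is a conceptual one rather than a technical one: one must recognize that the completeness hypothesis is precisely what promotes the pointwise-contracting perturbation $k$ into an honest automorphism via a Neumann-type argument, and then verify that convergence of the series respects the filtration. The rest is bookkeeping: continuity of $k$, closedness of the filtration levels, and the telescoping identity for $(\id_C + k) S_N$.
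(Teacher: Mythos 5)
Your argument is correct and takes essentially the same route as the paper: both factor $f = g\circ(\id_C \pm k)$ with $k$ a strictly action-decreasing endomorphism, invert $\id_C \pm k$ by the Neumann/geometric series whose convergence is ensured by completeness, and conclude $f^{-1} = S\circ g^{-1}$ is strictly filtered. You supply a bit more detail than the paper (explicit Cauchy estimates, closedness of the filtration levels, the telescoping identity), but there is no difference in substance.
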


\begin{proof}
  In view of Lemma~\ref{l:rig-cplx-n} we only need to show that $f$ is
  an isomorphism and that $f^{-1}$ is strictly filtered. 

  We will use a well-known inversion trick, that has already been used
  in a similar setting in~\cite{Usher1, Usher2}. Fix
  $0< \epsilon <(\min\{\delta_{d^C}, \delta_{d^D}\}-B_{h}(f-g))/2$. By
  the definition of $B_{h}$ there is an $\mathcal{R}$-linear map
  $\eta: C \longrightarrow D$ that shifts actions by
  $\leq B_h(f-g)+\epsilon$ such that $f-g=d^{C}\eta + \eta d^{C}$.
  Note that $f-g$ {\em decreases} action by at least $\epsilon$.

  Now write $f = g + (f - g) = g (\id+g^{-1}(f-g)) = g(\id - k)$,
  where $k: C \longrightarrow C$ is defined by $k=-g^{-1}(f-g)$. Since
  $g^{-1}$ is strictly filtered and $f-g$ decreases filtration by at
  least $\epsilon$, the same is true for $k$. As the filtration on $C$
  is complete, the series $a = \id + \sum_{n \geq 1} k^n$ converges,
  and satisfies $(\id-k)a = a(\id-k) = \id$. Therefore $f$ is
  invertible with inverse $a g^{-1}$, a strictly filtered
  chain-morphism of $\mathcal{R}$-modules.
\end{proof}

For the next result we will assume that $\mathcal{R} = \Lambda_0$ (the
positive Novikov ring over any field $R$). Recall that the Novikov
ring $\Lambda$ is the field of fractions of $\Lambda_0$.  Denote by
$\nu: \Lambda \longrightarrow \mathbb{R} \cup
\{\infty\}$ the standard valuation defined by
\begin{equation} \label{eq:val-Nov} \nu \Bigl(a_0 T^{\lambda_0} +
  \sum_{i=1}^{\infty} a_i T^{\lambda_i} \Bigr) = \lambda_0,
\end{equation}
where $a_0 \neq 0$ and $\lambda_i > \lambda_0$ for every $i\geq 1$. As
usual we set $\nu(0) = \infty$.

Let $C$ be a finite dimensional chain complex over $\Lambda$. Fix a
basis $\mathcal{G}$ of $C$ over $\Lambda$ and let
$A: \mathcal{G} \longrightarrow \mathbb{R}$ be a function. Similarly
to~\S\ref{sbsb:filtration-mixed} we will use $A$ to define a
filtration on $C$ by $\Lambda_0$-modules. Extend $A$ to a function
$A:C \to \R \cup \{-\infty\},$ by
$$A\Bigl(\sum \lambda_j e_j \Bigr) = \max\{-\nu(\lambda_j) + A(e_j)\},$$ 
where $e_j$ are the elements of the basis $\mathcal{G}$,
$0 \neq \lambda_j \in \Lambda$, $A(e_j)$ is the
pre-determined value of $A$ on the generator $e_j,$ and $\nu$ is the
preceding valuation. Define now 
$$C^{\leq \alpha} := \{ x\in C \mid A(x) \leq \alpha\}.$$
It is easy to see that $C^{\leq \alpha} \subset C$,
$\alpha \in \mathbb{R}$, is an increasing filtration of $C$ by
$\Lambda_0$-modules (though not by vector space over $\Lambda$). Since
$A(x) = -\infty$ iff $x=0$, this filtration is separated. Moreover, it
is exhaustive and complete.

From now on we will make the following {\em standing assumption:}
$A(d^C x) \leq A(x)$, $\forall \, x \in C$. In other words, we assume
that each $C^{\leq \alpha} \subset C$, $\alpha \in \mathbb{R}$, is a
subcomplex of $C$ (over $\Lambda_0$).

It is important to note that the function $A$, as defined above,
coincides with the action level of the preceding filtration on $C$, as
defined at the beginning of~\S\ref{s:filt-ch}. Thus no confusion
should arise by denoting them both by $A$.

We will make use of the following defintion from~\cite{Usher-Zhang}.
\begin{dfn} \label{d:delta-robust}
  A subspace $V \subset Ker(d^C) \subset C$ is called {\em
    $\delta$-robust} if for all $v \in V$ and $w \in C$ such that
  $v = d^C(w),$ we have $A(w) \geq A(v) + \delta.$
\end{dfn}

\begin{rem} \label{rem:delta-robust-subspace-in-kernel} According to
  the above definition, a complement $W$ in $Ker(d^C)$ to $Im(d^C)$ is
  a $\delta$-robust subspace for all $\delta > 0.$ Hence if
  $V \subset Im(d^C)$ is $\delta$-robust then $V \oplus W$ is also
  $\delta$-robust. We will call a $\delta$-robust subspace
  $V \subset Im(d^C)$ a {\em proper $\delta$-robust} subspace.
\end{rem}

\begin{prop}\label{prop:rig-cplx2} Let $(C,d^{C})$ be a chain complex
  as above, and let $f:C \longrightarrow C$ be a chain map. Assume
  that $d^{C}$ splits as a sum $d^{C}=d_{0}+d_{1}$ such that $d_0$ is
  an $\Lambda$-linear differential which (like $d^C$)
  also preserves the given filtration on $C$. Furthermore, assume that
  $\dim_{\Lambda}(H_{\ast}(C,d_{0})) \geq
  \dim_{\Lambda}(H_{\ast}(C,d^C))$. If
  $B_{h}(f-id_{C})< \delta_{d_{1}}$, then
  $$\dim_{\Lambda}(Im(f))\geq 
  \dim_{\Lambda}(H_{\ast}(C,d_{0}))~.~$$
\end{prop}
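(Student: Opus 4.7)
The plan is to use the Usher--Zhang orthogonal barcode basis for the filtered complex $(C,d_0)$ to locate a subspace on which $f$ is forced to be injective. First I would invoke the Usher--Zhang singular value decomposition to produce a non-Archimedean orthogonal $\Lambda$-basis $\{e_1,\dots,e_n\}\sqcup\{x_1,\dots,x_m\}\sqcup\{y_1,\dots,y_m\}$ of $C$ such that $d_0 e_i=0$, $d_0 y_j = x_j$, and the classes $\{[e_i]\}$ form a basis of $H_*(C,d_0)$ (so $n=\dim_\Lambda H_*(C,d_0)$). Setting $V=\mathrm{span}_\Lambda\{e_i\}$, $X=\mathrm{span}_\Lambda\{x_j\}=Im(d_0)$ and $Y=\mathrm{span}_\Lambda\{y_j\}$, one has $C = V\oplus X\oplus Y$ and $Ker(d_0)=V\oplus X$. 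The non-Archimedean orthogonality of the basis ensures that the projection $\pi_V:C\to V$ along $X\oplus Y$ is filtration non-increasing, i.e.\ $A(\pi_V(z))\leq A(z)$ for every $z\in C$.

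Since $\dim_\Lambda V = \dim_\Lambda H_*(C,d_0)$, it suffices to show that $f|_V$ is injective. Suppose for contradiction that there exists $v\in V\setminus\{0\}$ with $f(v)=0$. Pick $\rho$ with $B_h(f-\id_C)\leq\rho<\delta_{d_1}$ and write $f-\id_C = \eta\, d^C + d^C\eta$, where $\eta:C\to C$ shifts action by $\leq\rho$. Using $d_0 v=0$ and $d^C=d_0+d_1$ one computes
\[
0 \,=\, f(v) \,=\, v + d_0(\eta v) + r, \qquad r\,:=\,\eta(d_1 v) + d_1(\eta v).
\]
Since $\rho<\delta_{d_1}$, the estimates $A(\eta(d_1 v))\leq A(v)-\delta_{d_1}+\rho$ and $A(d_1(\eta v))\leq A(v)+\rho-\delta_{d_1}$ show that both summands of $r$ have action strictly less than $A(v)$, hence $A(r)<A(v)$. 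Applying $d_0$ to the displayed identity yields $d_0 r=0$, so $r\in Ker(d_0)=V\oplus X$; write $r=r_V+r_X$ accordingly. Projecting the identity onto $V$ via $\pi_V$ (and noting $d_0(\eta v)\in X$) gives $r_V=-v$, whence
\[
A(v)\,=\,A(r_V)\,=\,A(\pi_V(r))\,\leq\,A(r)\,<\,A(v),
\]
a contradiction.

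The main obstacle to carry out cleanly is verifying that the Usher--Zhang orthogonal decomposition applies in our precise setting: $C$ is a finite-dimensional $\Lambda$-complex equipped with the mixed filtration of \S\ref{sbsb:filtration-mixed}, and the orthogonal basis we need must be the one adapted to the auxiliary differential $d_0$ rather than $d^C$. Once this filtered normal form is granted, the rest of the argument is a short filtration computation resting on the single observation that $f-\id_C$ is realized as $\eta d^C + d^C\eta$ with the action-shift of $\eta$ strictly below $\delta_{d_1}$. I also note that the hypothesis $\dim_\Lambda H_*(C,d_0)\geq\dim_\Lambda H_*(C,d^C)$ does not actively enter the proof above; it merely guarantees that the conclusion $\dim_\Lambda Im(f)\geq\dim_\Lambda H_*(C,d_0)$ is at least as strong as the generic lower bound $\dim_\Lambda H_*(C,d^C)$ coming from $f$ inducing an isomorphism on $H_*(C,d^C)$.
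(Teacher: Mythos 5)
Your proof is correct, and it takes a genuinely different and somewhat more streamlined route than the paper's. The paper works with the notion of a $\delta$-robust subspace: it first proves (Lemma~\ref{lem:delta-robust-existence}) that, using the Usher--Zhang projection $\pi: C \to Im(d_0)$ from their Proposition~7.4, the kernel $Ker(\pi|_{Im(d^C)})$ yields a proper $\delta_{d_1}$-robust subspace $V \subset Im(d^C)$ of dimension $k = \tfrac{1}{2}\bigl(\dim_\Lambda H_*(C,d_0) - \dim_\Lambda H_*(C,d^C)\bigr)$; it then shows (Lemma~\ref{lem:delta-robust-injective}) that $f$ is injective on $\delta$-robust subspaces and sends them to $\epsilon$-robust ones; and finally it assembles $f(V)$, $f(V')$ (where $d^C V' = V$) and $f(W)$ (a complement of $Im(d^C)$ in $Ker(d^C)$) into a direct sum of total dimension $\dim_\Lambda H_*(C,d^C) + 2k = \dim_\Lambda H_*(C,d_0)$. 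You instead invoke the full Usher--Zhang singular value decomposition of $(C,d_0)$ to exhibit the $n$-dimensional span $V$ of homology representatives as an orthogonal complement to $X\oplus Y$, and then run a filtration estimate to show $f|_V$ is injective; that estimate is in essence the same bookkeeping as Lemma~\ref{lem:delta-robust-injective}, but organized around a different subspace. What your route buys: it bypasses the robust-subspace machinery, the auxiliary subspaces $V'$, $W$ and the direct-summand argument, and it never uses the hypothesis $\dim_\Lambda H_*(C,d_0) \geq \dim_\Lambda H_*(C,d^C)$, which the paper needs only to ensure $k\geq 0$. Your worry about whether the Usher--Zhang normal form applies in this mixed-filtration ungraded setting is unfounded: this is precisely the class of finite-dimensional $\Lambda$-complexes with filtration preserved by the differential that Usher--Zhang treat, and the paper already invokes their Proposition~7.4 for $(C,d_0)$ in the very same proof; the orthogonal-basis form you use is the content of their singular value decomposition. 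Finally, a small simplification: the step where you verify $d_0 r=0$ to place $r$ in $Ker(d_0)=V\oplus X$ is unnecessary, since $\pi_V$ is defined on all of $C$ and the estimate $A(\pi_V(r))\leq A(r)$ together with $\pi_V(d_0(\eta v))=0$ already give $v=-\pi_V(r)$ and hence the contradiction.
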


The proposition follows directly from the following two lemmas.

\begin{lem}\label{lem:delta-robust-existence} Let $(C, d^C)$ 
  be a chain complex as above, and assume that its differential splits
  as $d^C = d_0+d_1$ with $d_0$ satisfying the same assumptions as in
  Proposition~\ref{prop:rig-cplx2}. Then
  $\dim_{\Lambda}(H_{\ast}(C,d_{0})) -
  \dim_{\Lambda}(H_{\ast}(C,d^C))$ is
  even. Furthermore, denote the latter number by $2k$ and assume that
  $k \geq 0$. Then $(C,d_C)$ admits a proper $\delta_{d_1}$-robust
  subspace of dimension at least $k$.
\end{lem}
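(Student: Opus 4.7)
For the first assertion, the parity is immediate from the elementary identity $\dim_\Lambda H_*(C,d) = \dim_\Lambda C - 2\dim_\Lambda \mathrm{im}(d)$, valid for any differential $d$ on a finite-dimensional vector space over the field $\Lambda$. Applied to $d_0$ and to $d^C$, this gives
$$\dim_\Lambda H_*(C,d_0) - \dim_\Lambda H_*(C,d^C) \;=\; 2\bigl(p_C - p_0\bigr),$$
where $p_C := \dim_\Lambda \mathrm{im}(d^C)$ and $p_0 := \dim_\Lambda \mathrm{im}(d_0)$; the assumption $k \geq 0$ is precisely $k = p_C - p_0 \geq 0$.

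For the robust-subspace statement, my plan is to apply the non-Archimedean singular value decomposition of Usher--Zhang~\cite{Usher-Zhang} to the filtered $\Lambda$-chain complex $(C, d^C)$. This produces an \emph{orthogonal} $\Lambda$-basis $\{x_i, y_i\}_{i=1}^{p_C} \cup \{z_j\}_{j=1}^{r_C}$ of $C$ -- orthogonality meaning that $A\bigl(\sum_e \alpha_e e\bigr) = \max_e\bigl(A(e)-\nu(\alpha_e)\bigr)$ for any basis vectors $e$ and coefficients $\alpha_e \in \Lambda$ -- satisfying $d^C x_i = y_i$ and $d^C y_i = d^C z_j = 0$. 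Let $\ell_i := A(x_i)-A(y_i) \geq 0$ denote the associated bar lengths. The candidate proper robust subspace will be
$$V \;:=\; \mathrm{span}_\Lambda\bigl\{y_i : \ell_i \geq \delta_{d_1}\bigr\} \;\subset\; \mathrm{im}(d^C).$$

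Verifying $\delta_{d_1}$-robustness of $V$ is then straightforward. For $v = \sum_{\ell_i \geq \delta_{d_1}} c_i y_i \in V$, every $w$ with $d^C w = v$ differs from $w_0 := \sum c_i x_i$ by a $d^C$-cycle, i.e.\ by an element of $\mathrm{span}_\Lambda\{y_1, \ldots, y_{p_C}, z_1, \ldots, z_{r_C}\}$; hence the $x_i$-coefficients of $w$ in the orthogonal basis coincide with those of $w_0$, so orthogonality yields $A(w) \geq \max_i\bigl(A(x_i)-\nu(c_i)\bigr)$. Since $A(x_i) = A(y_i)+\ell_i \geq A(y_i)+\delta_{d_1}$ on the index set in question, and $A(v) = \max_i\bigl(A(y_i)-\nu(c_i)\bigr)$, this immediately gives $A(w) \geq A(v)+\delta_{d_1}$.

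The main algebraic step, and the essential obstacle, is to establish $\dim V \geq k$ -- equivalently, that at most $p_0$ of the lengths $\ell_i$ are strictly less than $\delta_{d_1}$. I plan to prove the stronger assertion that the family $\{d_0 x_i : \ell_i < \delta_{d_1}\}$ is $\Lambda$-linearly independent in $\mathrm{im}(d_0)$, which forces $p_0 \geq \#\{i : \ell_i < \delta_{d_1}\} = p_C - \dim V$. Expanding $d_0 x_i = y_i - d_1 x_i$, a putative relation $\sum_{\ell_i < \delta_{d_1}} c_i\, d_0 x_i = 0$ rewrites as $\sum c_i y_i = \sum c_i\, d_1 x_i$. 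By orthogonality, the left-hand side has action exactly $\max_i\bigl(A(y_i)-\nu(c_i)\bigr)$; on the right, the non-Archimedean triangle inequality together with the key estimate
$$A(d_1 x_i) \;\leq\; A(x_i) - \delta_{d_1} \;=\; A(y_i) + \ell_i - \delta_{d_1} \;<\; A(y_i) \qquad (\ell_i < \delta_{d_1})$$
forces strictly smaller action on the right, contradicting the equality unless every $c_i$ vanishes. This yields the required independence, hence the bound $\dim V \geq k$, and completes the plan.
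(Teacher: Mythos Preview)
Your proof is correct, and it takes a genuinely different route from the paper's. Both arguments invoke Usher--Zhang, but different pieces of it. The paper uses only the existence of a filtered projection $\pi: C \to \mathrm{im}(d_0)$ with $A(\pi(x)) \leq A(x)$ (extracted from \cite[Proposition~7.4]{Usher-Zhang}) and sets $V = \ker(\pi|_{\mathrm{im}(d^C)})$. Robustness then follows in one line: for $v = d^C w \in V$ one has $d_0 w = \pi(d_0 w) = -\pi(d_1 w)$, whence $v = (\id - \pi)(d_1 w)$ and $A(v) \leq A(d_1 w) \leq A(w) - \delta_{d_1}$; the dimension bound $\dim V \geq k$ is immediate from rank--nullity and the identity $\dim \mathrm{im}(d^C) = \dim \mathrm{im}(d_0) + k$. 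Your approach instead invokes the full non-Archimedean singular value decomposition of $(C,d^C)$ and identifies $V$ with the span of the bars of length $\geq \delta_{d_1}$. This buys more structural information --- your robust subspace is explicitly the ``long-bar'' part of the barcode of $d^C$ --- at the price of a slightly longer argument: the linear-independence step for $\{d_0 x_i : \ell_i < \delta_{d_1}\}$ replaces the one-line rank--nullity count. The paper's version is more economical and needs less machinery; yours makes the dependence on the persistence data of $d^C$ transparent.
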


\begin{lem} \label{lem:delta-robust-injective} Let $(C, d^C)$ be a
  chain complex as in Lemma~\ref{lem:delta-robust-existence} and
  $f: C \longrightarrow C$ be a chain map. Let $0< \epsilon < \delta$
  and suppose that $B_h(f-id_{C}) = \delta - \epsilon$. Then $f$ is
  injective on each (resp. proper) $\delta$-robust subspace, and maps
  it to a (resp. proper) $\epsilon$-robust subspace.
\end{lem}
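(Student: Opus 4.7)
The plan is to unpack the definition of $B_h(f - \id_C)$ into an explicit null-homotopy and then exploit what it becomes on cycles. By definition of the infimum, for every $\epsilon' > 0$ we can choose an $\mathcal{R}$-linear map $h_{\epsilon'} \colon C \to C$ shifting action by at most $\delta - \epsilon + \epsilon'$ and satisfying $f - \id_C = d^C h_{\epsilon'} + h_{\epsilon'} d^C$; restricted to a cycle $v \in V$ this reduces to $f(v) = v + d^C h_{\epsilon'}(v)$.

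Fix once and for all some $\epsilon' \in (0, \epsilon)$ and write $h := h_{\epsilon'}$. Injectivity on $V$ follows immediately: if $v \in V$ satisfies $f(v) = 0$, then $v = d^C(-h(v))$ exhibits $v$ as a boundary with $A(-h(v)) \leq A(v) + \delta - \epsilon + \epsilon' < A(v) + \delta$, violating the $\delta$-robustness of $V$ unless $v = 0$.

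For the $\epsilon$-robustness of $f(V) \subset \ker(d^C)$, take $v \in V$ and any $w \in C$ with $d^C w = f(v)$. The identity rewrites as $d^C(w - h(v)) = v$, and $\delta$-robustness forces $A(w - h(v)) \geq A(v) + \delta$; combined with $A(h(v)) < A(v) + \delta$ (an inequality of action levels in a filtered module), this forces $A(w) \geq A(v) + \delta$. On the other hand, the quantity $A(f(v))$ is intrinsic to $f$ and $v$, so the family of bounds $A(f(v)) \leq A(v) + \delta - \epsilon + \epsilon'$ obtained by running the estimate through each $h_{\epsilon'}$ can be optimized over $\epsilon' \to 0$ to give $A(f(v)) \leq A(v) + \delta - \epsilon$. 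Combining the two sides yields $A(w) \geq A(v) + \delta \geq A(f(v)) + \epsilon$. The ``proper'' addendum is automatic: $f$ maps $Im(d^C)$ to itself since $f(d^C u) = d^C f(u)$, so $V \subset Im(d^C)$ implies $f(V) \subset Im(d^C)$.

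No serious obstacle is expected; the only real discipline is to keep the approximation parameter $\epsilon'$ straight -- a single fixed $h = h_{\epsilon'}$ with $\epsilon' < \epsilon$ handles both injectivity and the lower bound on $A(w)$, whereas the complementary upper bound on $A(f(v))$ is intrinsic to $v$ and is obtained by passing to the infimum over all admissible choices of homotopy.
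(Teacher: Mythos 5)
Your proof is correct and follows essentially the same route as the paper's: unpack the null-homotopy to get $f(v) = v + d^C h(v)$ on cycles, then use $\delta$-robustness plus the non-Archimedean inequality to force $A(w)\geq A(v)+\delta$ and $A(f(v))\leq A(v)+\delta-\epsilon$. The only difference is that you handle the infimum in the definition of $B_h$ explicitly (via the approximating family $h_{\epsilon'}$), while the paper writes down a homotopy shifting action by exactly $\delta-\epsilon$, implicitly treating the infimum as attained; your version is the slightly more careful rendering of the same argument.
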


\begin{proof}[Proof of Proposition~\ref{prop:rig-cplx2}]
  By Lemma~\ref{lem:delta-robust-existence}, there exists a proper
  $\delta_{d_{1}}$-robust subspace $V$ in $(C,d^C)$ of dimension $k$
  (where $k$ is given by that lemma). By
  Lemma~\ref{lem:delta-robust-injective}, $f(V)$ will be a proper
  $\epsilon$-robust subspace of dimension $k$. Consider a subspace
  $V' \subset C$ of dimension $k$ such that $d^C(V') = V$, and a
  complement $W$ in $Ker(d^C)$ to $Im(d^C).$ Then $d^C(f(V')) = f(V),$
  showing that $\dim d^C(f(V')) = k,$ and $f(W)$ will again be a
  complement in $Ker(d^C)$ to $Im(d^C)$. (Note that
  $f(W) \cap d^C(C) = 0$ because, by assumption, $f-\id_C$ is
  null-homotopic, so $f$ induces an isomorphism in homology.) Now, by
  Lemma~\ref{lem:delta-robust-injective} again, $f(W)$ will have the
  correct dimension. Finally the three subspaces $f(V),f(V'),f(W)$ are
  direct summands of $C$ whence
  $\dim_{\Lambda}(Im(f)) \geq
  \dim_{\Lambda}(H_{\ast}(C,d^C)) + 2k$, finishing the
  proof.
\end{proof}

\begin{proof}[Proof of Lemma \ref{lem:delta-robust-existence}]
  The identities
  \begin{equation*}
    \begin{aligned}
      & \dim_{\Lambda}(C) =
      \dim_{\Lambda} (H_{\ast}(C,d^C)) + 2
      \dim_{\Lambda} (Im(d_C)), \\
      & \dim_{\Lambda}(C) =
      \dim_{\Lambda} (H_{\ast}(C,d_{0})) + 2
      \dim_{\Lambda} (Im(d_{0})),
    \end{aligned}
  \end{equation*}
  show that
  $\dim_{\Lambda}(H_{\ast}(C,d_{0})) -
  \dim_{\Lambda}(H_{\ast}(C,d^C))$ is even. Moreover we
  obtain:
  \begin{equation} \label{eq:dimension-relation}
    \dim_{\Lambda} (Im(d^C)) =
    \dim_{\Lambda} (Im(d_{0})) + k.
  \end{equation}

  From~\cite[Proposition 7.4]{Usher-Zhang}, it is immediate to
  construct a projection $\pi : C \longrightarrow Im(d_{0})$, that
  restricts to the identity on $Im(d_0)$ and satisfies
  $A(\pi(x)) \leq A(x)$ for all $x \in C$.

  From~\eqref{eq:dimension-relation} we now have that
  $\dim(Ker(\pi|_{Im(d^C)})) \geq k.$ We claim that
  $V = Ker(\pi|_{Im(d^C)})$ is $\delta_{d_1}$-robust. Indeed, if
  $v \in V, w \in C,$ and $v = d^C w,$ then writing
  $d^C w = d_0 w + d_1 w,$ and using $\pi(v) = 0$ we obtain
  $d_0w = \pi(d_0 w) = - \pi(d_1 w),$ whence $v = (id - \pi) (d_1 w).$
  Therefore
  $A(v) = A((id - \pi) (d_1 w)) \leq A(d_1 w) \leq A(w) -
  \delta_{d_1}$.  This implies $A(w) \geq A(v) + \delta_{d_1}$,
  concluding the proof.
\end{proof}

\begin{proof}[Proof of Lemma~\ref{lem:delta-robust-injective}]
  Let $V \subset C$ be a $\delta$-robust subspace. We write
  $f = id_C + d^C h - h d^C,$ where
  $A(h(x)) \leq A(x) + (\delta - \epsilon)$, for all $x \in C$. If
  $v \in V$ is such that $f(v) = 0,$ we would have
  $v + d^C(h(v)) = 0,$ which would yield $w = - h(v),$ with $v = dw$
  and $A(w) \leq A(v) + \delta - \epsilon$. On the other hand
  $\delta$-robustness implies $A(w) \geq A(v) + \delta$. A
  contradiction.

  If $f(v) = d^C z,$ we would have $v + d^C(h(v)) = d^C z,$ which
  would yield $w = z - h(v),$ with $v = dw.$ Therefore by
  $\delta$-robustness we obtain
  $A(v) + \delta \leq A(z- h(v)) \leq \max \{A(h(v)),A(z)\}.$ Since
  $A(h(v)) \leq A(v) + \delta - \epsilon,$ we get
  $A(v) + \delta \leq A(z),$ and
  $A(f(v)) \leq \max \{ A(v), A(h(v)) \} \leq A(v) + \delta - \epsilon
  \leq A(z) - \epsilon.$ We conclude that
  $A(z) \geq A(f(v)) + \epsilon,$ which finishes the proof.
\end{proof} 



\section{Floer theory and Fukaya categories} \label{s:floer-theory}

The goal of this section is to set up the variant of Floer theory that
will be used in this paper. In particular we will show how choose the
auxiliary parameters of this theory so that the Fukaya category
becomes a {\em weakly filtered} $A_{\infty}$-category.

Let $(M, \omega)$ be a symplectic manifold, either closed or convex at
infinity. We always assume $M$ to be connected. Denote by
$\mathcal{L}ag^{we}(M)$ the collection of all closed connected
Lagrangian submanifolds $L \subset M$ that are {\em weakly
  exact}. Recall that $L \subset M$ is weakly exact if for every
$A \in H_2^D(M,L)$ we have $\int_{A} \omega = 0$. (The group of
$H_2^D(M,L) \subset H_2(M,L)$ is by definition the image of the
Hurewicz homomorphism $\pi_2(M,L) \longrightarrow H_2(M,L)$).

Let $\mathcal{C} \subset \mathcal{L}ag^{we}$ be a collection of weakly
exact Lagrangians. Unless explicitly stated otherwise, we henceforth
make the following mild assumption on $\mathcal{C}$, whenever $M$ is
not compact. There exists an open domain $U_0 \subset M$ with compact
closure, such that all Lagrangians $L \subset \mathcal{C}$ lie inside
$U_0$. For further use, also fix another open domain with compact
closure $U_1 \supset \overline{U}_0$ as well as an $\omega$-compatible
almost complex structure $J_{\textnormal{conv}}$ which is compatible
with the convexity of $M$ outside of $\overline{U}_1$.

Fix a base ring $R$ of characteristic $2$ (e.g. $R = \mathbb{Z}_2$)
and let $\Lambda$ be the Novikov ring over $R$ as defined
in~\eqref{eq:Nov-ring}. Denote by $\fuk(\mathcal{C})$ the Fukaya
category, with coefficients in $\Lambda$, whose objects are
$L \in \mathcal{C}$. We mostly follow here the implementation of the
Fukaya category due to Seidel~\cite{Se:book-fukaya-categ} with several
modifications that will be explained shortly.

As in~\cite{Se:book-fukaya-categ}, for every pair of Lagrangians
$L_0, L_1 \in \mathcal{C}$ we choose a Floer datum
$\mathscr{D}_{L_0, L_1} = (H^{L_0, L_1}, J^{L_0, L_1})$ consisting of
a Hamiltonian function
$H^{L_0, L_1}: [0,1] \times M \longrightarrow \mathbb{R}$ and a
time-dependent $\omega$-compatible almost complex structure
$J^{L_0,L_1} = \{J^{L_0,L_1}_t\}_{t \in [0,1]}$. In case $M$ is not
compact we require that outside of $U_1$ we have $H \equiv 0$ and
$J^{L_0,L_1}_t \equiv J_{\textnormal{conv}}$.

Denote by $\mathcal{O}(H^{L_0,L_1})$ the set of orbits
$\gamma:[0,1] \longrightarrow M$ of the Hamiltonian flow
$\phi^{H^{L_0,L_1}}_t$ generated by $H^{L_0,L_1}$ such that
$\gamma(0) \in L_0$ and $\gamma(1) \in L_1$. The Floer complex
$CF(L_0, L_1; \mathscr{D}_{L_0, L_1})$ is a free $\Lambda$-module
generated by the set $\mathcal{O}(H^{L_0,L_1})$:
\begin{equation} \label{eq:CF-1} CF(L_0, L_1; \mathscr{D}_{L_0, L_1})
  = \bigoplus_{\gamma \in \mathcal{O}(H^{L_0,L_1})} \Lambda
  \gamma.
\end{equation}
By adding additional structure it is possible to grade the Floer
complexes but we will not do that in this paper, hence work in an
ungraded setting.  The differential $\mu_1$ on the Floer complex is
defined by counting solutions $u$ of the Floer equation:
\begin{equation} \label{eq:floer-eq-1}
  \begin{aligned}
    & u: \mathbb{R} \times [0,1] \longrightarrow M, \quad u(\mathbb{R}
    \times 0) \subset L_0, \; u(\mathbb{R} \times 1) \subset L_1, \\
    & \partial_s u + J^{H_0,H_1}_t (u) \partial_t u =
    -\nabla H^{L_0,L_1}_t (u), \\
    & E(u) := \int_{-\infty}^{\infty} \int_{0}^{1} \lvert \partial_su
    \rvert^2 dt ds < \infty.
    \end{aligned}
\end{equation}
where $(s,t) \in \mathbb{R} \times [0,1]$. Here,
$H_t^{L_0,L_1}(x) := H^{L_0, L_1}(t,x)$ and $\nabla H_t^{L_0,L_1}$ is
the gradient of the function
$H_t^{L_0,L_1} : M \longrightarrow \mathbb{R}$ with respect to the
Riemannian metric
$g_t(\cdot, \cdot) = \omega(\cdot, J^{L_0, L_1}_t \cdot)$ associated
to $\omega$ and $J^{L_0, L_1}_t$. The quantity $E(u)$ in the last line
of~\eqref{eq:floer-eq-1} is the energy of a solution $u$ and we
consider only finite energy solutions. (Note also that the norm
$\lvert \partial_s u \rvert$ in the definition of $E(u)$ is calculated
with respect to the metric $g_t$.) Solutions $u$
of~\eqref{eq:floer-eq-1} are also called Floer trajectories.

For $\gamma_-, \gamma_+ \in \mathcal{O}(H^{L_0,L_1})$ consider the
space of {\em parametrized} Floer trajectories $u$ connecting
$\gamma_{-}$ to $\gamma_+$:
\begin{equation} \label{eq:floer-traj-space-1} \mathcal{M}(\gamma_-,
  \gamma_+; \mathscr{D}_{L_0,L_1}) = \Big\{ u \mid u \text{
    solves~\eqref{eq:floer-eq-1} and} \lim_{s \to \pm \infty} u(s,t) =
  \gamma_{\pm}(t) \Big\}.
\end{equation}
Note that $\mathbb{R}$ acts on this space by translations along the
$s$-coordinate. This action is generally free, with the only exception
being $\gamma_- = \gamma_+$ and the stationary solution
$u(s,t) = \gamma_-(t)$ at $\gamma_-$.

Whenever, $\gamma_- \neq \gamma_+$ we denote by
\begin{equation}
  \mathcal{M}^*(\gamma_-,
  \gamma_+; \mathscr{D}_{L_0,L_1}) := \mathcal{M}(\gamma_-,
  \gamma_+; \mathscr{D}_{L_0,L_1}) \big/ \mathbb{R}
\end{equation}
the quotient space (i.e. the space of non-parametrized solutions).  In
the case $\gamma_- = \gamma_+$ we define
$\mathcal{M}^*(\gamma_-, \gamma_-; \mathscr{D}_{L_0,L_1})$ in the same
way only that we omit the stationary solution at $\gamma_-$. (Note
that if $\omega$ is exact then
$\mathcal{M}(\gamma_-, \gamma_-; \mathscr{D}_{L_0,L_1})$ contains only
the stationary solution at $\gamma_-$, hence
$\mathcal{M}^* = \emptyset$. But this might not be the case if
$\omega$ is not exact.)

For a generic choice of Floer datum $\mathscr{D}$ the space
$\mathcal{M}^*(\gamma_-, \gamma_+; \mathscr{D}_{L_0,L_1})$ is a smooth
manifold (possibly with several components having different
dimensions). Moreover, its $0$-dimensional component
$\mathcal{M}^*_0(\gamma_{-}, \gamma_{+}; \mathscr{D}_{L_0,L_1})$ is
compact hence a finite set.

Define now
$\mu_1: CF(L_0, L_1; \mathscr{D}) \longrightarrow CF(L_0, L_1;
\mathscr{D})$ by
\begin{equation} \label{eq:mu-1} \mu_1(\gamma_{-}) :=
  \sum_{\gamma_{+}} \sum_{u} T^{\omega(u)} \gamma_+, \quad \forall \;
  \gamma_{-} \in \mathcal{O}(H^{L_0,L_1}),
\end{equation}
and extending linearly over $\Lambda$. Here, the outer sum runs over
all $\gamma_{+} \in \mathcal{O}(H^{L_0,L_1})$ and the inner sum over
all solutions
$u \in \mathcal{M}^*_0(\gamma_{-}, \gamma_{+};
\mathscr{D}_{L_0,L_1})$.  The term $\omega(u)$ is a shorthand notation
for the symplectic area of a Floer trajectory $u$, namely
$\omega(u) := \int_{\mathbb{R} \times [0,1]} u^*\omega$.

It is well known that $\mu_1$ is a differential and we denote the
homology of $CF(L_0,L_1; \mathscr{D}_{L_0,L_1})$ by
$HF(L_0,L_1; \mathscr{D}_{L_0,L_1})$. This homology is independent of
the choice of the Floer datum in the sense that for every two regular
choices of Floer data $\mathscr{D}_{L_0,L_1}$,
$\mathscr{D}'_{L_0,L_1}$ there is a canonical isomorphism
$\psi_{\mathscr{D}, \mathscr{D}'}: HF(L_0,L_1; \mathscr{D}_{L_0,L_1})
\longrightarrow HF(L_0,L_1; \mathscr{D}'_{L_0,L_1})$ which form a
directed system. Therefore we can regard this collection of
$\Lambda$-modules as one and denote it by $HF(L_0,L_1)$. Note however,
that the canonical isomorphisms $\psi_{\mathscr{D}, \mathscr{D}'}$ do
not preserve action-filtrations in general, hence there is no meaning
to $H(CF^{\leq \alpha}(L_0,L_1))$ without specifying the Floer datum.

\medskip 

The higher operations $\mu_d$, $d \geq 2$, follow the same scheme as
in~\cite{Se:book-fukaya-categ}, with the main difference being that we
work over the Novikov ring $\Lambda$.

More precisely, we first make a choice of strip-like ends along the
compactification of the moduli-spaces $\mathcal{R}^{d+1}$, $d \geq 2$,
of disks with $(d+1)$-boundary punctures. For every
$r \in \mathcal{R}^{d+1}$ denote by $S_r$ the punctured disk
corresponding to $r$. Denote the punctures by $\zeta_i$,
$i=0, \ldots, d$, going in {\em clockwise} direction. The puncture
$\zeta_0$ will be called the exit and $\zeta_1, \ldots, \zeta_d$ the
entry punctures. We denote the arc along $\partial S_r$ connecting
$\zeta_i$ to $\zeta_{i+1}$ by $C_i$, with the convention that
$\zeta_{d+1} := \zeta_0$. (Of course, $\zeta_i$ and $C_i$ all depend
on $r$ but we suppress this from the notation.)

Next we make a choice of perturbation data
$\mathscr{D}_{L_0, \ldots, L_d} = (K^{L_0, \ldots, L_d}, J^{L_0,
  \ldots, L_d})$ for every tuple of $d+1$ Lagrangians
$L_0, \ldots, L_d \in \mathcal{C}$. The first item in the perturbation
data is a family of $1$-forms
$K^{L_0, \ldots, L_d} = \{ K^{L_0, \ldots, L_r}_r\}_{r \in
  \mathcal{R}^{d+1}}$, parametrized by $r \in \mathcal{R}^{d+1}$, with
values in the space of Hamiltonian functions
$M \longrightarrow \mathbb{R}$. The second one is a family of
$\omega$-compatible domain-dependent almost complex structures on $M$,
$J^{L_0, \ldots, L_d} = \{ J_r^{L_0, \ldots, L_d} \}_{r \in
  \mathcal{R}^{d+1}}$, parametrized by $r \in \mathcal{R}^{d+1}$. In
other words for every $r \in \mathcal{R}^{d+1}$,
$J^{L_0, \ldots, L_d}_r$ is itself a family
$\{ J^{L_0, \ldots, L_d}_{r,z} \}_{z \in S_r}$ of $\omega$-compatible
almost complex structure on $M$, parametrized by $z \in S_r$.

The perturbation data are required to satisfy several additional
conditions. The first one is that along each of the strip-like ends
the perturbation data coincides with the Floer data associated to the
pair of Lagrangians corresponding to that end. More precisely, along
the strip like end corresponding to the puncture $\zeta_i$ of $S_r$ we
have
\begin{equation} \label{eq:pert-floer-da} K^{L_0, \ldots, L_r}_r =
  H_t^{L_{i-1}, L_i} dt, \quad J^{L_0, \ldots, L_d} = J_t^{L_{i-1},
    L_i}, \quad \forall \; 1\leq i \leq d+1,
\end{equation}
where we have used here the convention that $L_{d+1}=L_0$. Here
$(s,t)$ are the conformal coordinates corresponding to the strip-like
ends.

The second condition is that along the arc $C_i$ we have
\begin{equation} \label{eq:K=0-on-del-C_i} K^{L_0, \ldots,
    L_d}(\xi)|_{L_i} = 0, \quad \forall \; \xi \in T(C_i), \; i=0,
  \ldots, d.
\end{equation}

Moreover, the choices of strip-like ends and perturbation data along
$\mathcal{R}^{d+1}$ are required to be compatible with gluing and
splitting, or in the language of~\cite{Se:book-fukaya-categ}
``consistent''.  This means essentially that these choices extend smoothly over
the compactification $\overline{\mathcal{R}}^{d+1}$ of the space of
boundary-punctured disks. In turn, this requires that for every $d$,
the choices of strip-like ends and perturbation data done over
$\mathcal{R}^{d+1}$ are compatible with those that appear on all the
strata of the boundary $\partial \overline{\mathcal{R}}^{d+1}$ of the
compactification $\overline{\mathcal{R}}^{d+1}$ of
$\mathcal{R}^{d+1}$. We refer the reader
to~\cite[Chapter~9]{Se:book-fukaya-categ} for the precise definitions
and implementation.

In case $M$ is not compact we add the following conditions on the
perturbation data. For every $r \in \mathcal{R}^{d+1}$ and
$\xi \in T(S_r)$ the Hamiltonian function
$K_r^{L_0, \ldots, L_d}(\xi)$ is required to vanish outside of $U_1$
and $J_r^{L_0, \ldots, L_d} \equiv J_{\textnormal{conv}}$ outside of
$U_1$.

Once we have made consistent choices of strip-like ends and
perturbation data we define the higher operations $\mu_d$ for
$L_0, \ldots, L_d \in \mathcal{C}$ as follows. For
$r \in \mathcal{R}^{d+1}$, $z \in S_r$ and $\xi \in T_z(S_r)$ define
$Y_{r,z}(\xi)$ to be the Hamiltonian vector field of the function
$K^{L_0, \ldots, L_d}_{r,z}(\xi) : M \longrightarrow
\mathbb{R}$. Consider now the following Floer equation:
\begin{equation} \label{eq:floer-eq-2}
  \begin{aligned}
    & u: S_r \longrightarrow M, \quad u(C_i) \subset L_i \;\; \forall
    \;
    0 \leq i \leq d, \\
    & Du_z + J^{L_0, \ldots, L_d}_{r,z} (u) \circ Du_z \circ j_r =
    Y_{r,z}(u) + J^{L_0, \ldots, L_d}_{r,z} \circ Y_{r,z}(u) \circ
    j_r, \\
    & E(u) := \int_{S_r} \lvert Du - Y_r \rvert^2_J \sigma < \infty.
  \end{aligned}
\end{equation}
Here $j_r$ stands for the complex structure on $S_r$. The last
quantity in~\eqref{eq:floer-eq-2} is the energy of a solution $u$ and
we consider only solutions of finite energy. The definition of $E(u)$
involves an area form $\sigma$ on $S_r$ and the norm
$\lvert \cdot \rvert_J$ on the space of linear maps
$T_z(S_r) \longrightarrow T_{u(z)}(M)$ which is induced by $j_r$,
$J := J_r^{L_0, \ldots, L_d}$ and $\sigma$. See~\cite[Section~2.2,
Page~20]{McD-Sa:jhol} for the definition. Note that $E(u)$ does not
depend on $\sigma$.

Given orbits $\gamma_{-}^1, \ldots, \gamma_{-}^d, \gamma_+$ with
$\gamma_{-}^i \in \mathcal{O}(H^{L_{i-1}, L_i})$ and
$\gamma_+ \in \mathcal{O}(H^{L_0, L_d})$ define the space of so called
{\em Floer polygons} connecting $\gamma_{-}^1, \ldots, \gamma_{-}^d$
to $\gamma_+$ to be the space of all pairs $(r, u)$ with
$r \in \mathcal{R}^{d+1}$ and $u:S_r \longrightarrow M$ such that:
\begin{enumerate}
\item $u$ is a solution of~\eqref{eq:floer-eq-2}.
\item On the strip-like end corresponding to puncture $\zeta_i$ we
  have $\lim_{s \to \infty} u(s,t) = \gamma^i_{-}(t)$ for
  $1 \leq i \leq d$, where $(s,t) \in (-\infty, 0] \times [0,1]$ are
  the conformal coordinates on the strip-like end of $\zeta_i$.
\item On the strip like end corresponding to puncture $\zeta_0$ we
  have $\lim_{s \to \infty} u(s,t) = \gamma_+(t)$ for
  $1 \leq i \leq d$, where $(s,t) \in [0,\infty) \times [0,1]$ are the
  conformal coordinates on the strip-like end of $\zeta_0$.
\end{enumerate}
We denote this space by
$\mathcal{M}(\gamma^1_-, \ldots, \gamma^d_-, \gamma_+;
\mathscr{D}_{L_0, \ldots, L_d})$. For a generic choice of Floer data
and perturbation data this space is a smooth manifold and its
$0$-dimensional component
$\mathcal{M}_0(\gamma^1_-, \ldots, \gamma^d_-, \gamma_+;
\mathscr{D}_{L_0, \ldots, L_d})$ is compact hence a finite
set. (Recall that $d\geq 2$ hence we do not divide here by any
reparametrization group.)  Define now
\begin{equation} \label{eq:mu_d-def}
  \mu_d(\gamma^1_{-}, \ldots, \gamma^d_{-}) = 
  \sum_{\gamma_+} \sum_{(r,u)} T^{\omega(u)} \gamma_+ \in CF(L_0, L_d;
  \mathscr{D}_{L_0, L_d}),
\end{equation}
where the first sum goes over all
$\gamma_+ \in \mathcal{O}(H^{L-0, L_d})$ and the second sum over all
$(r,u) \in \mathcal{M}_0(\gamma^1_-, \ldots, \gamma^d_-, \gamma_+;
\mathscr{D}_{L_0, \ldots, L_d})$. The term $\omega(u)$ stands for the
symplectic area of $u$, namely $\omega(u) = \int_{S_r} u^*\omega$.

Extending $\mu_d$ multi-linearly over $\Lambda$ we obtain an
operation:
$$\mu_d: CF(L_0, L_1; \mathscr{D}_{L_0, L_1}) \otimes \cdots \otimes
CF(L_{d-1}, L_d; \mathscr{D}_{L_{d-1}, L_d}) \longrightarrow CF(L_0,
L_d; \mathscr{D}_{L_0, L_d}).$$

With all the operations above $\fuk(\mathcal{C})$ becomes an
$A_{\infty}$-category. The proof of this is essentially the same as
the one in~\cite{Se:book-fukaya-categ}, the only difference is that
one needs to keep track of the areas appearing as exponents in the
variable $T$ of the Novikov ring.

\subsection{Units} \label{sb:units} We now explain briefly how to
construct homology units in $\fuk(\mathcal{C})$. More details can be
found in~\cite[Chapter~8]{Se:book-fukaya-categ}. Denote by
$S = D \setminus {\zeta_0}$ the unit disk punctured at one boundary
point $\zeta_0 \in \partial D$. Fix a strip-like end around $\zeta_0$
making $\zeta_0$ an exit puncture and let $(s,t)$ be the conformal
coordinates associated to this strip-like end. Let $L \in \mathcal{C}$
and $\mathscr{D}^{L,L}$ be a regular Floer datum for the pair $(L,L)$.
Pick a regular perturbation datum $\mathscr{D}_S = (K, J)$, as
described earlier with the only difference that $K$ and $J$ are are
defined only on $S$ (i.e. there is no dependence on any space like
$\mathcal{R}^{d+1}$). As before, we require that $D_S$ coincides with
the Floer datum $\mathscr{D}_{L,L}$ along the strip-like ends in the
sense of~\eqref{eq:pert-floer-da}. For $z \in S$, $\xi \in T_z(S)$
define $Y_z(\xi)$ as before. Given $\gamma \in \mathcal{O}(H^{L,L})$
consider the space $\mathcal{M}(\gamma; \mathscr{D}_S)$ of solutions
$u:(S, \partial S) \longrightarrow (M, L)$ of the last two lines of
equation~\eqref{eq:floer-eq-2}, with $S_r$, $Y_{r,z}$,
$J^{L_0, \ldots, L_d}_{r,z}$, $j_r$ replaced by $S$, $Y_z$, $J_z$ and
$i$ respectively, and such that along the strip-like end at $\xi_0$ we
have $\lim_{s \to \infty} u(s,t) = \gamma(t)$.  Define now an element
$e_L \in CF(L,L; \mathscr{D}_{L,L})$ by
\begin{equation} \label{eq:e_L} e_L := \sum_{\gamma \in
    \mathcal{O}(H^{L,L})} \sum_{u} T^{\omega(u)} \gamma,
\end{equation}
where the second sum runs over all solutions $u$ in the
$0$-dimensional component $\mathcal{M}_0(\gamma; \mathscr{D}_S)$ of
$\mathcal{M}(\gamma; \mathscr{D}_S)$. By standard theory $e_L$ is a
cycle and its homology class in $HF(L,L)$ is independent of the choice
of the Floer and perturbation data. Moreover, $[e_L] \in HF(L,L)$ is a
unit for the product induced by $\mu_2$.

\subsection{Families of Fukaya categories} \label{sb:fukaya-families}
The Fukaya category $\fuk(\mathcal{C})$ depends on all the choices
made - strip-like ends, Floer and perturbation data. We fix once and
for all a consistent choice of strip-like ends and denote by $E$ the
space of all consistent choices of perturbation data (compatible with
the fixed choice of strip-like ends). The space $E$ can be endowed
with a natural topology (and a structure of a Fr\'{e}chet manifold),
induced from a larger space in which one allows perturbation data in
appropriate Sobolev spaces
(see~\cite[Chapter~9]{Se:book-fukaya-categ}. The subspace
$E_{\textnormal{reg}} \subset E$ of regular perturbation data is
residual hence a dense subset.

The space $E$ contains a distinguished subspace \label{pp:E-reg}
$\mathcal{N} \subset E$ consisting of all consistent choices of
perturbation data $\mathscr{D} = (K, J)$ with $K \equiv 0$. Fix a
subset $E'_{\textnormal{reg}} \subset E_{\textnormal{reg}}$ whose
closure $\overline{E'}_{\textnormal{reg}}$ contains
$\mathcal{N}$. \label{pp:E'}

For $p \in E'_{\textnormal{reg}}$ we denote by $\fuk(\mathcal{C}; p)$
the associated Fukaya category with choice of perturbation data
$p$. We thus obtain a family of $A_{\infty}$-categories
$\{ \fuk(\mathcal{C}; p)\}_{p \in E'_{\textnormal{reg}}}$,
parametrized by $p \in E'_{\textnormal{ref}}$. It is well known that
this is a {\em coherent} system of $A_{\infty}$-categories
(see~\cite[Chapter~10]{Se:book-fukaya-categ}), in particular they are
all mutually quasi-equivalent.

In what follows we will sometimes use the following notation. Given
$L_0, L_1 \in \mathcal{C}$ and $p \in E'_{\textnormal{reg}}$ we write
$CF(L_0,L_1;p)$ for $CF(L_0,L_1; \mathscr{D}_{L_0,L_1})$, where
$\mathscr{D}_{L_0,L_1}$ is the Floer datum prescribed by the choice
$p \in E'_{\textnormal{reg}}$.

\subsection{Weakly filtered structure on Fukaya categories}
\label{sb:wf-fukaya}

We start by defining filtrations on the Floer complexes of pairs of
Lagrangians in $\mathcal{C}$. We follow here the general recipe
from~\S\ref{sbsb:filtration-mixed}.

Let $L_0, L_1 \in \mathcal{C}$ be two Lagrangians and
$\mathscr{D}_{L_0,L_1} = (H^{L_0,L_1}, J^{L_0,L_1})$ a Floer datum.
We define an ``action functional''
$$\mathbf{A}: CF(L_0, L_1; \mathscr{D}_{L_0,L_1}) \longrightarrow
\mathbb{R} \cup \{-\infty\}$$ as follows. Let
$P(T) = \sum_{i=0}^{\infty} a_i T^{\lambda_i} \in \Lambda$ with
$\lambda_0 < \lambda_i$ for all $i\geq 1$, and $a_0 \neq 0$. Let
$\gamma \in \mathcal{O}(H^{L_0,L_1})$ be a Hamiltonian orbit.  We
first define:
$$\mathbf{A}\bigl( P(T)\gamma \bigr) := -a_0 + \int_{0}^{1}
H^{L_0,L_1}_t(\gamma(t)) dt.$$ Now let
$\sum_{k=1}^l P_k(T) \gamma_k \in CF(L_0,L_1;\mathscr{D}_{L_0,L_1})$
be a general non-trivial element, where the $\gamma_k$'s are mutually
distinct.  We extend the definition of $\mathbf{A}$ to such an element
by:
$$\mathbf{A} \bigl( P_1(T)\gamma_1 + \cdots + P_l(T) \gamma_l \bigr) 
:= \max \bigl\{ \mathbf{A} \bigl(P_k(T)\gamma_k\bigr) \mid k=1,
\ldots, l \bigr\}.$$ Finally, we put $\mathbf{A}(0) = -\infty$.

We now define a filtration on $CF(L_0,L_1;\mathscr{D}_{L_0,L_1})$
by:
\begin{equation} \label{eq:CF-filtration}
  CF^{\leq \alpha} (L_0,L_1; \mathscr{D}_{L_0,L_1}) := 
  \{x \in CF(L_0,L_1; \mathscr{D}_{L_0,L_1}) \mid \mathbf{A}(x) \leq
  \alpha\}.
\end{equation}
Before we go on, a quick remark regarding the Hamiltonian functions
$H^{L_0,L_1}$ in the Floer data is in order. We do {\em not} assume
that these functions are normalized (e.g. by requiring them to have
zero mean when $M$ is closed, or to be compactly supported when $M$ is
open). This means that if we replace $H^{L_0, L_1}$ by
$H^{L_0, L_1} + c(t)$ for some family of constants $c(t)$, we get the
same chain complex as $CF(L_0,L_1; \mathscr{D}_{L_0,L_1})$ but with a
shifted action-filtration. See Remark~\ref{r:ham-normalization} for
more on this issue and the reason for not insisting on normalizing
these Hamiltonian functions.

Returning to~\eqref{eq:CF-filtration}, it is easy to see that
$CF^{\leq \alpha} (L_0,L_1; \mathscr{D}_{L_0,L_1})$ is a
$\Lambda_0$-module (though {\em NOT} a $\Lambda$-module!).  The fact
that this filtration is preserved by $\mu_1$ and moreover, that it
provides $\fuk(\mathcal{C})$ with a structure of a weakly filtered
$A_{\infty}$-category are the subject of the following proposition.

\begin{prop} \label{p:wf-fukaya} There exists a choice
  $E'_{\textnormal{reg}} \subset E_{\textnormal{reg}} \setminus
  \mathcal{N}$ with
  $\overline{E'}_{\textnormal{reg}} \supset \mathcal{N}$ and such that
  the following holds. Let $p \in E'_{\textnormal{reg}}$ and
  $\fuk(\mathcal{C}; p)$ be the corresponding Fukaya category. Then
  there exist a sequence of non-negative real numbers
  $\bme(p) = (\epsilon_1(p)=0, \epsilon_1(p), \ldots, \epsilon_d(p),
  \ldots)$ and $u(p), \zeta(p), \kappa(p) \in \mathbb{R}_+$, depending
  on $p$, such that:
  \begin{enumerate}
  \item With the filtrations described above on the Floer complexes,
    $\fuk(\mathcal{C};p)$ becomes a weakly filtered
    $A_{\infty}$-category with discrepancy $\leq \bme(p)$.
  \item $\fuk(\mathcal{C}; p)$ is h-unital in the weakly filtered
    sense and there is a choice of homology units with discrepancy
    $\leq u(p)$.
  \item
    $\fuk(\mathcal{C}; p) \in \hyperlink{h:asmp-ue}{\lbue(\zeta(p))}$.
    \label{i:fuk-p-ue}
  \item Let $L \in \mathcal{C}$ and denote by $\mathcal{L}$ its Yoneda
    module. Then
    $\mathcal{L} \in \hyperlink{h:asmp-H}{\lbh_s(\kappa(p))}$.
    \label{i:fuk-p-us}
  \item For every $p_0 \in \mathcal{N} \subset E$ (see
    Page~\pageref{pp:E-reg}) we have:
    $$\lim_{p \to p_0} \epsilon_d(p) = 0, \; \; \forall \; d \geq 2, \quad 
    \lim_{p \to p_0} u(p) = \lim_{p \to p_0} \zeta(p) = \lim_{p \to
      p_0} \kappa(p) = 0.$$
    \label{i:u-z-k-to-0}
  \end{enumerate}
\end{prop}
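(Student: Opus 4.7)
The plan is to extract the weakly filtered structure directly from the standard area-energy identity for solutions of the Floer equation~\eqref{eq:floer-eq-2}. For any such solution $u: S_r \to M$ with entry asymptotes $\gamma_-^1, \ldots, \gamma_-^d$ and exit asymptote $\gamma_+$, an integration-by-parts computation (cf.~\cite[Section~8]{Se:book-fukaya-categ}) expresses the symplectic area as
$$\omega(u) = \sum_{i=1}^{d} \mathbf{A}(\gamma_-^i) - \mathbf{A}(\gamma_+) - E(u) + \textstyle\int_{S_r} u^*R_K,$$
where $R_K$ is the curvature $2$-form on $S_r$ associated to the Hamiltonian-valued $1$-form $K^{L_0,\ldots,L_d}$. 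Since $E(u)\geq 0$ and $|\int_{S_r} u^*R_K|$ is uniformly bounded over $r \in \overline{\mathcal{R}}^{d+1}$ (by compactness of the compactified moduli space together with consistency of the perturbation data), one obtains a constant $C_d(p)$ such that $\omega(u) \leq \mathbf{A}(\gamma_-^1) + \cdots + \mathbf{A}(\gamma_-^d) - \mathbf{A}(\gamma_+) + C_d(p)$. By the definition of the action filtration on the Floer complexes, every summand $T^{\omega(u)}\gamma_+$ of $\mu_d(\gamma_-^1,\ldots,\gamma_-^d)$ then has action at most $\mathbf{A}(\gamma_-^1)+\cdots+\mathbf{A}(\gamma_-^d)+C_d(p)$, establishing item~(1) with $\epsilon_d(p):=C_d(p)$. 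Crucially, $C_d(p)$ is dominated by a $C^0$-norm of $R_K$, hence tends to $0$ as $K\to 0$.

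For items~(2)--(4) the same area-energy reasoning applies to the auxiliary moduli spaces at play. The PSS-type cycle $e_L$ from~\eqref{eq:e_L} is defined by counting solutions on the once-punctured disk $S$ with perturbation datum $\mathscr{D}_S$; the area estimate immediately gives $\mathbf{A}(e_L)\leq u(p)$ for $u(p)$ bounded by the $C^0$-norm of the curvature of $\mathscr{D}_S$. Assumption~$\lbue(\zeta(p))$ is established by constructing the element $c$ satisfying $\mu_2(e_L,e_L)=e_L+\mu_1(c)$ via counts over a $1$-parameter family of twice-punctured disks interpolating between the datum defining $\mu_2(e_L,e_L)$ and the one defining $e_L$; the bound $\mathbf{A}(c)\leq\zeta(p)$ is again a curvature supremum over this interpolating family. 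For $\lbh_s(\kappa(p))$, a standard parametrized moduli space with a ``moving insertion'' of $e_X$ produces a chain homotopy between the map $b\mapsto\mu_2(e_X,b)$ on $CF(Y,X)$ and the identity, and its action shift is controlled by a further curvature quantity $\kappa(p)$.

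The set $E'_{\textnormal{reg}} \subset E_{\textnormal{reg}}\setminus\mathcal{N}$ is then defined to consist of those regular $p$ for which \emph{all} relevant perturbation data (for every $\mathcal{R}^{d+1}$, for $\mathscr{D}_S$, for the $\lbue$-interpolation, and for the $\lbh_s$-homotopy) are chosen coherently inside a single $p$-dependent family whose $C^0$-sizes converge to $0$ as $p$ approaches some $p_0\in\mathcal{N}$. Such a choice is possible by residuality of the regular locus combined with the usual coherent-choice procedure of~\cite[Chapters~9,10]{Se:book-fukaya-categ}. With this $E'_{\textnormal{reg}}$, each of $\epsilon_d(p), u(p), \zeta(p), \kappa(p)$ is dominated by a $C^0$-norm of the corresponding curvature, and item~(5) follows. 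The main obstacle here is not analytical but organizational: one must arrange that a \emph{single} parameter $p$ simultaneously controls \emph{all} of these moduli-space data, so that every discrepancy quantity shrinks together as $p\to p_0$. Once this coherent family is in place, items~(1)--(5) follow from the three observations above.
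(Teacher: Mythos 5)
Your proposal follows essentially the same route as the paper's proof: in both, the weakly filtered structure is read off from the energy-area identity~\eqref{eq:E-omega-polygons}, the curvature term is bounded uniformly by compactness of the compactified universal family $\overline{\mathcal{S}}^{d+1}$ with the strip-like ends excised, the unit estimate comes from the curvature of $\mathscr{D}_S$ in~\eqref{eq:e_L}, and residuality of $E_{\textnormal{reg}}$ gives the coherent family $E'_{\textnormal{reg}}$ with $\mathcal{N}$ in its closure. Two minor remarks: with the paper's sign conventions the identity reads $\omega(u) = E(u) + \mathbf{A}(\gamma_+) - \sum_i \mathbf{A}(\gamma_-^i) + \int_{S_r} R_K(u)$, so $E(u)\geq 0$ yields a \emph{lower} bound on $\omega(u)$ (the inequality you write points the wrong way, though your final action estimate $\mathbf{A}(T^{\omega(u)}\gamma_+) = -\omega(u)+\mathbf{A}(\gamma_+) \leq \sum_i\mathbf{A}(\gamma_-^i) + C_d(p)$ is the correct one that actually follows); and the paper establishes $\lbue(\zeta(p))$ and $\lbh_s(\kappa(p))$ together from a single interpolating family $\{S''_\tau\}$ producing the chain homotopy $\Phi$ of~\eqref{eq:ch-homotopy-phi-2} (with $\lbue$ recovered by setting $L'=L$, $x=e_L$), rather than via two separate moduli constructions as you propose.
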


\begin{proof}
  We will only give a sketch of the proof, as most of the ingredients
  are standard in the theory (see e.g.~\cite{Se:book-fukaya-categ}).

  The precise definition of the set of choices of perturbation data
  $E'_{\textnormal{reg}}$ will be given in the course of the proof.

  We begin by showing that the filtration~\eqref{eq:CF-filtration} is
  preserved by $\mu_1$. Let $L_0, L_1 \in \mathcal{C}$ and
  $\mathscr{D}_{L_0,L_1} = (H^{L_0,L_1}, J^{L_0,L_1})$ be a Floer
  datum. Let $\gamma_{-},\gamma_{+} \in \mathcal{O}(H^{L_0,L_1})$ be
  two generators of $CF(L_0,L_1; \mathscr{D}_{L_0,L_1})$ and let
  $u \in \mathcal{M}_0(\gamma_{-}, \gamma_{+}; \mathscr{D}_{L_0,L_1})$
  be an element of the $0$-dimensional component of Floer trajectories
  connecting $\gamma_{-}$ to $\gamma_{+}$. By~\eqref{eq:mu-1}, the
  contribution of $u$ to $\mu_1(\gamma_{-})$ is
  $T^{\omega(u)} \gamma_{+}$.  We now have the following standard
  energy-area identity for solutions
  $u \in \mathcal{M}(\gamma_{-}, \gamma_{+}; \mathscr{D}_{L_0,L_1})$
  of the Floer equation:
  \begin{equation} \label{eq:E-omega-strips} E(u) = \omega(u) +
    \int_{0}^{1} H^{L_0,L_1}_t(\gamma_{-}(t)) dt - \int_{0}^{1}
    H^{L_0,L_1}_t(\gamma_{+}(t)) dt.
  \end{equation}
  It immediately follows that 
  $$\mathbf{A}(T^{\omega(u)} \gamma_{+}) = -\omega(u) + \int_{0}^{1}
  H^{L_0,L_1}_t(\gamma_{+}(t)) dt \leq \int_{0}^{1}
  H^{L_0,L_1}_t(\gamma_{-}(t)) dt = \mathbf{A}(\gamma_{-}).$$ This
  shows that $\mu_1$ preserves the filtration~\eqref{eq:CF-filtration}
  on $CF(L_0,L_1; \mathscr{D}_{L_0,L_1})$.

  We move on to analyze the behavior of the higher operations $\mu_d$,
  $d \geq 2$, with respect to our filtration. Let
  $L_0, \ldots, L_d \in \mathcal{C}$ and
  $\mathscr{D}_{L_0, \ldots, L_d}$ be the corresponding perturbation
  data. Let $\gamma_{-}^i \in \mathcal{O}(H^{L_{i-1}, L_i})$,
  $\gamma_+ \in \mathcal{O}(H^{L_0, L_d})$, and
  $(r,u) \in \mathcal{M}_0(\gamma_{-}^1, \ldots, \gamma_{-}^d,
  \gamma_+; \mathscr{D}_{L_0, \ldots, L_d})$. The contribution of $u$
  to $\mu_d(\gamma_{-}^1, \ldots, \gamma_{-}^d)$ is
  $T^{\omega(u)} \gamma_{+}$.

  Similarly to~\eqref{eq:E-omega-strips} we have the following
  energy-area identity for solutions $u$ of~\eqref{eq:floer-eq-2}:
  \begin{equation} \label{eq:E-omega-polygons} E(u) = \omega(u) -
    \int_0^1 H^{L_0,L_d}_t(\gamma_+(t))dt + \sum_{j=1}^d \int_0^1
    H_t^{L_{j-1},L_j}(\gamma_{-}^{j}(t))dt - \int_{S_r} R^{K^{L_0,
        \ldots, L_d}}(u),
  \end{equation}
  where $R^{K^{L_0, \ldots, L_d}}$ is the curvature $2$-form on $S_r$
  associated to the perturbation form $K^{L_0,\ldots, L_d}$. In local
  conformal coordinates $(s,t) \in S_r$ it can be written as follows.
  Write $K^{L_0, \ldots, L_d} = F_{s,t}ds + G_{s,t} dt$ for some
  functions $F_{s,t}, G_{s,t}: M \longrightarrow \mathbb{R}$. Then
  \begin{equation} \label{eq:curvature-term} R_{s,t}^{K^{L_0, \ldots,
        L_d}} = \bigl( -\tfrac{\partial F_{s,t}}{\partial t} +
    \tfrac{\partial G_{s,t}}{\partial s} - \{F_{s,t}, G_{s,t}\} \bigr)
    ds \wedge dt,
  \end{equation}
  where $\{F_{s,t}, G_{s,t}\} := -\omega(X^{F_{s,t}}, X^{G_{s,t}})$ is
  the Poisson bracket of the functions $F_{s,t}$, $G_{s,t}$.

  We now need to bound the term
  $\int_{S_r} R^{K^{L_0, \ldots, L_d}}(u)$
  from~\eqref{eq:E-omega-polygons} independently of $(r,u)$. To this
  end, first note that for any given $r \in \mathcal{R}^{d+1},$ the curvature
  $R^{K^{L_0, \ldots, L_d}}$ vanishes identically along the strip-like
  ends of $S_r$ by assumption on the perturbation $1$-form. Next, let $\mathcal{S}^{d+1}$ be the universal
  family of disks with $d+1$ boundary punctures
  (see~\cite[Chapter~9]{Se:book-fukaya-categ},see
  also~\cite[Section~3.1]{Bi-Co:lcob-fuk}). This is a fiber bundle
  over $\mathcal{R}^{d+1}$ whose fiber over $r \in \mathcal{R}^{d+1}$
  is the surface $S_r$. The space $\mathcal{S}^{d+1}$ admits a partial
  compactification $\overline{\mathcal{S}}^{d+1}$ over
  $\overline{\mathcal{R}}^{d+1}$ and can be endowed with a smooth
  structure. Since the perturbation data
  $\mathscr{D}_{L_0, \ldots, L_d}$ was chosen consistently, the forms
  $K^{L_0, \ldots, L_d}$ extend to the partial compactification
  $\overline{S}^{d+1}$ over $\overline{\mathcal{R}}^{d+1}$. Now let
  $\mathcal{W} \subset \overline{S}^{d+1}$ be the union of all the
  strip-like ends corresponding to all the surfaces parametrized by
  $r \in \overline{\mathcal{R}}^{d+1}$. Then
  $\overline{\mathcal{S}}^{d+1} \setminus
  \textnormal{Int\,}\mathcal{W}$ is compact.  It follows that for all
  $(r,u) \in \mathcal{M}_0(\gamma_{-}^1, \ldots, \gamma_{-}^d,
  \gamma_+; \mathscr{D}_{L_0, \ldots, L_d})$ we have:
  \begin{equation} \label{eq:bound-R} \Bigl \lvert \int_{S_r} R^{K^{L_0,
        \ldots, L_d}}(u) \Bigr \rvert \leq \epsilon_d(K^{L_0, \ldots,
      L_d}),
  \end{equation}
  where $\epsilon_d(K^{L_0, \ldots, L_d})$ depends only on the
  $C^1$-norm of $K^{L_0, \ldots, L_d}$ (defined in the
  $\mathcal{S}^{d+1}$ as well as $M$ directions). Moreover, we have
  $\epsilon_d(K^{L_0, \ldots, L_d}) \longrightarrow 0$ as
  $K^{L_0, \ldots, L_d} \longrightarrow 0$ in the $C^1$-topology
  (along
  $\overline{\mathcal{S}}^{d+1} \setminus
  \textnormal{Int\,}\mathcal{W}$ and $M$).

  A few words are in order for the case when $M$ is not compact. In
  that case the arguments above continue to work due to our choice of
  perturbation data. More precisely, recall that we had two open
  domains $U_0, U_1 \subset M$ with compact closure, with
  $\overline{U}_0 \subset U_1$, and with the following properties: all
  Lagrangians $L \in \mathcal{L}$ lie in $U_0$ and outside of $U_1$ we
  have $\mathscr{D}^{L_0, \ldots, L_d} = (0, J_{\textnormal{conv}})$
  for all $r \in \mathcal{R}^{d+1}$. This implies that the Floer
  equations~\eqref{eq:floer-eq-1} and~\eqref{eq:floer-eq-2} become
  homogeneous at the points where $u(z) \in M \setminus U_1$. Since
  $(M, \omega, J_{\textnormal{conv}})$ is convex at infinity, the
  maximum principle implies that all solutions $u$ lie within one
  compact domain of $M$.  Thus the estimate~\eqref{eq:bound-R} follows
  by bounding the $C^1$-norm of $K^{L_0, \ldots, L_d}$ only along that
  compact domain.

  Coming back to the estimate~\eqref{eq:bound-R}, it follows
  from~\eqref{eq:E-omega-polygons} that:
  \begin{equation} \label{eq:eps-d-K} \mathbf{A}(T^{\omega(u)} \gamma_+)
    \leq \mathbf{A}(\gamma_{-}^1) + \cdots + \mathbf{A}(\gamma_{-}^d) +
    \epsilon_d(K^{L_0, \ldots, L_d}).
  \end{equation}
  In order to obtain a weakly filtered structure on
  $\fuk(\mathcal{C};p)$ we need to bound from above
  $\epsilon_d(K^{L_0, \ldots, L_d})$ uniformly in
  $L_0, \ldots, L_d \in \mathcal{C}$, so that the ultimate discrepancy
  $\epsilon_d(p)$ depends only on the choice of
  $p \in E'_{\textnormal{reg}}$. This is easily done by restricting
  the set $E'_{\textnormal{reg}}$ to choices of perturbation data
  $p = \{ \mathscr{D}_{L_0, \ldots, L_d} \}_{L_0, \ldots, L_d \in
    \mathcal{C}}$ for which the $C^1$-norms of the forms
  $K^{L_0, \ldots, L_d}$ are uniformly bounded (in
  $L_0, \ldots, L_d$). Since $E_{\textnormal{reg}} \subset E$ is
  residual it follows that the restricted set of choices
  $E'_{\textnormal{reg}}$ still has $\mathcal{N}$ in its closure.

  This concludes the proof that
  $\fuk(\mathcal{C}; p)_{p \in E'_{\textnormal{reg}}}$ is a family of
  weakly filtered $A_{\infty}$-categories, and that the bounds on
  their discrepancies $\bm{\epsilon}(p)$ have the property that for
  all $p_0 \in \mathcal{N}$ we have
  $\lim_{p \to p_0} \epsilon_d(p) = 0$ for every $d \geq 2$.

  We now turn to the statements about the unitality of the categories
  $\fuk(\mathcal{C}; p)$ and their Yoneda modules. Let
  $p \in E'_{\textnormal{reg}}$. Fix $L \in \mathcal{C}$ and let
  $\mathscr{D}_{L,L} = (H^{L,L}, J^{L,L})$ be the Floer datum of
  $(L,L)$ prescribed by $p$. Recall that a homology unit
  $e_L \in CF(L,L;\mathscr{D}_{L,L})$ can be defined
  by~\eqref{eq:e_L}. Let $S = D \setminus \{\zeta_0\}$ and
  $\mathscr{D}_{S} = (K,J)$ as in~\S\ref{sb:units}.  Let
  $\gamma \in \mathcal{O}(H^{L,L})$ and
  $u \in \mathcal{M}_0(\gamma; \mathscr{D}_S)$. According
  to~\eqref{eq:e_L} the contribution of $\gamma$ and $u$ to $e_L$ is
  $T^{\omega(u)}\gamma$. The energy-area identity for $u$ gives:
  $$E(u) = \omega(u) - \int_0^1 H_t^{L,L}(\gamma(t))dt  - 
  \int_S R^{K}(u),$$ where $R^{K}(u)$ is the curvature associated to
  the $1$-form $K$ from the perturbation datum $\mathscr{D}_S$ and is
  defined in a similar way as in~\eqref{eq:curvature-term}. Note that
  we can choose the perturbation datum $\mathscr{D}_S = (K, J)$ such
  that the $C^1$-norm of the $1$-form $K$ is of the same order size as
  the $C^1$-norm of $H^{L,L}$ (i.e.
  $\lVert K \rVert_{C^1} \leq C \lVert H^{L,L} \rVert_{C^1}$ for some
  constant $C$). By doing that we obtain
  $\lvert \int_{S}R^{K}(u) \rvert \leq C' \lVert H^{L,L} \rVert_{C^1}$
  for some constant $C'$. It follows that
  $\mathbf{A}(e_L) \leq C' \lVert H^{L,L} \rVert_{C^1}$. By
  restricting all the Hamiltonians $H^{L,L}$, for all
  $L \in \mathcal{C}$, to have a uniformly bounded $C^1$-norm we
  obtain one constant $u(p)$ (that depends on the choice $p$) such
  that for all every $L \in \mathcal{C}$ we have
  $\mathbf{A}(e_L) \leq u(p)$. Moreover, $u(p) \longrightarrow 0$ as
  $p \longrightarrow p_0 \in \mathcal{N}$ in the $C^1$-topology.  This
  proves the statement about the discrepancy of the units in
  $\fuk(\mathcal{C}; p)$.
 
  We now turn to proving statements~(\ref{i:fuk-p-ue})
  and~(\ref{i:fuk-p-us}) of the proposition and the corresponding
  claims on $\zeta(p)$ and $\kappa(p)$ from
  statement~(\ref{i:u-z-k-to-0}).

  Let $L, L' \in \mathcal{C}$. Choose $S$, $\mathscr{D}_S$ and define
  $e_L$ as explained above. Denote by $\mathscr{D}_{L,L,L'}$ be the
  perturbation datum of the triple $(L,L,L')$ as prescribed by
  $p$. Consider also a disk
  $S' = D \setminus \{\zeta_0', \zeta_1', \zeta_2'\}$ with three
  boundary punctures, ordered clockwise along $\partial D$. We fix
  strip-like ends near these three punctures such that
  $\zeta_0', \zeta_1'$ are entries and $\zeta_2'$ is an exit.
  Consider a $1$-parametric family
  $(\{S''_{\tau}\}_{\tau \in (0,1]}, j_{\tau})$ of surfaces (endowed
  with complex structures) obtained by performing gluing $S$ and $S'$
  at the points $\zeta_0$, $\zeta_0'$ respectively. We construct this
  family so that $S''_{\tau} \longrightarrow S \coprod S'$ as
  $\tau \longrightarrow 0$ and $S''_1 = \mathbb{R} \times [0,1]$ is
  the standard strip. Next, we choose a generic family
  $\{\mathscr{D}_{\tau}\}_{\tau \in (0,1]}$ of perturbation data over
  the family $\{S''_{\tau}\}_{\tau \in (0,1]}$ such that:
  \begin{enumerate}
  \item for $\tau \longrightarrow 0$, $\mathscr{D}_{\tau}$ converges
    to $\mathscr{D}_{S}$ on the $S$ component and
    $\mathscr{D}_{L,L,L'}$ on the $S''$ component.
  \item $\mathscr{D}_1 = \mathscr{D}_{L,L}$.
  \end{enumerate}
  As the family $\{\mathscr{D}_{\tau}\}_{\tau \in (0,1]}$ is generic,
  none of the elements in the $\mathscr{D}_{\tau}$, $\tau <1$ is
  invariant under reparametrization by any non-trivial automorphism
  $\sigma \in Aut(S_{\tau})$.

  Let $\gamma, \lambda \in \mathcal{O}(H^{L,L'})$ and consider the
  space $\mathcal{M}(\gamma, \lambda ; \{ \mathscr{D}_{\tau} \})$ of
  all pairs $(\tau, u)$, with $\tau \in (0,1]$ and
  $u:S_{\tau} \longrightarrow M$ a solution of the Floer
  equation~\eqref{eq:floer-eq-2} with the obvious modifications:
  namely, the lower part of $\partial S_{\tau}$ is mapped by $u$ to
  $L$ and the upper one to $L$', $u$ converges to $\gamma$ at the
  entry $\zeta_1'$ and to $\lambda$ at the exit $\zeta_2'$,
  $(S_r, j_r)$ is replaced by $(S_{\tau}, j_{\tau})$, and $J_{r,z}$
  and $Y_{r,z}$ are replaced by the corresponding structures from
  $\mathscr{D}_{\tau}$.

  Assume that $\gamma \neq \lambda$, and consider the $0$-dimensional
  component
  $\mathcal{M}_0(\gamma, \lambda ; \{ \mathscr{D}_{\tau} \})$. This is
  compact $0$-dimensional manifold hence a finite set. It gives rise
  to a map
  \begin{equation} \label{eq:ch-homotopy-phi-1}
    \begin{aligned}
      & \Phi: CF(L,L'; \mathscr{D}_{L,L'}) \longrightarrow CF(L,L';
      \mathscr{D}_{L,L'}), \\
      & \Phi(\gamma) := \sum_{\lambda} \sum_{(\tau,u)} T^{\omega(u)}
      \lambda, \;\; \forall \gamma \in \mathcal{O}(H^{L,L'}),
    \end{aligned}
  \end{equation}
  where the outer sum is over all $\lambda \in \mathcal{O}(H^{L,L'})$
  with $\lambda \neq \gamma$ and the second sum is over all
  $(\tau, u) \in \mathcal{M}_0(\gamma, \lambda ; \{ \mathscr{D}_{\tau}
  \})$. We extend the formula in the 2'nd line
  of~\eqref{eq:ch-homotopy-phi-1} linearly over $\Lambda$.

  We claim that the following formula holds:
  \begin{equation} \label{eq:ch-homotopy-phi-2} \mu_2(e_L, x) = x +
    \mu_1 \circ \Phi(x) + \Phi \circ \mu_1(x), \; \; \forall \; x \in
    CF(L,L'; \mathscr{D}_{L,L'}),
  \end{equation}
  i.e. $\Phi$ is a chain homotopy between the map $\mu_2(e_L, \cdot)$
  and the identity.

  To prove this, let $\gamma, \gamma_+ \in \mathcal{O}(H^{L,L'})$ and
  consider the $1$-dimensional component
  $\mathcal{M}_1(\gamma, \gamma_+ ; \{ \mathscr{D}_{\tau} \})$ of the
  space $\mathcal{M}(\gamma, \gamma_+ ; \{ \mathscr{D}_{\tau}
  \})$. This space admits a compactification
  $\overline{\mathcal{M}}_1(\gamma, \gamma_+ ; \{ \mathscr{D}_{\tau}
  \})$ which is a $1$-dimensional manifold with boundary. The elements
  in the boundary of this space fall into four types:
  \begin{enumerate}
  \item Elements corresponding to the splitting of $S''$ into $S$ and
    $S'$ at $\tau=0$. These can be written as pairs $(u_S, u_{S'})$
    with $u_S \in \mathcal{M}_0(\gamma'; \mathscr{D}_S)$ for some
    $\gamma' \in \mathcal{O}(H^{L,L})$ and
    $u_{S'} \in \mathcal{M}_0(\gamma', \gamma, \gamma_+;
    \mathscr{D}_{L,L,L'})$. \label{i:split-1}
  \item Elements corresponding to splitting of $S_{\tau}$ at some
    $0 < \tau_0<1$ into a Floer strip $u_0$ followed by a solution
    $u_1:S_{\tau_0} \longrightarrow M$ of the Floer equation for the
    perturbation datum $\mathscr{D}_{\tau_0}$. More precisely, these
    can be written as $(\tau_0,u_0, u_1)$ with $0<\tau_0<1$,
    $u_0 \in \mathcal{M}^*(\gamma, \gamma'; \mathscr{D}_{L,L'})$ and
    $u_1 \in \mathcal{M}(\gamma', \gamma_+; \mathscr{D}_{\tau_0})$ for
    some $\gamma' \in \mathcal{O}(H^{L,L'})$. \label{i:split-2}
  \item The same as~(\ref{i:split-2}) only that the splitting occurs
    in reverse order, namely first an element of
    $\mathcal{M}(\gamma, \gamma'; \mathscr{D}_{\tau_0})$ followed by
    an element of
    $\mathcal{M}^*(\gamma', \gamma_+;
    \mathscr{D}_{L,L'})$. \label{i:split-3}
  \item Elements corresponding to $\tau=1$. These are
    $u: \mathbb{R} \times [0,1] \longrightarrow M$ that belong to the
    space $\mathcal{M}_0(\gamma, \gamma_+; \mathscr{D}_{L,L'})$ or in
    other words elements of the $0$-dimensional component of the space
    $\mathcal{M}(\gamma, \gamma_+; \mathscr{D}_{L,L'})$ of {\em
      parametrized} Floer trajectories connecting $\gamma$ to
    $\gamma_+$. The latter space has a $0$-dimensional component if
    and only if $\gamma=\gamma_+$ in which case that component
    contains only the stationary trajectory at $\gamma$. Summing up,
    this type of boundary point occurs if and only if
    $\gamma = \gamma_+$ and $u$ is the stationary solution at
    $\gamma$.
    \label{i:split-4}
  \end{enumerate}
  Summing up over all these four possibilities (for every given area
  of solutions $u$) yields formula~\eqref{eq:ch-homotopy-phi-2}. Note
  that the first term (i.e. the summand $x$) on the right-hand side
  of~\eqref{eq:ch-homotopy-phi-2} comes exactly from the boundary
  points of type~(\ref{i:split-4}).

  To conclude the proof we only need to estimate the shift in action
  (or filtration) of the chain homotopy $\Phi$. This is done in a
  similar way to the argument used above to estimate $\bme(p)$, namely
  by using an energy-area identity as
  in~\eqref{eq:E-omega-polygons}. Indeed we can choose the
  perturbation data $\mathscr{D}_S$ and $\mathscr{D}_{\tau}$,
  $0< \tau < 1$, to be of the same size order (in the $C^1$-norm) as
  the Hamiltonian $H^{L,L'}$, hence the curvature term in the
  energy-area identity can be bounded by a constant $C(H^{L,L'})$ that
  depends on $H^{L,L'}$ and such that $C(H^{L,L'}) \longrightarrow 0$
  as $H^{L,L'} \longrightarrow 0$ in the $C^1$-topology. By taking all
  the Hamiltonians $H^{L,L'}$ for all $L,L' \in \mathcal{C}$ to be
  uniformly bounded in the $C^1$-topology we obtain a uniform bound
  $\kappa(p)$ on the action shift of the chain homotopy $\Phi$ that
  holds for all pairs $L,L' \in \mathcal{C}$ and such that
  $\kappa(p) \longrightarrow 0$ as
  $p \longrightarrow p_0 \in \mathcal{N}$.  This shows that the Yoneda
  module $\mathcal{L}$ satisfies
  Assumption~$\hyperlink{h:asmp-H}{\lbh_s(\kappa(p))}$. By taking
  $L'=L$ it also follows immediately that
  $\mu_2(e_L,e_L) = e_L + \mu_1(c)$ for some chain $c$ with
  $\mathbf{A}(c) \leq \kappa(p)$, hence
  $\fuk(\mathcal{C}; p) \in \hyperlink{h:asmp-ue}{\lbue(\kappa(p))}$
  (so we can actually take $\zeta(p) = \kappa(p)$).
\end{proof}

\begin{rem} \label{r:ham-normalization} In some variants of Floer
  theory it is common to normalize the Hamiltonian functions involved
  in the definition of the Floer complexes. For example, when the
  ambient manifold is closed one often normalizes the Hamiltonian
  functions to have zero mean, and for open manifolds one requires the
  Hamiltonian functions to have compact support. This solves the
  ambiguity of adding constants to the Hamiltonian functions and
  consequently provides a ``canonical'' way to define action
  filtrations. This especially makes sense when one aims to construct
  invariants of Hamiltonian diffeomorphisms (or flows) by means of
  filtered Floer homology. See e.g. the theory of spectral
  numbers~\cite{Sc:action-spectrum, Oh:spec-inv-1, Oh:spec-inv-2,
    Oh:spec-inv-3, En-Po:calqm}, see
  also~\cite{Vi:generating-functions-1} for an earlier approach via
  generating functions.

  While we could have normalized the Hamiltonian functions in the
  Floer and perturbation data, we have opted not to do so. At first
  glance, this might seem to have odd implications. For example,
  suppose that $p_1, p_2 \in E'_{\text{reg}}$ are two choices of
  perturbation data such that $p_2$ is obtained by adding a
  (different) constant to each of the Hamiltonian functions (or forms)
  in the perturbation data from $p_1$. Clearly, the Fukaya categories
  $\fuk(\mathcal{C}; p_1)$ and $\fuk(\mathcal{C}; p_2)$ are precisely
  the same, but they have completely different (and generally
  unrelated) weakly filtered structures.

  Our justification for not imposing any normalization on the
  Hamiltonian functions is that their role is purely auxiliary, and
  moreover, ideally we would like to make them arbitrarily small. More
  specifically, the focus of our study is the collections of
  Lagrangians $\mathcal{C}$ and its Fukaya category, whereas the
  Hamiltonian functions in the Floer data serve only as perturbations
  whose sole purpose is technical, namely to set up the Floer theory
  so that it fits into a (infinite dimensional) Morse theoretic
  framework. In reality, we view the Hamiltonian perturbations as
  quantities that can be taken arbitrarily small and consider families
  of Fukaya categories parametrized by choices of perturbations that
  tend to $0$. (See e.g. Proposition~\ref{p:wf-fukaya}.)

  In fact, our theory would become simpler and cleaner if we
  could set up the Fukaya category without appealing to any
  perturbations at all. If this were possible (which means that all
  the Floer trajectories and polygons are unperturbed
  pseudo-holomorphic curves) our Fukaya categories would be
  genuinely filtered rather than only ``weakly
  filtered''. (See~\cite{FO3:book-vol1, FO3:book-vol2} for a
  ``perturbation-less'' construction of an $A_{\infty}$-algebra
  associated to a single Lagrangian.)

  Another point related to the matter of normalization is that when
  extending our theory to Lagrangian cobordisms
  (see~\S\ref{sb:ext-lcob}) we are forced to work with non-compactly
  supported Hamiltonian perturbations. While one could have attempted
  a different sort of normalization in that case (suited for the class
  of non-compactly supported perturbations used for cobordisms), we
  will not do that for the very same reasons as those for not doing it for
  $\fuk(\mathcal{C};p)$.
\end{rem}
\subsection{Extending the theory to Lagrangian cobordisms} 
\label{sb:ext-lcob}
Most of the theory developed in the previous subsections
of~\S\ref{s:floer-theory} extends to Lagrangian cobordisms. We will
briefly go over the main points here and refer the reader
to~\cite{Bi-Co:lcob-fuk, Bi-Co:cob1} for a detailed account on the
theory of Lagrangian cobordisms, especially in what concerns Floer
theory and Fukaya categories.

Let $(M, \omega)$ be a symplectic manifold as at the beginning
of~\S\ref{s:floer-theory}. We fix a collection $\mathcal{C}$ of
Lagrangians in $M$ as in~\S\ref{s:floer-theory}. Consider
$\widetilde{M} := \mathbb{R}^2 \times M$ endowed with the split
symplectic structure
$\widetilde{\omega} := \omega_{\mathbb{R}^2} \oplus \omega$, where
$\omega_{\mathbb{R}^2}$ is the standard symplectic structure of
$\mathbb{R}^2$.

Fix a strip $B = [a,b] \times \mathbb{R} \subset \mathbb{R}^2$ in the
plane. Consider the collection $\widetilde{\mathcal{C}}$ of all
Lagrangian cobordisms
$V : (L'_1, \ldots, L'_s) \leadsto (L_1, \ldots, L_r)$ in
$\widetilde{M}$ that have the following additional properties. We
assume that $V$ is cylindrical (with horizontal ends) outside of
$B \times M$ and that all of its ends $L'_i, L_j$ are Lagrangian
submanifolds from the collection $\mathcal{C}$. Moreover, we assume
that the ends of $V$ are all located along horizontal ends whose
$y$-coordinates are in $\mathbb{Z}$. Finally, we further assume that
$V$ is weakly exact as a Lagrangian submanifold of $\widetilde{M}$.

The Lagrangians $(L'_1, \ldots, L'_s)$ are referred to as the positive
ends and $(L_1, \ldots, L_r)$ are the negative ends. Note that the
values of $r$ and $s$ are allowed to vary arbitrarily. We also allow
$s$ or $r$ to be $0$ in which case $V$ is a null cobordism, i.e. a
cobordism with only negative ends (if $s=0$) or only positive ends (if
$r=0$). The case $r=s=0$ means that $V$ is a closed Lagrangian
submanifold of $\widetilde{M}$.

One can associate to the collection $\widetilde{\mathcal{C}}$ a Fukaya category
which we denote $\fukcob(\widetilde{\mathcal{C}})$. This is an
$A_{\infty}$-category (or rather a family of such categories,
depending on auxiliary choices) whose objects are the elements of
$\widetilde{\mathcal{C}}$. The precise construction is detailed
in~\cite{Bi-Co:lcob-fuk}. The main ingredients in the construction are
completely analogous to the case $\fuk(\mathcal{C})$, the main
differences being the following. The Floer datum
$\mathscr{D}_{V,V'} = (\widetilde{H}^{V,V'}, \widetilde{J}^{V,V'})$ of
a pair of cobordisms $V, V'$ has a special form at infinity. Namely,
there is a compact subset $C_{V,V'} \subset B \times M$ such that
outside of $C_{V,V'}$ we have:
$$\widetilde{H}^{V,V'}(t,z,p) = h(z) + H^{V,V'}(t,p),$$ where
$z \in \mathbb{R}^2$, $p \in M$,
$H^{V,V'}:[0,1] \times M \longrightarrow \mathbb{R}$ is a Hamiltonian
function on $M$ and $h:\mathbb{R}^2 \longrightarrow \mathbb{R}$ is the
so called {\em profile function} whose purpose is to generate a
Hamiltonian perturbation at infinity which disjoins $V'$ from $V$ at
infinity while keeping both of them cylindrical and horizontal at
infinity. Note that the profile function $h$ is not (and in fact
cannot be) compactly supported. We use the same function $h$ for the
perturbation data of all pairs of Lagrangians
$V, V' \in \widetilde{\mathcal{C}}$. We remark also that $h$ can be
taken to be arbitrarily small in the $C^1$-topology. Precise details
on the construction of $h$ can be found
in~\cite[Section~3]{Bi-Co:lcob-fuk}. The almost complex structures
$\widetilde{J}^{V,V'}$ appearing in the Floer data have also a special
form whose purpose is to retain compactness of the space of Floer
trajectories. We will not repeat its definition here, since its
particular form does not have any significance to the weakly filtered
structure on $\fukcob(\widetilde{\mathcal{C}})$ that we want to
achieve. The only relevant thing is that with this choice of Floer
data, there is a compact subset $C'_{V,V'} \subset B \times M$ such
that all orbits $\mathcal{O}(\widetilde{H}^{V,V'})$ lie inside
$C_{V,V'}$ and moreover all Floer trajectories for the pair $(V,V')$
lie inside $C_{V,V'}$.

The perturbation data used for the definition of
$\fukcob(\widetilde{\mathcal{C}})$ are analogous to those used for
$\fuk(\mathcal{C})$ with the following differences. For a given tuple
$\mathcal{V} = (V_0, \ldots, V_d)$ with
$V_j \in \widetilde{\mathcal{C}}$ the perturbation data
$\mathscr{D}_{\mathcal{V}} = (\widetilde{K}^{\mathcal{V}},
\widetilde{J}^{\mathcal{V}})$ is chosen so that
$$\widetilde{K}^{\mathcal{V}}|_{S_r} = 
h \cdot da_r + \widetilde{K}^{\mathcal{V}}_0,$$ where
$a_r: S_r \longrightarrow [0,1]$ are the so called {\em transition
  functions} which depend smoothly on $r \in
\mathcal{R}^{d+1}$. See~\cite[Section~3.1]{Bi-Co:lcob-fuk} for their
precise definition. The $1$-form
$\widetilde{K}^{\mathcal{V}}_0 \in
\Omega^1(S_r,C^{\infty}(\widetilde{M}))$ is chosen so that it has the
following two additional properties. The first one is that
$\widetilde{K}^{\mathcal{V}}_0$ satisfies
condition~\eqref{eq:K=0-on-del-C_i}. The second one is that there is a
compact subset $C_{\mathcal{V}} \subset B \times M$ that contains all
the subsets $C'_{V_i, V_j}$ (mentioned earlier) such that outside of
$C_{\mathcal{V}}$ the Hamiltonian vector fields
$X^{\widetilde{K}^{\mathcal{V}}_0}(\xi)$, generated by the function
$\widetilde{K}^{\mathcal{V}}_0(\xi):\widetilde{M} \longrightarrow
\mathbb{R}$ are vertical for all $r \in \mathcal{R}^{d+1}$ and
$\xi \in T(S_r)$. By ``vertical'' we mean that
$D\pi(X^{\widetilde{K}^{\mathcal{V}}_0(\xi)}) = 0$, where
$\pi: \widetilde{M} \longrightarrow \mathbb{R}^2$ is the projection.
Note that the due to the $h\cdot da_r$ term in the perturbation form
$\widetilde{K}^{\mathcal{V}}$ this form does {\em not} satisfy
condition~\eqref{eq:K=0-on-del-C_i}. However, this will not play any
role for the purposes of establishing a weakly filtered
$A_{\infty}$-category.

The almost complex structures $\widetilde{J}^{\mathcal{V}}$ from
$\mathscr{D}_{\mathcal{V}}$ are also chosen to have restricted form,
similarly to the ones appearing in the Floer data. We refer the reader
to~\cite[Secton~3.2]{Bi-Co:lcob-fuk} for the details. With these
choices made it can be proved that there exists a compact subset
$C_{\mathcal{V}} \subset \widetilde{M}$ such that for all
$(r,u) \in \mathcal{M}(\widetilde{\gamma}^1, \ldots,
\widetilde{\gamma}_d; \mathscr{D}_{\mathcal{V}})$ we have
$\textnormal{image\,}(u) \subset
C'_{\mathcal{V}}$. See~\cite[Section~3.3]{Bi-Co:lcob-fuk} and in
particular Lemma~3.3.2 there.

Of course, apart from the above the perturbation data
$\mathscr{D}_{\mathcal{V}}$ are assumed to be consistent and also
compatible with the Floer data along the strip-like ends of the
$S_r$'s.

With these choices made we can define the $A_{\infty}$-category
$\fukcob(\widetilde{\mathcal{C}})$ by the same recipe as in the
previous subsections of~\S\ref{s:floer-theory} an in particular by
formula ~\eqref{eq:mu_d-def}. This $A_{\infty}$-category is
$h$-unital, and a choice of homology units
$e_V \in CF(V,V; \mathscr{D}_{V,V})$ can be constructed by the same
recipe as in~\S\ref{sb:units} (see
also~\cite[Remark~3.5.1]{Bi-Co:lcob-fuk} for an alternative approach).

Similarly to $\fuk(\mathcal{C})$ our category
$\fukcob(\widetilde{\mathcal{C}})$ depends on the various choices
made, namely a choice of strip-like ends and perturbation data. Note
that part of the choice made for the perturbation data is the choice
of a profile function and the choice of transition functions.

We now fix the same choice of strip-like ends as for
$\fuk(\mathcal{C})$ and denote the space of choices of perturbation
data by $\widetilde{E}$. We denote the subspace of regular choices of
perturbation data by $\widetilde{E}_{\textnormal{reg}}$. For
$\widetilde{p} \in \widetilde{E}_{\textnormal{reg}}$ we denote by
$\fukcob(\widetilde{\mathcal{C}}; \widetilde{p})$ the category
corresponding to $\widetilde{p}$.

Next we endow $\fukcob(\widetilde{\mathcal{C}}; \widetilde{p})$ with a
weakly filtered structure. This is done in precisely the same way as
for $\fuk(\mathcal{C}; p)$. More precisely we define the action
filtration on the Floer complexes
$CF(V_0, V_1; \mathscr{D}_{V_0, V_1})$ by the same recipe as
in~\S\ref{sb:wf-fukaya}.

With these filtrations fixed, we now have the following:
\begin{prop} \label{p:wf-fukaya-cob} The statement of
  Proposition~\ref{p:wf-fukaya} holds for the $A_{\infty}$-categories
  $\fukcob(\widetilde{\mathcal{C}}; \widetilde{p})$,
  $\widetilde{p} \in \widetilde{E}'_{\textnormal{reg}}$, where
  $\widetilde{E}'_{\textnormal{reg}} \subset
  \widetilde{E}_{\textnormal{reg}}$ is defined in an analogous way as
  $E'_{\textnormal{reg}} \subset E_{\textnormal{reg}}$ (see
  page~\pageref{pp:E'}).
\end{prop}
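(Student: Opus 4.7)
My plan is to follow verbatim the scheme of the proof of Proposition~\ref{p:wf-fukaya}, since all the key ingredients in that argument are not specific to the closed setting. The only thing to verify is that each step admits an appropriate analogue in the cobordism context. First I would observe that the energy-area identities~\eqref{eq:E-omega-strips} and~\eqref{eq:E-omega-polygons} continue to hold verbatim on $\widetilde{M}$ for Floer strips and polygons of cobordisms, with $H^{L_{i-1},L_i}$ replaced by $\widetilde{H}^{V_{i-1},V_i}$ and $K^{L_0,\ldots,L_d}$ by $\widetilde{K}^{\mathcal{V}}$. Consequently, the discrepancy bounds~\eqref{eq:eps-d-K} and the action bound on the units are still governed by a uniform upper bound on the curvature $\int_{S_r}R^{\widetilde{K}^{\mathcal{V}}}(u)$ and by the $C^1$-size of the Hamiltonians $\widetilde{H}^{V,V'}$. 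Once such uniform bounds are in place, the rest of the argument (weakly filtered structure, discrepancy of units, Assumption~$\lbue$, Assumption~$\lbh_s$ for Yoneda modules, and the limits as $\widetilde{p}\to \widetilde{p}_0$) is a mechanical transcription of the closed case.

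The step that requires genuine attention, and which I expect to be the main obstacle, is the uniform control of the curvature integrals in the cobordism setting. Unlike in $\fuk(\mathcal{C};p)$, the Hamiltonians $\widetilde{H}^{V,V'}$ are not compactly supported: at infinity they equal $h(z)+H^{V,V'}(t,p)$ for the profile function $h$, and accordingly $\widetilde{K}^{\mathcal{V}}|_{S_r}$ contains the summand $h\cdot da_r$ which does not vanish near the punctures in the $\mathbb{R}^2$-direction. Nevertheless, $h$ can be chosen arbitrarily small in the $C^1$-topology, and by~\cite[Lemma~3.3.2]{Bi-Co:lcob-fuk} all Floer polygons for $\mathscr{D}_{\mathcal{V}}$ land in a compact subset $C'_{\mathcal{V}}\subset\widetilde{M}$. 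Using this, an estimate of the form~\eqref{eq:bound-R} again applies, with the $C^1$-norm of $\widetilde{K}^{\mathcal{V}}$ taken over $C'_{\mathcal{V}}$ and over the compact domain $\overline{\mathcal{S}}^{d+1}\setminus\textnormal{Int}\,\mathcal{W}$.

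Next, I would define $\widetilde{E}'_{\textnormal{reg}}$ in analogy with $E'_{\textnormal{reg}}$ on page~\pageref{pp:E'}, namely by restricting $\widetilde{E}_{\textnormal{reg}}$ to those consistent choices $\widetilde{p}=\{\mathscr{D}_{\mathcal{V}}\}$ for which the profile function $h$, the transition functions $a_r$, the Hamiltonians $H^{V,V'}$, and the auxiliary forms $\widetilde{K}_0^{\mathcal{V}}$ all have $C^1$-norms bounded by a common constant that can be taken arbitrarily small. Since $\widetilde{E}_{\textnormal{reg}}$ is residual in $\widetilde{E}$, this restriction still has the set $\widetilde{\mathcal{N}}$ of ``perturbation-free'' data (as in the closed case) in its closure. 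With $\widetilde{p}\in\widetilde{E}'_{\textnormal{reg}}$ chosen this way, the curvature bound above becomes uniform in the tuple $\mathcal{V}$, yielding a single sequence $\bm{\epsilon}(\widetilde{p})$ of discrepancies with $\epsilon_d(\widetilde{p})\to 0$ as $\widetilde{p}\to\widetilde{p}_0\in\widetilde{\mathcal{N}}$.

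Finally, for items (2)--(4) I would repeat literally the constructions from the proof of Proposition~\ref{p:wf-fukaya}: the units $e_V\in CF(V,V;\mathscr{D}_{V,V})$ are defined by~\eqref{eq:e_L} applied to $(V,V)$ using a perturbation datum on $S=D\setminus\{\zeta_0\}$ of profile-type form, and the chain homotopy $\Phi$ of~\eqref{eq:ch-homotopy-phi-1} is obtained by gluing $S$ to a triangle parametrizing $\mathscr{D}_{L,L,L'}$. The corresponding compactness of the one-parameter moduli spaces again follows from the cobordism-compactness results of~\cite[Section~3]{Bi-Co:lcob-fuk}, and the energy-area identity supplies the bounds on $u(\widetilde{p})$, $\zeta(\widetilde{p})$ and $\kappa(\widetilde{p})$, each tending to $0$ as $\widetilde{p}\to\widetilde{p}_0$. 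Putting these pieces together yields the analogue of Proposition~\ref{p:wf-fukaya} for $\fukcob(\widetilde{\mathcal{C}};\widetilde{p})$.
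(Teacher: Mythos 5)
Your proposal is correct and takes the same approach as the paper, which itself only states that the proof is ``essentially the same as that of Proposition~\ref{p:wf-fukaya} with straightforward modifications related to the special form of the perturbation data $\widetilde{p}$.'' You have correctly identified the one genuine subtlety --- the non-compactly-supported profile term $h\cdot da_r$ in $\widetilde{K}^{\mathcal{V}}$ --- and correctly resolved it via the $C^1$-smallness of $h$ together with the confinement of all Floer polygons to the compact set from \cite[Lemma~3.3.2]{Bi-Co:lcob-fuk}, so that the curvature estimate of type~\eqref{eq:bound-R} goes through unchanged.
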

The proof of this Proposition is essentially the same as that of
Proposition~\ref{p:wf-fukaya-cob} with straightforward modifications
related to the special form of the perturbation data $\widetilde{p}$.

For technical reasons we will need in the following also enlargements
of the categories $\fukcob(\widetilde{\mathcal{C}}; \widetilde{p})$
which will be denoted
$\fuk_{cob}(\widetilde{\mathcal{C}}_{1/2}; \widetilde{p})$. These are
defined in the same was as
$\fukcob(\widetilde{\mathcal{C}}; \widetilde{p})$ only that the
collection of objects $\widetilde{\mathcal{C}}$ is extended to allow
Lagrangian cobordisms $V$ with ends from $\mathcal{C}$ but these ends
are now allowed to lie over horizontal rays with $y$-coordinate in
$\tfrac{1}{2}\mathbb{Z}$ (rather than only $\mathbb{Z}$). This larger
collection of objects is denoted by
$\widetilde{\mathcal{C}}_{1/2}$. The perturbation data, the
$A_{\infty}$-operations as well as the weakly filtered structures are
defined in an analogous way as for
$\fukcob(\widetilde{\mathcal{C}}; \widetilde{p})$. We denote the space
of choices of perturbation data for these categories by
$\widetilde{E}_{1/2}$ and the space of regular such choices by
$\widetilde{E}_{\text{reg}, 1/2}$. Similarly to $E'_{\text{reg}}$ and
$\widetilde{E}'_{\text{reg}}$ we also have the space
$\widetilde{E}'_{\text{reg},1/2} \subset \widetilde{E}_{\text{reg},
  1/2}$. An obvious analogue of Proposition~\ref{p:wf-fukaya-cob}
continues to hold for the family of categories
$\fukcob(\widetilde{\mathcal{C}}_{1/2}; \widetilde{p})$,
$\widetilde{p} \in \widetilde{E}'_{\text{reg}, 1/2}$.

The relation between $\fukcob(\widetilde{\mathcal{C}})$ and
$\fukcob(\widetilde{\mathcal{C}}_{1/2})$ is simple. Any regular choice
of perturbation data for $\fukcob(\widetilde{\mathcal{C}}_{1/2})$ can
be used, by restriction to smaller class of objects, for
$\fukcob(\widetilde{\mathcal{C}})$. Thus, with the right choices of
perturbation data we obtain a full and faithful embedding
$\fukcob(\widetilde{\mathcal{C}}) \longrightarrow
\fukcob(\widetilde{\mathcal{C}}_{1/2})$. We shall give now a more
precise description of this.

There is an obvious restriction map
$r: \widetilde{E}_{1/2} \longrightarrow \widetilde{E}$ with
$r(\widetilde{E}_{\text{reg},1/2}) \subset \widetilde{E}_{\text{reg}}$
and
$r(\widetilde{E}'_{\text{reg},1/2}) \subset
\widetilde{E}'_{\text{reg}}$ and such that the closure of
$r(\widetilde{E}'_{\text{reg},1/2})$ contains
$\widetilde{\mathcal{N}}$ (the space of perturbations with
perturbation form $0$, similarly to $\mathcal{N}$
on~\pageref{pp:E'}). We will replace from now on
$\widetilde{E}'_{\text{reg}}$ with
$r(\widetilde{E}'_{\text{reg},1/2})$ and continue to denote the latter
by $\widetilde{E}'_{\text{reg}}$.

There is also a (non-unique) right inverse to $r$ which is an
extension map
$j: r(\widetilde{E}_{1/2}) \longrightarrow \widetilde{E}_{1/2}$ with
$j(\widetilde{N}) \subset \widetilde{N}_{1/2}$ and such that
$j(\widetilde{E}'_{\text{reg}}) \subset
\widetilde{E}'_{\text{reg},1/2}$. The map $j$ induces an obvious
family of extension functors
\begin{equation} \label{eq:functor-map-j} \mathscr{J}:
  \fukcob(\mathcal{C};\widetilde{p}) \longrightarrow
  \fukcob(\widetilde{\mathcal{C}}_{1/2}; j(\widetilde{p})), \;\;\;
  \widetilde{p} \in \widetilde{E}'_{\text{reg}}.
\end{equation}
These are $A_{\infty}$-functors which are full and faithful (on the
chain level). Note also that these functors $\mathscr{J}$ are
filtered, i.e. they have discrepancy $\leq \mathbf{0}$.

From now on we replace $\widetilde{E}'_{\text{reg}, 1/2}$ with
$j(\widetilde{E}'_{\text{reg}})$ and continue to denote the latter by
$\widetilde{E}'_{\text{reg}, 1/2}$. With these conventions made, the
maps
$r|_{\widetilde{E}'_{\text{reg}, 1/2}} : \widetilde{E}'_{\text{reg},
  1/2} \longrightarrow \widetilde{E}'_{\text{reg}}$ and
$j|_{\widetilde{E}'_{\text{reg}}} : \widetilde{E}'_{\text{reg}}
\longrightarrow \widetilde{E}'_{\text{reg}}$ become bijections,
inverse one to the other. Therefore, whenever no confusion arises we
omit $j$ and $r$ from the notation and denote $j(\widetilde{p})$ by
$\widetilde{p}$ keeping in mind that $\widetilde{p}$ is a regular
choice of perturbation data for $\fukcob(\widetilde{\mathcal{C}})$
which admits an extension, still denoted by $\widetilde{p}$, to a
regular choice of perturbation data for
$\fukcob(\widetilde{\mathcal{C}}_{1/2})$.

An important property of the extension map $j$ is the following.
For every $\widetilde{p}_0 \in \widetilde{\mathcal{N}}$ we have:
\begin{equation} \label{eq:discrep-under-j} \lim_{\widetilde{p} \to
    \widetilde{p}_0} \bme_d^{\fukcob(\widetilde{\mathcal{C}}_{1/2};
    j(\widetilde{p}))} = 0, \;\; \forall \; d.
\end{equation}
This follows easily from Proposition~\ref{p:wf-fukaya-cob} together
with the fact that $j$ is continuous, that
$j(\widetilde{\mathcal{N}}) \subset \widetilde{\mathcal{N}}_{1/2}$ and
that the closure of $j(\widetilde{E}'_{\text{reg}})$ contains
$\widetilde{\mathcal{N}}_{1/2}$.

\subsection{The monotone case} \label{sb:monotone}

The theory developed earlier in the paper continues to work in the
more general setting of monotone Lagrangian submanifolds.  We will
assume henceforth all symplectic manifolds as well as Lagrangian
submanifolds to be connected.

Let $(M, \omega)$ be a symplectic manifold and $L \subset M$ a
Lagrangian submanifold. Recall that $L$ is called monotone if the
following two conditions are satisfied:
\begin{enumerate}
\item There exists a constant $\rho>$ such that
  $$\omega(A) = \rho \mu(A) \; \forall A \in H_2^D(M,L).$$ 
  Here $H_2^D(M,L) \subset H_2(M,L)$ is the image of the Hurewicz
  homomorphism $\pi_2(M,L) \longrightarrow H_2(M,L)$ and $\mu$ is the
  Maslov index of $L$.
\item The minimal Maslov number $N_L$ of $L$, defined by
  $$N_L := \min\{\mu(A) \mid A \in H_2^D(M,L), \mu(A)>0\}$$ 
  satisfies $N_L \geq 2$. (We use the convention that
  $\min \emptyset = \infty$.)
\end{enumerate}

A basic invariant of monotone Lagrangians $L$ is the {\em Maslov-$2$
  disk count}, $\mathbf{d}_L \in \Lambda_0$. This element is defined
as $\mathbf{d}_L := d T^{a}$, where $d \in \mathbb{Z}_2$ is the number
of $J$-holomorphic disks (for generic $J$) of Maslov index $2$ whose
boundaries go through a given point in $L$, and $a = 2\rho> 0$ is the
area of each of these disks. Note that if there are no $J$-holomorphic
disks of Maslov $2$ at all then $\mathbf{d}_L=0$ by definition.

It is well known that $\mathbf{d}_L$ is independent of the choices
made in the definition (the almost complex structure $J$ and the point
on $L$ through which we count the disks - recall that $L$ is assumed
to be connected). We refer the reader
to~\cite[Section~2.5.1]{Bi-Co:lagtop} for the precise definition of
the coefficient $d$ in $\mathbf{d}_L$ and its properties. (Note that
the definition in that paper is done over $\mathbb{Z}$ so the $d$
above is obtained by reducing mod $2$.) In different forms this
invariant has appeared in~\cite{Oh:HF1, Oh:HF1-add, Chek:cob,
  FO3:book-vol1, Aur:t-duality}. Under additional assumptions on $L$,
one can define a version of this invariant also over other base rings
(such as $\mathbb{Z}$ and $\mathbb{C}$) sometimes taking additional
structures (like local systems) into account (see
e.g.~\cite{Aur:t-duality, Bi-Co:lagtop}), but we will not need that in
the sequel.

Fix an element $\mathbf{d} \in \Lambda_0$ of the form
$\mathbf{d} = d T^{a}$, $d \in \mathbb{Z}_2$, $a>0$. Denote by
$\mathcal{L}ag^{mon, \mathbf{d}}(M)$ the class of closed monotone
Lagrangians $L \subset M$ with $\mathbf{d}_{L} = \mathbf{d}$.  Let
$\mathcal{C} \subset \mathcal{L}ag^{mon, \mathbf{d}}(M)$ be a
collection of Lagrangians. Then one can define the Fukaya categories
$\fuk(\mathcal{C}; p)$ in the same way as described earlier and the
theory developed above in~\S\ref{s:floer-theory} carries over without
any modifications. (The main difference in the monotone case is that
$HF(L,L)$ might not be isomorphic to $H_*(L)$, and in fact may even
vanish. This however will not affect any of our considerations. Apart
from that, the monotone case poses some grading issues for Floer
complexes, but in this paper we work in an ungraded framework.)

Before we go on, we mention another basic measurement for monotone
Lagrangians that will be relevant in the sequel. Given a monotone
Lagrangian $L \subset M$ define its {\em minimal disk area} $A_L$ by
\begin{equation} \label{eq:min-disk-area}
  A_L = \min \{ \omega(A) \mid A \in H_2^D(M,L), \omega(A)>0 \}.
\end{equation}

Turning to cobordisms, the theory continues to work if we restrict to
monotone Lagrangian cobordisms $V \subset \mathbb{R}^2 \times M$. Note
that if $V : (L'_1, \ldots, L'_s) \leadsto (L_1, \ldots, L_r)$ is
monotone then its ends $L'_i$ and $L'_j$ are automatically monotone
too. Moreover, as observed by Chekanov~\cite{Chek:cob}, if $V$ is a
monotone Lagrangian cobordism then one can define the Maslov-$2$ disk
count $\mathbf{d}_V$ in the same way as above (i.e. for closed
Lagrangian submanifolds) and $\mathbf{d}_V$ continues to be invariant
of the choices made in its definition. Furthermore, if $V$ is
connected then
$$\mathbf{d}_V = \mathbf{d}_{L'_i} = \mathbf{d}_{L_j}, \; \forall \,
i,j.$$

Given $\mathbf{d} = dT^a\in \Lambda_0$ and a collection
$\mathcal{C} \subset \mathcal{L}ag^{mon, \mathbf{d}}(M)$, denote by
$\widetilde{\mathcal{C}}$ the collection of connected monotone
Lagrangian cobordisms $V$ all of whose ends are in $\mathcal{C}$. Note
that by the preceding discussion each $V \in \widetilde{\mathcal{C}}$
must have $\mathbf{d}_V = \mathbf{d}$. Therefore we omit $\mathbf{d}$
from the notation of $\mathcal{C}$ and $\widetilde{\mathcal{C}}$. This
also keeps the notation consistent with the weakly exact case.

From now, unless explicitly indicated, we treat uniformly both the
weakly exact case as well as the monotone one. In particular the class
of admissible Lagrangians will be denoted by $\mathcal{L}ag^{*}(M)$,
where $* = we$ in the weakly exact case, and $* = (mon, \mathbf{d})$
in the monotone case. We will use similar notation
$\mathcal{L}ag^{*}(\mathbb{R}^2 \times M)$ for the admissible classes
of cobordisms.

\subsection{Inclusion functors} \label{sb:inc-functors} Let
$\gamma \subset \mathbb{R}^2$ be an embedded curve with horizontal
ends, i.e. $\gamma$ is the image of a proper embedding
$\mathbb{R} \hooklongrightarrow \mathbb{R}^2$ whose image outside of
a compact set coincides with two horizontal rays having
$y$-coordinates in $\tfrac{1}{2}\mathbb{Z}$.

In~\cite[Section~4.2]{Bi-Co:lcob-fuk} we associated to $\gamma$ a
family of mutually quasi-isomorphic $A_{\infty}$-functors
$$\mathcal{I}_{\gamma} : \fuk(\mathcal{C}) \longrightarrow
\fukcob(\widetilde{\mathcal{C}}_{1/2})$$ which we called {\em
  inclusion functors}. They all have the same action on objects which
is given by $\mathcal{I}_{\gamma}(L) = \gamma \times L$ for every
$L \in \mathcal{C}$.

Here is a more precise description of this family of functors. Denote
by $\mathcal{H}_{\text{prof}}$ the space of profile functions
(see~\S\ref{sb:ext-lcob}, see also~\cite[Section~3]{Bi-Co:lcob-fuk}
for the precise definition). The construction of the inclusion
functors from~\cite{Bi-Co:lcob-fuk} involves the following
ingredients. First, we restrict to a special subset
$\mathcal{H}'_{\text{prof}}(\gamma) \subset \mathcal{H}_{\text{prof}}$
which contains arbitrarily $C^1$-small profile functions. Apart from
being profile functions, these functions
$h:\mathbb{R}^2 \longrightarrow \mathbb{R}$ have the following
additional properties:
\begin{enumerate}
\item $h|_{\gamma}$ is a Morse function with an odd number of critical
  points $O_1, \ldots, O_l \in \gamma$, where $5 \leq
  l=\text{odd}$. Moreover, in a small Darboux-Weinstein neighborhood
  of $\gamma$, $h$ is constant along each cotangent fiber. Thus for
  every $t$ we have:
  $\phi_t^h(\gamma) \cap \gamma = \{O_1, \ldots, O_l\}$.
\item The image, $(\phi_1^h)^{-1}(\gamma)$, of $\gamma$ under the
  inverse of the time-$1$ map of the Hamiltonian diffeomorphism
  generated by $h$ is as depicted in figure~\ref{f:gamma-c}.
\end{enumerate}
We refer the reader to~\cite[Section~4]{Bi-Co:lcob-fuk} for more
details. In that paper such functions were called {\em extended
  profile functions}. The word ``extended'' indicates that these
functions are adapted to cobordisms with ends along rays having
$y$-coordinates in $\tfrac{1}{2}\mathbb{Z}$ rather than just
$\mathbb{Z}$.
\begin{figure}[htbp]
   \begin{center}
     \includegraphics[scale=0.70]{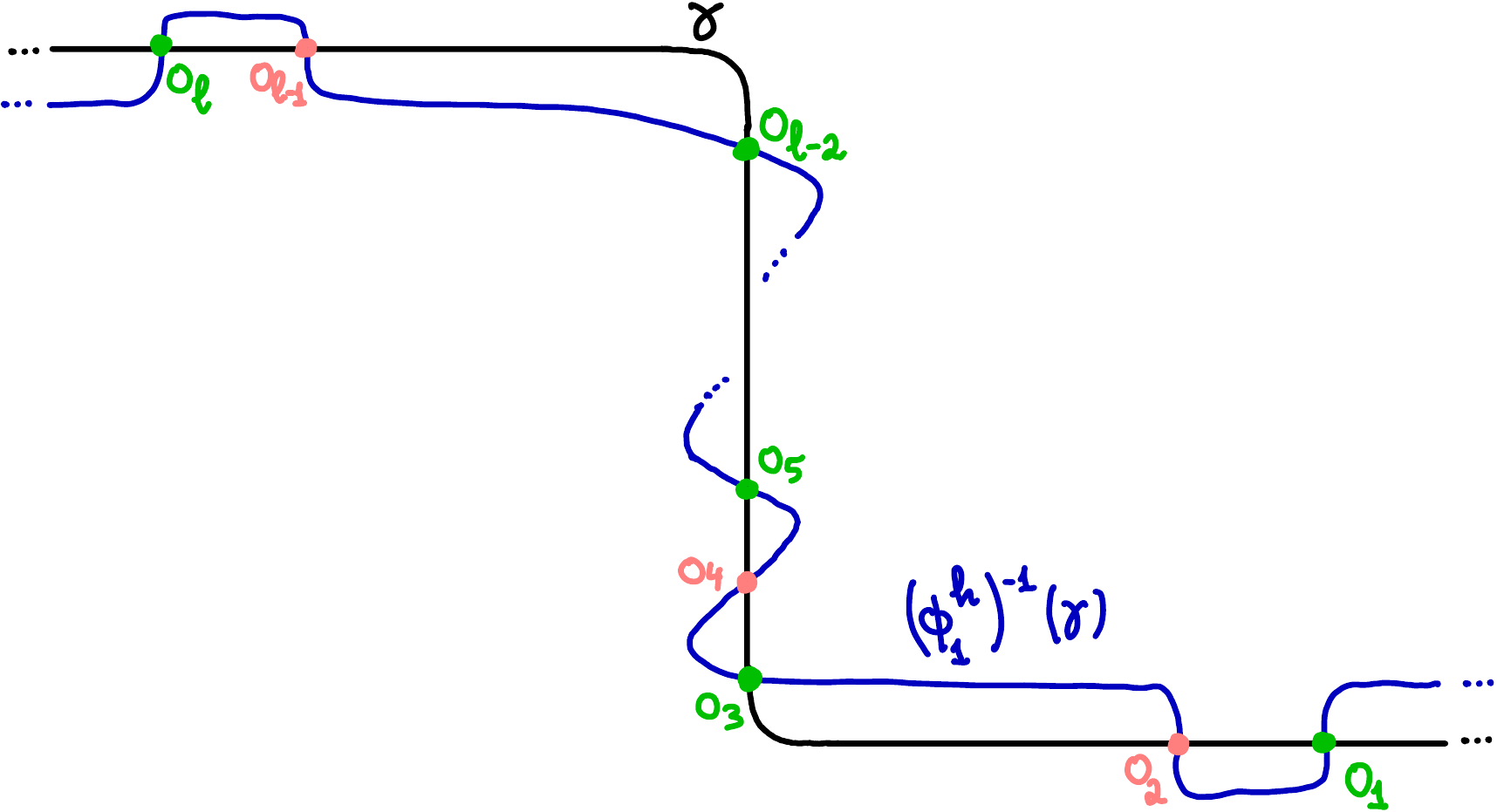}
   \end{center}
   \caption{The curves $\gamma$ and $(\phi_1^h)^{-1}(\gamma)$.}
   \label{f:gamma-c}
\end{figure}

Next, there is a map
\begin{equation} \label{eq:iota-gamma} \iota_{\gamma}:
  E'_{\textnormal{reg}} \times \mathcal{H}'_{\text{prof}}(\gamma)
  \longrightarrow \widetilde{E}'_{\textnormal{reg},1/2},
\end{equation}
and an $A_{\infty}$-functor
\begin{equation} \label{eq:I-gamma}
  \mathcal{I}_{\gamma; p,h}: \fuk(\mathcal{C}; p) 
  \longrightarrow \fukcob(\widetilde{\mathcal{C}}_{1/2};
  \iota_{\gamma}(p,h)),
\end{equation}
defined for every
$(p,h) \in E'_{\text{reg}} \times \mathcal{H}'_{\text{prof}}(\gamma)$
such that for all $(p,h)$ the following holds:
\begin{enumerate}
\item For $L_0, L_1 \in \mathcal{C}$, let
  $\mathscr{D}_{L_0,L_1} = (H^{L_0,L_1}, J^{L_0,L_1})$ be the Floer
  datum of $(L_0,L_1)$ prescribed by $p$ and
  $\mathscr{D}_{\gamma \times L_0, \gamma \times L_1} = (H^{\gamma
    \times L_0, \gamma \times L_1}, J^{\gamma\times L_0, \gamma \times
    L_1})$ the one prescribed by $\iota_{\gamma}(p,h)$.  Let
  $1 \leq j = \text{odd} \leq l$. Then for a small neighborhood
  $\mathcal{U}_j$ of $O_j$ we have
  $H^{\gamma \times L_0, \gamma \times L_1}(z,m) = h(z) +
  H^{L_0,L_1}(m)$ for all $(z,m) \in \mathcal{U}_j \times
  M$. Moreover, for every orbit $x \in \mathcal{O}(H^{L_0,L_1})$ and
  $1\leq j=\text{odd} \leq l$, we have
  $O_{j} \times x \in \mathcal{O}(H^{\gamma \times L_0, \gamma \times
    L_1})$. In the following we will denote
  $x^{(j)} := O_{j} \times x$. Furthermore, we may assume that these
  are all the orbits in
  $\mathcal{O}(H^{\gamma \times L_0, \gamma \times L_1})$, i.e.
  $\mathcal{O}(H^{\gamma \times L_0, \gamma \times L_1}) = \cup_{j}
  (O_j \times \mathcal{O}(H^{\gamma \times L_0, \gamma \times L_1}))$,
  where the union runs over all $1\leq j=\text{odd} \leq l$.
  \label{pp:iota-gamma-h}
\item $\mathcal{I}_{\gamma; p,h}(L) = \gamma \times L$ for every
  $L \in \mathcal{C}$. 
\item The 1'st order term, $(\mathcal{I}_{\gamma;p,h})_1$ is the chain
  map
  \begin{equation} \label{eq:I_1}
    \begin{aligned}
      & (\mathcal{I}_{\gamma;p,h})_1: CF(L_0,L_1;
      \mathscr{D}_{L_0,L_1}) \longrightarrow CF(\gamma \times L_0,
      \gamma \times L_1;
      \mathscr{D}_{\gamma \times L_0, \gamma \times L_1}), \\
      & (\mathcal{I}_{\gamma;p,h})_1(x) = x^{(1)} + x^{(3)} + \cdots +
      x^{(l)}, \;\; \forall \; x \in \mathcal{O}(H_{L_0,L_1}).
  \end{aligned}
\end{equation}
\item The higher terms of $\mathcal{I}_{\gamma; p,h}$ vanish:
  $(\mathcal{I}_{\gamma; p,h})_d=0$ for every $d \geq 2$.
\item The homological functor associated to
  $\mathcal{I}_{\gamma; p,h}$ is full and faithful.
\item For every $p_0 \in \mathcal{N}$ we have
  $\lim \iota_{\gamma}(p,h) \in \widetilde{\mathcal{N}}_{1/2}$ as
  $h \longrightarrow 0$, $p \longrightarrow p_0$. (The limits here are
  in the $C^1$-topology.) \label{pp:iota_gamma-lim}
\item Let $p_0 \in \mathcal{N}$. The weakly filtered
  $A_{\infty}$-categories
  $\fukcob(\widetilde{\mathcal{C}}_{1/2}; \iota(p,h))$ have
  discrepancy
  $\leq \bme^{\fukcob(\widetilde{\mathcal{C}}_{1/2}; \iota(p,h))}$,
  where for every $d$,
  $\lim \epsilon_d^{\fukcob(\widetilde{\mathcal{C}}_{1/2};
    \iota(p,h))} = 0$ as $p \longrightarrow p_0$ and
  $h \longrightarrow 0$. (The limits here are in the $C^1$-topology.)
  \label{pp:discr-fukcob-12}
\item In case the ends of $\gamma$ are along rays with $y$-coordinates
  in $\mathbb{Z}$ the map $\iota_{\gamma}$ and functors
  $\mathcal{I}_{\gamma; p,h}$ can be assumed to have values in
  $\widetilde{E}'_{\text{reg}}$ and
  $\fukcob(\widetilde{\mathcal{C}}; \widetilde{p})$ respectively. More
  precisely, the map $\iota_{\gamma}$ factors as a composition
  $$E'_{\textnormal{reg}} \times \mathcal{H}'_{\text{prof}}(\gamma)
  \xrightarrow{\; \iota'_{\gamma} \;}
  \widetilde{E}'_{\textnormal{reg}} \xrightarrow{\; j \; }
  \widetilde{E}'_{\textnormal{reg},1/2}$$ and the functors
  $\mathcal{I}_{\gamma; p,h}$ factor as the composition of the
  following two $A_{\infty}$-functors:
  \begin{equation} \label{eq:I-gamma'}
    \fuk(\mathcal{C}; p) \xrightarrow{\; \mathcal{I}'_{\gamma; p,h} \;} 
    \fukcob(\widetilde{\mathcal{C}}; \iota'_{\gamma}(p,h))
    \xrightarrow{\; \mathscr{J}\;}
    \fukcob(\widetilde{\mathcal{C}}_{1/2}; \iota_{\gamma}(p,h)).
  \end{equation}
  The map $\iota'_{\gamma}$ and the $A_{\infty}$-functor
  $\mathcal{I}'_{\gamma; p,h}$ have the same properties as described
  above for $\iota_{\gamma}$ and $\mathcal{I}_{\gamma; p,h}$
  respectively, with obvious modifications). The map $j$ and functor
  $\mathscr{J}$ are the ones introduced in~\eqref{eq:functor-map-j}.
\end{enumerate}

Naively speaking, the functor $\mathcal{I}_{\gamma; p,h}$ would have
been defined by extending perturbation data
$\mathscr{D}_{L_0, \ldots, L_d}$ for Lagrangians in $M$ to
perturbation data
$\mathscr{D}_{\gamma\times L_0, \ldots, \gamma \times L_d}$ in the
obvious way (using $h$) and then relate the $\mu_d$ operations on both
sides. However, due to technical issues related to compactness, the
realization of these functors turns out to be somewhat less
straightforward. We refer the reader
to~\cite[Section~4.2]{Bi-Co:lcob-fuk} for a detailed construction of
these functors.

\subsubsection{Additional properties relative to a given cobordisms}
Suppose we fix in advance a Lagrangian cobordism
$W \in \widetilde{\mathcal{C}}$ with the following properties.  Denote
by $K_1, \ldots, K_r \in \mathcal{C}$ the negative ends of $W$. Let
$\pi: \mathbb{R}^2 \times M \longrightarrow \mathbb{R}^2$ be the
projection. Assume that $\gamma$ intersects $\pi(W)$ only along the
projection of the horizontal cylindrical negative part of $W$
(corresponding to its negative ends) with one intersection point
corresponding to each end. Assume further that the intersection of
$\gamma$ and $\pi(W)$ is transverse and denote the intersection points
by $Q_1, \ldots, Q_r \in \mathbb{R}^2$, where $Q_j$ corresponds to the
$j'th$ negative end of $W$. Then we can restrict to profile functions
$h$ that have $O_{2j+1} = Q_j$ for every $1 \leq j \leq k$ and
redefine the spaces $\mathcal{H}_{\textnormal{prof}}$ and
$\mathcal{H}'_{\textnormal{prof}}$ by adding this restriction to the
their definitions. For simplicity, we will continue to denote these
spaces by $\mathcal{H}'_{\textnormal{prof}}$ and
$\mathcal{H}_{\textnormal{prof}}$.

Now, {\em in addition} to the previous list of properties, the map
$\iota_{\gamma}$ can be assumed to have also the following property:
let $L \in \mathcal{C}$ be a Lagrangian, and denote by
$\mathscr{D}_{L,K_j} = (H^{L,K_j}, J^{L,K_j})$ the Floer datum of
$(L,K_j)$ prescribed by $p$ and by
$\mathscr{D}_{\gamma \times L, V}=(H^{\gamma \times L, V}, J^{\gamma
  \times L, V})$ the Floer datum of $(\gamma \times L, V)$ prescribed
by $\iota_{\gamma}(p,h)$. Then we may assume that for small
neighborhoods $\mathcal{U}_j$ of $Q_j$ we have
$H^{\gamma \times L, V}(z,m) = h(z) + H^{L, K_j}(m)$ for every
$(z,m) \in \mathcal{U}_j \times M$. Moreover, we may assume that
$\mathcal{O}(H^{\gamma \times L, V}) = \cup_{j=1}^k \bigl(Q_j \times
\mathcal{O}(H^{L, K_j}) \bigr)$. \label{pp:iota-gamma-h-2}

\subsubsection{The weakly filtered structure of the inclusion
  functors} \label{sbsb:wf-inc-functors} The next proposition shows
that the inclusion functors are weakly filtered and gives more
information on their discrepancies.
\begin{prop} \label{p:wf-inc-functors} The family of
  $A_{\infty}$-functors $\mathcal{I}_{\gamma; p,h}$,
  $(p,h) \in E'_{\text{reg}} \times
  \mathcal{H}'_{\text{prof}}(\gamma)$, has the following properties:
  \begin{enumerate}
  \item $\mathcal{I}_{\gamma; p,h}$ is weakly filtered
    (see~\S\ref{sb:functors} for the definition).
  \item $\mathcal{I}_{\gamma; p,h}$ has discrepancy
    $\leq \bme^{\mathcal{I}_{\gamma;p,h}}$, where
    $\epsilon_d^{\mathcal{I}_{\gamma;p,h}} = 0 $ for every $d \geq 2$
    and
    $$\epsilon_1^{\mathcal{I}_{\gamma;p,h}} \leq \max \{ h(O_k) \mid
    1 \leq k = \text{odd} \leq l\}.$$ Note that
    $\epsilon_1^{\mathcal{I}_{\gamma; p,h}} \longrightarrow 0$ as
    $h \longrightarrow 0$ in the $C^0$-topology.
  \item $\mathcal{I}_{\gamma; p,h}$ is homologically unital. 
  \item For every $L \in \mathcal{C}$ denote by
    $e'_{\gamma \times L} = (\mathcal{I}_{\gamma; p,h})_1 (e_L) \in
    CF(\gamma \times L, \gamma \times L; \mathscr{D}_{\gamma \times L,
      \gamma \times L})$ the image of the homology unit
    $e_L \in CF(L,L; \mathscr{D}_{L,L})$ under the functor
    $\mathcal{I}_{\gamma; p,h}$. The collection of elements
    $\{ e'_{\gamma \times L} \}_{L \in \mathcal{C}}$ can be extended
    to a collection of homology units
    $\widetilde{\mathcal{E}} = \{e'_{V}\}_{V \in
      \widetilde{\mathcal{C}}}$ for
    $\fukcob(\widetilde{\mathcal{C}}_{1/2}, \iota_{\gamma}(p,h))$ with
    discrepancy $\leq \widetilde{u}'(p,h)$, where
    $\widetilde{u}'(p,h) \longrightarrow 0$ as
    $p \longrightarrow p_0 \in \mathcal{N}$ and $h \longrightarrow 0$
    in the $C^1$-topologies.
  \item With respect to the collection of homology units
    $\widetilde{\mathcal{E}}$ above we have
    $\fukcob(\widetilde{\mathcal{C}}_{1/2}; \iota_{\gamma}(p,h)) \in
    \hyperlink{h:asmp-ue}{\lbue(\widetilde{\zeta}(p,h))}$, where
    $\widetilde{\zeta}(p,h) \longrightarrow 0$ as
    $p \longrightarrow p_0 \in \mathcal{N}$ and $h \longrightarrow 0$
    in the $C^1$-topologies.
  \item Let $\mathcal{V}$ be the Yoneda module of
    $V \in \widetilde{\mathcal{C}}_{1/2}$. Then, with respect to the
    collection of homology units $\widetilde{\mathcal{E}}$ above we
    have
    $\mathcal{V} \in
    \hyperlink{h:asmp-H}{\lbh_s(\widetilde{\kappa}(p,h))}$, where
    $\widetilde{\kappa}(p,h) \longrightarrow 0$ as
    $p \longrightarrow p_0 \in \mathcal{N}$ and $h \longrightarrow 0$
    in the $C^1$-topologies.
  \end{enumerate}
  In case the ends of $\gamma$ have $y$-coordinates in $\mathbb{Z}$ an
  obvious analog holds for the family of functors
  $\mathcal{I}'_{\gamma; p, h}$ from~\eqref{eq:I-gamma'}.
\end{prop}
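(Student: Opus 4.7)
The overall strategy is to exploit the very explicit form of the inclusion functors: the fact that $(\mathcal{I}_{\gamma;p,h})_d = 0$ for all $d \geq 2$ means that the entire discrepancy calculation reduces to understanding the action-shift of $(\mathcal{I}_{\gamma;p,h})_1$. Once this is under control, the unitality and the two assumptions $\lbue$ and $\lbh_s$ for $\fukcob(\widetilde{\mathcal{C}}_{1/2}; \iota_{\gamma}(p,h))$ will come from Proposition~\ref{p:wf-fukaya-cob} (and its cobordism analog Proposition~\ref{p:wf-fukaya-cob}) combined with the continuity of the map $\iota_{\gamma}$ and the functor $\mathcal{I}_{\gamma;p,h}$ in the parameters $(p,h)$.

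The first step is the discrepancy bound in~(1)--(2). Let $L_0, L_1 \in \mathcal{C}$ and $x \in \mathcal{O}(H^{L_0,L_1})$. By property~\eqref{pp:iota-gamma-h} of $\iota_{\gamma}$, in a neighborhood $\mathcal{U}_j \times M$ of each $O_j \times x$ we have $H^{\gamma \times L_0, \gamma \times L_1}(t,z,m) = h(z) + H^{L_0,L_1}(t,m)$. Since $x^{(j)}(t) = (O_j, x(t))$ and $O_j$ is a critical point of $h|_{\gamma}$ (fixed under $\phi_t^h$), a direct evaluation of the action functional gives
\begin{equation*}
\mathbf{A}(x^{(j)}) \;=\; h(O_j) \;+\; \int_0^1 H_t^{L_0,L_1}(x(t))\,dt \;=\; h(O_j) + \mathbf{A}(x).
\end{equation*}
Extending $\Lambda$-linearly and applying formula~\eqref{eq:I_1}, this yields
\begin{equation*}
\mathbf{A}\bigl( (\mathcal{I}_{\gamma;p,h})_1(c) \bigr) \;\leq\; \mathbf{A}(c) \;+\; \max\{h(O_k) \mid 1 \leq k=\textnormal{odd} \leq l\},
\end{equation*}
for every $c \in CF(L_0,L_1;\mathscr{D}_{L_0,L_1})$. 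Because all higher order components of $\mathcal{I}_{\gamma;p,h}$ vanish, the discrepancy sequence claimed in~(2) is precisely the one that appears, and $\epsilon_1^{\mathcal{I}_{\gamma;p,h}} \to 0$ as $h \to 0$ in $C^0$.

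Next I would handle the unitality statements (3)--(4). Since $\mathcal{I}_{\gamma; p,h}$ is full and faithful on cohomology and $[e_L] \in HF(L,L)$ is the homology unit, the element $e'_{\gamma \times L} := (\mathcal{I}_{\gamma;p,h})_1(e_L)$ is a cycle that represents the homology unit of $\gamma \times L$ in $HF(\gamma\times L, \gamma \times L)$. By the previous step together with the estimate $\mathbf{A}(e_L) \leq u(p)$ from Proposition~\ref{p:wf-fukaya}, we get $\mathbf{A}(e'_{\gamma\times L}) \leq u(p) + \max_k h(O_k)$, a quantity tending to $0$ with $(p,h)$. To extend this to a collection of units $\widetilde{\mathcal{E}} = \{e'_V\}_{V \in \widetilde{\mathcal{C}}_{1/2}}$, we simply take the units provided by the (weakly filtered) analog of Proposition~\ref{p:wf-fukaya} for the category $\fukcob(\widetilde{\mathcal{C}}_{1/2}; \iota_{\gamma}(p,h))$ at all objects $V$ not of the form $\gamma \times L$, and use $e'_{\gamma \times L}$ at those that are. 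The resulting discrepancy $\widetilde{u}'(p,h)$ of units is then the maximum of $u(p)+\max_k h(O_k)$ and the canonical discrepancy of units $\widetilde{u}(\iota_{\gamma}(p,h))$ afforded by Proposition~\ref{p:wf-fukaya-cob}; property~\eqref{pp:iota_gamma-lim} guarantees that both tend to $0$ as $(p,h) \to (p_0, 0)$.

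The main obstacle, and the bulk of the remaining work, lies in verifying~(5) and~(6) for the \emph{modified} system of units $\widetilde{\mathcal{E}}$, since assumptions $\lbue$ and $\lbh_s$ depend on the specific choice of cycles representing the units. For objects $V \in \widetilde{\mathcal{C}}_{1/2}$ that are not of the form $\gamma \times L$, both assumptions hold with constants $\widetilde{\zeta}(\iota_{\gamma}(p,h))$ and $\widetilde{\kappa}(\iota_{\gamma}(p,h))$ coming directly from Proposition~\ref{p:wf-fukaya-cob}, and properties~\eqref{pp:iota_gamma-lim}--\eqref{pp:discr-fukcob-12} ensure these constants vanish in the limit. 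For the Lagrangians of the form $\gamma \times L$, one must compare the canonical unit $e_{\gamma\times L}$ with $e'_{\gamma \times L}$. Since both represent $[e_{\gamma\times L}]$ in $HF(\gamma\times L, \gamma \times L)$, there exists $\omega \in CF(\gamma\times L, \gamma\times L;\mathscr{D}_{\gamma\times L, \gamma\times L})$ with $e'_{\gamma\times L} - e_{\gamma\times L} = \mu_1(\omega)$. The key quantitative point is that $\mathbf{A}(\omega)$ can be bounded: one constructs $\omega$ explicitly via a continuation/homotopy argument (running a parametric moduli problem interpolating between the choices) whose energy-area identity, combined with the $C^1$-smallness of $h$ and of the Hamiltonian components of $p$, gives $\mathbf{A}(\omega) \to 0$ as $(p,h) \to (p_0,0)$; this is exactly the same type of argument used in the proof of Proposition~\ref{p:wf-fukaya} to bound $\kappa(p)$. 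Substituting $e'_{\gamma\times L} = e_{\gamma\times L} + \mu_1(\omega)$ into the definitions of $\lbue$ and $\lbh_s$ produces only extra terms of the shape $\mu_2(\mu_1(\omega),-)$ and $\mu_2(-,\mu_1(\omega))$ which, by the weakly filtered structure of $\fukcob(\widetilde{\mathcal{C}}_{1/2}; \iota_{\gamma}(p,h))$ already controlled by Proposition~\ref{p:wf-fukaya-cob}, contribute only action shifts bounded by $\mathbf{A}(\omega) + \epsilon_2^{\fukcob}(\iota_{\gamma}(p,h))$ plus the canonical $\widetilde{\zeta}$, $\widetilde{\kappa}$. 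Setting $\widetilde{\zeta}(p,h)$ and $\widetilde{\kappa}(p,h)$ to be the resulting maxima finishes the proof, and the hardest piece is precisely this continuation construction yielding the required $C^1$-small bound on $\mathbf{A}(\omega)$.
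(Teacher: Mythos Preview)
Your proposal is correct and in fact supplies details that the paper itself omits: the paper simply states that the proof is ``straightforward, given the precise definition of the functors $\mathcal{I}_{\gamma; p,h}$ which is described in detail in~\cite[Section~4.2]{Bi-Co:lcob-fuk}'' and gives no further argument. Your action computation $\mathbf{A}(x^{(j)}) = h(O_j) + \mathbf{A}(x)$ from the split Hamiltonian near $O_j$ is exactly the right ingredient for~(1)--(2), and the reduction of~(3)--(6) to Proposition~\ref{p:wf-fukaya-cob} via property~\eqref{pp:iota_gamma-lim} of $\iota_{\gamma}$ is the natural route.

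One simplification is available for item~(5). Since the higher components of $\mathcal{I}_{\gamma;p,h}$ vanish, the $A_{\infty}$-functor relations collapse to
\[
\mu_d^{\fukcob}\bigl((\mathcal{I})_1 a_1,\ldots,(\mathcal{I})_1 a_d\bigr) \;=\; (\mathcal{I})_1\,\mu_d^{\fuk}(a_1,\ldots,a_d)\qquad\text{for all }d,
\]
so applying $(\mathcal{I})_1$ to the identity $\mu_2(e_L,e_L)=e_L+\mu_1(c)$ furnished by Proposition~\ref{p:wf-fukaya} directly gives $\mu_2(e'_{\gamma\times L},e'_{\gamma\times L})=e'_{\gamma\times L}+\mu_1\bigl((\mathcal{I})_1 c\bigr)$ with $\mathbf{A}\bigl((\mathcal{I})_1 c\bigr)\leq \zeta(p)+\epsilon_1^{\mathcal{I}_{\gamma;p,h}}$. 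This handles $\lbue$ at the objects $\gamma\times L$ without ever introducing the primitive $\omega$. Your $\omega$-argument is, however, genuinely needed for~(6): there the map $b\mapsto\mu_2(e'_{\gamma\times L},b)$ acts on $CF(\gamma\times L,V)$ for \emph{arbitrary} $V\in\widetilde{\mathcal{C}}_{1/2}$, which lies outside the image of the inclusion functor, so the strict-functor shortcut is unavailable. Your identification of the action bound on $\omega$ as the crux is correct, and the interpolation you describe (between the perturbation datum on the once-punctured disk used for $e_{\gamma\times L}$ and a split datum adapted to the construction of $\mathcal{I}_{\gamma;p,h}$, controlled by the curvature estimate~\eqref{eq:bound-R}) is precisely the kind of argument buried in the reference the paper cites.
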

The proof of this proposition is straightforward, given the precise
definition of the functors $\mathcal{I}_{\gamma; p,h}$ which is
described in detail in~\cite[Section~4.2]{Bi-Co:lcob-fuk}.

\subsection{Weakly filtered iterated cones coming from cobordisms}
\label{sb:wf-icones-cobs}

Let $V \in \widetilde{\mathcal{C}}$ be a Lagrangian cobordism and
denote by $L_0, \ldots, L_r \in \mathcal{C}$ its negative ends. (In
contrast to~\S\ref{sb:ext-lcob} as well as~\cite{Bi-Co:lcob-fuk}, in
this section we index the negative ends from $0$ to $r$ rather than
from $1$ to $r$.)

Let $\gamma \subset \mathbb{R}^2$ be the curve depicted in
Figure~\ref{f:gamma-cob-c}. Let $p \in E'_{\text{reg}}$ and
$h \in \mathcal{H}'_{\text{prof}}(\gamma)$ be such that
$l := \# (\phi^h_1)^{-1}(\gamma) \cap \gamma = 2r+5$. 
\begin{figure}[htbp]
   \begin{center}
     \includegraphics[scale=0.70]{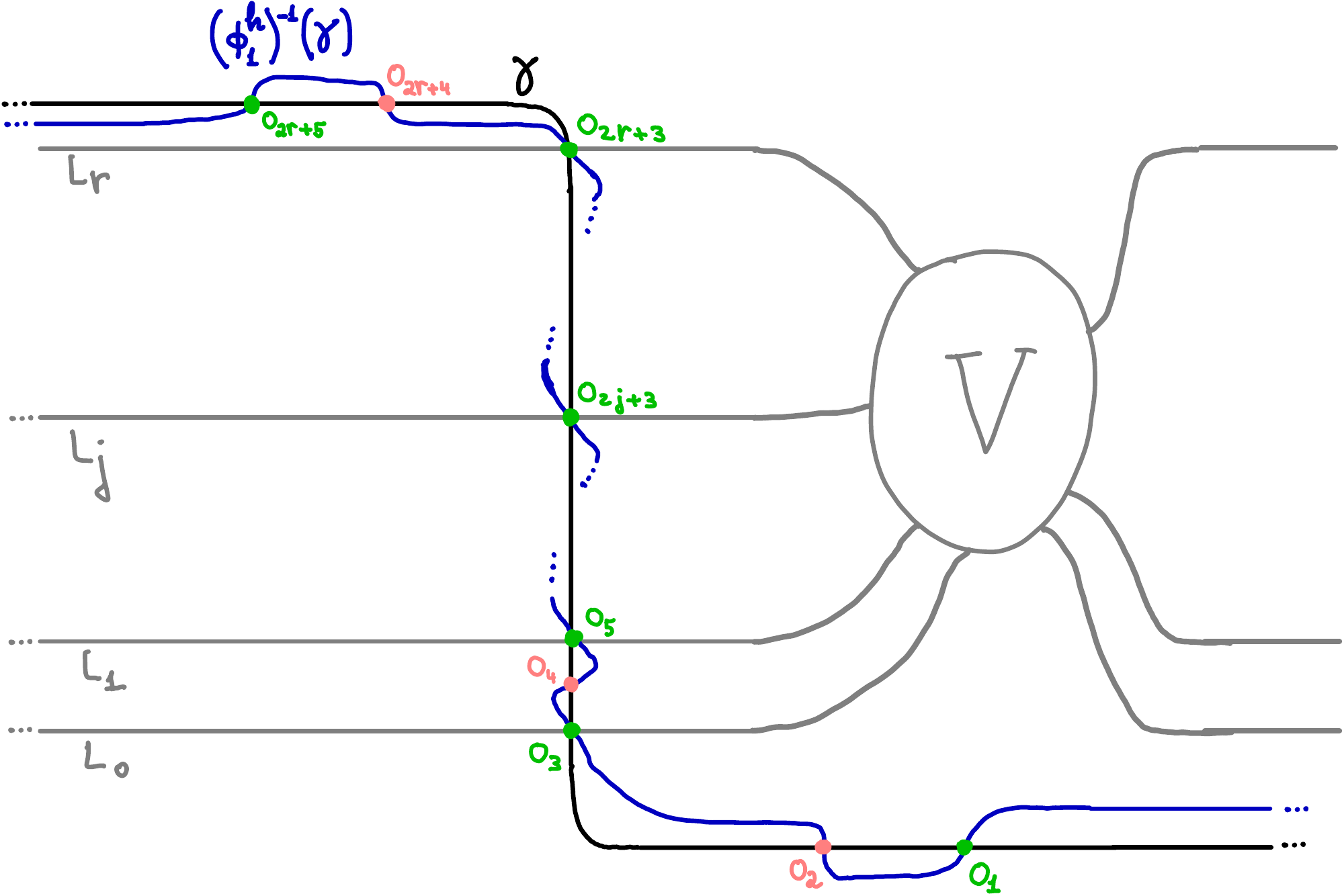}
   \end{center}
   \caption{The curves $\gamma$, $(\phi_1^h)^{-1}(\gamma)$ and the
     cobordism $V$.}
   \label{f:gamma-cob-c}
\end{figure}
Denote by $\mathcal{V}$ the Yoneda module of $V$, which we view here
as an $A_{\infty}$-module over the category
$\fukcob(\widetilde{\mathcal{C}}_{1/2};\iota_{\gamma}(p,h))$. Consider
now the pullback module
\begin{equation} \label{eq:pull-back-V} \mathcal{M}_{V; \gamma, p,h}
  := \mathcal{I}_{\gamma; p,h}^* \mathcal{V},
\end{equation}
which is a $\fuk(\mathcal{C}; p)$-module. Since
$\mathcal{I}_{\gamma; p, h}$ is a weakly filtered functor the module
$\mathcal{M}_{V; \gamma, p, h}$ is weakly filtered.
\begin{prop} \label{p:wf-M_V} The weakly filtered module
  $\mathcal{M}_{V; \gamma, p, h}$ has the following properties.
  \begin{enumerate}
  \item For every $N \in \mathcal{C}$ and $\alpha \in \mathbb{R}$ we
    have
    \begin{equation*}
      \begin{aligned}
        & \mathcal{M}_{V; \gamma, p, h}^{\leq \alpha}(N) = \\
        & CF^{\leq \alpha-h(O_3)}(N,L_0; p) \oplus CF^{\leq \alpha -
          h(O_5)}(N,L_1;p) \oplus \cdots \oplus CF^{\leq \alpha -
          h(O_{2r+3})}(N,L_r;p),
      \end{aligned}
    \end{equation*}
    where the last equality is of $\Lambda_0$-modules (but not
    necessarily of chain complexes). Here $CF(N, L_i; p)$ stands for
    $CF(N,L_i; \mathscr{D}_{N, L_i}) $, where $\mathscr{D}_{N, L_i}$
    is the Floer datum prescribed by $p \in E'_{\textnormal{reg}}$.
  \item $\mathcal{M}_{V; \gamma, p, h}$ has discrepancy
    $\leq \bme^{\mathcal{M}_{V; \gamma, p, h}}$, where
    \begin{equation} \label{eq:discrep-M_V}
      \epsilon_d^{\mathcal{M}_{V; \gamma, p, h}} \leq (d-1) \max\{
      h(O_k) \mid 1 \leq k = \text{odd} \leq 2r+5\} +
      \epsilon_d^{\fuk_{\text{cob}}(\widetilde{\mathcal{C}}_{1/2};
        \iota_{\gamma}(p,h))}.
    \end{equation}
  \end{enumerate}
\end{prop}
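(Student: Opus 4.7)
\medskip
\noindent\textbf{Proof proposal.} The plan is to unwrap the definition of the pullback module in~\eqref{eq:pull-back-V} and then exploit two facts already established: the explicit description of the Floer orbits between $\gamma\times N$ and $V$ from the properties of $\iota_{\gamma}(p,h)$ on Pages~\pageref{pp:iota-gamma-h} and~\pageref{pp:iota-gamma-h-2}, and the general pullback formula Lemma~\ref{l:pull-back-M}.

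For the first assertion I would start from
$\mathcal{M}_{V;\gamma,p,h}(N) = \mathcal{V}(\mathcal{I}_{\gamma;p,h}(N))
= CF(\gamma\times N, V;\widetilde{\mathscr{D}})$,
where $\widetilde{\mathscr{D}} = (\widetilde{H}^{\gamma\times N,V},\widetilde{J}^{\gamma\times N,V})$ is the Floer datum prescribed by $\iota_{\gamma}(p,h)$. By the adapted form of $\iota_{\gamma}$ relative to $V$ (Page~\pageref{pp:iota-gamma-h-2}), the intersection points of $\gamma$ with $\pi(V)$ along the negative ends are exactly $Q_j=O_{2j+3}$, $0\leq j \leq r$, and the generating set of $CF(\gamma\times N, V;\widetilde{\mathscr{D}})$ splits as the disjoint union $\bigsqcup_{j=0}^{r} Q_j\times \mathcal{O}(H^{N,L_j})$. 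For the associated generator $\widetilde{x}=Q_j\times x$ I would compute
$\mathbf{A}(\widetilde{x}) = \int_{0}^{1}\widetilde{H}^{\gamma\times N,V}_t(\widetilde{x}(t))\,dt = h(Q_j) + \int_{0}^{1}H^{N,L_j}_t(x(t))\,dt = h(O_{2j+3}) + \mathbf{A}(x)$,
using the local product form of the perturbation datum near $Q_j\times M$. Extending $\mathbf{A}$ to $\Lambda$-linear combinations exactly as in~\S\ref{sbsb:filtration-mixed}, the filtration $\mathcal{M}^{\leq \alpha}_{V;\gamma,p,h}(N)$ cuts out, within each $Q_j$-sector, precisely $CF^{\leq \alpha-h(O_{2j+3})}(N,L_j;p)$, yielding the desired direct sum decomposition of $\Lambda_0$-modules (which is not a decomposition of chain complexes because $\mu_1$ generically involves Floer strips connecting different $Q_j$-sectors).

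For the second assertion I would apply Lemma~\ref{l:pull-back-M} directly to the pullback $\mathcal{M}_{V;\gamma,p,h} = \mathcal{I}^{*}_{\gamma;p,h}\mathcal{V}$. Since the higher order terms of $\mathcal{I}_{\gamma;p,h}$ vanish ($(\mathcal{I}_{\gamma;p,h})_d = 0$ for $d\geq 2$), all the $s_i$ in that Lemma must equal $1$, so one lands in the simplified regime
$\epsilon_d^{\mathcal{I}^{*}\mathcal{V}} = (d-1)\,\epsilon_1^{\mathcal{I}_{\gamma;p,h}} + \epsilon_d^{\mathcal{V}}$.
The Yoneda module $\mathcal{V}$ inherits the discrepancy bound of its category, namely $\epsilon_d^{\mathcal{V}} \leq \epsilon_d^{\fukcob(\widetilde{\mathcal{C}}_{1/2};\iota_{\gamma}(p,h))}$. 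Combining this with the bound $\epsilon_1^{\mathcal{I}_{\gamma;p,h}}\leq \max\{h(O_k)\mid 1\leq k=\text{odd}\leq 2r+5\}$ supplied by Proposition~\ref{p:wf-inc-functors} yields exactly~\eqref{eq:discrep-M_V}.

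The routine part is checking that the action computation $\mathbf{A}(\widetilde{x}) = h(Q_j)+\mathbf{A}(x)$ persists after Novikov extension, which follows immediately from the definition of the action on $\Lambda$-linear combinations and the fact that the symplectic areas of Floer trajectories contribute only through the Novikov variable. No serious obstacle is expected; the only point that needs care is verifying that the local product form of $\widetilde{H}^{\gamma\times N,V}$ near $Q_j\times M$ (guaranteed on Page~\pageref{pp:iota-gamma-h-2}) is indeed what makes the Hamiltonian integral split additively, since this is what feeds the action shifts by $h(O_{2j+3})$ into the filtration description.
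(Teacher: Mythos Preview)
Your proposal is correct and follows the same approach as the paper: the second statement is exactly the combination of Lemma~\ref{l:pull-back-M} (in its simplified form for functors with vanishing higher terms) with Proposition~\ref{p:wf-inc-functors}, and for the first statement you have simply written out the ``straightforward calculation'' that the paper invokes without details. The only minor point worth noting is the indexing shift $Q_j=O_{2j+3}$ versus the $O_{2j+1}=Q_j$ convention on Page~\pageref{pp:iota-gamma-h-2}, which you handled correctly by adapting to the $0$-to-$r$ indexing of the ends used in~\S\ref{sb:wf-icones-cobs}.
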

\begin{proof}
  The second statement follows immediately from
  Proposition~\ref{p:wf-inc-functors} and Lemma~\ref{l:pull-back-M}
  together with the fact that the higher terms of
  $\mathcal{I}_{\gamma; p,h}$ vanish. The first statement can be
  verified by a straightforward calculation.
\end{proof}

\begin{rem} \label{r:discrep-M_V-2} An inspection of the arguments
  from~\cite[Section~4.4]{Bi-Co:lcob-fuk} shows that the estimate for
  the discrepancy $\epsilon_d^{\mathcal{M}_{V; \gamma, p, h}}$
  in~\eqref{eq:discrep-M_V} can be slightly improved by replacing the
  ``$\max$'' term from~\eqref{eq:discrep-M_V} with
  $\max\{ h(O_k) \mid 3 \leq k = \text{odd} \leq 2r+3\}$. We will not
  go into details on that since this improvement will not play any
  role in our applications.
\end{rem}

Recall from~\cite[Section~4.4]{Bi-Co:lcob-fuk} that the module
$\mathcal{M}_{V; \gamma, p, h}$ is naturally isomorphic to an iterated
cone with attaching objects corresponding to the ends
$L_0, \ldots, L_r$ of $V$. More precisely, denote by $\mathcal{L}_j$
the Yoneda module corresponding to $L_j$. Then
\begin{equation*} \mathcal{M}_{V; \gamma, p, h} \cong \tcn
  (\mathcal{L}_r \xrightarrow{\; \phi_r \;} \tcn(\mathcal{L}_{r-1}
  \xrightarrow{\; \phi_{r-1} \;} \tcn( \cdots \tcn(\mathcal{L}_2
  \xrightarrow{\; \phi_2 \;} \tcn(\mathcal{L}_1 \xrightarrow{\; \phi_1
    \;} \mathcal{L}_0 )) {\cdot}{\cdot}{\cdot}))),
\end{equation*}
where $\phi_j$ is a module homomorphism between $\mathcal{L}_j$ and
the intermediate iterated cone involving the attachment of only the
first $j+1$ objects $\mathcal{L}_0, \ldots, \mathcal{L}_j$.

As we will see shortly, the module homomorphisms $\phi_j$ are weakly
filtered (and obviously the $\mathcal{L}_i$'s too) and consequently
the iterated cone $\mathcal{M}_{V; \gamma, p, h}$ can be endowed with
a weakly filtered structure by the algebraic recipe
of~\S\ref{sb:wf-mc} and~\S\ref{sb:wf-ic}. At the same time, we have
just seen that $\mathcal{M}_{V; \gamma, p, h}$ has another weakly
filtered structure being the pull back module by an inclusion functor,
as described in Proposition~\ref{p:wf-M_V}. Our goal now is to compare
these two weakly filtered structures and show that they are
essentially the same.

Consider the following collection of curves
$\gamma_1, \ldots, \gamma_r \subset \mathbb{R}^2$ with horizontal
ends, as depicted in Figure~\ref{f:gamma-j-c}. We assume that
$\gamma_r = \gamma$, the curve involved in the definition of
$\mathcal{M}_{V; \gamma, p, h}$.

We also choose profile functions
$h_1, \ldots, h_r: \mathbb{R}^2 \longrightarrow \mathbb{R}$ with
$h_j \in \mathcal{H}'_{\text{prof}}(\gamma_j)$ and such that the
following holds (see Figure~\ref{f:gamma-j-c}):
\begin{enumerate}
\item $h_r = h$. \label{pp:h_j-functions}
\item
  $(\phi_1^{h})^{-1}(\gamma) \cap \gamma = \{O_1, \ldots, O_{2r+5}\}$
\item
  $(\phi_1^{h_j})^{-1}(\gamma_j) \cap \gamma_j = \{O^j_1, \ldots,
  O^j_{2j+5}\}$, where $O^j_k = O_k$ for all $1 \leq k \leq
  2j+3$. Thus only the last two intersection points
  $O^j_{2j+4}, O^j_{2j+5}$ do not belong to the $\gamma_l$'s for
  $l>j$.
\item $h_j$ coincides with $h$ over the half-plane
  $\{y \leq y_{2j+3} + \tfrac{1}{100} \}$, where $y_{2j+3}$ is the
  $y$-coordinate of $O_{2j+3}$.
\end{enumerate}

\begin{figure}[htbp]
   \begin{center}
     \includegraphics[scale=0.80]{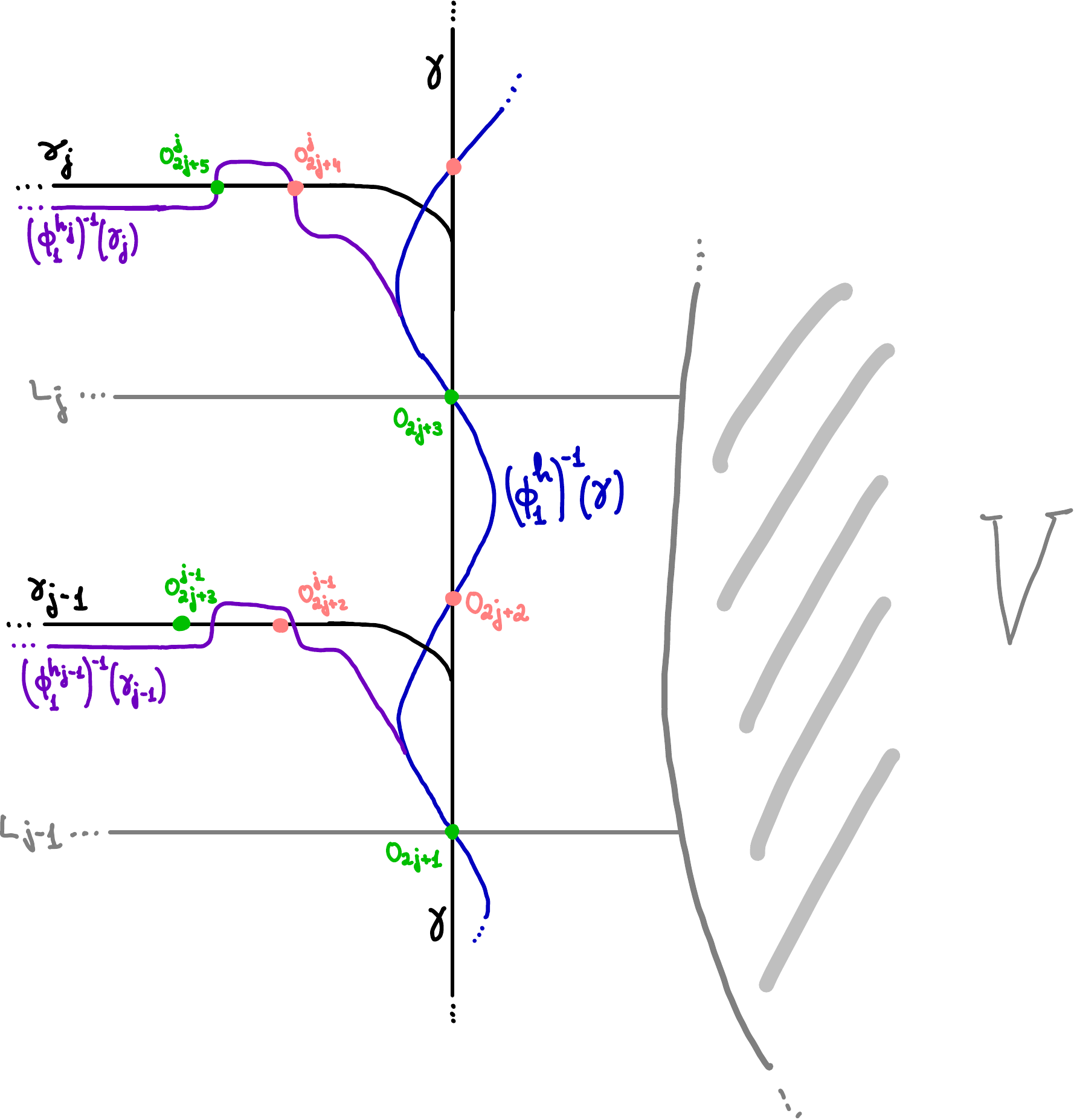}
   \end{center}
   \caption{A closer look at the curves $\gamma$, $\gamma_{j-1}$, and
     $\gamma_j$ near the $(j-1)$'th and $j$'th ends of $V$.}
   \label{f:gamma-j-c}
\end{figure}

We denote the space of all tuples of profile functions
$(h_1, \ldots, h_r)$ satisfying these conditions by
$\mathcal{H}'_{\text{prof}}(\gamma_1, \ldots, \gamma_r)$ and denote
elements of this space by $\upsilon = (h_1, \ldots, h_r)$. With this
notation, it is possible to choose maps
$\iota_{\gamma_j}: E'_{\textnormal{reg}} \times
\mathcal{H}'_{\text{prof}}(\gamma_j) \longrightarrow
\widetilde{E}'_{\textnormal{reg},1/2}$ for $j=1, \ldots, r$, as
in~\eqref{eq:iota-gamma} satisfying the following. For every
$(p, \upsilon) \in E'_{\text{reg}} \times
\mathcal{H}'_{\text{prof}}(\gamma_1, \ldots, \gamma_r)$ the choice of
data $\iota_{\gamma_j}(p,h_j) \in \widetilde{E}'_{\text{reg},1/2}$ has
the properties listed for $\iota_{\gamma}(p,h)$ on
page~\pageref{pp:iota-gamma-h} but with $\gamma$ replaced by
$\gamma_j$ and $h$ by $h_j$. (Consequently, for every
$\widetilde{p}_0 \in \mathcal{N}$ we have
$\lim \iota_{\gamma_j}(p, h_j) \in \widetilde{\mathcal{N}}_{1/2}$ as
$p \longrightarrow p_0$ and
$\upsilon = (h_1, \ldots, h_r) \longrightarrow (0, \ldots, 0)$.)
Moreover, we require that for every $j$ and
$p \in E'_{\textnormal{reg}}$,
$\upsilon = (h_1, \ldots, h_r) \in
\mathcal{H}'_{\text{prof}}(\gamma_1, \ldots, \gamma_r)$ the data
prescribed by $\iota_{\gamma_j}(p, h_j)$ is compatible with that
prescribed by $\iota_{\gamma_{j-1}}(p,h_{j-1})$ (in the obvious sense,
similar to $h_j$ being compatible with
$h_{j-1}$). By~\S\ref{sb:inc-functors}, the curves $\gamma_j$ and the
maps $\iota_{\gamma_j}$ induce a family of inclusion functors
$$\mathcal{I}_{\gamma_j; p, h_j}: \fuk(\mathcal{C}; p)
\longrightarrow \fuk(\widetilde{\mathcal{C}}_{1/2};
\iota_{\gamma_j}(p, h_j)),$$ parametrized by
$p \in E'_{\textnormal{reg}}$,
$\upsilon = (h_1, \ldots, h_r) \in
\mathcal{H}'_{\text{prof}}(\gamma_1, \ldots, \gamma_r)$.  We will use
below the notation
$\mathcal{I}_{\gamma_j; p, \upsilon} := \mathcal{I}_{\gamma_j; p,
  h_j}$, where $h_j$ is the $j$'th entry in the tuple $\upsilon$ since
it reflects better the parameters $(p, \upsilon)$ parametrizing this
family of functors. We will also write sometimes
$\iota_{\gamma_j}(p, \upsilon)$ for $\iota_{\gamma_j}(p, h_j)$.

Consider now the pullback $\fuk(\mathcal{C};p)$-modules
$$\mathcal{M}_{V; \gamma_j, p, \upsilon} =
\mathcal{I}^*_{\gamma_j; p, \upsilon} \mathcal{V}, \quad j=1, \ldots,
r.$$ We endow each of these modules with its weakly filtered structure
as defined at the beginning of~\S\ref{sb:wf-icones-cobs} and further
described by Proposition~\ref{p:wf-M_V} (where $l = 2j+5$, and
$\gamma$ should be replaced by $\gamma_j$ and $h$ by $h_j$). Next, for
every $0 \leq j \leq r$ denote by $\mathcal{L}_j$ the Yoneda module
associated to $L_j$, endowed with its weakly filtered structure
induced from $\fuk(\mathcal{C}; p)$. Finally, recall that for a weakly
filtered module $\mathcal{M}$ and $\nu \in \mathbb{R}$,
$S^{\nu}\mathcal{M}$ stands for the weakly filtered module obtained
from $\mathcal{M}$ by an action-shift of $\nu$
(see~\S\ref{sbsb:action-shifts}).

\begin{prop} \label{p:icones-M_j} For every
  $(p, \upsilon) \in E'_{\textnormal{reg}} \times
  \mathcal{H}'_{\text{prof}}(\gamma_1, \ldots, \gamma_r)$ there exist
  weakly filtered module homomorphisms
  $\phi_1: \mathcal{L}_1 \longrightarrow \mathcal{L}_0$ and
  $\phi_j: \mathcal{L}_j \longrightarrow S^{h(O_3)}\mathcal{M}_{V;
    \gamma_{j-1}, p, \upsilon}$ for $j=2, \ldots, r$ such that the
  following holds for every $1 \leq j \leq r$:
  \begin{enumerate}
  \item $\phi_j$ shifts action by $\leq 0$.
  \item The discrepancy of $\phi_j$ is $\leq \bm{\delta}^{\phi_j}$,
    where
    \begin{equation} \label{eq:delta-phi-j} \delta_d^{\phi_j} :=
      (d-1)\max_{\substack{1 \leq k \leq 2j+3 \\ k \text{ odd}}}
      h(O_k) + \epsilon_d^{\fukcob(\widetilde{\mathcal{C}}_{1/2};
        \iota_{\gamma_j}(p,\upsilon))} + h(O_{2j+3})-h(O_3).
    \end{equation}
  \item For every $1\leq j \leq r$,
    $S^{h(O_3)}\mathcal{M}_{V; \gamma_j, p, \upsilon} = \tcn(\phi_j;
    0, \bm{\delta}^{\phi_j})$ as a weakly filtered
    modules. (See~\S\ref{sb:wf-mc} for our conventions for weakly
    filtered cones.) In other words, the weakly filtered module
    $S^{h(O_3)}\mathcal{M}_{V; \gamma_j, p, \upsilon}$ coincides with
    the weakly filtered mapping cone over $\phi_j$.
  \end{enumerate}
\end{prop}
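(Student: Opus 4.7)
The plan is to fix $(p, \upsilon) \in E'_{\textnormal{reg}} \times \mathcal{H}'_{\textnormal{prof}}(\gamma_1, \ldots, \gamma_r)$ and argue for each $j$ separately, building $\phi_j$ as the geometric connecting morphism associated to the passage from $\gamma_{j-1}$ to $\gamma_j$. For any $N \in \mathcal{C}$, property~\eqref{pp:iota-gamma-h-2} of the map $\iota_{\gamma_j}$ exhibits the set of generators of $CF(N \times \gamma_j, V; \iota_{\gamma_j}(p,\upsilon))$ as the union of two disjoint pieces: those orbits concentrated near the crossings $Q_1, \ldots, Q_{j-1}$ (which are precisely the generators of $\mathcal{M}_{V; \gamma_{j-1}, p, \upsilon}(N)$) and those concentrated near $Q_j$ (which are naturally identified with the generators of $\mathcal{L}_j(N) = CF(N, L_j; p)$, the new intersection point $Q_j$ corresponding to $O_{2j+3}$). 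This gives a splitting of $\mathcal{M}_{V;\gamma_j,p,\upsilon}(N)$ as a $\Lambda_0$-module.

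Next I would verify, following the geometric recipe from~\cite[Section~4.4]{Bi-Co:lcob-fuk}, that this splitting realizes $\mathcal{M}_{V;\gamma_j,p,\upsilon}$ as a mapping cone of $\mathcal{A}$-modules over $\mathcal{M}_{V;\gamma_{j-1},p,\upsilon}$ with attaching object $\mathcal{L}_j$. The module homomorphism $\phi_j$ is defined by counting those Floer polygons in $\widetilde{M}$ whose output asymptote lies near $Q_k$ for some $k < j$ and whose $\mathcal{V}$-input asymptote lies near $Q_j$; such polygons necessarily run along the portion of $V$ connecting the $j$'th end to one of the earlier ends, and this is the geometric origin of the action cost $h(O_{2j+3}) - h(O_3)$. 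The $A_\infty$ relations satisfied by the $\mu_d^{\mathcal{V}}$ immediately yield that $\phi_j$ is a module homomorphism and that the higher-order operations on $\mathcal{M}_{V;\gamma_j,p,\upsilon}$ restrict correctly, so that the cone description holds as an ordinary $\mathcal{A}$-module.

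The heart of the argument, and the main obstacle, is to upgrade this splitting to an equality of \emph{weakly filtered} modules $S^{h(O_3)}\mathcal{M}_{V; \gamma_j, p, \upsilon} = \tcn(\phi_j; 0, \bm{\delta}^{\phi_j})$. Applying the filtration formula of Proposition~\ref{p:wf-M_V} to $\gamma_j$ and then shifting by $h(O_3)$, the summand for index $k < j$ becomes $CF^{\leq \alpha - (h(O_{2k+3}) - h(O_3))}(N, L_k;p)$, which matches precisely the filtration of $S^{h(O_3)}\mathcal{M}_{V;\gamma_{j-1}, p, \upsilon}$ because the functions $h_j$ and $h_{j-1}$ agree near all these critical points by~(\ref{pp:h_j-functions}). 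The $\mathcal{L}_j$ summand becomes $CF^{\leq \alpha - (h(O_{2j+3}) - h(O_3))}(N, L_j;p)$, which by definition~\eqref{eq:fcone} of the weakly filtered cone is exactly the $\mathcal{L}_j$-part of $\tcn(\phi_j; 0, \bm{\delta}^{\phi_j})$ provided $\delta_1^{\phi_j} \geq h(O_{2j+3}) - h(O_3)$, matching the stated bound.

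Finally, to estimate the higher discrepancies $\delta_d^{\phi_j}$, I would apply the energy-area identity~\eqref{eq:E-omega-polygons} to the Floer polygons defining $(\phi_j)_d$ in the category $\fukcob(\widetilde{\mathcal{C}}_{1/2}; \iota_{\gamma_j}(p,\upsilon))$. Each of the $d-1$ inputs contributes an action defect bounded by the filtration discrepancy of the canonical identification of generators on $N \times \gamma_j$ with generators on $N$ (this is where the $(d-1)\max_{1 \leq k \leq 2j+3, \, k \text{ odd}} h(O_k)$ term enters, analogous to the first-order discrepancy of the inclusion functor $\mathcal{I}_{\gamma_j; p,h_j}$ from Proposition~\ref{p:wf-inc-functors}), the curvature contribution is controlled by $\epsilon_d^{\fukcob(\widetilde{\mathcal{C}}_{1/2}; \iota_{\gamma_j}(p,\upsilon))}$, and the action-shift between the $\mathcal{L}_j$-filtration and the shifted cobordism filtration contributes the remaining $h(O_{2j+3}) - h(O_3)$. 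Summing these gives the claimed bound~\eqref{eq:delta-phi-j}, and the overall action shift $\leq 0$ follows since the shift $h(O_{2j+3})-h(O_3)$ has been absorbed into $\delta_1^{\phi_j}$ rather than into the action-shift parameter $\rho$.
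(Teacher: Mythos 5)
Your overall geometric intuition is sound, and the action-filtration bookkeeping on generators in the first and third paragraphs is essentially what the paper verifies. But there is a genuine gap in the central claim: that the splitting of $CF(\gamma_j\times N, V;\iota_{\gamma_j}(p,\upsilon))$ into lower-crossing and $Q_j$-crossing generators realizes $\mathcal{M}_{V;\gamma_j,p,\upsilon}$ as a \emph{chain-level} mapping cone of $\mathcal{A}$-modules with attaching map $\phi_j$ valued in $\mathcal{M}_{V;\gamma_{j-1},p,\upsilon}$. You assert that $\phi_j$ is "defined by counting those Floer polygons in $\widetilde M$ whose output asymptote lies near $Q_k$ for $k<j$ and whose $\mathcal{V}$-input lies near $Q_j$," and that the $A_\infty$-relations "immediately yield" the cone description. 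This is not immediate. The $A_\infty$-structures of $\mathcal{M}_{V;\gamma_j,p,\upsilon}$ and of $\mathcal{M}_{V;\gamma_{j-1},p,\upsilon}$ are defined through \emph{different} perturbation data $\iota_{\gamma_j}(p,\upsilon)$ and $\iota_{\gamma_{j-1}}(p,\upsilon)$, so the "lower-left block" of $\mu_d$ for $\gamma_j$ is not a priori equal to the $\mu_d$ of $\mathcal{M}_{V;\gamma_{j-1}}$, nor is it clear that polygons with output and $V$-input both among the lower orbits do not travel through the region near $Q_j$ where the two data sets differ. Without controlling this, the upper-triangular block structure required for a cone is not established.

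The paper handles exactly this point by routing through the auxiliary $A_\infty$-categories $\mathcal{B}_j,\mathcal{B}'_j$ and the functors $e_j,p_j,\sigma_j,q_j$ of~\cite[Prop.~4.4.1]{Bi-Co:lcob-fuk}, together with the identity $\sigma_j\circ p_j\circ e_j=\id$. That machinery (bottleneck almost complex structures, etc.) is precisely what ensures the lower block of the operations coincides with those for $\gamma_{j-1}$, producing the chain-level cone $\mathcal{M}'_j=\tcn(\sigma_j^*\mathcal{L}_j\xrightarrow{\varphi_j}q_j^*\mathcal{M}'_{j-1})$ after which $\phi_j$ is obtained by pullback. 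Indeed the paper explicitly flags that the chain-level cone assertion is \emph{not} stated in~\cite{Bi-Co:lcob-fuk} and must be deduced from that construction. Correspondingly, your estimate of $\bm\delta^{\phi_j}$ should be derived by tracking the discrepancies of $e_j$, $p_j$, $\sigma_j$, $q_j$ and $\varphi_j$ through Lemmas~\ref{l:pull-back-M},~\ref{l:pull-back-f},~\ref{l:pull-back-cone}, rather than by a single application of the energy-area identity as you suggest; as written, the decomposition of the bound into the three stated terms is asserted rather than derived. The remedy is to make the appeal to~\cite[Prop.~4.4.1]{Bi-Co:lcob-fuk} explicit, then verify the weakly filtered properties of each functor and homomorphism separately before pulling back.
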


Recalling that
$\mathcal{M}_{V; \gamma, p,h} = \mathcal{M}_{V; \gamma_r, p,
  \upsilon}$, the above Proposition implies that
\begin{equation}\label{eq:filtered-cob-it-cone} 
  \begin{aligned}
    S^{h(O_3)} & \mathcal{M}_{V; \gamma, p, h} \\ = \; & \tcn
    (\mathcal{L}_r \xrightarrow{\; \overline{\phi}_r \;}
    \tcn(\mathcal{L}_{r-1} \xrightarrow{\; \overline{\phi}_{r-1} \;}
    \tcn( \cdots \tcn(\mathcal{L}_2 \xrightarrow{\; \overline{\phi}_2
      \;} \tcn(\mathcal{L}_1 \xrightarrow{\; \overline{\phi}_1 \;}
    \mathcal{L}_0 )) {\cdot}{\cdot}{\cdot}))),
  \end{aligned}
\end{equation}
where $\overline{\phi}_j := (\phi_j; 0, \bm{\delta}^{\phi_j})$ and the
cones in~\eqref{eq:filtered-cob-it-cone} are endowed with the
filtrations as defined in~\S\ref{sb:wf-mc}. In other words, up to a
small action-shift, $\mathcal{M}_{V; \gamma, p, h}$ can be viewed as a
weakly filtered iterated cone by the very same recipe described at the
beginning of~\S\ref{sb:wf-ic} (with $\rho_j=0$ and
$\mathcal{K}_j = S^{h(O_3)} \mathcal{M}_{V; \gamma_j, p,
  \upsilon}$). Consequently, we can apply Theorem~\ref{t:itcones} with
$\mathcal{K}_r = S^{h(O_3))}\mathcal{M}_{V; \gamma, p, h}$.

\begin{proof}[Proof of Proposition~\ref{p:icones-M_j}]
  The proof is based on two main ingredients. The first one is the
  theory developed in~\cite[Sections~4.2,~4.4]{Bi-Co:lcob-fuk} from
  which it follows that, ignoring action-filtrations, we have
  $\mathcal{M}_{V; \gamma_j, p, \upsilon} = \tcn(\mathcal{L}_j
  \longrightarrow \mathcal{M}_{V; \gamma_{j-1}, p, \upsilon})$. The
  second one comprises direct action-filtration calculations for the
  modules $\mathcal{M}_{V; \gamma_j, p, \upsilon}$ and the
  homomorphisms $\phi_j$.

  Before we go on, we should remark a notational difference
  between~\cite{Bi-Co:lcob-fuk} and the present
  paper. In~\cite{Bi-Co:lcob-fuk} the negative ends of the cobordism
  $V$ are indexed from $1$ to $r$, whereas in the present text the
  indexing runs between $0$ and $r$. This results in several other
  indexing differences between the two texts. For example, the curves
  $\gamma_j$ in the present text are the same as $\gamma_{j+1}$
  in~\cite{Bi-Co:lcob-fuk}. In the present text, the number of
  intersection points between $\phi_1^{h_j}(\gamma_j)$ and $\gamma_j$
  is $2j+5$, whereas in~\cite{Bi-Co:lcob-fuk} this number is $2j+3$,
  etc.

  We start by adding to the collection of curves
  $\gamma_1, \ldots, \gamma_r$ another curve $\gamma_0$, defined in
  the same way as the $\gamma_j$'s only that it is adapted to the
  $L_0$-end of $V$ in the sense that the negative end of $\gamma_0$
  goes above the $L_0$-end and below the $L_1$ end. We also choose
  $h_0 \in \mathcal{H}'_{\text{prof}}(\gamma_0)$ satisfying the same
  conditions as the $h_j$'s (see page~\pageref{pp:h_j-functions}) only
  for $j=0$. We write
  $(\phi_1^*{h_0})^{-1}(\gamma_0) \cap \gamma_0 = \{O_1, O_2, O_3,
  O_4^0, O_5^0\}$. To simplify the notation we also extend the tuple
  $\upsilon = (h_1, \ldots, h_r)$ to contain also $h_0$ and write
  $\upsilon = (h_0, \ldots, h_r)$. As before we have an inclusion
  functor associated to $\gamma_0, p, h_0$ and we consider the
  pullback module
  $\mathcal{M}_{V; \gamma_0, p, h_0} := \mathcal{I}_{\gamma_0; p,
    h_0}^* \mathcal{V}$. To be consistent with the previous notation,
  we will denote this module also by
  $\mathcal{M}_{V; \gamma_0, p, \upsilon}$.

  We first claim that there exist module homomorphisms
  $\phi_j: \mathcal{L}_j \longrightarrow \mathcal{M}_{V; \gamma_{j-1},
    p, h_{j-1}}$ for all $1 \leq j \leq r$, such that
  \begin{equation} \label{eq:M_V-cone} \mathcal{M}_{V; \gamma_j, p,
      h_j} = \tcn(\mathcal{L}_j \xrightarrow{\; \phi_j \;}
    \mathcal{M}_{V; \gamma_{j-1}, p, h_{j-1}}),
  \end{equation}
  where at the moment we ignore the action filtrations. This statement
  is not explicitly stated in~\cite[Section~4.4.2]{Bi-Co:lcob-fuk},
  but it follows easily from the arguments in that paper. More
  specifically, what is stated explicitly
  in~\cite[Section~4.4.2]{Bi-Co:lcob-fuk} is that there exists an
  exact triangle (in the derived category $D\fuk(\mathcal{C}; p)$) of
  the form
  $\mathcal{L}_j \longrightarrow \mathcal{M}_{V; \gamma_{j-1}, p,
    h_{j-1}} \longrightarrow \mathcal{M}_{V; \gamma_j, p, h_j}$. Here
  however, we claim a stronger statement, namely
  that~\eqref{eq:M_V-cone} holds at the chain level. We will now
  explain how to deduce~\eqref{eq:M_V-cone} from the theory developed
  in~\cite{Bi-Co:lcob-fuk}. In doing that we will mostly follow the
  notation from that paper.

  By~\cite[Proposition~4.4.1]{Bi-Co:lcob-fuk} for every
  $0 \leq j \leq r$ we have the following:
  \begin{enumerate}
  \item $A_{\infty}$-categories $\mathcal{B}_j$ and $\mathcal{B}'_j$
    (depending on $\gamma_j, p$ and $h_j$).
  \item Quasi-isomorphisms of $A_{\infty}$-categories:
    $e_j: \fuk(\mathcal{C}; p) \longrightarrow \mathcal{B}_j$,
    $p_j: \mathcal{B}_j \longrightarrow \mathcal{B}'_j$,
    $\sigma_j: \mathcal{B}'_j \longrightarrow \fuk(\mathcal{C}; p)$
    and $q_j: \mathcal{B}'_j \longrightarrow \mathcal{B}'_{j-1}$, for
    $j \geq 1$, all with vanishing higher order terms. Moreover, they
    satisfy:
    \begin{equation} \label{eq:sigma_j-p_j-e_j} \sigma_j \circ p_j
      \circ e_j = \id, \; \; \forall \; j\geq 0, \quad \text{and}
      \quad q_j \circ p_j \circ e_j = p_{j-1} \circ e_{j-1} \; \;
      \forall \; j\geq 1.
    \end{equation}
  \item A $\mathcal{B}_j$-module $\overline{\mathcal{M}}_j$ and a
    $\mathcal{B}'_j$-module $\mathcal{M}'_j$ such that:
    \begin{equation} \label{eq:M_j-M'_j}
      \begin{aligned}
        & \mathcal{M}_{V; \gamma_j, p, h_j} =
        e_j^*\overline{\mathcal{M}}_j, \quad p_j^* \mathcal{M}'_j =
        \overline{\mathcal{M}}_j, \;\; \forall \; j\geq 0 \\
        & \mathcal{M}'_j = \tcn (\sigma_j^* \mathcal{L}_j
        \xrightarrow{\; \varphi_j \;} q_j^*\mathcal{M}'_{j-1}), \; \;
        \forall \; j\geq 1,
      \end{aligned}
    \end{equation}
    for some module homomorphism $\varphi_j$. (This homomorphism was
    denoted by $\phi_j$
    in~\cite[Proposition~4.4.1]{Bi-Co:lcob-fuk}. We have denoted it
    here by $\varphi_j$ since $\phi_j$ is already used for a slightly
    different homomorphism.)
  \item For $j=0$ we have: $\mathcal{M}'_0 =
    \sigma_0^*\mathcal{L}_0$. \label{i:M'_0}
  \end{enumerate}
  We now pull back the second line of~\eqref{eq:M_j-M'_j} by the
  functor $p_j \circ e_j$. The desired equality~\eqref{eq:M_V-cone}
  now follows by using~\eqref{eq:sigma_j-p_j-e_j} together with the
  fact that $A_{\infty}$-functors pull back mapping cones to mapping
  cones (at the chain level). Note that for $j=0$, pulling back the
  equality from point~\eqref{i:M'_0} above yields:
  $\mathcal{M}_{V;\gamma_0, p, h_0} = \mathcal{L}_0$.

  We now turn to the weakly filtered setting. Throughout the rest of
  the proof it is useful to keep in mind that $h_j(O_k) = h(O_k)$ for
  every $0 \leq j \leq r$ and $1 \leq k \leq 2j+3$.
  
  We claim that in the weakly filtered setting the correct version
  of~\eqref{eq:M_V-cone} has the form:
  \begin{equation} \label{eq:SMV-1}
    \begin{aligned}
      & S^{h(O_3)}\mathcal{M}_{V;\gamma_j, p, \upsilon} =
      \tcn(\mathcal{L}_j \xrightarrow{\; (\phi_j; 0,
        \bm{\delta}^{\phi_j}) \;} S^{h(O_3)}
      \mathcal{M}_{V;\gamma_{j-1}, p, \upsilon}), \; \; \forall \; 1\leq j\leq r, \\
      & S^{h(O_3)}\mathcal{M}_{V; \gamma_0, p, \upsilon} = \mathcal{L}_0.
    \end{aligned}
  \end{equation}
      
  Of course, by Lemma~\ref{l:action-shift-cone}, the first line 
  of~\eqref{eq:SMV-1} is equivalent to:
  \begin{equation} \label{eq:SMV-2} \mathcal{M}_{V;\gamma_j, p,
      \upsilon} = \tcn(\mathcal{L}_j \xrightarrow{\; (\phi_j; 0,
      \bm{\delta}^{\phi_j}+h(O_3)) \;} \mathcal{M}_{V;\gamma_{j-1}, p,
      \upsilon}), \; \; \forall \; 1\leq j\leq r.
  \end{equation}

  To prove~\eqref{eq:SMV-2} one needs to go over the arguments in the
  proof of~\cite[Proposition~4.4.1]{Bi-Co:lcob-fuk} and take
  action-filtrations into consideration. An inspection of these
  arguments shows that the categories $\mathcal{B}_j$,
  $\mathcal{B}'_j$ and functors $e_j$, $p_j$, $\sigma_j$, $q_j$ are
  all weakly filtered, and so are the modules $\mathcal{M}'_j$ and
  $\overline{\mathcal{M}}_j$. Moreover, we have:
  \begin{enumerate}
  \item The discrepancies of both $\mathcal{B}_j$ and $\mathcal{B}'_j$
    are
    $\leq \bme^{\fukcob(\widetilde{\mathcal{C}}_{1/2};
      \iota_{\gamma_j}(p, h_j))}$.
  \item Both functors $p_j$ and $q_j$ are filtered, i.e. have
    discrepancies $\leq \bm{0}$.
  \item $e_j$ has discrepancy $\leq \bme^{e_j}$, where
    $\epsilon_1^{e_j} = \max \{ h_j(O^j_k) \mid 1 \leq k = \text{odd}
    \leq 2j+5\}$ and $\epsilon_d^{e_j} = 0$ for all $d \geq 2$.
  \item $p_j \circ e_j$ has discrepancy $\leq \bme^{p_j \circ e_j}$,
    where
    $\epsilon_1^{p_j \circ e_j} = \max \{ h(O_k) \mid 1 \leq k =
    \text{odd} \leq 2j+3\}$ and $\epsilon_d^{p_j \circ e_j} = 0$ for
    all $d \geq 2$.
  \item $\sigma_j$ has discrepancy $\leq \bme^{\sigma_j}$, where
    $\epsilon_1^{\sigma_j} = -h(O_{2j+3})$ and
    $\epsilon_d^{\sigma_j} = 0$ for all $d \geq 2$.
  \item The module homomorphism
    $\varphi_j: \sigma_j^*\mathcal{L}_j \longrightarrow q_j^*
    \mathcal{M}'_{j-1}$ shifts action by $\leq 0$ and has discrepancy
    $\leq \bme^{\varphi_j}$, where
    $$\epsilon_d^{\varphi_j} = 
    \epsilon_d^{\fukcob(\widetilde{\mathcal{C}}_{1/2};
      \iota_{\gamma_j}(p, \upsilon))} + h(O_{2j+3}).$$
  \item The modules $\mathcal{M}'_j$ and $\overline{\mathcal{M}}'_j$
    have discrepancies
    $\leq \bme^{\fukcob(\widetilde{\mathcal{C}}_{1/2};
      \iota_{\gamma_j}(p, h_j))}$.
  \item The equalities (or identifications) from~\eqref{eq:M_j-M'_j}
    hold also in the weakly filtered sense, where the cone over
    $\varphi_j$ on the 2'nd line of~\eqref{eq:M_j-M'_j} is now taken
    over $(\varphi_j; 0, \bme^{\varphi_j})$.
  \item $\mathcal{M}'_0 = S^{-h(O_3)}\sigma_0^*\mathcal{L}_0$ as
    weakly filtered modules. \label{i:M'_0-wf}
  \end{enumerate}
  To conclude the proof of~\eqref{eq:SMV-2} we pull back the weakly
  filtered version of the 2'nd line of~\eqref{eq:M_j-M'_j} by
  $p_j \circ e_j$ and use
  Lemmas~\ref{l:pull-back-cone},~\ref{l:pull-back-M}
  and~\ref{l:pull-back-f} (recall that $p_j$, $e_j$ do not have higher
  order terms). The assertion that
  $S^{h(O_3)}\mathcal{M}_{V; \gamma_0, p, \upsilon} = \mathcal{L}_0$
  follows in a similar way.
\end{proof}


\section{Proof of the main geometric statements} \label{s:main-geom}

In this section we prove the main geometric results of the
paper.

We will make use of the following variants of the notion of Gromov
width. Let $(M^{2n},\omega)$ be a symplectic manifold, $L\subset M$ a
Lagrangian submanifold and $Q\subset M$ a
subset. Following~\cite{Bar-Cor:Serre,Bar-Cor:NATO} we define the
Gromov width $\delta(L;Q)$ of $L$ relative to $Q$ as follows. Assume
first that $L \not \subset Q$. Define:
\begin{equation} \label{eq:delta1}
  \begin{aligned}
    \delta(L;Q)=\sup \bigl\{\pi r^{2} \in (0,\infty] \ \mid \ &
    \exists \; \text{a symplectic embedding} \; e:
    B(r) \longrightarrow M \\
    & \text{such that} \; e^{-1}(L) = B_{\mathbb{R}}(r) \; \text{and}
    \; e(B(r)) \cap Q = \emptyset \; \bigr\} ~.~
  \end{aligned}
\end{equation}
Here $B(r) \subset \mathbb{R}^{2n}$ is the standard $2n$-dimensional
closed ball of radius $r$, endowed with the standard symplectic
structure from $\mathbb{R}^{2n}$, and
$B_{\mathbb{R}}(r) := B(r) \cap (\mathbb{R}^n \times \{0\})$ is the
real part of $B(r)$. In case $L \subset Q$ we set $\delta(L;Q) := 0$.

Another variant of the Gromov width is associated to an immersed
Lagrangian. Let $\widehat{\mathbb{L}}$ be a smooth closed manifold
(possibly disconnected) and let
$\iota: \widehat{\mathbb{L}} \longrightarrow M$ be a Lagrangian
immersion with image $\mathbb{L} := \iota(\widehat{\mathbb{L}})$. We
would like to measure the "size" of a subset of the double points of
$\mathbb{L}$ relative to a given subset $Q\subset M$.  More precisely,
denote by $\Sigma(\iota) \subset \mathbb{L}$ the set of points that
have more than one preimage under the immersion $\iota$. Let
$\Sigma' \subset \Sigma(\iota)$ be a non-empty subset such that each
point in $\Sigma'$ is a transverse intersection of two branches of the
immersion. As before, let $Q \subset M$ be a subset. Assume first that
$\Sigma' \not \subset Q$. We define the Gromov width
$\delta^{\Sigma'}(\mathbb{L}; Q)$ of the self-intersection set
$\Sigma'$ relative to $Q$ by:
\begin{equation*} \label{eq:delta-sigma}
  \begin{aligned}
    \delta^{\Sigma'}(\mathbb{L}; Q)=\sup \bigl\{ \pi r^{2}\in
    (0,\infty] \; \mid \; & \forall \; x \in \Sigma', \; \exists \;
    \text{a symplectic embedding} \;
    e_{x}:B(r) \longrightarrow M \; \text{with},\\
    & e_{x}(0)=x, \; e_{x}^{-1}(\mathbb{L})= B_{\mathbb{R}}(r)\cup i
    B_{\mathbb{R}}(r), \;
    e_{x}(B(r)) \cap Q = \emptyset, \\
    & \mathrm{and} \; e_{x'}(B(r))\cap e_{x''}(B(r)) = \emptyset \;
    \text{whenever} \; x' \neq x'' \bigr\}.
  \end{aligned}
\end{equation*}
Here $iB_{\mathbb{R}}(r)$ stands for the imaginary part of the ball,
$iB_{\mathbb{R}}(r) := B(r) \cap (\{0\} \times \mathbb{R}^n)$.

In case $\emptyset \neq \Sigma' \subset Q$ we set
$\delta^{\Sigma'}(\mathbb{L}; Q) = 0$. In case $\Sigma'=\emptyset$ we
set $\delta^{\emptyset}(\mathbb{L};Q)=\infty$. In what follows, if
$Q=\emptyset$, then we omit the set $Q$ from the notation in both
$\delta(L;Q)$ and $\delta^{\Sigma'}(\mathbb{L};Q)$.

The next important geometric measurement is the {\em shadow} of a
cobordism, as defined in~\cite{Co-She:metric} and already mentioned in the introduction. Let
$V \subset \mathbb{R}^2 \times M$ be a Lagrangian cobordism. Denote by
$\pi: \mathbb{R}^2 \times M \longrightarrow \mathbb{R}^2$ the
projection. The shadow $\mathcal{S}(V)$ of $V$ is defined as:
\begin{equation} \label{eq:shadow} \mathcal{S}(V) = Area \bigl(
  \mathbb{R}^2 \setminus \mathcal{U} \bigr),
\end{equation}
where $\mathcal{U} \subset \mathbb{R}^2 \setminus \pi(V)$ is the union
of all the {\em unbounded} connected components of
$\mathbb{R}^2 \setminus \pi(V)$. Equivalently one can define
$\mathcal{S}(V)$ as follows. Fix a subset
$V_0 := V \cap \pi^{-1}([-R,R] \times \mathbb{R})$ outside of which
$V$ is cylindrical. Then $\mathcal{S}(V)$ is the infimum of $Area(D)$,
where $D$ runs over all subsets $D \subset \mathbb{R}^2$ which are
symplectomorphic to a $2$-dimensional disk and such that
$D \supset \pi(V_0)$.

We now turn to the main result which we restate here for the convenience of the reader. Recall that
$\mathcal{L}ag^{\ast}(M)$ denotes the collection of closed Lagrangian
submanifolds of $M$ of class $*$, where $*$ stands either for the
weakly exact Lagrangians ($* = \text{we}$ in short), or for the
monotone Lagrangians with given Maslov-$2$ disk count
$\mathbf{d} \in \Lambda_0$ ($*=(\text{mon}, \mathbf{d})$ in short) as
introduced in~\S\ref{sb:monotone}. Similarly, we have the collection
$\mathcal{L}ag^{\ast}(\mathbb{R}^2 \times M)$ of Lagrangian cobordisms
$V \subset \mathbb{R}^2 \times M$ of class $*$, where $*$ is as above.

\begin{thm} \label{thm:fission} Let
  $L,L_{1},\ldots, L_{k}\in \mathcal{L}ag^{\text{we}}(M)$ and
  $V:L \leadsto (L_{1},\ldots, L_{k})$ a weakly exact Lagrangian
  cobordism.  Denote $S := \cup_{i=1}^k L_i$ the union of the
  Lagrangians corresponding to the negative ends of $V$. Then
  \begin{equation} \label{eq:SV-delta} \mathcal{S}(V)\geq
    \tfrac{1}{2}\delta(L;S).
  \end{equation}

  For the next two points of the theorem we will use the following
  notation. Let $N \in \mathcal{L}ag^{\text{we}}(M)$ be another weakly
  exact Lagrangian submanifold and consider
  $S = \cup_{i=1}^k L_i \subset M$ and $N \cup S \subset M$ as
  immersed Lagrangians (parametrized by $\coprod_{i=1}^k L_i$ and
  $N \coprod (\coprod_{i=1}^k L_i)$ respectively).
  \begin{enumerate} [label=(\alph*),ref=\alph*]
  \item \label{i:ch-inters} Assume that $N$ intersects each of the
    Lagrangians $L_1, \ldots, L_k$ transversely and that
    $N \cap L_i \cap L_j = \emptyset$ for all $i \neq j$. Denote
    $\Sigma' := N \cap S$. If
    $\mathcal{S}(V) <\frac{1}{2}\ \delta^{\Sigma'}(N\cup S)$ then
    \begin{equation} \label{eq:N-cap-L}
      \#(N\cap L) \geq \sum_{i=1}^{k}\# (N\cap L_{i})~.~
    \end{equation}
  \item \label{i:hf-inters} Assume that the Lagrangians
    $L_{1},\ldots, L_{k}$ intersect pairwise transversely and that no
    three of them have a common intersection point (i.e.
    $L_i \cap L_j \cap L_r = \emptyset$ for all distinct indices
    $i,j,r$). Let $\Sigma''$ be the set of all double points of $S$,
    i.e.  $\Sigma'' := \cup_{1 \leq i < j \leq k} L_{i} \cap
    L_{j}$. If $\mathcal{S}(V) < \frac{1}{4}\ \delta^{\Sigma''}(S; N)$
    then
    \begin{equation} \label{eq:N-cap-L-HF}
      \# (N\cap L)\geq \displaystyle\sum_{i=1}^{k} 
      \dim_{\Lambda} (HF(N,L_{i}))~.~
    \end{equation}
  \end{enumerate}
\end{thm}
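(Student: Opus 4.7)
The plan is to treat all three parts uniformly, following the outline of~\S\ref{sb:outline-prf-A} and implementing it with the full machinery of weakly filtered $A_\infty$-modules from~\S\ref{s:wf-ai-theory} together with the Fukaya categories of~\S\ref{s:floer-theory}. First I would bend the positive $L$-end of $V$ clockwise about the ``bulb'' of $V$ to obtain a null-cobordism $W: \emptyset \cobto (L, L_1, \ldots, L_k) \in \mathcal{L}ag^*(\mathbb{R}^2 \times M)$ with $\mathcal{S}(W) \leq \mathcal{S}(V) + \eta$ for arbitrary $\eta > 0$. For a curve $\gamma \subset \mathbb{R}^2$ as in Figure~\ref{f:gamma-cob-c} and suitably small regular perturbation data $p$ and $\upsilon$, form the weakly filtered $\fuk^*(M;p)$-module $\mathcal{M} := \mathcal{M}_{W; \gamma, p, \upsilon} = \mathcal{I}_{\gamma;p,\upsilon}^* \mathscr{W}$ of~\S\ref{sb:wf-icones-cobs}, which by Proposition~\ref{p:icones-M_j}, after an action-shift, is a weakly filtered iterated cone $\tcn(\mathcal{L}_k \to \tcn(\cdots \to \tcn(\mathcal{L}_1 \to \mathcal{L})))$ with attaching maps $\phi_j$ of controlled discrepancy. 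Since $W$ has no positive end, $\gamma$ can be moved by a Hamiltonian isotopy of $\mathbb{R}^2$ of energy $\mathcal{S}(W) + \eta'$ to a curve $\gamma'$ with $\gamma' \times Y$ disjoint from $W$, producing by continuation a null-homotopy of $\id_{\mathcal{M}(Y)}$ of homotopical boundary level $B_h \leq \mathcal{S}(V) + O(\eta, \eta', \eta'')$ for any closed $Y \in \mathcal{L}ag^*(M)$, where $\eta''$ absorbs residual perturbation discrepancies. Applying Theorem~\ref{t:itcones} produces a quasi-isomorphic weakly filtered module $\mathcal{M}'$ whose differential on $\mathcal{M}'(Y) = CF(Y, L) \oplus \bigoplus_i CF(Y, L_i)$ is block upper-triangular with diagonal $\mu_1^{\mathcal{A}}$ and off-diagonal entries given explicitly by higher Fukaya operations $\mu_d^{\fuk^*}$ applied to small-action cycles $c_{q,p} \in CF^{\leq \alpha_{q,p}}(L_q, L_p)$; by Theorem~\ref{t:itcones}(\ref{pp:sigma-1-iso})--(\ref{i:sigma_1-inv}), the quasi-isomorphism $\sigma_1$ and its inverse are strictly filtered, upper triangular with $\id$'s on the diagonal, so the null-homotopy transfers to $\mathcal{M}'(Y)$ with essentially the same boundary level.

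For part~I take $Y = L$ and a symplectic ball embedding $e: B(r) \to M$ realizing $\pi r^2 = \delta(L; S) - \eta_0$, with $e^{-1}(L) = B_{\mathbb{R}}(r)$, $e(B(r)) \cap S = \emptyset$, and set $P := e(0)$. Arrange $\mathscr{D}_{L, L}$ via a small Morse-theoretic perturbation (as in~\S\ref{sb:units}) so that the unit cycle $e_L$ has arbitrarily small action and is supported at $P$. Then $e_L$ is a cycle in $\mathcal{M}'(L)$ that by acyclicity bounds at action level $\leq \mathcal{S}(V) + O(\eta)$; the explicit form of $\mu_1^{\mathcal{M}'}$ rewrites this boundary as a sum of higher Fukaya operations applied to the $c_{q,p}$ together with chains in the off-diagonal $CF(L, L_i)$'s, forcing the existence of a Floer polygon through $P$ of symplectic area $\leq \mathcal{S}(V) + O(\eta)$. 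Letting the perturbation data tend to $0$ and invoking Gromov compactness, this Floer polygon converges to a $J$-holomorphic polygon $u$ with boundary on $L \cup L_1 \cup \ldots \cup L_k$ passing through $P$ and with $\omega(u) \leq \mathcal{S}(V) + O(\eta)$. Choosing $J$ standard on $e(B(r))$, the Lelong inequality gives $\tfrac{1}{2}\pi r^2 \leq \omega(u)$, and letting $\eta_0, \eta \to 0$ yields~\eqref{eq:SV-delta}.

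For parts~(\ref{i:ch-inters}) and~(\ref{i:hf-inters}) take $Y = N$. The off-diagonal entries of $\mu_1^{\mathcal{M}'(N)}$ count Floer polygons whose boundary visits $N$ together with at least two distinct $L_i$'s, and hence necessarily crosses either a point of $\Sigma' = N \cap S$ (for (\ref{i:ch-inters})) or a point of $\Sigma''$ that avoids $N$ (for (\ref{i:hf-inters})). Applying Lelong at the embedded symplectic balls witnessing $\delta^{\Sigma'}(N \cup S)$ (resp.\ $\delta^{\Sigma''}(S; N)$) yields a uniform lower bound on the action drop of the off-diagonal part $d_1$ of $d^{\mathcal{M}'(N)} = d_0 + d_1$, namely $\tfrac{1}{2}\delta^{\Sigma'}(N \cup S)$ (resp.\ $\tfrac{1}{4}\delta^{\Sigma''}(S; N)$; the factor $\tfrac{1}{4}$ reflects the need for the polygon to clear two half-balls at each double point while staying away from $N$). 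The hypothesis on $\mathcal{S}(V)$ therefore gives $B_h(\id_{\mathcal{M}'(N)}) < \delta_{d_1}$ in the limit. For part~(\ref{i:ch-inters}), apply Lemma~\ref{l:rig-cplx-n} to the chain endomorphism of $\mathcal{M}'(N)$ built from the null-homotopy, concluding that it is injective; comparing generators then yields~\eqref{eq:N-cap-L}. For part~(\ref{i:hf-inters}), apply Proposition~\ref{prop:rig-cplx2} with $d_0$ the block-diagonal Floer differential, for which $\dim_\Lambda H_*(\mathcal{M}'(N), d_0) = \dim_\Lambda HF(N, L) + \sum_i \dim_\Lambda HF(N, L_i)$; acyclicity of $(\mathcal{M}'(N), d^{\mathcal{M}'(N)})$ then yields~\eqref{eq:N-cap-L-HF}.

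The principal technical obstacle is the simultaneous control of all the error terms propagating through the construction (discrepancies of $\fuk^*(M;p)$ and of $\fukcob(\widetilde{\mathcal{C}}_{1/2}; \iota_\gamma(p,\upsilon))$, profile function values $h(O_k)$, the action shift $\rho^\sigma$ in~\eqref{eq:rho-sigma}, and the residual continuation errors), and verifying via Propositions~\ref{p:wf-fukaya}, \ref{p:wf-inc-functors}, and~\ref{p:wf-M_V} that these can all be made arbitrarily small by sending $p \to p_0 \in \mathcal{N}$ and $\upsilon \to 0$. A secondary difficulty is extracting an honest $J$-holomorphic polygon in the limit: this requires Gromov compactness applied carefully as the perturbations degenerate while the marked point $P$ is kept on the $L$-component of the boundary, and using the injectivity of the Lelong contribution to ensure the limiting polygon has positive energy and meets the prescribed point.
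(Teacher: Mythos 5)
Part~I of your proposal matches the paper's argument: bend $V$ to the null-cobordism $W$, take $Y = L$, control the boundary level of the unit $e_L = q$, pass to the model $\mathcal{M}'$ of Theorem~\ref{t:itcones} whose differential involves explicit higher $\mu_d$'s, degenerate the perturbation data, and apply Lelong at the ball around $P$. This part is correct.

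For parts~(\ref{i:ch-inters}) and~(\ref{i:hf-inters}), however, your approach diverges from the paper's in a way that creates a genuine gap. You keep working with the bent cobordism $W$ and the \emph{single} acyclic complex $\mathcal{M}'(N) = CF(N,L) \oplus \bigoplus_{i=1}^k CF(N,L_i)$, trying to extract the asymmetric dimension inequalities $\#(N \cap L) \geq \sum_i \#(N \cap L_i)$ (resp.\ $\geq \sum_i \dim HF(N,L_i)$) from acyclicity plus the rigidity lemmas. This does not go through, for two reasons. First, the off-diagonal entry $a_{0,j}$ of $\mu_1^{\mathcal{M}'(N)}$ (for $j\geq 1$) maps $CF(N,L_j) \to CF(N,L)$: its input corner lies in $\Sigma' = N\cap S$ but its output corner lies in $N\cap L$, which is \emph{not} in $\Sigma'$, and its internal corners $c_{k_m,0}$ lie in $L_{k_m}\cap L$, also outside $\Sigma'$ (and outside $\Sigma''$). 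So only \emph{one} of the controlled balls is crossed, and your claimed estimate $\delta_{d_1} \geq \tfrac{1}{2}\delta^{\Sigma'}(N\cup S)$ for part~(\ref{i:ch-inters}) fails; at best one gets $\tfrac{1}{4}$, which is weaker than the hypothesis $\mathcal{S}(V) < \tfrac{1}{2}\delta^{\Sigma'}$ permits. Second, and more fundamentally, acyclicity of $\mathcal{M}'(N)$ with a small null-homotopy does not by itself compare the sizes of the two summands. Lemma~\ref{l:rig-cplx-n} and Proposition~\ref{prop:rig-cplx2} both require an explicit chain endomorphism $f$ with $B_h(f-\id)$ small \emph{and} with $\mathrm{Im}(f)$ bounded by $\#(N\cap L)$; there is no such $f$ canonically produced by the null-homotopy of $\id_{\mathcal{M}'(N)}$.

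The fix, and what the paper actually does, is to \emph{not} bend $V$ for parts~(\ref{i:ch-inters}) and~(\ref{i:hf-inters}). Instead one works with $V$ itself, using two curves $\gamma$ (cutting across the negative ends $L_1,\ldots,L_k$) and $\gamma'$ (cutting across the single positive end $L$), so that $\mathcal{M}_{V;\gamma,p,h}(N) \cong \bigoplus_{i=1}^k CF(N,L_i)$ (no $CF(N,L)$ factor) while $\mathcal{M}_{V;\gamma',p,h}(N) \cong CF(N,L)$. The Hamiltonian isotopy moving $\gamma$ to $\gamma'$, of Hofer length $\leq \mathcal{S}(V)+\epsilon$, induces continuation maps $\phi\colon \bigoplus_i CF(N,L_i) \to CF(N,L)$ and $\psi$ in the other direction with $\psi\circ\phi \simeq \id$ via a homotopy shifting action by $\leq \mathcal{S}(V)+\epsilon$. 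Now all off-diagonal polygons in $\mu_1^{\mathcal{M}(N)}$ have both input and output in $\Sigma'$, giving the full $\tfrac{1}{2}\pi r^2$ estimate for~(\ref{i:ch-inters}) (or, for~(\ref{i:hf-inters}), the $\tfrac{1}{4}\pi r^2$ estimate from one double-point corner of $\Sigma''$). Lemma~\ref{l:rig-cplx-n} applied to $f=\psi\circ\phi$, $g=\id$ gives injectivity of $\phi$, hence $\sum_i\#(N\cap L_i) \leq \#(N\cap L)$; Proposition~\ref{prop:rig-cplx2} applied to $f = \sigma_1\circ\psi\circ\phi\circ\sigma_1^{-1}$ gives $\dim\mathrm{Im}(f) \geq \sum_i \dim HF(N,L_i)$ while $\dim\mathrm{Im}(f) \leq \dim CF(N,L) = \#(N\cap L)$, which is the required inequality. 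You should also add the reduction to the transverse case for both parts, which is handled by gluing small Lagrangian suspensions to the negative ends of $V$.
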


The proof is given in~\S\ref{sb:prf-thm-fission} below.

Theorem~\ref{thm:fission} has an analog in the monotone case too.
Recall from~\S\ref{sb:monotone} the Maslov-$2$ disk count
$\mathbf{d} \in \Lambda_0$ associated to a monotone Lagrangian $L$ and
also its minimal disk area $A_L$ defined by~\eqref{eq:min-disk-area}
in~\S\ref{sb:monotone}.

\begin{thm} \label{thm:fission-mon} Let
  $L,L_{1},\ldots, L_{k} \subset M$ be monotone Lagrangians and
  $V:L \leadsto (L_{1},\ldots, L_{k})$ a connected monotone
  cobordism. Let $S$ be the same as in
  Theorem~\ref{thm:fission}. Denote by $\mathbf{d} \in \Lambda_0$ the
  Maslov-$2$ disk count of $L$ (hence by~\S\ref{sb:monotone} also of
  the $L_i$'s) and let $N \subset M$ be another monotone Lagrangian
  with $\mathbf{d}_N = \mathbf{d}$.  Then:
  \begin{equation} \label{eq:SV-delta-mon} \mathcal{S}(V)\geq \min \{
    \tfrac{1}{2}\delta(L;S), A_L\}.
  \end{equation}
  Moreover, under the above assumptions statements~\eqref{i:ch-inters}
  and~\eqref{i:hf-inters} continue to hold as stated in
  Theorem~\ref{thm:fission}.
\end{thm}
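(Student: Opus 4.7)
The plan is to re-run, essentially verbatim, the proof of Theorem~\ref{thm:fission} sketched in~\S\ref{sb:outline-prf-A}, making use of the fact that the weakly filtered $A_\infty$-theory of~\S\ref{s:wf-ai-theory}, the constructions of the Fukaya categories $\fuk(\mathcal{C};p)$ and $\fukcob(\widetilde{\mathcal{C}}_{1/2};\widetilde{p})$, the inclusion functors $\mathcal{I}_{\gamma;p,h}$ of~\S\ref{sb:inc-functors}, the iterated cone description of Proposition~\ref{p:icones-M_j}, and the structure Theorem~\ref{t:itcones} have all been set up in~\S\ref{s:floer-theory} (cf.~\S\ref{sb:monotone}) so as to apply uniformly to the monotone case with fixed $\mathbf{d}$. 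The only substantive new point is that the Gromov compactification of the moduli spaces underlying our chain-level identities may produce Maslov-$2$ pseudoholomorphic disk bubbles; the hypothesis $N_L\ge 2$ rules out disk bubbles of non-positive Maslov index as well as sphere bubbles, making the monotone compactness analysis tractable.

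For the inequality~\eqref{eq:SV-delta-mon}, fix a symplectic embedding $e:B(r)\to M$ certifying $\pi r^2$ just below $\delta(L;S)$, with $e^{-1}(L)=B_\mathbb{R}(r)$ and center $P=e(0)$, and form the bent cobordism $W:\emptyset\cobto(L,L_1,\ldots,L_k)$. Let $\mathcal{M}=\mathcal{I}^*_{\gamma;p,h}\mathcal{W}$ be the associated pullback module. As in~\S\ref{sb:outline-prf-A}, the acyclicity of $\mathcal{M}$ together with a null-homotopy of $\id_{\mathcal{M}(L)}$ of action-shift at most $\mathcal{S}(V)+\epsilon$, coming from a Hamiltonian isotopy pushing $\gamma$ to a curve $\gamma'$ disjoint from $\pi(W)$, forces a cycle $e_L\in CF(L,L)$ representing the unit and localized at the maximum of a Morse function attained at $P$ to be a boundary in $\mathcal{M}(L)$ of homotopical boundary level $\le\mathcal{S}(V)+\epsilon$. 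Passing to the quasi-isomorphic model $\mathcal{M}'$ furnished by Theorem~\ref{t:itcones}, whose differential has the explicit form~\eqref{eq:aij}, and choosing an almost complex structure that is standard on $e(B(r))$, a Gromov limit as the perturbation parameters tend to zero produces a non-constant $J$-holomorphic configuration $u$ of total area $\omega(u)\le\mathcal{S}(V)+\epsilon'$ passing through $P$. The novelty compared to the weakly exact case is that this limit may be either (i) a genuine $J$-holomorphic polygon with boundary on $L\cup S$ passing non-trivially through $P$, or (ii) a configuration in which the polygon through $P$ is constant and carries a Maslov-$2$ disk bubble with boundary on $L$ attached at $P$. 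The Lelong inequality inside the ball handles case~(i) giving $\omega(u)\ge\tfrac12\pi r^2$; monotonicity of $L$ handles case~(ii) giving $\omega(u)\ge A_L$. Letting $\pi r^2\nearrow\delta(L;S)$ and $\epsilon,\epsilon'\searrow 0$ yields~\eqref{eq:SV-delta-mon}.

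For parts~\eqref{i:ch-inters} and~\eqref{i:hf-inters} no modification of the bounds is needed, because the extraction is performed at an intersection point $x\in N\cap L_i$, respectively a double point $x\in L_i\cap L_j$, where by the defining property of $\delta^{\Sigma'}(N\cup S)$ and $\delta^{\Sigma''}(S;N)$ the prescribed symplectic ball contains two Lagrangian branches meeting transversely at its center and avoids all other relevant Lagrangians. Consequently, the limit curve is forced to be non-constant at $x$, and the Lelong inequality inside the ball provides the sharp lower bound on its area, producing~\eqref{eq:N-cap-L} and~\eqref{eq:N-cap-L-HF} exactly as in the weakly exact case; the possibility of a bubble attached at $x$ is excluded by the same area-in-the-ball comparison since such a bubble would itself need to satisfy Lelong on the pair of branches there. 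The algebraic input behind~\eqref{i:hf-inters}, namely Proposition~\ref{prop:rig-cplx2} combined with a splitting $d^{CF}=d_0+d_1$ of the Floer differential that isolates the lowest-area component into $d_0$, is purely algebraic and applies to the monotone Floer complex over $\Lambda$ without change. The main technical obstacle throughout will be the Gromov compactness bookkeeping in the monotone regime: verifying that the limit bubble tree at $P$ or $x$ has boundary along the correct Lagrangian so that the correct Maslov-$2$ disk area (namely $A_L$ in the main inequality) governs it, and that more exotic bubble configurations are area-dominated by one of the two alternatives above; this is a standard verification in the monotone Fukaya-theoretic framework, made possible by the assumption $N_L\ge 2$.
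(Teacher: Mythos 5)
Your discussion of statements~(\ref{i:ch-inters}) and~(\ref{i:hf-inters}) matches the paper: these do carry over without modification, precisely because the relevant polygons have corners on intersection points of \emph{distinct} Lagrangians, and the algebraic input (Lemma~\ref{l:rig-cplx-n}, Proposition~\ref{prop:rig-cplx2}) is insensitive to the weakly exact/monotone distinction.

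However, your treatment of inequality~(\ref{eq:SV-delta-mon}) has a genuine gap, and the gap is exactly where the paper does the nontrivial work. After producing the boundary relation
$q = \mu_1^{CF(L_0,L_0;p)}(b_0) + \sum_{j=1}^k a_{0,j}(b_j)$,
one must rule out (or control) the contribution of the \emph{diagonal} term $\mu_1^{CF(L_0,L_0;p)}(b_0)$. In the weakly exact case this is Lemma~\ref{l:unit-we}, which shows $\langle\mu_1(b_0), q\rangle = 0$ by the Morse-theoretic observation that the unique maximum $q$ receives no nonconstant gradient-type trajectory. In the monotone case that lemma fails, and the paper replaces it by Lemma~\ref{l:unit-mo}: a Maslov-index dimension count for the moduli space of Floer strips on $L_0$ from a critical point $c$ to $q$ forces $\mu([\overline{u}]) \geq n+1-|c|>0$, hence by monotonicity $\omega(u)\geq A_{L_0}$, so $\nu(\langle\mu_1(b_0),q\rangle)\geq A_L$. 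Combined with the running assumption $\mathcal{S}(W)+\epsilon<A_L$, this ensures that some $a_{0,j_0}(b_{j_0})$ with $j_0\geq 1$ still contributes to the $q$-coefficient with valuation $\leq 0$, so the argument proceeds as in the weakly exact case and the minimum with $A_L$ enters precisely through this threshold. This step is \emph{algebraic and done before} the Gromov limit.

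Your proposal does not address this at all. You locate the new monotone phenomenon at the later Gromov-compactness stage and invoke a dichotomy between ``a genuine polygon through $P$'' and ``a constant polygon carrying a Maslov-$2$ bubble at $P$.'' This is misplaced: by the time one takes the limit $p_n\to p_0$, the polygon has already been chosen from a term $a_{0,j_0}(b_{j_0})$ with $j_0\geq 1$, so it has corners on at least one of the $L_i$ and the weakly exact bubbling analysis goes through unchanged. Conversely, the contribution that Lemma~\ref{l:unit-mo} handles, namely $\mu_1(b_0)$, corresponds to a Floer \emph{strip} with both boundaries on $L_0$ --- not one of your cases (i) or (ii) --- and without the Maslov-index/monotonicity bound on its area, the Gromov limit of such strips as the perturbation tends to zero could simply be constant, yielding no curve through $P$ at all. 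Your assertion that the Gromov limit ``produces a non-constant $J$-holomorphic configuration $u$\ldots passing through $P$'' is therefore precisely the thing that needs justification, and the justification is the Maslov-index bound of Lemma~\ref{l:unit-mo}, which your argument omits.
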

The proof is given in~\S\ref{sb:prf-thm-fission-mon}.
\subsection{Proof of
  Theorem~\ref{thm:fission}} \label{sb:prf-thm-fission}

We begin with the proof of~\eqref{eq:SV-delta}. We first assume that
the Lagrangians $L, L_0, \ldots, L_k$ intersect pairwise transversely,
and treat the general case afterwards.

\begin{figure}[htbp]
   \begin{center}
     \includegraphics[scale=0.5]{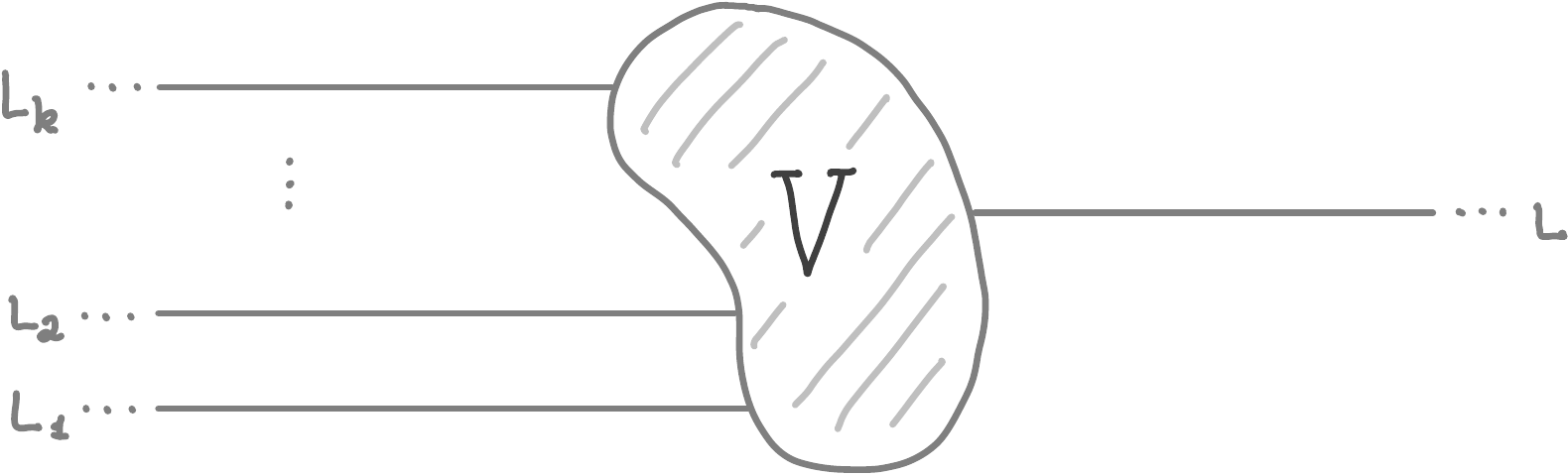} \; 
     \includegraphics[scale=0.5]{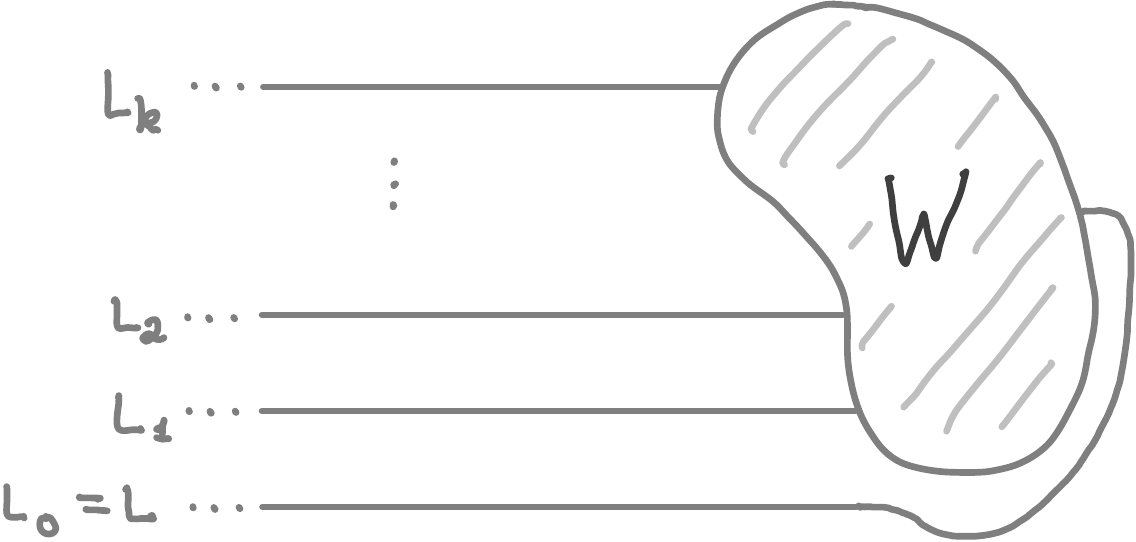}
   \end{center}
   \caption{The cobordisms $W$ obtained from $V$ by bending the
     positive end.}
   \label{f:cob-v-w}
\end{figure}

We start by bending the positive end of $V$ by $180^{\circ}$ clockwise
in such a way as to get a cobordism $W$ without positive ends, and
whose negative ends are $(L_0,L_{1},\ldots, L_{k})$, where $L_0 :=
L$. See Figure~\ref{f:cob-v-w}. Clearly
$\mathcal{S}(W)=\mathcal{S}(V)$.

Fix $\epsilon>0$. Let $\gamma$, $\gamma'$ be two curves, as depicted
in Figure~\ref{f:gamma-gamma'}, and such that there exists a (not
compactly supported) Hamiltonian isotopy, horizontal at infinity,
$\varphi_t: \mathbb{R}^2 \longrightarrow \mathbb{R}^2$, $t \in [0,1]$,
with $\varphi_0 = \id$, $\varphi_1(\gamma) = \gamma'$ and with
\begin{equation} \label{eq:l-varphi-t} \text{length} \{\varphi_t\}
  \leq \mathcal{S}(W) + \epsilon/2,
\end{equation}
where $\text{length} \{\varphi_t\}$ stands for the Hofer length of the
isotopy $\{\varphi_t\}$.

\begin{figure}[htbp]
   \begin{center}
     \includegraphics[scale=0.5]{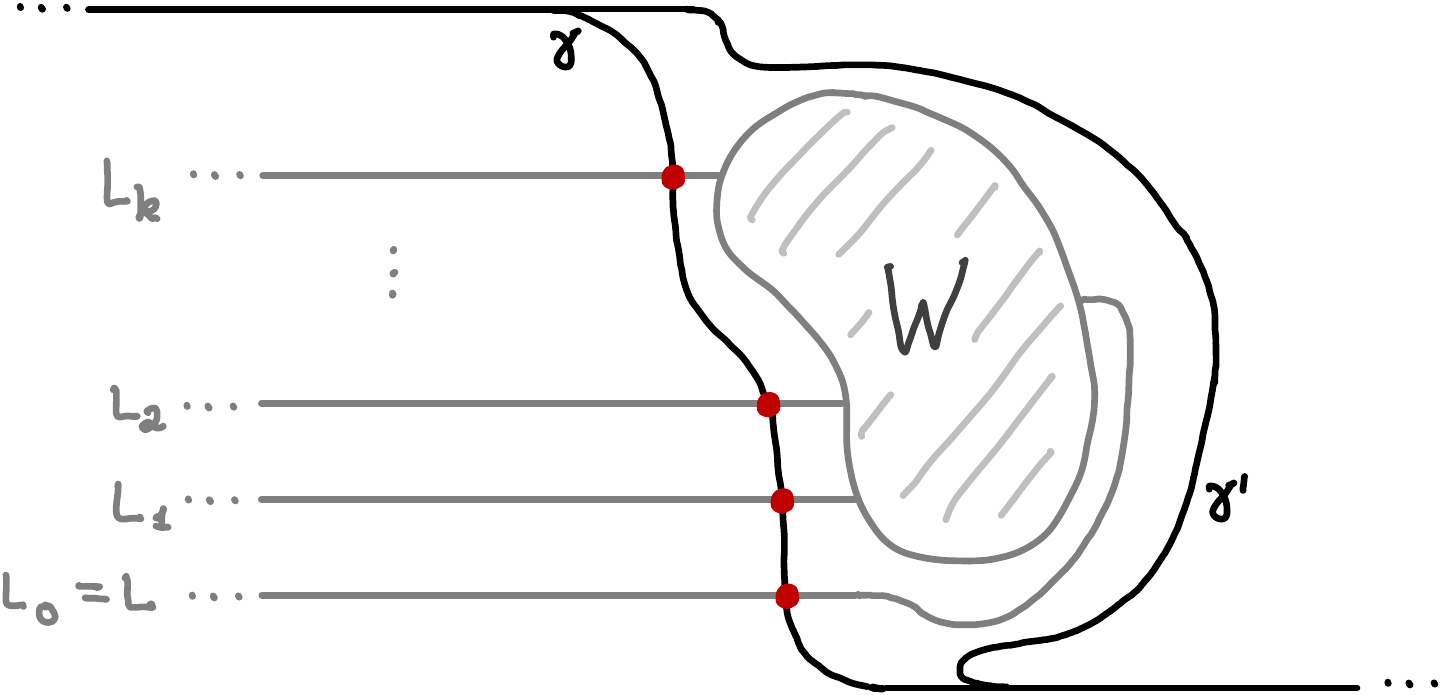}
   \end{center}
   \caption{The curves $\gamma$ and $\gamma'$ and the cobordism $W$.}
   \label{f:gamma-gamma'}
\end{figure}

Put $S = \cup_{i=1}^k L_i$ and let
$e: B(r) \longrightarrow M \setminus S$ be a symplectic embedding as
in the definition of $\delta(L_0; S)$ in~\eqref{eq:delta1}, with
$$\delta(L_0, S) - \epsilon \leq \pi r^2 \leq \delta(L_0,S).$$ 
Next, let
\begin{equation} \label{eq:JB}
  B := \textnormal{image\,}(e), \quad q := e(0) \in L_0, \quad
  J^B := e_*(J_{\textnormal{std}}),
\end{equation}
where the latter is the complex structure on $B$ corresponding to the
standard complex structure $J_{\textnormal{std}}$ of $B^{2n}(r)$ via
the embedding $e$.

Next, we fix a symplectic identification between a small
\label{pp:function-f}open
neighborhood $U$ of $L_0$ in $M$ and a neighborhood $U'$ of the
zero-section in $T^*(L)$. Let $f: L \longrightarrow \mathbb{R}$ be a
$C^1$-small Morse function with exactly one local maximum at the point
$q \in L_0$.  We extend $f$ to a function
$\widetilde{f}: U' \longrightarrow \mathbb{R}$ by setting it to be
constant along the fibers of the cotangent bundle. Finally, let
$H_f^{L_0,L_0}: M \longrightarrow \mathbb{R}$ be a smooth function
such that $H_f^{L_0,L_0}|_U$ coincides with $\widetilde{f}$ via the
identification between $U$ and $U'$ that we have just fixed.

We now turn to the Fukaya categories relevant for this proof. Let
$\mathcal{C}$ be the collection of Lagrangians $L_0, \ldots, L_k$. We
will use the Fukaya categories $\fuk(\mathcal{C})$ and
$\fukcob(\widetilde{\mathcal{C}})$ associated to $\mathcal{C}$. More
specifically, we consider regular perturbation data
$p \in E'_{\textnormal{reg}}$ and $C^1$-small profile functions
$h \in \mathcal{H}'_{\textnormal{prof}}(\gamma)$ as
in~\S\ref{sb:inc-functors}. We impose two additional restrictions on
the admissible choices of perturbation data $p$ as follows. The first
one is that the datum $\mathscr{D}_{L_0,L_0}$ of the pair $(L_0,L_0)$
should have the function $H_f^{L_0,L_0}$ as its Hamiltonian function,
defined using any choice of a $C^1$-small Morse functions $f$ as
described above. Furthermore, we allow only for functions $f$ that are
sufficiently $C^1$-small such that
$\mathcal{O}(H_f^{L_0,L_0}) = \textnormal{Crit}(f)$. Note that for
every $y \in \mathcal{O}(H_f^{L_0,L_0})$ we have $A(y) = f(y)$.

The second restriction is that the Hamiltonian functions $H^{L_i,L_j}$
in the Floer data $\mathscr{D}_{L_i, L_j}$, $i \neq j$, are all
$0$. It is possible to impose these additional restriction and still
maintain regularity since we have assumed that the Lagrangians
$L_0, L_1, \ldots, L_k$ intersect pairwise transversely. With these
choices we have for every $i \neq j$:
$$\mathcal{O}(H^{L_i,L_j}) = L_i \cap L_j, \quad A(z) = 0, 
\; \forall \, z \in \mathcal{O}(H^{L_i,L_j}).$$

We denote the space of all such regular choices of perturbation data
by $E''_{\textnormal{reg}} \subset E'_{\textnormal{reg}}$. We remark
that the Morse function $f$ is not fixed over $E''_{\textnormal{reg}}$
and each choice $p \in E''_{\textnormal{reg}}$ comes with its own
function $f$. Finally, note that $\mathcal{N}$ is still in the closure
of $E''_{\textnormal{reg}}$.

We now appeal to the theory developed in~\S\ref{s:floer-theory}.
Consider the Fukaya category $\fuk(\mathcal{C}; p)$
(see~\S\ref{sb:fukaya-families}) as well as the Fukaya category of
cobordisms
$\fukcob(\widetilde{\mathcal{C}}_{1/2}; \iota_{\gamma}(p,h))$
(see~\S\ref{sb:ext-lcob} and~\eqref{eq:I-gamma}
in~\S\ref{sb:inc-functors}). Recall that we have an ``inclusion''
functor
$\mathcal{I}_{\gamma; p,h}: \fuk(\mathcal{C}; p) \longrightarrow
\fukcob(\widetilde{\mathcal{C}}_{1/2}; \iota_{\gamma}(p,h))$.

Denote by $\mathcal{W}$ the Yoneda module corresponding to the object
$W \in \textnormal{Ob}(\fukcob(\widetilde{\mathcal{C}}_{1/2};
\iota_{\gamma}(p,h)))$ and consider its pull-back by the functor
$\mathcal{I}_{\gamma; p,h}$:
$$\mathcal{M}_{W; \gamma, p, h} := \mathcal{I}_{\gamma; p,h}^*\mathcal{W}.$$

Recall from~\S\ref{sb:wf-fukaya},~\S\ref{sb:ext-lcob}
and~\S\ref{sb:inc-functors} that the $A_{\infty}$-categories
$\fuk(\mathcal{C}; p)$,
$\fukcob(\widetilde{\mathcal{C}}_{1/2}; \iota_{\gamma}(p,h))$ as well
as the $A_{\infty}$-functor $\mathcal{I}_{\gamma; p,h}$ are all weakly
filtered. Moreover, by~\S\ref{sb:wf-icones-cobs} the module
$\mathcal{M}_{W; \gamma, p, h}$ if weakly filtered too. By
Propositions~\ref{p:wf-M_V}, and
points~\eqref{pp:iota_gamma-lim},~\eqref{pp:discr-fukcob-12} on
page~\pageref{pp:iota_gamma-lim} the discrepancy of this module is
bounded from above by
$\bme(p,h) = (\epsilon_1(p,h), \epsilon_2(p,h), \ldots,
\epsilon_d(p,h), \ldots)$ which satisfies
$\lim \epsilon_d(p,h) \longrightarrow 0$ for every $d$, as
$p \longrightarrow p_0 \in \mathcal{N}$ and $h \longrightarrow 0$ (the
latter in the $C^1$-topology).

Throughout the proof we will repeatedly deal with quantities having
the same asymptotics as $\epsilon_d(p,h)$. In order to simplify the
text we introduce the following notation. Let
$\mathcal{N}_0 \subset \mathcal{N}$ and let $(p,h) \longmapsto C(p,h)$
be a real valued function defined for $p$ in a subset of
$E'_{\textnormal{reg}}$ whose closure contains $\mathcal{N}_0$, and
$h \in \mathcal{H}'_{\textnormal{prof}}$. We will write
$C(p,h) \in \onx{\mathcal{N}_0}$ to indicate that for every
$p_0 \in \mathcal{N}_0$ we have $\lim C(p,h) = 0$ as
$p \longrightarrow p_0$ and $h \longrightarrow 0$ (the latter in the
$C^1$-topology).

By Proposition~\ref{p:icones-M_j}
(and~\eqref{eq:filtered-cob-it-cone}) we have:
\begin{equation} \label{eq:filtered-cob-it-cone-W}
  \begin{aligned}
    S^{s_h} & \mathcal{M}_{W; \gamma, p, h} \\ = \; & \tcn
    (\mathcal{L}_k \xrightarrow{\; \overline{\phi}_k \;}
    \tcn(\mathcal{L}_{k-1} \xrightarrow{\; \overline{\phi}_{k-1} \;}
    \tcn( \cdots \tcn(\mathcal{L}_2 \xrightarrow{\; \overline{\phi}_2
      \;} \tcn(\mathcal{L}_1 \xrightarrow{\; \overline{\phi}_1 \;}
    \mathcal{L}_0 )) {\cdot}{\cdot}{\cdot}))),
  \end{aligned}
\end{equation}
where $s_h \longrightarrow 0$ as $h \to 0$. (Recall
from~\S\ref{sbsb:action-shifts} that
$S^{s_h}\mathcal{M}_{V; \gamma, p, h}$ stands for the module
$\mathcal{M}_{V; \gamma, p, h}$ with action-shift by $s_h$.) The
modules $\mathcal{L}_i$ in~\eqref{eq:filtered-cob-it-cone-W} are the
Yoneda modules of the $L_i$'s. The notation $\overline{\phi_i}$ stands
for $\overline{\phi}_i = (\phi_i, 0, \bm{\delta^{(i)}})$, with
$\phi_i$ being a homomorphism of modules that shifts action by
$\leq 0$ and has discrepancy $\leq \bm{\delta^{(i)}}(p,h)$ where for
every $d$ we have $\delta^{(j)}_d (p,h) \in \on$. For simplicity of
notation set
$\bm{\delta}(p,h) := \max \{\bm{\delta^{(1)}}(p,h), \ldots,
\bm{\delta^{(k)}}(p,h)\}$, so that the discrepancy of all the
$\phi_i$'s is $\leq \bm{\delta}(p,h)$ and we still have
$\delta_d(p,h) \in \on$ for all $d$.

Consider the filtered chain complex
$\mathscr{C}_{p,h} := S^{s_{h}}\mathcal{M}_{W; \gamma, p,
  h}(L_0)$ \label{pp:C-ph} endowed with the differential coming from
the $\mu_1$-operation of $\mathcal{M}_{W; \gamma, p, h}$. By
definition
$\mathscr{C}_{p,h} = S^{s_h}CF(\gamma \times L_0, W;
\mathscr{D}_{\gamma \times L_0, W})$, where
$\mathscr{D}_{\gamma \times L_0, W}$ is the Floer datum prescribed by
$\iota_{\gamma}(p,h)$. By~\eqref{eq:filtered-cob-it-cone-W} the Floer
complex of $(L_0,L_0)$ is a subcomplex of $\mathscr{C}_{p,h}$, or more
precisely, we have an {\em action preserving} inclusion of chain
complexes:
\begin{equation} \label{eq:CF-L_0L_0} CF(L_0,L_0; p) \subset
  \mathscr{C}_{p,h},
\end{equation}
where $\mathscr{D}_{L_0,L_0}$ is specified by $p$ and is subject to
the additional restrictions imposed earlier in the proof. To simplify
the notation, we will denote from now on for a pair of Lagrangians
$(L', L'')$ by $CF(L',L''; p)$ the Floer complex
$CF(L',L''; \mathscr{D}_{L',L''})$, where $\mathscr{D}_{L',L''}$ is
the Floer datum specified by $p$.

Recall that we also have the curve $\gamma' \subset \mathbb{R}^2$ with
$\gamma' \cap \pi(W) = \emptyset$. Choose a Floer datum $\mathscr{D}'$
for $(\gamma' \times L_0, W)$ with a sufficiently $C^2$-small
Hamiltonian function so that
$CF(\gamma' \times L_0, W; \mathscr{D}')=0$. Now $\gamma \times L_0$
can be Hamiltonianly isotoped to $\gamma' \times L_0$ via an isotopy
horizontal at infinity with Hofer length
$\leq \mathcal{S}(W) + \epsilon/2$. By standard Floer theory (see
e.g.~\cite[Section~5.3.2]{FO3:book-vol1}) the identity map on
$\mathscr{C}_{p,h}$ is null homotopic via a chain homotopy which shifts
action by $\leq \mathcal{S}(W) + \epsilon/2$. Translated to the
formalism of~\eqref{eq:Bh-1} in~\S\ref{s:filt-ch} this means that
$B_h(\id_{\mathscr{C}_{p,h}}) \leq \mathcal{S}(W) + \epsilon/2$, hence
by~\eqref{eq:beta-beta_h} we have:
\begin{equation} \label{eq:Bh-C} \beta(\mathscr{C}_{p,h}) \leq
  \mathcal{S}(W) + \epsilon/2,
\end{equation}
where $\beta(\mathscr{C}_{p,h})$ is the boundary depth of the
(acyclic) chain complex $\mathscr{C}_{p,h}$ as defined
in~\eqref{sec:bdry-depth}.

%
%
We now appeal to Theorem~\ref{t:itcones} applied to the weakly
filtered iterated cone~\eqref{eq:filtered-cob-it-cone-W}. We apply
this theorem with $X=L_0$ and $\rho_i=0$. We obtain a new weakly
filtered module $\mathcal{M}$ such that $\mathcal{M}(L_0)$ has a
differential $\mu_1^{\mathcal{M}}$ as described in that theorem
together with a filtered {\em chain isomorphism}
$\sigma_1: \mathscr{C}_{p,h} \longrightarrow \mathcal{M}(L_0)$. An
inspection of the sizes of shifts and discrepancies of the various
maps involved in Theorem~\ref{t:itcones} show that there exists a
constant $s^{\sigma}(p,h) \in \on$ such that $\sigma_1$ shifts
filtration by $\leq s^{\sigma}(p,h)$.  Additionally,
Theorem~\ref{t:itcones} implies that $CF(L_0,L_0; p)$ is also a
filtered subcomplex of $\mathcal{M}(L_0)$ and that
$\textnormal{pr}_0 \circ \sigma_1$ maps
$CF(L_0,L_0; p) \subset \mathcal{C}_{P,h}$ to
$CF(L_0,L_0; p) \subset \mathcal{M}(L_0)$ via the identity map:
$(\textnormal{pr}_0 \circ \sigma_1)|_{CF(L_0,L_0; p)} = \id$. Here
$\textnormal{pr}_0: \mathcal{M}(L_0) \longrightarrow CF(L_0,L_0;p)$ is
the projection onto the $0$'th factor of $\mathcal{M}(L_0)$.

Consider now the homology unit $e_{L_0} \in CF(L_0,L_0; p)$ as
constructed in~\eqref{eq:e_L}. By standard Floer theory $e_{L_0} = q$
(recall that $q$ is the unique maximum of
$f:L_0 \longrightarrow \mathbb{R}$).

Let $c \in CF(L_0,L_0; p)$ and $\gamma \in \mathcal{O}(H^{L_0,L_0})$ a
generator, where $H^{L_0.L_0}$ is the Hamiltonian function of the
Floer datum specified by $p$ for $(L_0,L_0)$. We denote by
$\langle c, \gamma \rangle \in \Lambda$ the coefficient of $\gamma$
when writing $c$ as a linear combination of elements of
$\mathcal{O}(H^{L_0,L_0})$ with coefficient in $\Lambda$.
  
We will need the following Lemma.
\begin{lem} \label{l:unit-we} For every chain $c \in CF(L_0,L_0; p)$
  we have $\langle \mu_1(c), q \rangle = 0$.
\end{lem}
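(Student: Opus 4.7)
The plan is to reduce the statement to a Morse-theoretic index argument, exploiting the fact that the Floer datum on $(L_0,L_0)$ was chosen to come from a $C^1$-small Morse function $f$ on $L_0$ whose unique local maximum is the point $q$, together with weak exactness which kills all holomorphic disk bubbling.

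First, I would recall that by our choice $p \in E''_{\textnormal{reg}}$, the Hamiltonian in $\mathscr{D}_{L_0,L_0}$ is $H_f^{L_0,L_0}$ with $\mathcal{O}(H_f^{L_0,L_0}) = \Crit(f)$, and the generators of $CF(L_0,L_0;p)$ over $\Lambda$ are exactly the critical points of $f$. By $\Lambda$-linearity, to prove the lemma it suffices to show $\langle \mu_1(\gamma),q\rangle = 0$ for every critical point $\gamma \in \Crit(f)$.

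Next, I would invoke the standard identification of Floer theory with Morse theory for $C^1$-small Hamiltonians, due to Floer, Hofer--Salamon, Oh, and Po\'{z}niak. Concretely, for $f$ sufficiently $C^1$-small and generic almost complex structures in $\mathscr{D}_{L_0,L_0}$, every rigid Floer strip $u:\R\times[0,1]\to M$ with both boundaries on $L_0$ connecting critical points of $f$ is either a small perturbation of a negative $g_t$-gradient trajectory of $f$ (with zero symplectic area in the limit) or has symplectic area bounded below by some $\hbar > 0$ determined by the topology of $(M,L_0)$. Weak exactness of $L_0$ (so that $\omega$ vanishes on $H_2^D(M,L_0)$, together with the absence of holomorphic disks on $L_0$) guarantees the spectral gap, so that by shrinking $f$ further we can arrange that only the gradient-type strips contribute to $\mu_1$. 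In other words, $\mu_1 = d^{\text{Morse}}_f$ on $CF(L_0,L_0;p)$.

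Finally, the coefficient $\langle d^{\text{Morse}}_f(\gamma), q\rangle$ counts (modulo reparametrization) gradient trajectories of $f$ from $\gamma$ to $q$, and such a moduli space is non-empty only if the Morse index $\mu_f(\gamma)$ equals $\mu_f(q)+1 = n+1$, where $n = \dim L_0$. Since no critical point has Morse index exceeding $n$, this is impossible, hence $\langle \mu_1(\gamma),q\rangle = 0$ for every $\gamma\in\Crit(f)$, proving the lemma. The main obstacle is the justification of the Floer--Morse identification in the weakly exact setting with Novikov coefficients; this is well-established in the literature, but one must verify that our perturbation datum $p\in E''_{\textnormal{reg}}$ can indeed be chosen so that $f$ is small enough for the identification to hold, which is possible since $E''_{\textnormal{reg}}$ accumulates on $\mathcal{N}$ and regularity for such small $f$ is generic.
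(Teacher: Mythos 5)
Your proof is conceptually sound but takes a genuinely heavier route than the paper does, and one sub-claim in your reduction deserves scrutiny. The paper's argument is elementary and avoids the Floer--Morse identification entirely: take any Floer strip $u$ from $x_-$ to $x_+$ with boundary on $L_0$, cap it to a disk $\overline{u}'$ in $H_2^D(M,L_0)$, and use weak exactness to conclude $\omega(u)=0$. The energy--area identity~\eqref{eq:E-omega-strips} then gives $E(u) = f(x_-) - f(x_+)$, so the value of $f$ is non-increasing along Floer trajectories; since $q$ is the unique maximum, any strip into $q$ must come from $q$ itself with $E(u)=0$, hence be constant, and constant strips are excluded from $\mu_1$. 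This works for any $C^1$-small $f$ with $\mathcal{O}(H_f^{L_0,L_0})=\Crit(f)$, needs no "shrinking $f$ further," and needs no grading --- which matters, since the paper deliberately works in an ungraded setting, so the Morse index $n+1$ argument you invoke is not quite in the spirit of the framework even though it is true.

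The step in your proposal that is off is the dichotomy "either gradient-like with area $\to 0$, or area $\geq\hbar>0$." In the weakly exact case there is no $\hbar>0$: \emph{every} Floer strip with boundary on $L_0$ automatically has $\omega(u)=0$ by the capping argument, independently of the size of $f$. There simply are no high-area strips to rule out. So the spectral-gap framing (which is the right picture in the monotone case, where $\hbar = A_L$) is not the mechanism here, and invoking it makes the argument appear to depend on an extra smallness hypothesis on $f$ that the paper's proof never needs. If you replace that step with the direct observation $\omega(u)=0$ and the resulting monotonicity of $f$ along strips, you recover the paper's proof and can drop the appeal to the Floer--Morse comparison theorem entirely, which is both cleaner and more robust (your version only covers the tail $p_n\to p_0$ of the perturbation-data sequence, whereas the lemma is stated for general $p\in E''_{\textnormal{reg}}$).
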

We postpone the proof of the lemma and continue with the proof of
Theorem~\ref{thm:fission}.

Put $C_f := \max_{x \in L_0} |f(x)|$,
$C^{(1)}(p,h) := C_f + s^{\sigma}(p,h)$.  By~\eqref{eq:Bh-C} there
exists 
\begin{equation}\label{eq:boundary-fund}
b' \in \mathscr{C}_{p,h}\ \textnormal{with}\ 
A(b'; \mathscr{C}_{p,h}) \leq A(e_{L_0};\mathscr{C}_{p,h}) +
\mathcal{S}(W) + \frac{\epsilon}{2} \leq C_f + \mathcal{S}(W) +
\frac{\epsilon}{2}\ ,
\end{equation} such that
$e_{L_0} = \mu^{\mathscr{C}_{p,h}}_1(b')$.

Recall from point~\eqref{pp:Delta_j-Delta_0} of
Theorem~\ref{t:itcones} that
$\textnormal{pr}_0 \circ \sigma_1|_{CF(L_0,L_0;p)} = \id$. Set
$b:=\sigma_1(b')$ and apply $\textnormal{pr}_0 \circ \sigma_1$ to the
equality $e_{L_0} = \mu^{\mathscr{C}_{p,h}}_1(b')$. We obtain:
\begin{equation} \label{eq:mu-1-b-A} e_{L_0} = \textnormal{pr}_0 \circ
  \mu_1^{\mathcal{M}}(b), \quad A(b; \mathcal{M}(L_0)) \leq
  C^{(1)}(p,h) + \mathcal{S}(W) + \epsilon/2,
\end{equation}
where $C^{(1)}(p,h) := C_f + s^{\sigma}(p,h)$. Obviously
$C^{(1)}(p,h) \in \on$. (Note that $f \longrightarrow 0$ as
$p \longrightarrow p_0 \in \mathcal{N}$.)

Using the splitting~\eqref{eq:M-direct-sum} write
$b = b_0 + \cdots + b_k$, with $b_i \in CF(L_0, L_i; p)$ and
$$A(b_i;CF(L_0, L_i; p)) \leq C^{(2)}(p,h) + \mathcal{S}(W) + \epsilon/2,$$ 
where $C^{(2)}(p,h)$ is a new constant such that
$\lim C^{(2)}(p,h) \in \on$.

Continuing to apply Theorem~\ref{t:itcones} we have:
\begin{equation} \label{eq:q-equals-sum} q = \textnormal{pr}_0 \circ
  \mu_1^{\mathcal{M}}(b) = \sum_{j=0}^k a_{0,j}(b_j) =
  \mu_1^{CF(L_0,L_0;p)}(b_0) + \sum_{j=1}^k a_{0,j}(b_j),
\end{equation}
where the operators $a_{i,j}$ are the entries of the matrix
representation of $\mu_1^{\mathcal{M}}$ with respect to the
splitting~\eqref{eq:M-direct-sum}. By Lemma~\ref{l:unit-we},
$\langle \mu_1(b_0), q \rangle = 0$, hence there exists
$1 \leq j_0 \leq k$ such that
\begin{equation} \label{eq:nu-b-j0} \langle a_{0,j_0}(b_{j_0}),
  q\rangle \neq 0, \quad \nu \bigl(\langle a_{0,j_0}(b_{j_0}),
  q\rangle \bigr) \leq \nu(1) = 0.
\end{equation}
Here $\nu$ is the standard valuation of $\Lambda$
(see~\eqref{eq:val-Nov}).

By Theorem~\ref{t:itcones} there exists chains
$c_{i',i''} \in CF(L_{i'}, L_{i''};p)$, for all $i' < i''$, with
$A(c_{i',i''}) \leq C^{(3)}(p,h)$, where $C^{(3)}(p,h) \in \on$ and
such that
$$a_{0,j_0}(b_{j_0}) = \sum_{2 \leq d, \, \underline{i}}
\mu_d^{\fuk(\mathcal{C};p)}(b_{j_0}, c_{i_d, i_{d-1}}, \ldots, c_{i_2,
  i_1}),$$ where $\underline{i} = (i_1, \ldots, i_d)$ runs over all
partitions $0=i_1 < i_2 \cdots < i_{d-1} < i_d = j_0$.

It follows that there exists a partition
$\underline{i}^0 = (i^0_1, \ldots, i^0_d)$ with $d \geq 2$, for which
$$\big \langle \mu_d^{\fuk(\mathcal{C};p)}(b_{j_0}, c_{i^0_d, i^0_{d-1}}, 
\ldots, c_{i^0_2, i^0_1}), q \big \rangle \neq 0, \quad \nu \Big( \big
\langle \mu_d^{\fuk(\mathcal{C};p)}(b_{j_0}, c_{i^0_d, i^0_{d-1}},
\ldots, c_{i^0_2, i^0_1}), q \big \rangle \Big) \leq 0.$$ Writing
$b_{j_0}$ as a linear combination (over $\Lambda$) of elements from
$L_0 \cap L_{j_0}$ and similarly for the $c_{i^0_r, i^0_{r-1}}$'s, we
deduce that there exist $x \in L_0 \cap L_{j_0}$, $P(T) \in \Lambda$,
and $z_{r} \in L_{i^0_r} \cap L_{i^0_{r-1}}$, $Q_r \in \Lambda$ for
$r=2, \ldots, d$, such that:
\begin{equation*} \label{eq:P-Q_r}
  \begin{aligned}
    & A(P(T)x) \leq C^{(2)}(p,h) + \mathcal{S}(W) + \epsilon/2, \\
    & A(Q_r(T)z_r) \leq C^{(3)}(p,h), \; \forall \, 2 \leq r \leq d, \\
    & \big \langle \mu_d^{\fuk(\mathcal{C};p)}(P(T)x, Q_d(T)z_d,
    \ldots, Q_2(T)z_2), q \big \rangle \neq 0, \\
    \nu \Big( & \big \langle \mu_d^{\fuk(\mathcal{C};p)}(P(T)x,
    Q_d(T)z_d, \ldots, Q_2(T)z_2), q \big \rangle \Big) \leq 0.
  \end{aligned}
\end{equation*}
Note that $d$, as well as the points $x$, $z_d, \ldots, z_2$, $q$, all
depend on $(p,h)$, but for the moment we suppress this from the
notation.

Since $A(P(T)x) = -\nu(P(T))$ and $A(Q_r(T)z_r)=-\nu(Q_r(T))$ we
obtain:
\begin{equation} \label{eq:nu-mu-d} \nu \Big( \big \langle
  \mu_d^{\fuk(\mathcal{C};p)}(x, z_d, \ldots, z_2), q \big \rangle
  \Big) \leq \mathcal{S}(W) + \epsilon/2 + C^{(4)}(p,h),
\end{equation}
where $C^{(4)}(p,h) \in \on$.
  
Denote by $\mathscr{D}(p) = (K(p), J(p))$ the perturbation datum
prescribed by $p \in E''_{\textnormal{reg}}$ for the tuple of
Lagrangians $(L_0, L_{j_0}, L_{i_{d-1}}, \ldots, L_{i_2}, L_0)$. It
follows from~\eqref{eq:nu-mu-d} that there exists a non-constant Floer
polygon $u \in \mathcal{M}(x, z_d, \ldots, z_2, q; \mathscr{D}(p))$
with
\begin{equation*} \label{eq:om-u-S} \omega(u) \leq \mathcal{S}(W) +
  \epsilon/2 + C^{(4)}(p,h).
\end{equation*}
  
Let $p_0 \in \mathcal{N}$ be any choice of perturbation data which
assigns to the tuple of Lagrangians
$(L_0, L_{j_0}, L_{i_{d-1}}, \ldots, L_{i_2}, L_0)$ the perturbation
data $\mathscr{D}(p_0) = (K=0, J(p_0))$, where $J(p_0)$ is a family of
almost complex structures that coincide with $J^B$ on $B$
(see~\eqref{eq:JB}).

Fix a generic $C^1$-small Morse function $f$ as on
page~\pageref{pp:function-f}. We now choose a sequence
$\{(p_n, h_n)\}$ in $E''_{\textnormal{reg}}$ with
$(p_n, h_n) \longrightarrow (p_0,0)$ as $n \longrightarrow \infty$,
and with the following additional property. The Hamiltonian function
$H^{L_0,L_0}(n)$ prescribed by $p_n$ for the Floer datum
$\mathscr{D}_{L_0,L_0}(p_n)$ of $(L_0,L_0)$ is
$H^{L_0,L_0}_{\frac{1}{n}f}$, i.e.~constructed as on
page~\pageref{pp:function-f} but with the function $\frac{1}{n}f$
instead of $f$. Consequently, the point $q$ (the maximum of
$\frac{1}{n}f$) does not depend on $n$.

Passing to a subsequence of $\{(p_n, h_n)\}$ if necessary we may
assume that both $d$ as well as the points $x, z_d, \ldots, z_2$ above
do not depend on $n$ either. (Note that by Theorem~\ref{t:itcones},
$d \leq k$, so there are only finitely many possible values for $d$.)

In summary, we obtain a sequence,
$u_n \in \mathcal{M}(x, z_d, \ldots, z_2, q; \mathscr{D}(p_n))$ with
$\omega(u_n) \leq \mathcal{S}(W) + \epsilon/2 + C^{(4)}(p_n,h_n)$. By
a compactness result~\cite{Oh-Zhu:thick-thin, Oh-Zhu:floer-traj} (see
also~\cite{Fuk-Oh:zero-loop, Oh:relative, Oh:spectral}) there exists a
subsequence of $\{u_n\}$ which converges to a union of Floer polygons
$v_0, v_1, \ldots, v_l$, $l \geq 0$, together with a (possibly broken)
negative gradient trajectory $\eta$ of $f$. (``Broken'' means that the
trajectory might pass through several critical points of $f$.)

The Floer polygons $v_i$ map the boundary components of their domains
of definition to some of the Lagrangians in the collection
$L_0, L_{j_0}, L_{i_{d-1}}, \ldots, L_{i_2}, L_0$. Moreover, $v_0$
maps one of its boundary components to $L_0$. The maps $v_i$ satisfy
the Floer equation corresponding to the perturbation data prescribed
by $p_0$. Consequently they are all genuine pseudo-holomorphic
(i.e.~without Hamiltonian perturbations) with respect to the
(domain-dependent) almost complex structures prescribed by $p_0$. In
particular, $\omega(v_i) \geq 0$ for every $i$.

Since
$\omega(u_n) \leq \mathcal{S}(W) + \epsilon/2 + C^{(4)}(p_n,h_n)$ for
every $n$, it follows that
$\sum_{i=0}^l \omega(v_i) \leq \mathcal{S}(W) + \epsilon/2$, hence
\begin{equation} \label{eq:om-v_0} \omega(v_0) \leq \mathcal{S}(W) +
  \epsilon/2.
\end{equation}

The other part of the limit of $\{u_n\}$, namely the negative gradient
trajectory $\eta$ of $f$, emanates from an $L_0$-boundary point of one
of the polygons, say $v_0$, and ends at the point $q$.

Consider now $v_0$ and $\eta$. Note that $\eta$ must be the constant
trajectory at the point $q$ since it goes into $q$ which is a maximum
of $f$. It follows that the polygon $v_0$ passes (along its boundary)
through the point $q$.

We now appeal to the special form of $J(p_0)$ over the ball
$B$. Recall that $v_0$ is $J(p_0)$-holomorphic. Thus restricting $v_0$
to the subdomain (of its definition) which is mapped to
$\textnormal{Int\,}(B)$ we obtain a {\em proper} $J^B$-holomorphic
curve $v_0'$ parametrized by a non-compact Riemann surface with one
boundary component. Moreover that boundary component is mapped to
$B \cap L_0$, and $q \in B$ which is the center of the ball is in the
image of that boundary component. Passing to the standard ball $B(r)$
via the symplectic embedding $e$ mentioned in~\eqref{eq:JB} we obtain
from $v'_0$ a proper $J_{\textnormal{std}}$-holomorphic curve $v''_0$
inside $B(r)$ which passes through $0$ and its boundary is mapped to
$B_{\mathbb{R}}(r) \subset B^{2n}(r)$. Applying a reflection along
$\mathbb{R}^n \times 0$ to $v''_0$, and gluing the result to $v''_0$
we obtain a proper $J_{\textnormal{std}}$-holomorphic curve (without
boundary) $\widetilde{v}''_0$ in $\textnormal{Int\,} B^{2n}(r)$ which
passes through $0$. By the Lelong inequality we have
$\pi r^2 \leq \omega_{\textnormal{std}}(\widetilde{v}''_0)$. Putting
everything together we obtain:
$$\delta(L_0,S) - \epsilon \leq 
\pi r^2 \leq \omega(\widetilde{v}''_0)= 2\omega(v''_0) \leq
2\omega(v_0) \leq 2\mathcal{S}(W) + \epsilon.$$ Since this
inequality holds for all $\epsilon>0$ the desired
inequality~\eqref{eq:SV-delta} follows.

\medskip

We now turn to the proof of Lemma stated earlier in the proof.
\begin{proof}[Proof of Lemma~\ref{l:unit-we}]
  Recall that the Hamiltonian function in the Floer data
  $\mathscr{D}_{L_0,L_0}$ of $(L_0,L_0)$ is $H_f^{L_0,L_0}$ and we
  have $\mathcal{O}(H_f^{L_0,L_0}) = \textnormal{Crit}(f)$.

  Let $u: \mathbb{R} \times [0,1] \longrightarrow M$ be a Floer strip
  connecting $x_{-}$ to $x_{+}$, where
  $x_{\pm} \in \textnormal{Crit}(f)$. Identifying
  $(D \setminus \{-1,+1\}, \partial D \setminus \{-1,+1\})$ with
  $(\mathbb{R} \times [0,1], \mathbb{R} \times \{0\} \cup \mathbb{R}
  \times \{1\})$ we obtain from $u$ a map
  $u': (D \setminus \{-1,+1\}, \partial D \setminus \{-1,+1\})
  \longrightarrow (M,L_0)$ that extends continuously to a map
  $\overline{u}': (D, \partial D) \longrightarrow (M,L_0)$.  Since
  $L_0$ is weakly exact we have $\omega(\overline{u}') = 0$, hence
  $\omega(u) = 0$.

  By~\eqref{eq:E-omega-strips} it follows that
  $f(x_{-}) = f(x_{+}) + E(u)$, where $E(u)$ is the energy of $u$
  (see~\eqref{eq:floer-eq-1}). As $E(u)\geq 0$ we have
  $f(x_{-}) \geq f(x_{+})$ with equality iff $E(u)=0$.

  Suppose by contradiction that $\langle \mu_1(x), q \rangle \neq 0$
  for some $x \in \textnormal{Crit}(f)$. Let $u$ be a Floer strip that
  contributes to $\mu_1(x)$ and connects $x$ to $q$. By the above, we
  have $f(x) \geq f(q)$. Since $q$ is the unique maximum of $f$ it
  follows that $x = q$. Moreover, $E(u) = 0$. The latter implies that
  $\partial_su \equiv 0$. But this can happen only if $u$ is the
  constant strip at $q$ which contradicts the fact that $u$
  contributes to $\mu_1(x)$.
\end{proof}

To complete the proof of~\eqref{eq:SV-delta} it remains only to treat
the case when the Lagrangians $L_0, L_1, \ldots, L_k$ do not intersect
pairwise transversely.

Fix $\epsilon>0$. We apply $k$ Hamiltonian isotopies, one to each
Lagrangian $L_i$, $1\leq i \leq k$, such that the following holds:
\begin{enumerate}
\item The images $L'_1, \ldots, L'_k$ of $L_1, \ldots, L_k$ after
  these isotopies are such that $L_0, L'_1, \ldots, L'_k$ intersect
  pairwise transversely.
\item The Hofer length of each of these isotopies is
  $\leq \epsilon/k$.
\item $\delta(L_0; S) - \epsilon \leq \delta(L_0; S')$, where
  $S' = L'_1 \cup \ldots \cup L'_k$.
\end{enumerate}
Let $V: L_0 \leadsto (L_1, \ldots, L_k)$ be a weakly exact
cobordism. We now glue to each of the negative ends $L_i$ of $V$ the
Lagrangian suspension associated to the preceding Hamiltonian isotopy
used to move $L_i$ to $L'_i$. The result is a new cobordism
$V':L_0 \leadsto (L'_1, \ldots, L'_k)$ whose shadow satisfies:
$\mathcal{S}(V') \leq \mathcal{S}(V) + \epsilon$.

Since the ends of $V'$ intersect pairwise transversely it follows from
what we have already proved that
$\mathcal{S}(V') \geq \frac{1}{2}\delta(L_0; S')$. Therefore:
$$\tfrac{1}{2}\delta(L_0; S) - \tfrac{\epsilon}{2} \leq 
\tfrac{1}{2}\delta(L_0; S') \leq \mathcal{S}(V') \leq \mathcal{S}(V) +
\epsilon.$$ As this holds for all $\epsilon>0$ the result readily
follows.

This completes the proof of~\eqref{eq:SV-delta}.

\medskip

We now turn to the proofs of the other two statements of the theorem.
\paragraph{{\sl Proof of statement~\eqref{i:ch-inters}}}
As in the previous part of the proof, we first assume that the
Lagrangians $L_1, \ldots, L_k$ intersect pairwise transversely.

Fix $\epsilon>0$ small enough such that
\begin{equation} \label{eq:SV-eps} \mathcal{S}(V) + \epsilon <
  \tfrac{1}{2}\delta^{\Sigma'}(N \cup S) - \tfrac{1}{2}\epsilon.
\end{equation}
Fix also $r>0$ with
\begin{equation} \label{eq:delta-sigma-p-N} \delta^{\Sigma'}(N\cup S)
  - \epsilon \leq \pi r^2 < \delta^{\Sigma'}(N \cup S).
\end{equation}
Write $\Sigma' = N \cap S = \{x_1, \ldots, x_m \}$ for the double
points of $N \cup S$, and let $e_{x_i}: B(r) \longrightarrow M$,
$i=1, \ldots, m$, be a collection of symplectic embeddings with the
properties as in the definition of $\delta^{\Sigma'}$ on
page~\pageref{eq:delta-sigma} (we take $\mathbb{L} = N \cup S$ and
$Q = \emptyset$ in that definition). Denote
$B := \cup_{i=1}^m \textnormal{image\,}(e_{x_i})$ and let $J^B$ be the
complex structure on $B$ whose value on
$\textnormal{image\,}(e_{x_i})$ is the push forward
$(e_{x_i})_* (J_{\textnormal{std}})$ of the standard complex structure
$J_{\textnormal{std}}$ of $B(r)$ via the map $e_{x_i}$.

We consider now two curves $\gamma, \gamma'$ of the the same shape as
in the earlier part of the proof (see Figure~\ref{f:gamma-gamma'-2})
and such that (similarly to~\eqref{eq:l-varphi-t}) there exists a
Hamiltonian isotopy, horizontal at infinity,
$\varphi_t: \mathbb{R}^2 \longrightarrow \mathbb{R}^2$, $t \in [0,1]$,
with $\varphi_0 = \id$, $\varphi_1(\gamma) = \gamma'$ and with
\begin{equation} \label{eq:l-varphi-t-2} \text{length} \{\varphi_t\}
  \leq \mathcal{S}(V) + \epsilon/2.
\end{equation}

\begin{figure}[htbp]
   \begin{center}
     \includegraphics[scale=0.5]{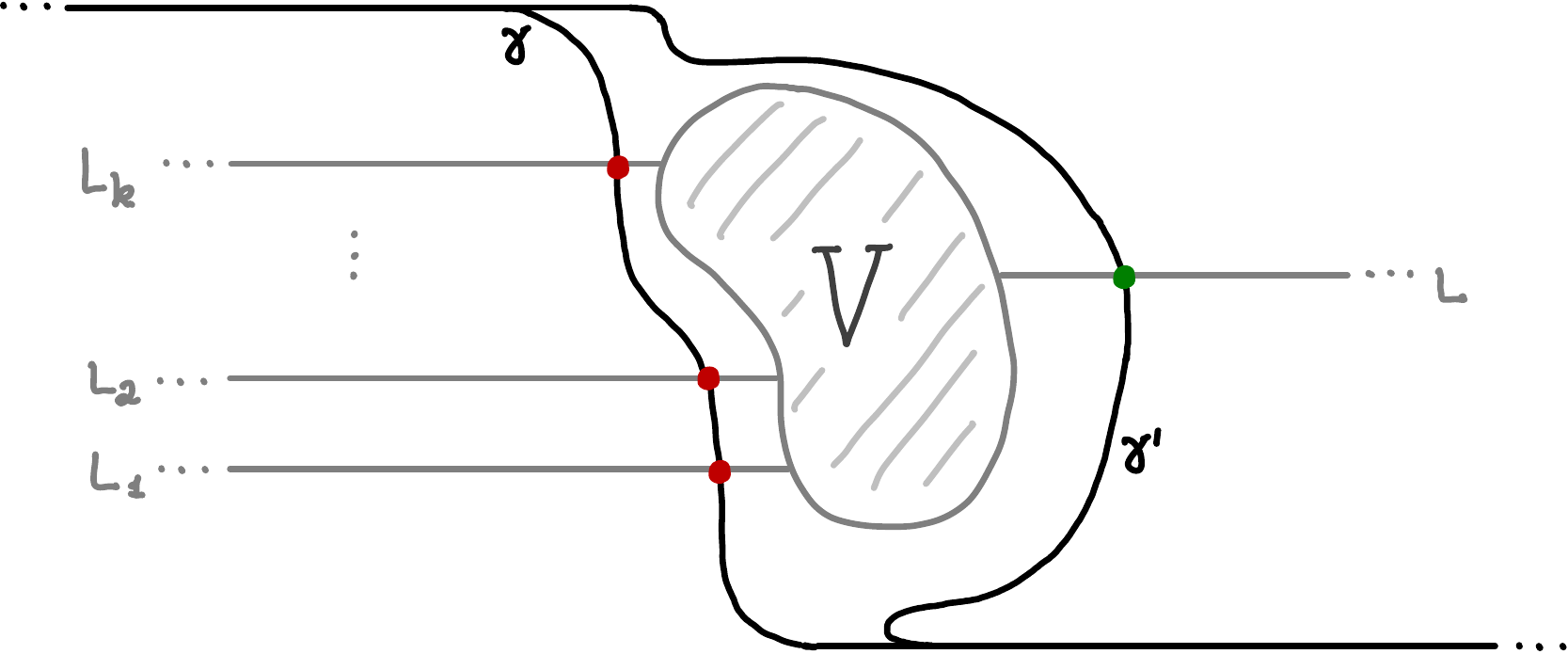}
   \end{center}
   \caption{The curves $\gamma$ and $\gamma'$ and the cobordism $V$.}
   \label{f:gamma-gamma'-2}
\end{figure}

Next we set up the Fukaya categories involved in the proof. Let
$\mathcal{C}$ be the collection of Lagrangians $L_1, \ldots, L_k,
L$. We will work with the Fukaya $\fuk(\mathcal{C};p)$ defined with
choices of perturbation data $p$ with the following restrictions. The
Floer data of $(N, L_i)$, prescribed by $p$, are of the type
$\mathscr{D}_{N, L_i} = (H^{N,L_i}=0, J(p))$, where $J(p)=\{J_t(p)\}$
is a family of almost complex structures such that $J_t(p)|_{B} = J^B$
for all $t$. The Floer data $\mathscr{D}_{L_i,L_j}$ $i \neq j$ have
the $0$ Hamiltonian function. Finally, the perturbation data
$\mathscr{D}_{N, L_{i_d}, \ldots, L_{i_1}}$, $d \geq 2$, all have
vanishing Hamiltonian form, i.e.~they are of the type $(K=0, J)$.  Due
to the assumption that $N, L_1, \ldots, L_k$ intersect pairwise
transversely, regular choices of perturbation data with the above
properties do exist. We denote the space of such regular choices by
$E''_{\textnormal{reg}}$. (It is important to note that the
restriction that $J_t(p)|_{B} = J^B$ for every $t$ does not pose any
regularity problem since every Floer strip or polygon relevant for the
definition of $\fuk(\mathcal{C}; p)$ cannot have its image lying
entirely inside $B$, and outside of $B$ we have not posed any
restrictions on the choice of almost complex structures.)

We set up the Fukaya categories
$\fukcob(\widetilde{\mathcal{C}}_{1/2}, \iota_{\gamma}(p,h))$ and
$\fukcob(\widetilde{\mathcal{C}}_{1/2}, \iota_{\gamma'}(p,h))$ and the
associated inclusion functors in the same way as in the previous part
of the proof.

Let $\mathcal{V}$, $\mathcal{V}'$ be the Yoneda modules corresponding
to $V$, one time viewed as an object
$V \in \textnormal{Ob}(\fukcob(\widetilde{\mathcal{C}}_{1/2};
\iota_{\gamma}(p,h)))$ and one time as
$V \in \textnormal{Ob}(\fukcob(\widetilde{\mathcal{C}}_{1/2};
\iota_{\gamma'}(p,h)))$. Consider the pull-back modules
$$\mathcal{M}_{V; \gamma, p, h} := \mathcal{I}^*_{\gamma; p, h}
\mathcal{V}, \quad \mathcal{M}_{V ; \gamma', p, h} :=
\mathcal{I}^*_{\gamma; p, h} \mathcal{V}'.$$

By Proposition~\ref{p:icones-M_j}
(and~\eqref{eq:filtered-cob-it-cone}) we have:
\begin{equation} \label{eq:filtered-cob-it-cone-V} S^{s_h}
  \mathcal{M}_{V; \gamma, p, h} \\ = \; \tcn (\mathcal{L}_k
  \xrightarrow{\; \overline{\phi}_k \;} \tcn(\mathcal{L}_{k-1}
  \xrightarrow{\; \overline{\phi}_{k-1} \;} \tcn( \cdots
  \tcn(\mathcal{L}_2 \xrightarrow{\; \overline{\phi}_2 \;}
  \mathcal{L}_1 )) {\cdot}{\cdot}{\cdot})),
\end{equation}
where $s_h \longrightarrow 0$ as $h \to 0$, and similarly to what we
have had on page~\pageref{eq:filtered-cob-it-cone-W},
$\overline{\phi}_i = (\phi_i, 0, \bm{\delta^{(i)}})$, with $\phi_i$ a
homomorphism of modules that shifts action by $\leq 0$ and has
discrepancy $\leq \bm{\delta}(p,h)$, where for every $d$ we have
$\delta_d (p,h) \in \on$. By similar arguments,
$S^{s'_h} \mathcal{M}_{V ; \gamma', p, h} = \mathcal{L}$, where
$\mathcal{L}$ is the Yoneda module of $L$, and
$s'_h \longrightarrow 0$ as $h \longrightarrow 0$.

Consider now the chain complexes
$$\mathcal{C}_{p,h} := \mathcal{M}_{V; \gamma, p, h}(N), \quad
\mathcal{C}'_{p,h} := \mathcal{M}_{V; \gamma', p, h}(N)$$ endowed with
the differential coming from the $A_{\infty}$-modules
$\mathcal{M}_{V; \gamma, p, h}$, $\mathcal{M}_{V; \gamma', p, h}$.
(Note that $\mathcal{C}_{p,h}$ defined here is different than the
$\mathcal{C}_{p,h}$ from page~\pageref{pp:C-ph}.)

By definition
$\mathcal{C}_{p,h} = CF(\gamma \times N, V; \mathscr{D}_{\gamma \times
  N, V})$, where $\mathscr{D}_{\gamma \times N, V}$ is the Floer datum
prescribed by $\iota_{\gamma}(p,h)$. Similarly
$\mathcal{C}'_{p,h} = CF(\gamma' \times N, V; \mathscr{D}_{\gamma'
  \times N, V})$, where $\mathscr{D}_{\gamma' \times N, V}$ is the
Floer datum prescribed by $\iota_{\gamma'}(p,h)$. Consider now the
Hamiltonian isotopy
$\widetilde{\varphi}_t := \varphi_t \times \id: \mathbb{R}^2 \times M
\longrightarrow \mathbb{R}^2 \times M$, $t \in [0,1]$, where
$\varphi_t$ is the Hamiltonian isotopy from
page~\pageref{eq:l-varphi-t-2}. Note that $\{\widetilde{\varphi}_t\}$
is horizontal at infinity and by~\eqref{eq:l-varphi-t-2} has Hofer
length $\leq \mathcal{S}(V) + \tfrac{1}{2}\epsilon$. Since
$\widetilde{\varphi}_1(\gamma \times N) = \gamma' \times N$, by
standard Floer theory (see e.g.~\cite[Chapter~5]{FO3:book-vol1}) this
isotopy induces two chain maps:
$$\phi: \mathcal{C}_{p,h} \longrightarrow \mathcal{C}'_{p,h}, 
\quad \psi: \mathcal{C}'_{p,h} \longrightarrow \mathcal{C}_{p,h},$$
which are both filtered and such that $\psi \circ \phi$ is chain
homotopic to $\id$ by a chain homotopy that shifts action by
$\leq \mathcal{S}(V) + \epsilon$. More specifically
$$\psi \circ \phi = id + K d^{\mathcal{C}_{p,h}} + 
d^{\mathcal{C}_{p,h}} K,$$ where $d^{\mathcal{C}_{p,h}}$ is the
differential of $\mathcal{C}_{p,h}$ and $K$ is a $\Lambda$-linear map
that shifts action by $\leq \mathcal{S}(V) + \epsilon$. Using the
formalism of~\eqref{eq:Bh-1} this means that
\begin{equation} \label{eq:Bh-psi-phi-id}
  B_h(\psi \circ \phi - \id) \leq \mathcal{S}(V) + \epsilon.
\end{equation}

We now appeal to Theorem~\ref{t:itcones}, by which we obtain the
following:
\begin{enumerate}
\item A chain complex $\mathcal{M}(N)$ whose underlying
  $\Lambda$-module coincides with $\mathcal{C}_{p,h}$ and whose
  differential $\mu_1^{\mathcal{M}(N)}$ is described by~\eqref{eq:aij}
  from Theorem~\ref{t:itcones}.
\item An isomorphism of chain complexes
  $\sigma_1: \mathcal{M}(N) \longrightarrow \mathcal{C}_{p,h}$ such
  that both $\sigma_1$ and its inverse
  $\sigma_1^{-1}:\mathcal{C}_{p,h} \longrightarrow \mathcal{M}(N)$
  shift action by $\leq C^{(1)}(p,h)$, where $C^{(1)}(p,h) \in \on$.
\end{enumerate}

We now estimate the action drop $\delta_{\mu_1^{\mathcal{M}(N)}}$ (as
defined in~\eqref{eq:delta-dC}) of the differential
$\mu_1^{\mathcal{M}(N)}$ of the chain complex $\mathcal{M}(N)$.
\label{pp:estim-delta_mu_1}
By Theorem~\ref{t:itcones} the differential $\mu_1^{\mathcal{M}(N)}$
comprises various $\mu_d$-operations, $1\leq d\leq k$, associated to
tuples of Lagrangians of the type
$(N, L_{i_d}, L_{i_{d-1}}, \ldots, L_{i_2}, L_{i_1})$, where
$i=i_1 < \cdots < i_d\leq j$, $1 \leq i\leq j \leq k$. Recall also
that the perturbation data $p \in E''_{\textnormal{reg}}$ were chosen
with vanishing Hamiltonian perturbation for tuple of Lagrangians as
above. Therefore, the above mentioned $\mu_d$-operations are defined
by counting (unperturbed) pseudo-holomorphic polygons $u$ with corners
mapped to intersection points between consecutive pairs of Lagrangians
in tuples as above. Each polygon $u$ contributing to these
$\mu_d$-operation has an intersection point in $N \cap L_j$ as one of
its inputs and an intersection point in $N \cap L_i$ as its
output. Moreover, these polygons are $J^B$-holomorphic over $B$.  We
thus obtain:
$$\omega(u) \geq \omega(\textnormal{image\,}(u) \cap B) \geq
\tfrac{1}{4} \pi r^2 + \tfrac{1}{4}\pi r^2 = \tfrac{1}{2}\pi r^2,$$
where the first inequality hold because $u$ is
unperturbed-pseudo-holomorphic over its entire domain, while the
second inequality follows from a Lelong-inequality type of argument
(see e.g.~\cite{Bar-Cor:Serre, Bar-Cor:NATO}). Combining the preceding
inequalities with~\eqref{eq:delta-sigma-p-N} we deduce that every
Floer polygon $u$ that participate in the calculation of the
differential $\mu_1^{\mathcal{M}(N)}$ must satisfy
$\omega(u) \geq \tfrac{1}{2}\delta^{\Sigma'}(N\cup S) -
\tfrac{\epsilon}{2}$. It follows that
\begin{equation} \label{eq:delta-mu-1-M}
  \delta_{\mu_1^{\mathcal{M}(N)}} \geq
  \tfrac{1}{2}\delta^{\Sigma'}(N\cup S) - \tfrac{\epsilon}{2}.
\end{equation}

In view of the map $\sigma_1$ and its inverse $\sigma_1^{-1}$,
mentioned earlier in the proof, we deduce the following estimate for
the action drop of the differential of $\mathcal{C}_{p,h}$:
$$\delta_{d^{\mathcal{C}_{p,h}}} \geq
\tfrac{1}{2}\delta^{\Sigma'}(N\cup S) - \tfrac{\epsilon}{2} -
2C^{(1)}(p,h).$$ As $C^{(1)}(p,h) \in \on$, by choosing
$p \in E''_{\textnormal{reg}}$ close enough to $\mathcal{N}$ and the
profile function $h$ small enough, we may assume in view
of~\eqref{eq:SV-eps} that
$\tfrac{1}{2}\delta^{\Sigma'}(N\cup S) - \tfrac{\epsilon}{2} -
2C^{(1)}(p,h) > \mathcal{S}(V) + \epsilon$. Combining the above
together with~\eqref{eq:Bh-psi-phi-id} we obtain:
$$\delta_{d^{\mathcal{C}_{p,h}}} > 
\mathcal{S}(V)+ \epsilon \geq B_h(\psi \circ \phi - \id).$$ By
Lemma~\ref{l:rig-cplx-n} (applied with $C = \mathcal{C}_{p,h}$,
$f = \psi \circ \phi$, $g = \id$) we deduce that $\psi \circ \phi$ is
injective. It follows that
$\phi: \mathcal{C}_{p,h} \longrightarrow \mathcal{C}'_{p,h}$ is
injective too, hence
$\dim_{\Lambda} \mathcal{C}_{p,h} \leq \dim_{\Lambda}
\mathcal{C}'_{p,h}$. But
$$\mathcal{C}_{p,h} = \bigoplus_{i=1}^k \bigoplus_{x \in N \cap L_i}
\Lambda \cdot x, \quad \mathcal{C}'_{p,h} = \bigoplus_{x \in N \cap
  L} \Lambda \cdot x,$$ which implies the desired
inequality~\eqref{eq:N-cap-L}. This completes the proof of
statement~\eqref{i:ch-inters} under the additional assumption that
$L_1, \ldots, L_k$ intersect pairwise transversely.

It remains to treat the case when the Lagrangians $L_1, \ldots, L_k$
do not necessarily intersect pairwise transversely.

Let $V$ be a cobordism as in the statement of the theorem. Fix $r>0$
and $\epsilon>0$ with
\begin{equation} \label{eq:perturb-a-eps}
  \mathcal{S}(V) + \epsilon < \pi r^2 < \delta^{\Sigma'}(N \cup S).
\end{equation}
Let $e_x: B(r) \longrightarrow M$, $x \in \Sigma' = N \cap S$, be a
collection of symplectic embeddings as in the definition of
$\delta^{\Sigma'}(N \cup S)$ on page~\pageref{eq:delta-sigma}.  Since
$\Sigma' = \cup_{i=1}^k (N \cap L_i)$ and the latter union is disjoint
every $x \in \Sigma'$ belongs to {\em precisely} one of the
Lagrangians $L_1, \ldots, L_k$. Now let $y \in \Sigma'$ and assume
that $y \in N \cap L_i$. Let $j \neq i$. It is easy to see from the
assumptions imposed on the embeddings $e_x$ in the definition of
$\delta^{\Sigma'}(N \cup S)$ that $L_j \cap e_y(B(r)) = \emptyset$. In
particular $L_j \cap L_i$ lies outside of $e_y(B(r))$. It follows that
$\cup_{i'<i''} (L_{i'} \cap L_{i''})$ lies outside of
$B := \cup_{x \in \Sigma'} \textnormal{image\,} e_x(B(r))$.

Next, apply a small Hamiltonian perturbation to each of the
Lagrangians $L_1, \ldots, L_k$, keeping them fixed inside $B$, so as
to obtain new Lagrangians $L'_1, \ldots, L'_k$ that intersect pairwise
transversely. By taking these perturbations small enough we may also
assume that no new intersection points between $S$ and $N$ have been
created, i.e.~$L'_i \cap N = L_i \cap N$ for every $i$.  Moreover, we
take these Hamiltonian perturbations to be small in the Hofer metric
so that the Hofer length of each of the isotopies generating the above
perturbations is $\leq \tfrac{\epsilon}{2k}$.

We now glue to each of the negative ends $L_i$ of $V$ the Lagrangian
suspension associated to the preceding Hamiltonian isotopy used to
move $L_i$ to $L'_i$. The result is a new cobordism
$V':L \leadsto (L'_1, \ldots, L'_k)$ with
$\mathcal{S}(V') \leq \mathcal{S}(V) + \frac{1}{2}\epsilon$. Combining
with~\eqref{eq:perturb-a-eps} we get:
$$\mathcal{S}(V') \leq \delta^{\Sigma'}(N \cup S').$$ 
As the ends $L'_i$ of $V'$ intersect pairwise transversely, by what we
have proved earlier we have:
$$\#(N\cap L) \geq \sum_{i=1}^{k}\# (N\cap L'_{i}).$$ Since
$N \cap L'_i = N \cap L_i$ for all $i$, the results follows and
completes the proof of statement~\eqref{i:ch-inters}.

\medskip
\paragraph{{\sl Proof of statement~\eqref{i:hf-inters}}}
The proof is similar to the proof of statement~\eqref{i:ch-inters}
above, only that now we use Proposition~\ref{prop:rig-cplx2} instead
of Lemma~\ref{l:rig-cplx-n} to estimate $\# (N\cap L)$
in~\eqref{eq:N-cap-L-HF}. Below we will mainly go over the points in
the proof that differ from the proof of statement~\eqref{i:ch-inters}.

Fix $\epsilon>0$ small enough and $r>0$ so that:
\begin{equation} \label{eq:SV-pr2-delta} \mathcal{S}(V) + \epsilon <
  \tfrac{1}{4}\delta^{\Sigma''}(S;N) - \tfrac{1}{4}\epsilon, \quad
  \delta^{\Sigma''}(S; N)-\epsilon \leq \pi r^2 <
  \delta^{\Sigma''}(S;N).
\end{equation}
Next, fix symplectic embeddings $e_x: B(r) \longrightarrow M$,
$x \in \Sigma''$, as in the definition of $\delta^{\Sigma''}(S;N)$ on
page~\pageref{eq:delta-sigma}. Fix also curves $\gamma$, $\gamma'$ as
in the proof of statement~\eqref{i:ch-inters}. We set up the Fukaya
categories $\fuk(\mathcal{C};p)$,
$\fukcob(\widetilde{\mathcal{C}}_{1/2}, \iota_{\gamma}(p,h))$,
$\fukcob(\widetilde{\mathcal{C}}_{1/2}, \iota_{\gamma'}(p,h))$ and the
inclusion functors $\mathcal{I}_{\gamma; p, h}$,
$\mathcal{I}_{\gamma'; p, h}$, in the same way as in the proof of
statement~\eqref{i:ch-inters}. We then define the chain complexes
$\mathcal{C}_{p,h}$, $\mathcal{C}'_{p,h}$, and the two chain maps
$\phi: \mathcal{C}_{p,h} \longrightarrow \mathcal{C}'_{p,h}$,
$\psi: \mathcal{C}'_{p,h} \longrightarrow \mathcal{C}_{p,h}$, with
\begin{equation} \label{eq:psi-circ-phi-2}
  \psi \circ \phi = \id + K \circ d^{\mathcal{C}_{p,h}} +
  d^{\mathcal{C}_{p,h}} \circ K,
\end{equation}
where $K$ shifts action by $\leq \mathcal{S}(V) + \epsilon$.

As before, we now use Theorem~\ref{t:itcones} and obtain a chain
complex $\mathcal{M}(N)$ whose underlying $\Lambda$-module coincides
with $\mathcal{C}_{p,h}$ and equals
\begin{equation} \label{eq:MN-split}
  \mathcal{M}(N) = \oplus_{i=1}^k CF(N,L_i; \mathscr{D}_{N,L_i}).
\end{equation}
By Theorem~\ref{t:itcones} the differential $\mu_1^{\mathcal{M}(n)}$
can be written with respect to the splitting~\eqref{eq:MN-split} as an
upper triangular matrix of operators $(a_{i,j})$ with diagonal
elements $a_{i,i} = \mu_1^{CF(N,L_i; \mathscr{D}_{N,L_i})}$. Write
$\mu_1^{\mathcal{M}(N)} = d_0 + d_1$, where
$d_0 = \oplus_{i=1}^k \mu_1^{CF(N,L_i; \mathscr{D}_{N,L_i})}$ with
respect to the splitting~\eqref{eq:MN-split} and
$d_1: \mathcal{M}(N) \longrightarrow \mathcal{M}(N)$ is the operator
represented by the part of the matrix $(a_{i,j})$ that lies strictly
above the diagonal.

The operator $d_1$ consists of sums of $\mu_d$-operations, $d \geq 2$,
where among the inputs of each such operation there is at least one
point from $L_i \cap L_j$, $i<j$. A similar argument to the one used
on page~\pageref{pp:estim-delta_mu_1} in estimating
$\delta_{\mu_1^{\mathcal{M}(N)}}$ in the proof of
statement~\eqref{i:ch-inters} shows that
$\delta_{d_1} \geq \tfrac{1}{4}\pi r^2$. Here, $\delta_{d_1}$ is the
action drop of $d_1$ (see~\S\ref{s:filt-ch},
page~\pageref{pp:delta-f}). Combining with~\eqref{eq:SV-pr2-delta} we
get:
\begin{equation} \label{eq:delta-d_1-ineq}
  \delta_{d_1} \geq \tfrac{1}{4}\delta^{\Sigma''}(S;N) -
  \tfrac{1}{4}\epsilon.
\end{equation}

Put
$f':= \psi \circ \phi : \mathcal{C}_{p,h} \longrightarrow
\mathcal{C}_{p,h}$. By~\eqref{eq:psi-circ-phi-2} we have
$B_h(f'-\id) \leq \mathcal{S}(V) + \epsilon$. Recall from
Theorem~\ref{t:itcones} the isomorphism of chain complexes
$\sigma_1: \mathcal{C}_{p,h} \longrightarrow \mathcal{M}(N)$ such that
both $\sigma_1$ and its inverse $\sigma_1^{-1}$ shift action by
$\leq C^{(1)}(p,h)$, where $C^{(1)}(p,h) \in \on$. Consider
$$f:= \sigma_1 \circ f' \circ \sigma_1^{-1} : \mathcal{M}(N)
\longrightarrow \mathcal{M}(N).$$ We would like to apply
Proposition~\ref{prop:rig-cplx2} to $C = \mathcal{M}(N)$, $d_0$, $d_1$
as defined above and the map $f$. We have:
$$f - \id = (\sigma_1 \circ K \circ \sigma_1^{-1}) \circ
d^{\mathcal{C}_{p,h}} + d^{\mathcal{C}_{p,h}} \circ (\sigma_1 \circ K
\circ \sigma_1^{-1}),$$ hence
$B_h(f - \id) \leq \mathcal{S}(V) + \epsilon + 2 C^{(1)}(p,h)$. As
$C^{(1)}(p,h) \in \on$, by taking $p$ close enough to $\mathcal{N}$
and the profile function $h$ small enough, we may assume in view
of~\eqref{eq:SV-pr2-delta} that
$$\mathcal{S}(V) + \epsilon + 2 C^{(1)}(p,h) <
\tfrac{1}{4}\delta^{\Sigma''}(S;N) - \tfrac{1}{4}\epsilon.$$

Together with~\eqref{eq:delta-d_1-ineq} we now obtain:
\begin{equation} \label{eq:Bh-f-id} B_h(f - \id) < \delta_{d_1}.
\end{equation}

In order to apply Proposition~\ref{prop:rig-cplx2} it remains to check
that
\begin{equation} \label{eq:dim-H-ineq}
  \dim_{\Lambda} H_*(\mathcal{M}(N), d_0) \geq \dim_{\Lambda}
  H_*(\mathcal{M}(N), \mu_1^{\mathcal{M}(N)}).
\end{equation}
This follows from standard results in homological algebra since
$$H_*(\mathcal{M}(N), d_0) = \oplus_{i=1}^k HF(N, L_i), \quad 
H_*(\mathcal{M}(N), \mu_1^{\mathcal{M}(N)}) \cong
H_*(\mathcal{C}_{p,h}, d^{\mathcal{C}_{P,h}})$$ and
$\mathcal{C}_{p,h}$ is an iterated cone of the type
\begin{equation*}
  \begin{aligned}
    \mathcal{C}_{p,h} = \tcn (CF(N,L_k)
    \to \tcn( & CF(N, L_{k-1}) \\
    & \to \tcn( \cdots \tcn(CF(N, L_2) \to CF(N,L_1)))
    {\cdot}{\cdot}{\cdot})).
  \end{aligned}
\end{equation*}
  
We are now in position to apply Proposition~\ref{prop:rig-cplx2}, by
which we obtain:
$$\dim_{\Lambda} (\textnormal{image\,}(f)) \geq \dim_{\Lambda}
H_*(\mathcal{M}(N), d_0) = \sum_{i=1}^k \dim_{\Lambda} HF(N,L_i).$$ On
the other hand:
$$\dim_{\Lambda} (\textnormal{image\,}(f)) \leq \dim_{\Lambda} \mathcal{M}(N)
= \sum_{i=1}^k \#(N \cap L_i).$$ Putting the last two inequalities
together yields~\eqref{eq:N-cap-L-HF} and concludes the proof of
statement~\eqref{i:hf-inters}.  \qed

\begin{rem}\label{rem:Misha-etc}
a. The intersection
 result at the point  (a) in Theorem \ref{thm:fission} implies variants of both inequalities (\ref{eq:SV-delta})  and
 (\ref{eq:N-cap-L-HF}) but with slightly different assumptions and for different constants $\delta$. 
 This is obvious concerning (\ref{eq:N-cap-L-HF}) and for inequality (\ref{eq:SV-delta}) it is seen by applying (\ref{eq:N-cap-L}) to the  cobordism $W:\emptyset\cobto (L,L_{1},\ldots, L_{k})$ obtained by bending the positive end of $V$ half way clockwise - as in Figure \ref{f:cob-v-w} - and taking $N$ a Hamiltonian
 perturbation of $L$.
 
 b. The following argument, due to Misha Khanevsky, leads to a more direct proof of the first part of Theorem \ref{thm:fission} but gives a weaker inequality.  We reproduce the argument here with Misha's permission.
 A result of Usher (Theorem 4.9 in \cite{Usher3}) claims that, given two Lagrangians $V$ and $V'$ that intersect transversely and non-trivially, there exists $\delta>0$ depending on
 $V$ and $V'$, such that the energy required to disjoin $V$ from $V'$ is greater than $\delta$.
 This result was proven for compact or (tame at infinity) symplectic manifolds but can be adjusted without any difficulty to the case of  Lagrangians with cylindrical ends in $\C\times M$. Assume that 
 $L\not\subset \cup_{i}L_{i}$ and let $T$ be a small Lagrangian torus, disjoint from all $L_{i}$'s, and 
 such that $T$ intersects $L$ transversally and non-trivially. Let $\gamma'$ be a curve as in Figure \ref{f:gamma-gamma'-2} and let $V'=\gamma'\times T$.  Thus $V'$ and $V$ intersect non-trivially
 and transversely (see also Figure \ref{f:cob-v-w}). The isotopy taking the curve $\gamma$ to the curve $\gamma'$  in Figure \ref{f:gamma-gamma'-2} disjoins $V'$ from $V$ and thus its energy, that can be assumed to be as close as needed to $\mathcal{S}(V)$, has to exceed $\delta$.
By inspecting Usher's proof \cite{Usher3}, the constant $\delta$ is smaller than the smallest area of a non-trivial  $J$-disk, sphere or strip with boundaries on $V$ and on $V'$. By standard arguments, the latter areas can be shown to only depend on $L$ and $T$ and this finishes Khanevsky's argument. However, notice that the
dependence on $T$ of the constant $\delta$ here  means that it is generally smaller than $\delta(L;S)$ from the statement of Theorem \ref{thm:fission}.  Moreover, this argument does not imply the points (a) and (b) of the statement and it also can not be adjusted to estimate the algebraic measurements that we will see later in the paper in Corollary \ref{cor:alg-weights}.
   \end{rem}

\subsection{Proof of
  Theorem~\ref{thm:fission-mon}} \label{sb:prf-thm-fission-mon} The
proofs of statements~\eqref{i:ch-inters} and~\eqref{i:hf-inters} given
in~\S\ref{sb:prf-thm-fission} carry over to the monotone case without
any modifications.

We now explain how to adjust the proof of~\eqref{eq:SV-delta} given
in~\S\ref{sb:prf-thm-fission} in order to
prove~\eqref{eq:SV-delta-mon}.

We may assume throughout the proof that $\mathcal{S}(V) < A_L$, for
otherwise the inequality~\eqref{eq:SV-delta-mon} is trivially
satisfied. We need to prove that
$\mathcal{S}(V) \geq \tfrac{1}{2}\delta(L;S)$.

We fix $\epsilon>0$ as in the proof of~\eqref{eq:SV-delta} but we
require additionally that 
\begin{equation} \label{eq:SV-AL-eps} \mathcal{S}(W) + \epsilon < A_L.
\end{equation}
The proof now goes along the same lines as the proof
of~\eqref{eq:SV-delta}, detailed in~\S\ref{sb:prf-thm-fission}, up to
the point where we had to use Lemma~\ref{l:unit-we} (see
page~\pageref{l:unit-we}). That lemma does not hold in the monotone
case, and we will now use the following lemma instead:
\begin{lem} \label{l:unit-mo} Let $c \in \textnormal{Crit}(f)$, viewed
  as an element of $\mathcal{O}(H^{L_0,L_0}_f)$. If
  $\langle \mu^{CF(L_0,L_0;p)}_1(c), q \rangle \neq 0$ then
  $\nu(\langle \mu_1^{CF(L_0,L_0;p)}(c), q\rangle) \geq A_{L_0}$.
\end{lem}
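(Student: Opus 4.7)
The plan is to adapt the proof of Lemma~\ref{l:unit-we}, replacing the weak exactness argument (which forced $\omega(u)=0$) by the monotonicity constraint, which forces any nontrivial disk bubble to carry at least area $A_{L_0}$. The key geometric input is the same energy-area identity \eqref{eq:E-omega-strips}, combined with the fact that, by our choice of perturbation data $p \in E''_{\textnormal{reg}}$, the Hamiltonian function in $\mathscr{D}_{L_0,L_0}$ is $H_f^{L_0,L_0}$ with $\mathcal{O}(H_f^{L_0,L_0}) = \textnormal{Crit}(f)$, and for each orbit $\gamma_x$ sitting over $x \in \textnormal{Crit}(f)$ the integral $\int_0^1 H_f^{L_0,L_0}(\gamma_x(t))\,dt$ equals $f(x)$.

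Concretely, I would take a Floer strip $u:\mathbb{R}\times[0,1]\to M$ which contributes nontrivially to $\langle \mu_1(c), q\rangle$. Conformally identifying $\mathbb{R}\times[0,1]$ with $D\setminus\{-1,+1\}$ and using removal of singularities, $u$ extends to a continuous map $\bar u:(D,\partial D)\to(M,L_0)$ representing a class $[\bar u]\in H_2^D(M,L_0)$, with $\omega([\bar u])=\omega(u)$. The identity \eqref{eq:E-omega-strips} becomes
\begin{equation*}
  E(u) \;=\; \omega(u) + f(c) - f(q).
\end{equation*}
Since $E(u)\geq 0$ and $q$ is the unique (global) maximum of $f$, we immediately get $\omega(u) \geq f(q)-f(c) \geq 0$.

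Now I would split into two cases. If $\omega(u)=0$, then the above inequalities force $E(u)=0$ and $f(c)=f(q)$, hence $c=q$ and $u$ is the stationary strip at $q$; but stationary strips do not contribute to $\mu_1$, a contradiction. Therefore every strip contributing to $\langle \mu_1(c),q\rangle$ satisfies $\omega(u)>0$. In that case $[\bar u]\in H_2^D(M,L_0)$ has positive symplectic area, and by the definition \eqref{eq:min-disk-area} of the minimal disk area we obtain $\omega(u)=\omega([\bar u])\geq A_{L_0}$.

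The main (and only) potential obstacle is to guarantee that the cancellation in the sum $\langle \mu_1(c),q\rangle = \sum_u T^{\omega(u)}$ cannot drop the valuation below $A_{L_0}$; but this is automatic, since the standard valuation $\nu$ on $\Lambda$ satisfies $\nu(\sum_i a_i T^{\lambda_i})\geq \min_i \lambda_i$ whenever all $a_i\neq 0$. Since each exponent $\omega(u)$ appearing in the sum is $\geq A_{L_0}$, this yields $\nu\bigl(\langle \mu_1(c),q\rangle\bigr)\geq A_{L_0}$, as required. Note that the assumption $\mathcal{S}(W)+\epsilon<A_L=A_{L_0}$ from \eqref{eq:SV-AL-eps} is exactly what will allow this lemma to play the role of Lemma~\ref{l:unit-we} further downstream, when one argues that the contribution from $\mu_1(b_0)$ to the expansion \eqref{eq:q-equals-sum} of $q$ cannot absorb the coefficient of $q$ (since its Novikov valuation is too large).
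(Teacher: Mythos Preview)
Your proof is correct and in fact cleaner than the paper's. Both proofs extend the Floer strip $u$ to a disk $\bar u:(D,\partial D)\to(M,L_0)$ and then argue that $\omega(\bar u)>0$, after which the definition of $A_{L_0}$ (together with the discreteness of $\omega$ on $H_2^D(M,L_0)$ implied by monotonicity) gives $\omega(u)\ge A_{L_0}$. The difference lies in how positivity of $\omega(\bar u)$ is established: the paper invokes the Fredholm dimension formula $\dim\mathcal{M}^*_u(c,q)=|c|-|q|-1+\mu([\bar u])$, uses $|q|=n$ and $|c|\le n$ to conclude $\mu([\bar u])\ge n+1-|c|>0$, and then appeals to the monotonicity relation $\omega=\rho\mu$ to get $\omega([\bar u])>0$. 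You instead reuse the energy--area identity exactly as in Lemma~\ref{l:unit-we}, obtaining $\omega(u)\ge f(q)-f(c)\ge 0$ with equality forcing the stationary strip and a contradiction. Your route avoids any reference to gradings or Maslov indices (consistent with the paper's declared ungraded framework) and makes the lemma a direct strengthening of Lemma~\ref{l:unit-we}; the paper's route, while also valid, gives the sharper Maslov information $\mu([\bar u])\ge n+1-|c|$, which is not needed here.
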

We postpone the proof for a while and continue with the proof of
Theorem~\ref{thm:fission-mon}. As in the proof of
Theorem~\ref{thm:fission-mon} we decompose the element $b$
from~\eqref{eq:mu-1-b-A} as $b=b_0 + \cdots + b_k$ with
$b_i \in CF(L_0,L_i; p)$. We cannot deduce, as earlier, that
$\langle \mu_1(b_0), q \rangle = 0$, however by Lemma~\ref{l:unit-mo}
and~\eqref{eq:SV-AL-eps} we still obtain that
$$\nu( \langle \mu_1(b_0), q\rangle) \geq A_{L_0} - \mathcal{S}(W) -
C^{(2)}(p,h) - \tfrac{1}{2}\epsilon > \tfrac{1}{2}\epsilon -
C^{(2)}(p,h),$$ where $C^{(2)}(p,h) \in \on$. By taking $p$ close
enough to $p_0 \in \mathcal{N}$ and $h$ small enough we may assume
that $\nu( \langle \mu_1(b_0), q\rangle) > 0$. In view
of~\eqref{eq:q-equals-sum} we can now deduce, as before, that there
exists $1 \leq j_0 \leq k$ such that~\eqref{eq:nu-b-j0} holds. From
this point on, the proof continues exactly as carried out in the
weakly exact case in~\S\ref{sb:prf-thm-fission}.

It remains to proof the preceding lemma.
\begin{proof}[Proof of Lemma~\ref{l:unit-mo}]
  Let $u \in \mathcal{M}(c,q; \mathscr{D}_{L_0,L_0})$ be a Floer strip
  that goes from $c$ to $q$ and contributes to
  $\mu^{CF(L_0,L_0;p)}_1(x)$. We need to show that
  $\omega(u) \geq A_{L_0}$.

  Indeed, as in the proof of Lemma~\ref{l:unit-we} on
  page~\pageref{l:unit-we}, after identifying
  $(\mathbb{R} \times [0,1], \mathbb{R} \times \{0\} \cup \mathbb{R}
  \times \{1\})$ with
  $(D\setminus \{-1,+1\}, \partial D \setminus \{-1,+1\})$ the map $u$
  extends continuously to a map
  $\overline{u}: (D, \partial D) \longrightarrow (M,L_0)$.  The
  dimension of the component of $u$ in the space
  $\mathcal{M}^*(c,q; \mathscr{D}_{L_0,L_0})$ of {\em
    non-parametrized} Floer trajectories connecting $c$ to $q$ is
  given by
  $$\dim \mathcal{M}^*_{u}(c,q; \mathscr{D}_{L_0,L_0}) =
  |c|-|q|-1+\mu([\overline{u}]) = |c|-n-1+\mu([\overline{u}]),$$ where
  $\mu$ is the Maslov index and $[\overline{u}] \in H_2^D(M,L_0)$ is
  the homology class induced by $\overline{u}$.  Since
  $\dim \mathcal{M}^*_{u}(c,q; \mathscr{D}_{L_0,L_0}) \geq 0$ we must
  have $\mu([\overline{u}]) \geq n+1-|c| > 0$. By monotonicity of
  $L_0$ we have $\omega([\overline{u}]) \geq A_{L_0}$, hence
  $\omega(u) \geq A_{L_0}$. This concludes the proof of the lemma.
\end{proof}

The proof of Theorem~\ref{thm:fission-mon} is now complete. 
\qed


\section{Metrics on Lagrangian spaces and examples}\label{sec:examples-metr}

This section gives some context to the phenomena reflected in Theorem
\ref{thm:fission} and discusses a number of applications and ramifications.

\subsection{Shadow metrics on $\mathcal{L}ag^{\ast}(M)$}\label{subsec:shad-metric}
Let $(M,\omega)$ be a symplectic manifold. Fix a class
$\mathcal{L}ag^{*}(M)$ of Lagrangian submanifolds of $M$, where $*$
that can be either $*=we$ or $(mon, \mathbf{d})$,
see~\ref{sb:monotone}. Fix a family
$\mathcal{F} \subset \mathcal{L}ag^{*}(M)$ of Lagrangian submanifolds
of $M$. Let $L$ and $L'$ be two other Lagrangians in
$\mathcal{L}ag^{*}(M)$. Theorem~\ref{thm:fission} suggests the
definition of the following two sequences of numbers. The definition of these
numbers has a geometric underpinning in that it is based on the existence of 
certain cobordisms.

First, for each $a > 0$,  define the {\em geometric $\epsilon$-cone-length} of $L'$ relative to $L$ (with
respect to $\mathcal{F}$) as
$$l^{\mathcal{F}}_{a}(L',L ) := \min \{ k \in \mathbb{N} \mid 
\exists \ V:L' \cobto (L_1,...,L_{s-1}, L, L_{s},...,L_k), L_i \in
\mathcal{F}, \; \mathcal{S}(V)\leq a\}~.~$$ Here, the minimum
is taken only over cobordisms
$V \in \mathcal{L}ag^{*}(\mathbb{R}^2 \times M)$, i.e. in the given
class $*$. It is important to note that we allow $V$ to be disconnected. 
We use the convention that the number $l^{\mathcal{F}}_{a}(L',L )$ equals $0$ 
if $L$ and $L'$ are related by a simple cobordism $V:L'\cobto L$ of shadow $\leq a$
(a cobordism with just two possibly non-void ends, one positive and one negative, is called simple).
We set $l^{\mathcal{F}}_{a}(L',L)=\infty$ if no
cobordism $V$ as above exists. We will omit $\mathcal{F}$ from the
notation when there is no risk of confusion.

It is clear that $l_{a}^{\mathcal{F}}(L', L)$ is non-increasing
in $a$ and symmetric with respect to $L,L'$. Next, define
$l^{\mathcal{F}}(L', L) := \lim_{a\to\infty}
l^{\mathcal{F}}_{a}(L',L)$ to be the {\em absolute} cone length
of $L'$ relative to $L$ and
$l^{\mathcal{F}}_{0}(L',L) :=\lim_{a\to 0}l_{a}(L',L)$.

In view of Theorem \ref{thm:fission} it is natural to also estimate
the minimal shadow required for splittings as in the definition of
$l_{a}^{\mathcal{F}}$ and thus define a second family of
natural measurements as follows. For every $k \in \mathbb{N}$ define:
\begin{equation}\label{eq:pseudo-metric}
  d^{\mathcal{F}}_{k}(L',L) := \inf\{\mathcal{S}(V) \mid 
  \exists\ V:L' \cobto (L_1,...,L_{s-1}, L, L_{s},...,L_r), \; L_i \in
  \mathcal{F}, \, r \leq k\}.
\end{equation}
Again, the infimum is taken only over cobordisms $V$ of class $*$ 
and we allow $V$ to be disconnected.  This is significant as, for instance,
if $\mathcal{F}$ contains a representative in each Hamiltonian isotopy class of the Lagrangians in $\mathcal{L}ag^{\ast}(M)$, then $d^{\mathcal{F}}_{2}(L',L)$ is finite for all $L,L'\in \mathcal{L}ag^{\ast}(M)$ (one can take $V$ as an appropriate union $V_{0}\cup V_{1}$ of two disjoint Lagrangian suspensions $V_{0}:L\to L_{1}$, $V_{1}:\emptyset \to (L',L'_{1} )$ with $L_{1}$ and $L'_{1}$ respectively
Hamiltonian isotopic to $L$ and to $L'$). We take $d^{\mathcal{F}}_{k}(L',L)=\infty$
if no cobordisms $V$ as in (\ref{eq:pseudo-metric}) exist.

We call $d^{\mathcal{F}}_{k}$ the {\em geometric $k$-splitting energy} of $L'$
relative to $L$ (with respect to $\mathcal{F}$).  Again,
$d^{\mathcal{F}}_{k}$ is symmetric in $(L, L')$, and
$d^{\mathcal{F}}_{k}(L',L)$ is non-increasing in $k$.  Note that
$d^{\mathcal{F}}_{0}$ is the ``shadow'' metric on elementary cobordism
equivalence classes as defined in~\cite{Co-She:metric}. Thus for a
Hamiltonian diffeomorphism $\phi$, we have
$$d^{\mathcal{F}}_{0}(\phi(L),L)\leq ||\phi||_{H}$$ 
(the latter stands
for the Hofer norm of $\phi$).

The following inequality is immediate:
$$d^{\mathcal{F}}_{k+k'}(L,L'')\leq d^{\mathcal{F}}_{k}(L,L')+
d^{\mathcal{F}}_{k'}(L',L'').$$ Obviously, we also have:
$d^{\mathcal{F}}_{l^{\mathcal{F}}_{a}(L',L)}(L',L) \leq
a$ and
$l^{\mathcal{F}}_{d^{\mathcal{F}}_{k}(L',L)}(L',L) \leq k$.

Finally, we define also the following measurement:
\begin{equation}\label{eq:pseudo-metric-gen}
  d^{\mathcal{F}}(L,L')=\lim_{k\to \infty} d_{k}^{\mathcal{F}}(L,L')=\inf_{k\geq 0}d_{k}^{\mathcal{F}}(L,L')~.~
\end{equation}
From the above it follows that $d^{\mathcal{F}}(-,-)$ is a
pseudo-metric.  By definition, $d^{\mathcal{F}}(L,L')$
 is infinite only if there are no cobordisms relating $L$ to $L'$ and with all the other ends in $\mathcal{F}$.
 
 The first point of Theorem \ref{thm:fission} (Theorem \ref{t:main-A}) implies:
\begin{cor}\label{cor:vanishing}
  If $d^{\mathcal{F}}(L',L)=0$, then
  $L\subset L'\cup \overline{\cup_{K\in\mathcal{F}}K}$ and
    $L' \subset L \cup \overline{\cup_{K\in\mathcal{F}}K}$.
\end{cor}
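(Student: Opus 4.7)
The plan is to argue by contradiction using Theorem~\ref{t:main-A} (or its monotone analogue from Theorem~\ref{thm:fission-mon}) together with the lower bound on the Gromov width $\delta(L';S)$ supplied by a point in $L'$ avoiding $L \cup \overline{\cup_{K\in\mathcal{F}}K}$. By the stated symmetry $d^{\mathcal{F}}(L,L') = d^{\mathcal{F}}(L',L)$ combined with the symmetry of $d^{\mathcal{F}}_k$ (which follows by bending the cobordisms so as to interchange the roles of $L$ and $L'$, at the cost of arbitrarily small extra shadow), it suffices to establish only one of the two inclusions, say $L' \subset L \cup \overline{\cup_{K\in\mathcal{F}}K}$.

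Suppose this inclusion fails; then there is a point $p \in L'$ with $p \notin L \cup \overline{\cup_{K\in\mathcal{F}}K}$. Because the latter set is closed, one can choose an open Darboux neighborhood $U$ of $p$ disjoint from it, and then a symplectic embedding $e : B(r) \to M$ with $e(0)=p$, $e^{-1}(L') = B_{\mathbb{R}}(r)$ and $e(B(r)) \subset U$. The key point is that $e(B(r))$ is then disjoint from $L$ as well as from $K$ for every $K \in \mathcal{F}$ simultaneously, since it lies in $U$ which avoids the entire closure $\overline{\cup_{K\in\mathcal{F}}K}$. In particular, $\pi r^{2}$ serves as a uniform lower bound for $\delta(L';S)$ whenever $S$ is a finite union of Lagrangians consisting of $L$ together with any number of Lagrangians from $\mathcal{F}$.

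Now use the assumption $d^{\mathcal{F}}(L',L) = 0$: for every $\epsilon > 0$ there exists a cobordism $V \in \mathcal{L}ag^{\ast}(\mathbb{R}^{2}\times M)$,
$$V : L' \cobto (F_{1},\dots,F_{s-1}, L, F_{s},\dots,F_{r}), \quad F_{i} \in \mathcal{F},$$
with $\mathcal{S}(V) < \epsilon$. The negative ends form a set $S$ as above, so $\delta(L';S) \geq \pi r^{2}$. Applying Theorem~\ref{t:main-A} (or Theorem~\ref{thm:fission-mon} in the monotone case, using the constant $\min\{\tfrac{1}{2}\pi r^{2}, A_{L'}\}$), we obtain $\mathcal{S}(V) \geq \tfrac{1}{2}\pi r^{2}$ (respectively $\mathcal{S}(V) \geq \min\{\tfrac{1}{2}\pi r^{2}, A_{L'}\}$), independently of the cobordism. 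Choosing $\epsilon$ smaller than this fixed positive constant yields the required contradiction. The symmetric inclusion $L \subset L' \cup \overline{\cup_{K\in\mathcal{F}}K}$ follows by the same reasoning applied to $d^{\mathcal{F}}(L,L')$, with the roles of $L$ and $L'$ exchanged.

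The only mildly delicate step is the symmetry $d^{\mathcal{F}}(L,L')=d^{\mathcal{F}}(L',L)$ at the level of cobordisms with prescribed shadow; this is handled by bending both the positive end $L'$ and the $L$-end of any admissible $V$ through half-turns in $\mathbb{R}^{2}$, which produces a cobordism with $L$ as the single positive end and $L'$ among the negative ends, while adding only an arbitrarily small amount to the shadow. Everything else is a direct application of Theorem~\ref{t:main-A} combined with the elementary observation that avoiding a closed set on a small ball is an open condition.
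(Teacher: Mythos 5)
Your proof is correct and follows the same route as the paper's: assume an inclusion fails, extract a small Darboux ball with real part along the relevant Lagrangian and disjoint from the closed set $L\cup\overline{\cup_{K\in\mathcal{F}}K}$, use this to bound $\delta$ from below uniformly over all admissible negative-end sets $S$, and conclude from Theorem~\ref{t:main-A} (or its monotone variant) that the shadow of any admissible cobordism is bounded away from zero, contradicting $d^{\mathcal{F}}=0$. The paper's proof is terser but relies on exactly the same two ingredients — the positivity of the relative Gromov width when the inclusion fails, and the shadow lower bound — and handles the second inclusion by the same ``swap the roles'' remark you make; your extra care in spelling out the ball embedding, the uniformity of the lower bound across different $S$, and the monotone case is fine but not a different argument.
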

\begin{proof}
  Indeed, if the first inclusion in this statement does not hold, then
  $\delta(L; L'\cup \overline{\cup_{K\in\mathcal{F}}K})>0$ and as a
  consequence of Theorem~\ref{thm:fission} it follows that any
  cobordism having $L$ as its positive end, with one negative end
  equal to $L'$ and with the other negative ends from $\mathcal{F}$,
  is of shadow at least
  $\delta(L; L'\cup \overline{\cup_{K\in\mathcal{F}}K})/2$ which means
  that $d^{\mathcal{F}}(L',L)$ can not vanish.

  In case the other inclusion from the statement does not hold, the
  proof is similar.
\end{proof}

It is easy to see (Remark \ref{rem:pseudo} below) that the pseudo-metric
$d^{\mathcal{F}}$ given by (\ref{eq:pseudo-metric-gen}) is in general
degenerate. However, there is a simple
way to adjust it so as to obtain a genuine
metric (possibly infinite). 
  
\begin{cor} \label{cor:deblurring} Let $\mathcal{F}$ and
  $\mathcal{F}'$ be two families of Lagrangians in $\mathcal{L}ag^{\ast}(M)$ so
  that the intersection
$(\overline{\cup_{K\in\mathcal{F}}K}) \cap
    (\overline{\cup_{K'\in \mathcal{F}'}K'})$ is totally disconnected.
     The pseudo-metric on
  $\mathcal{L}ag^{\ast}(M)$ defined by:
  $$\widehat{d}^{\mathcal{F},\mathcal{F}'}=\max\{d^{\mathcal{F}}, d^{\mathcal{F}'}\}$$
  is non-degenerate.
\end{cor}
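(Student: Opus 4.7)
The strategy is straightforward: combine the fact that the maximum of two pseudo-metrics is a pseudo-metric with the vanishing obstruction furnished by Corollary~\ref{cor:vanishing}, and then use connectedness of Lagrangians plus the total disconnectedness hypothesis to rule out degeneracy.

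First I would verify that $\widehat{d}^{\mathcal{F},\mathcal{F}'}$ is a pseudo-metric. Symmetry and the triangle inequality for $d^{\mathcal{F}}$ and $d^{\mathcal{F}'}$ are inherited from the general discussion in~\S\ref{subsec:shad-metric}, and both properties are preserved under pointwise maximum. The value $\widehat{d}^{\mathcal{F},\mathcal{F}'}(L,L)=0$ is immediate since a trivial cylindrical cobordism $L\cobto L$ has zero shadow.

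Next, assume $\widehat{d}^{\mathcal{F},\mathcal{F}'}(L,L')=0$ for some $L,L'\in\mathcal{L}ag^{\ast}(M)$. Then both $d^{\mathcal{F}}(L,L')=0$ and $d^{\mathcal{F}'}(L,L')=0$, so Corollary~\ref{cor:vanishing} applied to each family yields
\[
L\subset L'\cup \overline{\textstyle\bigcup_{K\in\mathcal{F}}K},\qquad
L\subset L'\cup \overline{\textstyle\bigcup_{K'\in\mathcal{F}'}K'}.
\]
Intersecting these two inclusions gives
\[
L\subset L'\cup\Bigl(\overline{\textstyle\bigcup_{K\in\mathcal{F}}K}\cap \overline{\textstyle\bigcup_{K'\in\mathcal{F}'}K'}\Bigr),
\]
so that $L\setminus L'$ is contained in the totally disconnected set appearing in the statement.

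The key step, and the only non-formal one, is to deduce $L\subset L'$ from this inclusion. The set $L\setminus L'$ is open in $L$ because $L'$ is closed in $M$. Since $L$ is a closed connected Lagrangian submanifold of the symplectic manifold $M$, it is a topological manifold of dimension $\dim L=\tfrac{1}{2}\dim M\geq 1$. Any nonempty open subset of a manifold of positive dimension contains a small coordinate ball, which is connected and not a single point, so it cannot be totally disconnected. Hence $L\setminus L'=\emptyset$, i.e.\ $L\subset L'$. By the symmetric argument (interchanging the roles of $L$ and $L'$) we get $L'\subset L$, and therefore $L=L'$. This shows $\widehat{d}^{\mathcal{F},\mathcal{F}'}$ is non-degenerate.

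I do not anticipate any real technical obstacle here; the only minor point to be careful about is that the Lagrangians in $\mathcal{L}ag^{\ast}(M)$ are assumed to be closed and connected (as recalled in~\S\ref{s:floer-theory} and~\S\ref{sb:monotone}), which is precisely what allows the total-disconnectedness hypothesis to force $L\setminus L'=\emptyset$. The remainder of the argument is a purely formal packaging of Corollary~\ref{cor:vanishing}.
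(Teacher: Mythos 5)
Your proof is correct and follows essentially the same route as the paper's: both derive the two inclusions from Corollary~\ref{cor:vanishing}, observe that a point of $L\setminus L'$ gives a whole open disk of $L$ avoiding $L'$, and use that such a disk cannot lie inside a totally disconnected set. Your write-up is just slightly more explicit about the openness of $L\setminus L'$ and the connectedness of a coordinate ball, but the argument is the same.
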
  
\begin{proof} If $\widehat{d}^{\mathcal{F},\mathcal{F}'}(L, L')=0$ we
  deduce from Corollary \ref{cor:vanishing} that 
    $L\subset L'\cup \overline{\cup_{K\in \mathcal{F}}K}$ and
    $L\subset L'\cup \overline{\cup_{K'\in \mathcal{F}'}K'}$. Assume
  that there is a point $x\in L$ so that $x\not \in L'$.  Then there
  is an open disk $D\subset L$ so that $D\cap L'=\emptyset$. It
  follows that $D\subset \overline{\cup_{K\in \mathcal{F}}K}$
    as well as $D\subset \overline{\cup_{K'\in \mathcal{F}'}K'}$
  which is not possible because the set
  $(\cup_{K\in\mathcal{F}}K) \cap (\cup_{K'\in \mathcal{F}'}K')$ is totally disconnected). 
  We
  conclude that $L\subset L'$. The roles of $L$ and $L'$ being
  symmetric, we deduce that $L=L'$.
\end{proof}
 
 Notice that if $L'=\phi(L)$ with $\phi$ a Hamiltonian isotopy, then 
 $\widehat{d}^{\mathcal{F},\mathcal{F}'}(L,L')\leq ||\phi||_{H}$. 
  
Given a family $\mathcal{F}$ that is finite (but this can also work in
more general instances) it is easy to produce an additional family
$\mathcal{F}'$ that satisfies the assumption of Corollary
\ref{cor:deblurring}.  This can be achieved, for instance, by
transporting each element of $\mathcal{F}$ by an appropriate
Hamiltonian isotopy.  

We will not analyze here in detail the properties of the metrics $\widehat{d}^{\mathcal{F},\mathcal{F}'}$ but there are two simple observations that we include.

\begin{cor}\label{cor: metrics-var} Let $\phi$ be a Hamiltonian diffeomorphism and let $|| - ||_{H}$ be the Hofer norm. We have the following two inequalities:
\begin{equation}\label{eq:quasi-isom}
|\widehat{d}^{\mathcal{F},\mathcal{F}'}(L,L')-\widehat{d}^{\mathcal{F},\mathcal{F}'}(\phi(L),\phi(L'))|\leq  2 ||\phi||_{H}
\end{equation} 
and
\begin{equation}\label{eq:quasi-isom2}
|\widehat{d}^{\phi(\mathcal{F}),\phi(\mathcal{F}')}(L,L')-\widehat{d}^{\mathcal{F},\mathcal{F}'}(L,L')|\leq  2 ||\phi||_{H}~.~
\end{equation}
Therefore, the group of Hamiltonian diffeomorphisms acts by quasi-isometries on the metric space
$(\mathcal{L}ag^{\ast}(M), \widehat{d}^{\mathcal{F},\mathcal{F}'})$. Moreover, the identity
is a quasi-isometry between the two metric spaces $$(\mathcal{L}ag^{\ast}(M), \widehat{d}^{\phi(\mathcal{F}),\phi(\mathcal{F}')}) \ ,  (\mathcal{L}ag^{\ast}(M), \widehat{d}^{\mathcal{F},\mathcal{F}'})~.~$$
\end{cor}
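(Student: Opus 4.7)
The plan is to reduce both inequalities to a single statement about $d^{\mathcal{F}}$ by exploiting that $\widehat{d}^{\mathcal{F},\mathcal{F}'}=\max\{d^{\mathcal{F}},d^{\mathcal{F}'}\}$ and the elementary bound $|\max(a,b)-\max(c,d)|\leq\max(|a-c|,|b-d|)$. Thus it suffices to establish the corresponding bounds for $d^{\mathcal{F}}$ alone (the argument for $\mathcal{F}'$ is identical).

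For (\ref{eq:quasi-isom}), I would rely on the Lagrangian suspension construction: given a Hamiltonian isotopy $\phi_t$ generated by $H_t$ and any $K\in\mathcal{L}ag^{\ast}(M)$, one obtains a Lagrangian cobordism $\Sigma_K:K\cobto\phi(K)$ in $\mathcal{L}ag^{\ast}(\mathbb{R}^2\times M)$ whose shadow is bounded by $\int_0^1(\max H_t-\min H_t)\,dt$; by choosing $H_t$ close to optimal, one can arrange $\mathcal{S}(\Sigma_K)\leq||\phi||_H+\epsilon$. Starting from a near-optimal cobordism $V:L\cobto(F_1,\ldots,F_{i-1},L',F_i,\ldots,F_r)$ with $F_j\in\mathcal{F}$ and $\mathcal{S}(V)\leq d^{\mathcal{F}}(L,L')+\epsilon$, I concatenate $V$ with a reversal of $\Sigma_L$ at the positive $L$-end and with $\Sigma_{L'}$ at the negative $L'$-end. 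The concatenated cobordism has positive end $\phi(L)$ and negative ends $(F_1,\ldots,\phi(L'),\ldots,F_r)$, with the non-target entries still in $\mathcal{F}$; its shadow is bounded by $\mathcal{S}(V)+\mathcal{S}(\Sigma_L)+\mathcal{S}(\Sigma_{L'})+O(\epsilon)$ by the near-additivity of shadow under concatenation along matching horizontal ends. Letting $\epsilon\to 0$ yields $d^{\mathcal{F}}(\phi(L),\phi(L'))\leq d^{\mathcal{F}}(L,L')+2||\phi||_H$; the reverse inequality follows by running the same argument for the pair $(\phi(L),\phi(L'))$ with the isotopy $\phi^{-1}$, using $||\phi^{-1}||_H=||\phi||_H$.

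For (\ref{eq:quasi-isom2}), the key observation is the exact identity $d^{\phi(\mathcal{F})}(L,L')=d^{\mathcal{F}}(\phi^{-1}(L),\phi^{-1}(L'))$, obtained by applying the symplectomorphism $\textnormal{Id}_{\mathbb{R}^2}\times\phi^{-1}$ to any cobordism realizing (almost) the infimum on either side: this map preserves the projection to $\mathbb{R}^2$ hence the shadow, preserves the class $\ast$, and sends cobordisms with non-target ends in $\phi(\mathcal{F})$ bijectively onto those with non-target ends in $\mathcal{F}$. Thus (\ref{eq:quasi-isom2}) reduces to (\ref{eq:quasi-isom}) applied to $\phi^{-1}$. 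The quasi-isometry statements are then immediate: (\ref{eq:quasi-isom}) expresses that $\phi\cdot L:=\phi(L)$ acts on $(\mathcal{L}ag^{\ast}(M),\widehat{d}^{\mathcal{F},\mathcal{F}'})$ by $(1,2||\phi||_H)$-quasi-isometries, and (\ref{eq:quasi-isom2}) is precisely the statement that $\textnormal{id}$ is a $(1,2||\phi||_H)$-quasi-isometry from $(\mathcal{L}ag^{\ast}(M),\widehat{d}^{\mathcal{F},\mathcal{F}'})$ to $(\mathcal{L}ag^{\ast}(M),\widehat{d}^{\phi(\mathcal{F}),\phi(\mathcal{F}')})$.

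The construction is elementary; there is no serious analytic obstacle. The only nontrivial input is the near-additivity of shadow under concatenation along a common horizontal end, which follows from the characterization of $\mathcal{S}(V)$ as the infimum of $\textnormal{Area}(D)$ over topological disks $D\subset\mathbb{R}^2$ containing the non-cylindrical part $\pi(V_0)$ (two such disks for the concatenated pieces can be joined across the common end by an arbitrarily thin channel). The one point deserving explicit mention is that Lagrangian suspensions preserve the class $\ast$ — weak exactness in the exact case, and monotonicity with the prescribed Maslov-$2$ disk count in the monotone case — ensuring that all auxiliary cobordisms remain in $\mathcal{L}ag^{\ast}(\mathbb{R}^2\times M)$.
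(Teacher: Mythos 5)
Your proof is correct and follows essentially the same approach as the paper: in both cases \eqref{eq:quasi-isom} is obtained by gluing Lagrangian suspensions of $\phi$ (and of $\phi^{-1}$) to the $L$- and $L'$-ends of a near-optimal cobordism, adding at most $2\|\phi\|_H$ to the shadow, and \eqref{eq:quasi-isom2} is obtained from the same gluing after transporting the cobordism by $\mathrm{Id}_{\mathbb{R}^2}\times\phi^{\pm 1}$. Your reformulation of the second step as an exact equivariance identity $d^{\phi(\mathcal{F})}(L,L')=d^{\mathcal{F}}(\phi^{-1}L,\phi^{-1}L')$ followed by an application of \eqref{eq:quasi-isom} to $\phi^{-1}$ is a slightly tidier packaging of what the paper does in one step.
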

\begin{proof}
 A cobordism $V:L\cobto (F_{1},\ldots, L',\ldots, F_{k})$ can be extended, by gluing appropriate Lagrangian suspensions to the ends $L$ and $L'$, to a cobordism 
$V':\phi(L)\cobto (F_{1},\ldots, \phi(L'),\ldots, F_{k})$ of shadow $\mathcal{S}(V')\leq \mathcal{S}(V)+2 ||\phi||_{H}$.  The first inequality in the statement then follows rapidly, by applying the same argument to $\phi^{-1}$. Similarly, to deduce the second inequality, consider $V:L\cobto (F_{1},\ldots, L',\ldots, F_{k})$. By applying $\phi$ to $V$ we get $\phi(V):\phi(L)\cobto (\phi(F_{1}),\ldots, \phi(L'),\ldots,\phi(F_{k}))$.  Extend both ends $\phi(L)$ and $\phi(L')$ by Lagrangian suspensions thus getting $V''\cobto  (\phi(F_{1}),\ldots, L',\ldots,\phi(F_{k}))$ of shadow bounded by $\mathcal{S}(V)+2||\phi||_{H}$ and the desired inequality follows easily.
\end{proof}

The construction of the metrics $\widehat{d}^{\mathcal{F},\mathcal{F}'}$ is very flexible and can be 
modified in a variety of ways.  For instance, one can use more than two families $\mathcal{F}$.
Here is such an example. Let $\mathcal{U}=\{U_{i}\}_{i\in I}$ be a family of open sets $U_{i}\subset M$
and let $\mathcal{F}_{i}=\{ L\in \mathcal{L}ag^{\ast}(M) \mid L\cap U_{i}=\emptyset\}$. 
For each
index $i\in I$ we then have a shadow pseudo-metric $d^{\mathcal{F}_{i}}$ and we define a new pseudo-metric: $$D^{\mathcal{U}}=\sup \{ d^{\mathcal{F}_{i}} \mid i\in I\}~.~$$
In the statement below we will make use of the following quantity. For $L\in \mathcal{L}ag^{\ast}(M)$
let $\Delta(L;\mathcal{U})=\inf\{ s \mid  \ \forall\  i\in I\ \  \exists 
\ \phi \ \textnormal{Hamiltonian diffeomorphism with}\ \phi(L)\cap U_{i}=\emptyset, ||\phi||_{H}\leq s\}$.

\begin{cor}\label{cor:fract-sets} With the notation above we have:
\begin{itemize}
\item[i.] If $\mathcal{U}$ is a covering of $M$ in the sense that $\cup_{i} U_{i}=M$, then $D^{\mathcal{U}}$ is non-degenerate.
\item[ii.] For all $L,L'\in\mathcal{L}ag^{\ast}(M)$ such that $\Delta(L;\mathcal{U})$ and $
\Delta (L';\mathcal{U})$ are finite, 
we have:  $$D^{\mathcal{U}}(L,L')\leq \Delta(L;\mathcal{U})+\Delta (L';\mathcal{U})~.~$$
\end{itemize}
\end{cor}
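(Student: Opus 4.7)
The plan is that both parts follow from material already in hand: part (i) from Corollary~\ref{cor:vanishing} applied family by family, and part (ii) from a direct suspension/bending construction. No further Floer-theoretic input is needed beyond what is already established.

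For part (i), I would suppose $D^{\mathcal{U}}(L,L') = 0$ and unwind this as $d^{\mathcal{F}_{i}}(L,L') = 0$ for every $i \in I$. Applying Corollary~\ref{cor:vanishing} index-wise gives
\[
L \subset L' \cup \overline{\bigcup_{K\in \mathcal{F}_{i}} K}, \qquad L' \subset L \cup \overline{\bigcup_{K\in \mathcal{F}_{i}} K},
\]
for every $i$. Since every $K\in \mathcal{F}_{i}$ is disjoint from $U_{i}$, it is contained in the closed set $M \setminus U_{i}$, and hence $\overline{\cup_{K\in \mathcal{F}_{i}} K} \subset M\setminus U_{i}$. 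Rearranging, $L\cap U_{i} \subset L'$ and $L'\cap U_{i} \subset L$ for every $i$. If $x \in L$, the covering hypothesis yields some index $i$ with $x \in U_{i}$, so $x \in L$. Therefore $L \subset L'$, and by symmetry $L' \subset L$, so $L = L'$; this is the desired non-degeneracy.

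For part (ii), I would fix $\epsilon>0$ and use the definition of $\Delta$ to pick, for each $i \in I$, Hamiltonian diffeomorphisms $\phi_{i},\psi_{i}$ with $\phi_{i}(L)\cap U_{i}=\emptyset$, $\psi_{i}(L')\cap U_{i}=\emptyset$, and Hofer norms at most $\Delta(L;\mathcal{U})+\epsilon$ and $\Delta(L';\mathcal{U})+\epsilon$ respectively. Setting $F_{i} := \phi_{i}(L) \in \mathcal{F}_{i}$ and $F'_{i} := \psi_{i}(L') \in \mathcal{F}_{i}$, I would then build the Lagrangian suspension cobordism $V_{1}:L' \cobto F'_{i}$ of shadow at most $\|\psi_{i}\|_{H}$, and the suspension $V_{2}:L\cobto F_{i}$ of shadow at most $\|\phi_{i}\|_{H}$. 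Bending the positive end of $V_{2}$ clockwise by half a turn, as in Figure~\ref{f:cob-v-w}, produces $W:\emptyset \cobto (F_{i},L)$ whose shadow exceeds $\mathcal{S}(V_{2})$ by an arbitrarily small amount $\epsilon'$. Placing $V_{1}$ and $W$ in disjoint horizontal strips of $\mathbb{R}^{2}$ (this is allowed, as both admit cylindrical ends at prescribed integer heights), their disjoint union is a cobordism
\[
V : L' \cobto (F'_{i}, F_{i}, L)
\]
with all intermediate ends in $\mathcal{F}_{i}$ and, by the additivity of shadow for projections lying in disjoint vertical strips, $\mathcal{S}(V) \leq \|\psi_{i}\|_{H}+\|\phi_{i}\|_{H}+\epsilon'$. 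Hence $d^{\mathcal{F}_{i}}(L,L')\leq \Delta(L;\mathcal{U})+\Delta(L';\mathcal{U})+2\epsilon+\epsilon'$; letting $\epsilon, \epsilon' \to 0$ and taking the supremum over $i$ delivers the desired bound on $D^{\mathcal{U}}(L,L')$.

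I do not expect a serious obstacle. The only points that require a line of verification are: (a) that suspensions of Hamiltonian diffeomorphisms and their bent versions stay in the admissible class $\mathcal{L}ag^{*}(\mathbb{R}^{2}\times M)$, which is standard both in the weakly exact and in the monotone case; and (b) that shadow is additive (up to arbitrarily small error) under disjoint unions of cobordisms placed in disjoint vertical strips, which is immediate from the definition $\mathcal{S}(V)=\mathrm{Area}(\mathbb{R}^{2}\setminus \mathcal{U})$ with $\mathcal{U}$ the union of unbounded components of the complement of $\pi(V)$.
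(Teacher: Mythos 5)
Your proof is correct and takes essentially the same approach as the paper: part~(i) is reduced to Corollary~\ref{cor:vanishing} together with the elementary observation that $\overline{\cup_{K\in\mathcal{F}_{i}}K}\subset M\setminus U_{i}$, and part~(ii) constructs the required cobordism in $\mathcal{L}ag^{*}(\mathbb{R}^{2}\times M)$ as a disjoint union of a simple Lagrangian suspension and a bent suspension placed in disjoint horizontal strips. Note the typo in part~(i): ``so $x\in L$'' should read ``so $x\in L'$'', since $x\in L\cap U_{i}\subset L'$.
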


\begin{proof} The first point follows immediately from Corollary \ref{cor:vanishing}.
For the second point fix some $s > \Delta(L;\mathcal{U})$, $s'> \Delta(L';\mathcal{U})$ and pick one
family $\mathcal{F}_{i}$. There is a cobordism $V:L\cobto (L'_{1},L', L_{1})$ such that 
$V$ is a disjoint union of two Lagrangian suspensions $V_{0}: L\cobto L_{1}$ and $V_{1}: \emptyset \cobto (L'_{1},L')$ such that $L_{1}, L'_{1}\in \mathcal{F}_{i}$ and   $\mathcal{S}(V_{0})\leq s$,  $\mathcal{S}(V_{1})\leq s'$. This means that $d^{\mathcal{F}_{i}}(L,L')\leq s+s'$ which implies the claim. 
\end{proof}

There are other variants of the definition of the metric $\widehat{d}^{\mathcal{F},\mathcal{F}'}$ that have 
interesting features. For instance,  by considering in (\ref{eq:pseudo-metric}) only cobordisms
$V: L'\cobto (L_{1},\ldots, L_{k}, L)$, in other words cobordisms for which $L'$ is the positive end and $L$ is the {\em top} negative end, one gets a measurement $t^{\mathcal{F}}_{k}(L',L)$. It has similar
properties to $d^{\mathcal{F}}_{k}$, except that it is not symmetric. We define $t^{\mathcal{F}}(L',L)$ as  in (\ref{eq:pseudo-metric-gen}) and we symmetrize by putting $r^{\mathcal{F}}(L',L)=(t^{\mathcal{F}}(L',L)+t^{\mathcal{F}}(L,L'))/2$ thus obtaining a new pseudo-metric. This pseudo-metric satifies the conclusion
of Corollary \ref{cor:vanishing} and can be used in the 
rest of the constructions before leading to metrics $\widehat{r}^{\mathcal{F},\mathcal{F}'}$
that satisfy the conclusions of  Corollaries \ref{cor:deblurring}, \ref{cor: metrics-var} and \ref{cor:fract-sets},
where in \ref{cor:fract-sets} the pseudo-metric $D^{\mathcal{U}}$ is replaced with $R^{\mathcal{U}}=\sup \{ \ r^{\mathcal{F}_{i}}\  \mid  \  i \in I\}$. An additional interesting feature of the pseudo-metrics $R^{\mathcal{U}}$ is next:

\begin{cor}\label{cor:Haus} With the notation above fix $L\in \mathcal{L}ag^{\ast}(M)$ and 
assume that $\mathcal{U}$ is a covering of $M$.
There exists a constant $\delta>0$ depending on $L$ and $\mathcal{U}$ such that, if $L'\in \mathcal{L}ag^{\ast}(M)$ is disjoint from $L$, then $R^{\mathcal{U}}(L,L')\geq \delta$.
\end{cor}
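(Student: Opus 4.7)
The plan is to combine the compactness of $L$ with the covering property of $\mathcal{U}$ to produce a uniform ``Darboux radius'' around $L$, and then to invoke Theorem~\ref{thm:fission} on every cobordism contributing to $R^{\mathcal{U}}(L,L')$.

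First I would construct the uniform Darboux balls. Fix a Weinstein tubular neighborhood $\mathcal{W}$ of $L$. Applying a Lebesgue-number argument to the open cover $\{U_{i}\cap\mathcal{W}\}_{i\in I}$ of the compact manifold $L$, together with Weinstein's neighborhood theorem, produces a radius $r_{0}=r_{0}(L,\mathcal{U})>0$ such that for every $x\in L$ there exists an index $i(x)\in I$ and a symplectic embedding $e_{x}:B(r_{0})\to U_{i(x)}$ satisfying $e_{x}(0)=x$ and $e_{x}^{-1}(L)=B_{\mathbb{R}}(r_{0})$.

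Now fix $L'\in\mathcal{L}ag^{\ast}(M)$ with $L\cap L'=\emptyset$. The plan is to select $x\in L$ together with $i:=i(x)$ so that the ball $B_{x}:=e_{x}(B(r_{0}))$ is disjoint from $L'$. Once such a choice is made, any cobordism $V:L\cobto (L_{1},\ldots,L_{k},L')$ with $L_{1},\ldots,L_{k}\in\mathcal{F}_{i}$ has ends $L_{j}$ which lie outside $U_{i}$ and hence outside $B_{x}$; combined with $B_{x}\cap L'=\emptyset$, the embedding $e_{x}$ witnesses
\[
  \delta\bigl(L;\,L_{1}\cup\cdots\cup L_{k}\cup L'\bigr)\ \geq\ \pi r_{0}^{2}.
\]
Theorem~\ref{thm:fission} then gives $\mathcal{S}(V)\geq\tfrac{1}{2}\pi r_{0}^{2}$, so $t^{\mathcal{F}_{i}}(L,L')\geq\tfrac{1}{2}\pi r_{0}^{2}$. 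Since $t^{\mathcal{F}_{i}}(L',L)\geq 0$, this yields $r^{\mathcal{F}_{i}}(L,L')\geq\tfrac{1}{4}\pi r_{0}^{2}$ and hence $R^{\mathcal{U}}(L,L')\geq\delta:=\tfrac{1}{4}\pi r_{0}^{2}$.

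The main obstacle is the selection of $x\in L$ with $B_{x}\cap L'=\emptyset$, made with $r_{0}$ depending only on $L$ and $\mathcal{U}$ (and not on $L'$). One would proceed by fixing, once and for all, a finite Lebesgue-type subcover of $L$ by such Darboux balls $\{B_{x_{k}}\}_{k=1}^{N}$ built from the cover $\mathcal{U}$; the key point is to show that at least one of these finitely many balls is necessarily disjoint from the closed Lagrangian $L'$ whenever $L\cap L'=\emptyset$. This is the technical heart of the proof, to be settled by a compactness/dimension argument using that $L$ and $L'$ are disjoint closed $n$-dimensional submanifolds of $M^{2n}$ whose Weinstein neighborhoods interact in a controlled way; should a uniform $r_{0}$ fail, one expects to shrink the collection of admissible balls further using only the data of $(L,\mathcal{U})$ so that disjointness from $L'$ is automatic for every $L'\in\mathcal{L}ag^{\ast}(M)$ with $L\cap L'=\emptyset$.
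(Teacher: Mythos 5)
Your high-level structure (take one Darboux ball for $L$ inside some $U_i$, note it is automatically disjoint from every $L_j\in\mathcal F_i$, and feed the cobordisms through Theorem~\ref{thm:fission} to get a bound of the form $\tfrac{1}{4}\pi r^2$) matches the paper, and your reduction $r^{\mathcal F_i}(L,L')\geq \tfrac12 t^{\mathcal F_i}(L,L')$ is fine. But the step you correctly flag as ``the technical heart'' --- finding a ball $B_x$ disjoint from $L'$ with radius independent of $L'$ --- is a genuine gap, and no amount of shrinking based only on $(L,\mathcal U)$ will close it: the paper's very next sentence points out that one can have a sequence $L_n\in\mathcal L ag^{\ast}(M)$, each disjoint from $L$, with $L_n\to L$ in Hausdorff distance (e.g.\ nearby longitudes on a torus). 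For such a sequence, any fixed neighborhood of a point of $L$ eventually meets $L_n$, so your uniform $r_0$ cannot exist. In short, the bound you want to apply, $\delta(L;L_1\cup\cdots\cup L_k\cup L')\geq\pi r_0^2$, is simply false uniformly in $L'$.

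The correct ingredient is a strengthening of the first part of Theorem~\ref{thm:fission} that drops $L'$ from the set $S$ when $L\cap L'=\emptyset$. Concretely: for $V:L\cobto(L_1,\ldots,L_k,L')$ with $L\cap L'=\emptyset$, one has $\mathcal S(V)\geq\tfrac12\delta(L;\,\cup_i L_i)$, with \emph{no} requirement that the ball avoid $L'$. To see this, inspect the proof of~\eqref{eq:SV-delta}: after bending and applying Theorem~\ref{t:itcones}, the contribution to the differential in $C(L,L)$ coming from the $j$'th negative end factors through $CF(L,L_j)$, and for $j$ indexing the end $L'$ we have $CF(L,L')=0$ because $L\cap L'=\emptyset$. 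Hence the Floer polygon $v_0$ through the Darboux ball center $q\in L$ has all its boundary arcs on $L$ and the $L_i$'s and \emph{never} touches $L'$, so the Lelong argument only needs the ball to be disjoint from $\cup_i L_i$. This is the ``crucial remark'' in the paper's proof, and it is what makes the constant $\delta=\tfrac14\pi r^2$ uniform in $L'$. Once you have this, the rest of your argument goes through: the ball $U\subset U_i$ is disjoint from every $L_j\in\mathcal F_i$ by definition of $\mathcal F_i$, so $\delta(L;\cup L_j)\geq\pi r^2$, and $t^{\mathcal F_i}(L,L')\geq\tfrac12\pi r^2$, yielding $R^{\mathcal U}(L,L')\geq\tfrac14\pi r^2$.
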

\begin{proof} The crucial remark is that, by inspecting the proof of the first part of Theorem \ref{thm:fission}, we see that given $V:L\cobto (L_{1},\ldots, L_{k}, L')$ in $\mathcal{L}ag^{\ast}(\R^{2}\times M)$ and such that $L\cap L'=\emptyset$, then $\mathcal{S}(V)\geq \delta(L;S)/2$ where $S=\cup_{i}L_{i}$
but $S$ - and thus $\delta(L,S)$  - does not depend on $L'$. As $\mathcal{U}$ is a covering of $M$
there exists some index $i\in I$ and an open set $U\subset U_{i}$ such that $U$ is the image of an embedding
$e: B(r)\to M$ with $ e^{-1}(L)=B_{\R}(r)$. Obviously, $U$ is disjoint from all the elements of $\mathcal{F}_{i}$ and thus $R^{\mathcal{U}}(L,L')\geq \pi r^{2}/4$.

\end{proof}
It is easy to find examples of sequences of Lagrangians $L_{k}$, each disjoint from a fixed $L$ (for instance
longitudes on a torus), and such that the sequence $L_{k}$ converges to $L$ in Hausdorff distance. By Corollary \ref{cor:Haus} all these Lagrangians remain away from $L$ in $R^{\mathcal{U}}$ distance.

\begin{rem}\label{rem:various-metric}
a.  It is well-known that there are other natural metrics defined on $\mathcal{L}ag^{\ast}(M)$. The most famous is Hofer's Lagrangian metric, used since the work of Chekanov \cite{Chek:finsler}, which infimizes the Hofer energy needed to carry one Lagrangian to the other. Another interesting more algebraic  metric, smaller 
than the Hofer metric, is the spectral metric due to Viterbo.  Both these metrics are infinite as soon as the two Lagrangians compared are not Hamiltonian isotopic. A metric smaller than the Hofer norm, and based on simple
Lagrangian cobordism has been introduced in \cite{Co-She:metric}: it measures the distance between $L$ and $L'$ by infimizing the shadow of cobordisms having only $L$ and $L'$ as ends. This metric is finite on each 
simple cobordism class and, with the notation above, it coincides with $d^{\emptyset}=\widehat{d}^{\emptyset,\emptyset}$. This metric is again often infinite. For instance, in the exact case, as soon as $L$ and $L'$ have non-isomorphic homologies, the simple shadow distance between $L$ and $L'$ is infinite. Indeed, if $L$ and $L'$ are related by an exact simple cobordism, then $L$ and $L'$ have isomorphic singular homologies
\cite{Bi-Co:cob1}  (more rigidity is actually true,  see \cite{Sua:s-cob}).  For other results on the simple shadow metric see \cite{Bis2,Bis1,Bis3}.

b. A notion of cone-length is familiar in homotopy theory as a measure of complexity
for topological spaces \cite{Cor:cone-LS}.

\end{rem}

\subsection{Some examples and calculations}
\subsubsection{Curves on tori and related
  examples} \label{subsec:curves_tori}

If $\phi$ is a Hamiltonian diffeomorphism, then clearly
$l_{a}(\phi(L),L)=0$ as soon as $\epsilon \geq ||\phi ||_{H}$
and so $l(\phi(L),L)=0$.  However,
we will see below classes of examples with
$0< l_{a}(\phi(L),L) < \infty$.  Intuitively, an inequality of
the type $1 \leq l_{a}(\phi(L), L) < \infty$ seems to indicate
that $\phi$ distorts $L$ (at least for our choices of classes
$\mathcal{F}$).

The examples below that satisfy
$1 \leq l_{a}(\phi(L),L)<\infty$ also satisfy
$$d_{l_{a}(\phi(L),L)}(\phi(L),L) < d_{0}(\phi(L),L)\leq
||\phi||_{H}.$$ In other words, in these examples the ``optimal'' (in
the sense of minimizing the shadow) approximation of $\phi(L)$ through
elements of the set $\{L\}\cup \mathcal{F}$ requires more elements
than just $L$.  Moreover, the relevant $d_{k}$'s are small enough so
that statement~\eqref{i:hf-inters} of Theorem~\ref{thm:fission}
applies and indeed, as predicted by the theorem, in these examples the
number of intersection points $\phi(L)\cap N$, where $N$ is an
appropriate other Lagrangian $N \in \mathcal{L}ag^*(M)$, is much
higher than the usual lower bound, given by the rank of the Floer
homology group $HF(N,L)$.

Consider the $2$-dimensional torus $M=T^2$ endowed with an area form.
We identify $T^2$ with the square $[-1,1]\times [-1,1]$ with the usual
identifications of the edges.  We consider five Lagrangians on $T^2$,
described on the square $[-1,1]\times [-1,1]$ by:
$L=[-1,1]\times \{0\}$,
$S_{1}=\{-\frac{1}{2} -\epsilon\}\times [-1,1]$,
$S_{2}=\{-\frac{1}{2} +\epsilon\}\times [-1,1]$,
$S_{3}=\{\frac{1}{2} -\epsilon\}\times [-1,1]$,
$S_{4}=\{\frac{1}{2} +\epsilon\}\times [-1,1]$. Here
$0<\epsilon \leq\frac{1}{8}$.
 
We will construct a new Lagrangian obtained through surgery between
$L$ and the $S_{i}$'s.  We use the surgery conventions from
\cite{Bi-Co:cob1} and define - see Figure \ref{fig:torus}:

\begin{equation}\label{eq:surgeries}
  L'=S_{3}\#[(S_{2}\#(L\#S_{1}))\#S_{4}]~.~
\end{equation}

\begin{figure}[htbp]
  \begin{center}
    \includegraphics[width=0.55\textwidth,
    height=0.35\textheight]{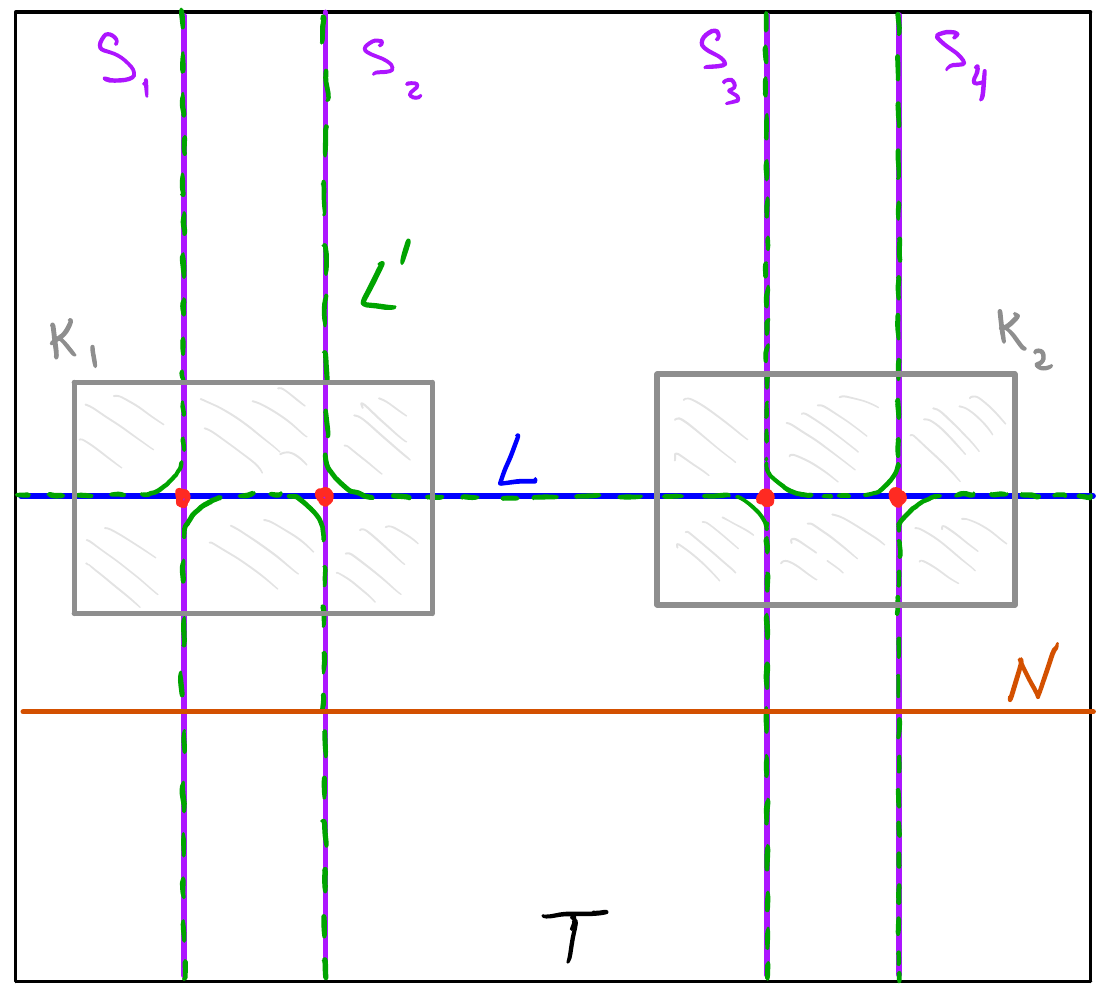}
  \end{center}
  \caption{The torus $T^2$ and the Lagrangians $L$,
    $L'=S_{3}\#[(S_{2}\#(L\#S_{1}))\#S_{4}]$, and $N$.
  \label{fig:torus}}
\end{figure}

In the surgeries above we use handles of equal size in the sense that
the area enclosed by each handle is equal to a fixed $\delta >0$ with
$\delta$ very small.  We will also make use of the two rectangles
$K_{1}=[-\frac{1}{2}-2\epsilon,-\frac{1}{2}+2\epsilon]\times
[-\epsilon,\epsilon]$ and
$K_{2}=[\frac{1}{2}-2\epsilon,\frac{1}{2}+2\epsilon]\times
[-\epsilon,\epsilon]$ and we put $K=K_{1}\cup K_{2}$ (see again
Figure~\ref{fig:torus}).

\begin{lem}\label{lem:ex1} Let  $\mathcal{F}=\{S_1, S_2, S_3, S_4 \}$
  and assume that $\delta < \frac{\epsilon ^{2}}{2}$. We have:
  \begin{itemize}
  \item[i.] $d_{0}(L',L)=4\epsilon$, $d_{4}(L',L)\leq 2\delta$,
    $l(L',L)=0$, $l_{2\delta}(L',L)=4$.
  \item[ii.] For any weakly-exact Lagrangian $N\subset T^2$ so that
    $N\cap K=\emptyset$, we have
    \begin{equation}\label{eq:intpoints}
      \#(N\cap L')\geq rk\ (HF(N,L))+ \sum_{i=1}^{4} rk\ (HF(N,S_{i}))~.~
    \end{equation}
  \item[iii.] If $N'$ is a weakly exact Lagrangian $N'\subset T^2$,
    then either $N'\cap L\neq\emptyset$ or, for any Hamiltonian
    diffeomorphism $\phi$ so that $\phi(L)=L'$ we have
    $\phi(N')\cap K\neq\emptyset$.
  \end{itemize}
\end{lem}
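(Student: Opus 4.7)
The plan is to treat the three parts in order, using the iterated Lagrangian surgery cobordism of~\cite{Bi-Co:cob1} as the source of upper bounds, and Theorem~\ref{thm:fission} as the source of lower bounds. For (i), the upper bound $d_{4}(L',L)\leq 2\delta$ follows from the iterated surgery cobordism $V:L'\cobto(S_{3},S_{2},L,S_{1},S_{4})$ dictated by the bracketing in~\eqref{eq:surgeries}, whose shadow can be arranged to be bounded by the total handle area, which one can make $\leq 2\delta$ by a sequential placement of the four handles. This also yields $l_{2\delta}(L',L)\leq 4$. The equality $d_{0}(L',L)=4\epsilon$ and the vanishing $l(L',L)=0$ come from a Lagrangian suspension: $L$ and $L'$ are Hamiltonian-isotopic via a compactly supported Hamiltonian of energy approximately $4\epsilon$ (pushing $L$ along four vertical detours of length $\approx 2\epsilon$ each), which produces a simple cobordism of shadow $\leq 4\epsilon$; the matching lower bound will follow from an area computation. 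The remaining inequality $l_{2\delta}(L',L)\geq 4$ will be derived from part~(ii) below applied to a suitably chosen vertical circle $N$ disjoint from $K$, forcing every $S_{i}$ to appear as a negative end.

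For part (ii), the plan is to apply Theorem~\ref{thm:fission}\eqref{i:hf-inters} directly to the cobordism realizing $d_{4}(L',L)\leq 2\delta$, with ends $(S_{3},S_{2},L,S_{1},S_{4})$. The double-point set $\Sigma''=\bigcup_{i<j}L_{i}\cap L_{j}$ reduces to the four points $L\cap S_{i}$ (since the $S_{i}$'s are mutually disjoint), all of which lie in $K$. Since $N\cap K=\emptyset$, one can place a Darboux ball of radius $r$ at each of these four points entirely inside $K$ and disjoint from $N$; because the two intersection points inside each rectangle $K_{j}$ are separated by $2\epsilon$ horizontally and each $K_{j}$ has vertical extent $2\epsilon$, one gets $r$ arbitrarily close to $\epsilon$, hence $\delta^{\Sigma''}(S;N)\geq \pi\epsilon^{2}$. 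The hypothesis $\delta<\epsilon^{2}/2$ then yields $\mathcal{S}(V)\leq 2\delta<\tfrac{1}{4}\delta^{\Sigma''}(S;N)$, and the conclusion~\eqref{eq:intpoints} follows.

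For part (iii), suppose toward a contradiction that $N'\cap L=\emptyset$ and that some Hamiltonian diffeomorphism $\phi$ satisfies $\phi(L)=L'$ and $\phi(N')\cap K=\emptyset$. Put $N:=\phi(N')$; this is a weakly exact curve on $T^{2}$ with $N\cap K=\emptyset$. Weak exactness forces $N$ to be non-contractible (a null-homotopic circle bounds a disk of positive area), and $N\cap L=\emptyset$ forces $[N]\cdot[L]=0$, so $[N]$ is a multiple of $(1,0)$; simplicity gives $[N]=\pm(1,0)$. Consequently $|[N]\cdot[S_{i}]|=1$ for each $i$, so $\operatorname{rk} HF(N,S_{i})\geq 1$ for all $i$ (by the standard Floer-homology computation for linear curves on $T^{2}$). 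Part~(ii) then yields $\#(N\cap L')\geq \sum_{i=1}^{4}\operatorname{rk} HF(N,S_{i})\geq 4$, which contradicts $N\cap L'=\phi(N'\cap L)=\emptyset$.

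The main obstacle is the careful bookkeeping for the shadow of the iterated surgery cobordism that establishes $d_{4}(L',L)\leq 2\delta$ and, in parallel, the verification that the Gromov-width estimate $\delta^{\Sigma''}(S;N)\geq \pi\epsilon^{2}$ is compatible with the numerical hypothesis $\delta<\epsilon^{2}/2$ needed to apply Theorem~\ref{thm:fission}\eqref{i:hf-inters}. Once these quantitative comparisons are in place, the homological arguments in (ii) and (iii) are immediate.
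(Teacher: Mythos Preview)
Your overall strategy is correct and close to the paper's, but there is a genuine numerical gap in part~(ii), and this propagates to your proposed argument for $l_{2\delta}(L',L)\geq 4$.

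In part~(ii) you estimate $\delta^{\Sigma''}(S;N)\geq \pi\epsilon^{2}$ by placing \emph{round} balls of radius close to $\epsilon$ at the four points $L\cap S_i$. You then claim that $\delta<\epsilon^{2}/2$ gives $2\delta<\tfrac{1}{4}\delta^{\Sigma''}(S;N)$. But $\tfrac{1}{4}\pi\epsilon^{2}\approx 0.785\,\epsilon^{2}$, while the hypothesis only yields $2\delta<\epsilon^{2}$; for $\delta\in(\tfrac{\pi}{8}\epsilon^{2},\tfrac{1}{2}\epsilon^{2})$ your inequality fails. The paper instead uses that in dimension two a symplectic embedding is just area-preserving, so one can pack each quarter of the $2\epsilon\times 2\epsilon$ sub-square of $K_j$ and obtain balls of area arbitrarily close to $4\epsilon^{2}$ with the required real/imaginary axis conditions. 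This gives $\delta^{\Sigma''}(S;N)\geq 4\epsilon^{2}$, and then $2\delta<\epsilon^{2}=\tfrac{1}{4}\cdot 4\epsilon^{2}$ does follow from the stated hypothesis. You need this sharper bound.

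For $l_{2\delta}(L',L)\geq 4$ you propose to apply Theorem~\ref{thm:fission}\eqref{i:hf-inters} to a hypothetical cobordism $V'$ with only three ends from $\mathcal{F}$, tested against a vertical circle $N$. Besides inheriting the numerical issue above, this route runs into the transversality hypothesis of~\eqref{i:hf-inters} when ends are repeated (nothing forbids $V':L'\cobto(S_1,L,S_1,S_2)$). The paper avoids this entirely by using the \emph{first} inequality of Theorem~\ref{thm:fission}: if some $S_j$ is absent from the ends of $V'$, then the portion of $L'$ running along $S_j$ is far from $L\cup\bigcup_{i\neq j}S_i$, giving $\delta(L';\,L\cup\bigcup_{i\neq j}S_i)\geq 8\epsilon$ and hence $\mathcal{S}(V')\geq 4\epsilon$, contradicting $\mathcal{S}(V')\leq 2\delta$. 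This is both cleaner and robust to repetitions.

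Finally, your ``area computation'' for the lower bound $d_0(L',L)\geq 4\epsilon$ should be made explicit: it is again inequality~\eqref{eq:SV-delta} of Theorem~\ref{thm:fission}, applied with $\delta(L';L)=8\epsilon$ (a disk centred on one of the vertical arcs of $L'$, away from $L$). Your arguments for (iii) and for the upper bounds in (i) match the paper.
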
  
Floer homology is considered here with coefficients in $\Z/2$.  Notice
that $HF(N,L')\cong HF(N,L)$ so that the inequality
(\ref{eq:intpoints}) indicates an ``excess'' of intersection
points. An example of a Lagrangian $N$ as at point ii is simply
$N=[-1,1]\times\{-2\epsilon\}$.

\begin{proof} 
 
  By inspecting again Figure \ref{fig:torus} and possibly extending
  the representation of the torus by adding vertically two additional
  fundamental domains to the square $[-1,1]\times [-1,1]$) one can see
  that there is a Hamiltonian isotopy $\phi: T^2 \to T^2$ so that
  $L'=\phi(L)$ (this is because the upper and lower ``bends'' in the
  picture encompass equal areas). The expression in
  (\ref{eq:surgeries}) show that there is a cobordism
  $$V: L'\to(S_{3}, S_{2}, L, S_{1}, S_{4})$$ given as 
  the trace of the respective surgeries (as given in
  \cite{Bi-Co:cob1}) and because the $S_{i}$'s are disjoint and all
  the handles are of area $\delta$ we have
  $\mathcal{S}(V)\leq 2\delta$. The reason for the factor $2$ is that
  the handles associated to the surgeries on the ``left'' and those on
  the ``right'' can not be assumed to have a superposing projection;
  the constant is $2$ and not $4$ because the two handles on the left
  (and similarly for the two handles on the right) can be assumed to
  have overlapping projections.  It is a simple exercise to show that
  $\delta(L';L)= 8\epsilon$.
   
  From the first part of Theorem \ref{thm:fission} we deduce
  $d_{0}(L',L)\geq 4\epsilon$.  It is also easy to see that one can
  find a Hamiltonian $H:T^2 \to \R$ with variation equal to
  $4\epsilon$ and so that $\phi_{1}^{H}(L)=L'$.  Therefore,
  $d_{0}(L',L)=4\epsilon~.~$ On the other hand, recall the assumption
  $\delta < \epsilon ^{2}/2$. Therefore we have:
  $$d_{4}(L',L)\leq 2\delta~.~$$
 
  We now estimate cone-length.  Clearly, the absolute number is
  $l(L',L)=0$.  From the existence of the cobordism $V$ we deduce
  $l_{2 \delta}(L',L)\leq 4$. We want to show $l_{2 \delta}(L',L)=
  4$. Assume that $l_{2\delta}(L',L)\leq 3$.  Therefore there exists a
  cobordism $V':L'\to (L_{1}, L_{2}, L_{3}, L_{4})$ where one of the
  $L_{i}$'s equals $L$ and the other three are picked among the
  $S_{i}$'s (or are void) and the shadow of $V'$ is at most $2\delta$.
  Without loss of generality, assume that $S_{1}$ is not among the
  $L_{i}$'s. We now consider the number
  $\delta(L'; L\cup S_{2}\cup S_{3}\cup S_{4})$. By using a disk
  centered along the part of $S_{1}$ contained in $L'$ we see that
  $\delta(L';L\cup S_{1}\cup S_{2})\geq 8\epsilon$. By the first part of
  Theorem \ref{thm:fission} it follows $\mathcal{S}(V')\geq 4\epsilon$
  which contradicts $\delta\leq 2\epsilon^{2}$.

  The two other points of the Lemma also follow from Theorem
  \ref{thm:fission} (they possibly admit also more elementary, direct
  proofs). Point (b) of the Theorem implies that for any weakly-exact
  Lagrangian $N\subset T^2$ so that $N\cap K=\emptyset$, we have
  (\ref{eq:intpoints}).  Indeed, we may find disks around the (unique)
  intersection point of each of the $S_{i}$'s with $L$ that are of
  area $4\epsilon^{2}$, have the real part along $L$ and the imaginary
  part along $S_{i}$, are contained in $K$, and any two of these disks
  have disjoint interiors.  As $N$ avoids $K$, this means
  $\delta^{\Sigma \mathbb{L}}(\mathbb{L};N)\geq 4\epsilon^{2}$ for
  $\mathbb{L}=L\bigcup\cup_{i} S_{i}$.  The last point of the Lemma
  follows in a similar way. Assuming also that $N'\cap L=\emptyset$ we
  also have $\phi(N')\cap L'=\emptyset$. If we also have
  $\phi(N')\cap K=\emptyset$, then $\phi(N')$ satisfies inequality
  (\ref{eq:intpoints}) (with $\phi(N')$ in the place of $N$).  From
  the fact that $N'$ is weakly exact we deduce that the singular
  homology class of $N'$ is the same as that of $L$ and thus
  $HF(N',S_{i})$ does not vanish.  But this leads to a contradiction
  with $\phi(N')\cap L'=\emptyset$.
\end{proof}

It is easy to construct examples similar to the one above in higher
dimensions.  For instance, one can consider
$M= (T^2 \times T^2, \omega\oplus \omega)$ and take
$\bar{L}=L\times L$, $\bar{S}_{i}= S_{i}\times S_{i}$ etc. We will see
some less trivial extensions in the next subsection.

\begin{rem}\label{rem:pseudo} 
  We use the examples below to underline two  deficiencies
  of the pseudo-metric $d^{\mathcal{F}}$.
  \begin{itemize}
  \item[i.] $d^{\mathcal{F}}$ is generally degenerate. For example,
    $d^{\mathcal{F}}_{3}(S_{1}, S_{2})=0$, hence
    $d^{\mathcal{F}}(S_1, S_2)=0$. Indeed, let
    $V: S_{1} \cobto (S_{1}, S_{2}, S_{2})$ be the cobordism
    $V=\gamma_{0}\times S_{1} \coprod \gamma_{1}\times S_{2}$ where
    $\gamma_{0}=\R+i\subset \C$ and $\gamma_{1}$ is a curve in $\C$
    that has two horizontal negative ends, one at height $2$ and the
    other at height $3$ and is disjoint from $\gamma_{0}$. The same
    construction shows that for any family $\mathcal{F}$ with more
    than one element the resulting pseudo-metric is degenerate.

    In the above examples the cobordisms $V$ are disconnected and they
    also have vanishing shadow. However, there are also examples of
    connected cobordisms $W_{\epsilon}$ with constant ends and {\em
      positive} shadow such that
    $\lim_{\epsilon\to 0}\mathcal{S}(W_{\epsilon})=0$. For instance,
    with the notation above, consider a curve
    $\gamma \subset \mathbb{R}^2$ which has a ``$\supset$'' shape with
    its lower end going to $-\infty$ along the horizontal line $y=-1$
    and its upper end going to $-\infty$ along the horizontal line
    $y=1$. Let $\gamma'$ be the $x$-axis, $y=0$. Consider now the
    surgery
    $W_{\epsilon}: = (\gamma \times S_1) \#_{\epsilon} (\gamma' \times
    L )\subset \mathbb{R}^2 \times T^2$. (Note that, in cotrast to the
    construction of e.g. $L'$ above, the surgery here is performed in
    the space $\mathbb{R}^2 T^2$.) Clearly $W_{\epsilon}$ is a
    (connected) weakly exact Lagrangian cobordism
    $W_{\epsilon}: L \cobto (S_1, L, S_1)$ and
    $\lim_{\epsilon \to 0} \mathcal{S}(W_{\epsilon}) = 0$.

  \item[ii.] In general, even if both Lagrangians $L$ and $L'$ belong
    to the triangulated completion of the family $\mathcal{F}$, it can
    be very difficult to know whether $d^{\mathcal{F}}(L,L')$ is
    finite because there might not be any practical way to construct
    cobordisms with ends $L,L'$ and elements of $\mathcal{F}$.
  \end{itemize}
\end{rem}

\subsubsection{Matching cycles in simple Lefschetz fibrations.}

We revisit here the phenomena described above in a different context
and we also see examples of symplectic diffeomorphisms $\phi :M\to M$
so that $l(\phi^{k}(L),L)=k$ (in these examples $\phi$ is a Dehn
twist).

The manifold $M$ is now taken to be the total space of a Lefschetz
fibration $\pi: M \longrightarrow \mathbb{C}$ over $\mathbb{C}$ with
general fiber the cotangent bundle of a sphere $K$ (in particular $M$
is not compact).  We will assume that the Lefschetz fibration - the we
write as $\pi :M\to \C$ - has exactly three singularities $x_{i}$,
$i=1,2,3$ whose projection on $\C$ is arranged as in Figure
\ref{fig:Lef} below.  We also assume that there are two matching
cycles relating the three singularities that we denote by $S$, from
$x_{1}$ to $x_{2}$, and $L$, from $x_{2}$ to $x_{3}$ - as in the same
figure.

Notice that $L$ and $S$ intersect (transversely) in a single
point. Moreover, recall that with the notations in \cite{Bi-Co:cob1,
  Bi-Co:lefcob-pub} we have that $S\#L$ is Hamiltonian isotopic to the
Dehn twist $\tau_{S}(L)$, and, similarly, $L\#S$ is Hamiltonian
isotopic to $\tau_{S}^{-1}(L)$.  An important point to emphasize here
is that the Dehn twist $\tau_{S}(L)$ is only well defined up to
Hamiltonian isotopy. On the other hand, the models for $\tau_{S}(L)$
(and $\tau_{S}^{-1}(L)$) given by surgery, as before, are precisely
determined as soon as the local data of the surgery is fixed (the
surgery handle and the precise Darboux chart around the intersection
point).  We will also need, two other matching cycles $S_{1}$ and
$S_{2}$ with a projection as in Figure \ref{fig:Lef} a.
 \begin{figure}[htbp]
   \begin{center}
   \includegraphics[width=0.80\textwidth,
    height=0.35\textheight]{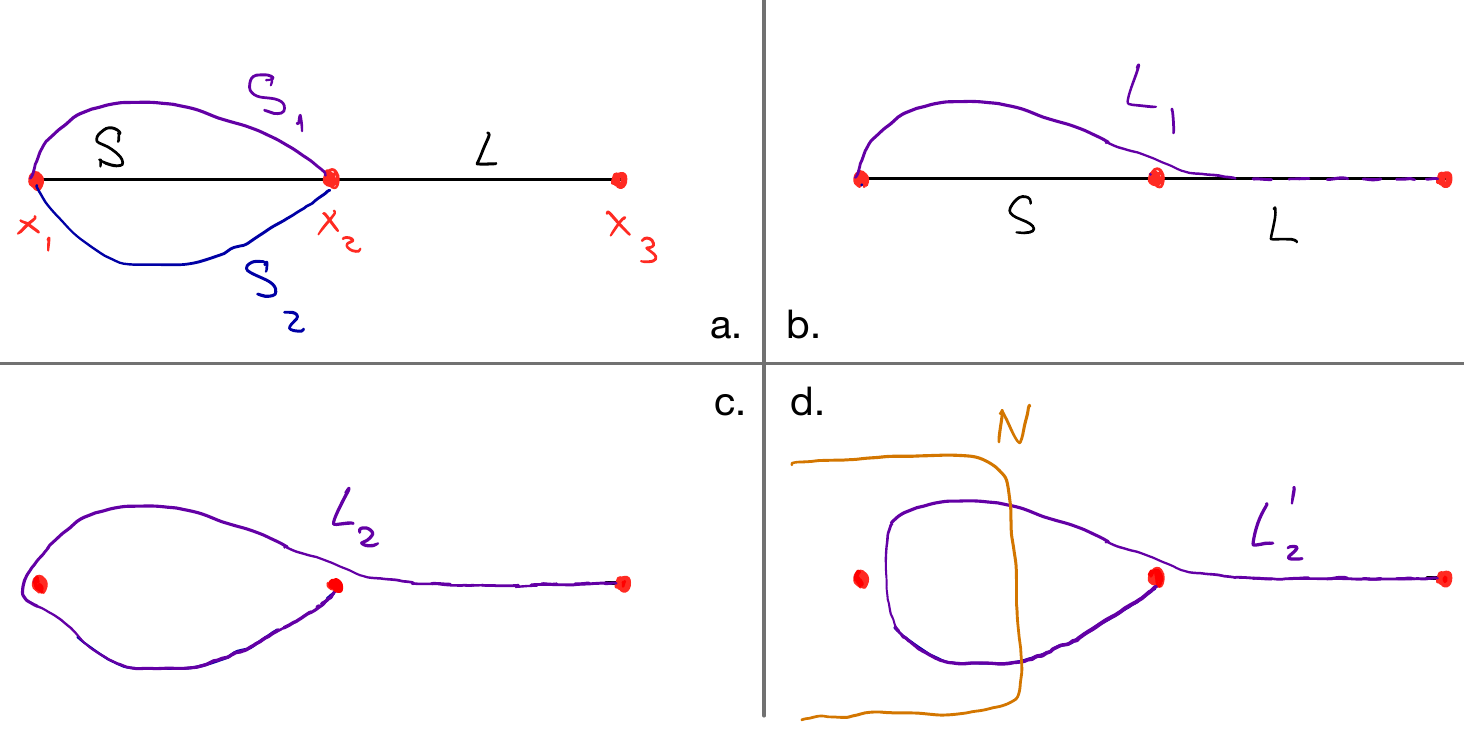}
   \end{center}
   \caption{The matching cycles $S, S_{1},S_{2}$ and $L$ and the
     Lagrangians $L_{1}, L_{2}, L_{2}'$ constructed by surgery (and
     small perturbation) from them.}
   \label{fig:Lef}
\end{figure}

They are both Hamiltonian isotopic to $S$ through Hamiltonian
isotopies.  The two spheres $S_{2}$ and $S_{2}$ intersect transversely
at the points $x_{1}$ and $x_{2}$ and each of them intersects
transversely $L$ at the point $x_{2}$.  We now consider the following
three Lagrangians: $L_{1}$ which is obtained from $S_{1}\# L$ after a
small Hamiltonian isotopy so that its projection is as in Figure
\ref{fig:Lef} b, $L_{2}$ given as a small deformation of
$S_{2}\# L_{1}$ and $L_{2}'$, a small deformation of $L_{1}\#S_{2}$ so
that their projections are as in the same figure, part c and d,
respectively. Notice that $L_{1}$ is a model for $\tau_{S}(L)$ and
that $L_{2}$ and $L_{2}'$ are models for $\tau^{2}_{S}(L)$ and
$L=\tau_{S}^{-1}\tau_{S}(L)$, respectively. In particular, there is a
Hamiltonian isotopy $\phi$ so that $L'_{2}=\phi(L)$.

\

Fix the family $\mathcal{F}=\{S_{1},S_{2}\}$.  The first remark is
that by taking the surgery handles sufficiently small we have
$d_{2}(L'_{2},L)<d_{0}(L'_{2},L)<\infty$. Further, let $K'$ be a
Hamiltonian perturbation of the vanishing sphere $K$ in the general
fiber. Let $N$ be the trail of $K'$ along a curve as in Figure
\ref{fig:Lef} d.  It is not hard to see
$HF(N,L_{2})=HF(N,S_{1})\oplus HF(N,S_{2})=HF(K,K)\oplus HF(K,K)$ (one
can use Seidel's exact triangle associated to a Dehn twist for this
computation). This implies $l(L_{2},L)=2$. On the other hand,
$HF(N,L'_{2})=0$. However, by taking the surgery handles in the
constructions above sufficiently small we see that
$\# (N\cap L'_{2})\geq 2 \ \mathrm{rk\/}(HF(K,K))$, as predicted by
Theorem \ref{thm:fission}. Notice also that if the surgery handle is
not small enough, or, alternatively, $N$ avoids $L_{2}'$ by passing
closer to $x_{1}$, then $N$ is disjoint from $L'_{2}$.

The last remark in this setting is the following.  By taking more
copies of the sphere $S$, (for instance four, as on the left of Figure
\ref{fig:lef2}), we can construct, in a way similar to the above,
models $L_{k}$ for $\tau_{S}^{k}(L)$. In Figure \ref{fig:lef2}, on the
right, we represent in this way $\tau^{4}_{S}(L)$.  As before, it is
easy to compute $HF(N, L_{k})=\oplus_{i=1}^{k}HF(K,K)$. This shows
that $l(L_{k},L)=k$ (this is a reflection of the well-known fact that
$\tau_{S}$ is not a torsion element in $Symp(M)$).

\begin{figure}[htbp]
  \begin{center}
    \includegraphics[width=0.85\textwidth,
    height=0.18\textheight]{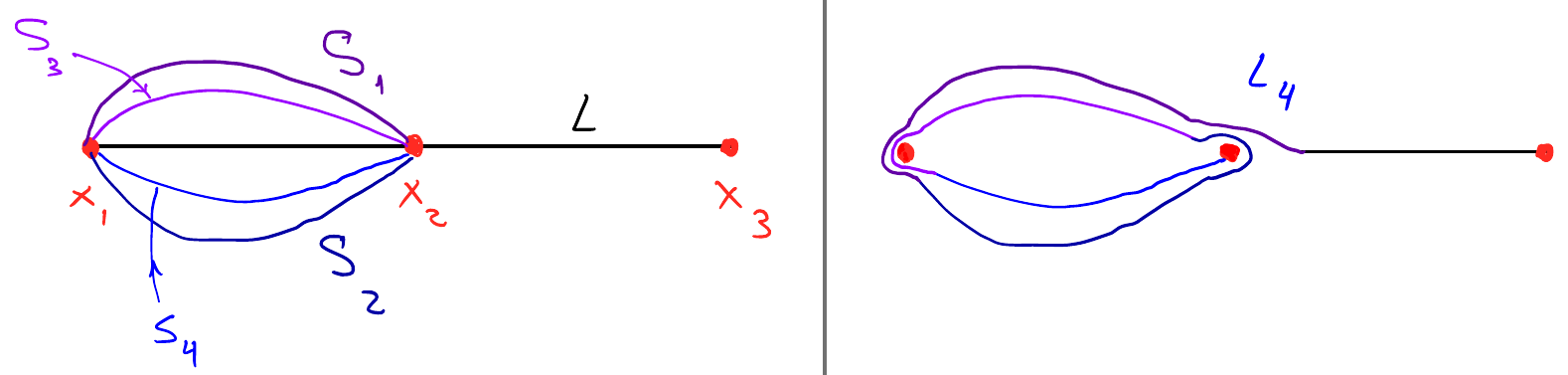}
  \end{center}
  \caption{A model for $\tau^{4}_{S}(L)$.}
  \label{fig:lef2}
\end{figure}

\subsubsection{Trace of surgery.} The numbers $d_{k}$ are hard to compute
as it is difficult in general to identify cobordisms with fixed ends
and with minimal shadow. However, we will see here how to use one of
the intersection results from Theorem~\ref{thm:fission} to show the
``optimality'' of decompositions given by the trace of certain
surgeries at one point.

We focus on just one example. As in \S\ref{subsec:curves_tori} we take
$M=T=S^{1}\times S^{1}$ and we fix $S_{1}$ and $L$ as in that
subsection. We now consider $L''= L\# S_{1}$ and, again as in
\S\ref{subsec:curves_tori}, we assume that the area of the handle used
in the surgery giving $L''$ is equal to $\delta$. We fix
$\mathcal{F}=\{S_1, S_2, S_3, S_4\}$ as in Lemma~\ref{lem:ex1}.
Notice that the shadow of the trace of the surgery
$V:L''=L\# S^{1}\to (L,S_{1})$ is equal to $\delta$.
 
\begin{lem}
  For $\delta$ small enough we  have $d_{1}(L'',L)=\delta$.
\end{lem}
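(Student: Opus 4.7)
The upper bound $d_{1}(L'',L)\leq \delta$ is immediate: the trace of the surgery $V_{0}: L''=L\#S_{1} \cobto (L, S_{1})$ is a weakly exact Lagrangian cobordism whose shadow equals the area $\delta$ of the surgery handle, and this cobordism is of the type permitted in the definition of $d_{1}^{\mathcal{F}}$ (one negative end, $S_{1}$, lies in $\mathcal{F}$, and the other equals $L$).

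For the lower bound, the plan is to invoke the first inequality in Theorem~\ref{thm:fission} against every admissible cobordism. A cobordism $V$ contributing to $d_{1}(L'',L)$ must have the form $V:L'' \cobto L$, $V: L''\cobto (L_{1},L)$, or $V: L''\cobto (L,L_{1})$ with $L_{1}\in \mathcal{F}=\{S_{1},S_{2},S_{3},S_{4}\}$. The simple case $V:L''\cobto L$ can either be discarded (since $L''$ and $L$ represent distinct classes in $H_{1}(T^{2})$, no weakly exact simple cobordism exists) or, more robustly, handled together with the case $L_{1}\neq S_{1}$. In all such cases, along the $S_{1}$-following part of $L''$ (vertical, at $x=-\tfrac{1}{2}-\epsilon$, away from the surgery region) a symplectic disk of area at least $4\pi\epsilon^{2}$ can be embedded with $L''$ as its real part while staying disjoint from $L\cup L_{1}$: the horizontal distance from this portion of $L''$ to any $S_{i}$ with $i\neq 1$ is at least $2\epsilon$, and the vertical distance to $L$ can be taken close to $\tfrac{1}{2}$. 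This yields $\delta(L'';L\cup L_{1})\geq 4\pi\epsilon^{2}$ and therefore $\mathcal{S}(V)\geq 2\pi\epsilon^{2}>\delta$ as soon as $\delta$ is small compared with $\epsilon^{2}$.

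The crucial and most delicate case is $L_{1}=S_{1}$, in which $S=L\cup S_{1}$. Here one has to compute $\delta(L'';L\cup S_{1})\geq 2\delta$ directly from the local model of the Biran--Cornea surgery. Working in a Darboux chart centered at the intersection point $p=L\cap S_{1}$, the handle of $L''$ is a curve in a quadrant bounding, together with the axes $L$ and $S_{1}$, a region of area $\delta$. Placing a symplectic disk symmetrically across the handle --- with one half filling the enclosed region of area $\delta$ and the other half extending into the opposite region, tangent to the two axes --- realizes a symplectic embedding of a ball of area $2\delta$ with image disjoint from $L\cup S_{1}$ and with the handle as its real axis. By Theorem~\ref{thm:fission} this gives $\mathcal{S}(V)\geq \tfrac{1}{2}\cdot 2\delta=\delta$.

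Combining the two cases yields $d_{1}(L'',L)\geq \delta$ provided $\delta$ is smaller than the explicit threshold $\min\{2\pi\epsilon^{2},\delta/\text{(constants from other }L_{i}\text{'s)}\}$, which matches the upper bound and proves the lemma. The principal difficulty is the sharp Gromov-width estimate in the $L_{1}=S_{1}$ case: a naive packing argument only produces a disk of area roughly $\delta$ rather than $2\delta$, losing the needed factor of two. Identifying the optimal embedded ball requires taking seriously the two-sided geometry of the handle, and this is precisely what allows the lower bound to match the shadow of the surgery trace and makes the equality $d_{1}(L'',L)=\delta$ hold on the nose rather than merely up to a constant factor.
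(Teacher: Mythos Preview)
Your approach is correct and interestingly different from the paper's in the key case $L_1=S_1$. Both of you dispose of the case $L_1\neq S_1$ in the same way (the first inequality of Theorem~\ref{thm:fission} applied to a ball on the $S_1$-portion of $L''$; your constant $4\pi\epsilon^2$ differs from the paper's $2(4\epsilon-\delta)$ but either suffices). For $L_1=S_1$, however, the paper does \emph{not} compute $\delta(L'';L\cup S_1)$. Instead it invokes statement~(\ref{i:hf-inters}) of Theorem~\ref{thm:fission}: given any $\delta'<\delta$, it constructs a test Lagrangian $N$ (Hamiltonian isotopic to $S_1$) meeting $L''$ transversely in a single point, and meeting $L$ and $S_1$ so that the triangle $aoc$ near $o$ has area $A\in(\delta',\delta)$ with $\delta^{\Sigma''}(L\cup S_1;N)\geq 4A$. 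Since $\mathcal{S}(V')=\delta'<A=\tfrac{1}{4}\cdot 4A$, part~(\ref{i:hf-inters}) would force $\#(N\cap L'')\geq\dim HF(N,L)+\dim HF(N,S_1)=1+2=3$, contradicting $\#(N\cap L'')=1$.

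Your direct route via $\delta(L'';L\cup S_1)\geq 2\delta$ also works, but your justification is imprecise: a ball whose boundary is ``tangent to the two axes'' literally touches $L\cup S_1$ and is therefore disallowed. The correct statement is that the supremum equals $2\delta$, approached but never achieved. In the Biran--Cornea model the $n=1$ handle consists of two arcs in opposite quadrants, each enclosing area equal to the handle parameter $\delta$; a ball with real diameter on one arc has its inner half confined to the open region of area $\delta$ between that arc and the axes, forcing $\pi r^2<2\delta$, and area-preserving flexibility lets one approach this bound. What your approach buys is directness; what the paper's approach buys is a demonstration that the intersection inequality~(\ref{i:hf-inters}) is itself sharp enough to certify the optimality of the surgery trace, without the delicate Gromov-width computation.
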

 
In other words, there is no decomposition of $L''$ in terms of the
family $L\cup\mathcal{F}$ through a cobordism with two negative ends
and of shadow smaller than $\delta$.

\begin{proof} Suppose that there is a cobordism
  $V': L'' \to (L_{1}, L_{2})$ so that one of the $L_{i}$'s equals
  $L$, the other equals one of the $S_{i}$'s and
  $\mathcal{S}(V')= \delta' <\delta$.  We first notice that $S_{1}$
  needs to appear among the $L_{i}$'s. Indeed, suppose, for instance
  that $(L_{1}, L_{2})=(L, S_{2})$. In this case, consider a disk
  based on the part of $L''$ that coincides with $S_{1}$ and is
  disjoint from $S_{2}$ as well as from $L$ and whose real part is
  along $L''$. The area of such a disk can be assumed to be as close
  as needed to $2(4\epsilon -\delta)$.  By now applying the first
  part of Theorem \ref{thm:fission} we deduce that
  $\delta > \mathcal{S}(V')\geq 4\epsilon-\delta$ which is a
  contradiction if $\delta$ is small enough. In conclusion, we deduce
  that the two negative ends of $V'$ coincide with $L$ and
  $S_{1}$. Consider now the Lagrangian $N$ as in Figure \ref{fig:ex2}
  and denote by $o$ the intersection of $L$ and $S_{1}$.
  \begin{figure}[htbp]
    \begin{center}
      \includegraphics[width=0.65\textwidth,
      height=0.36\textheight]{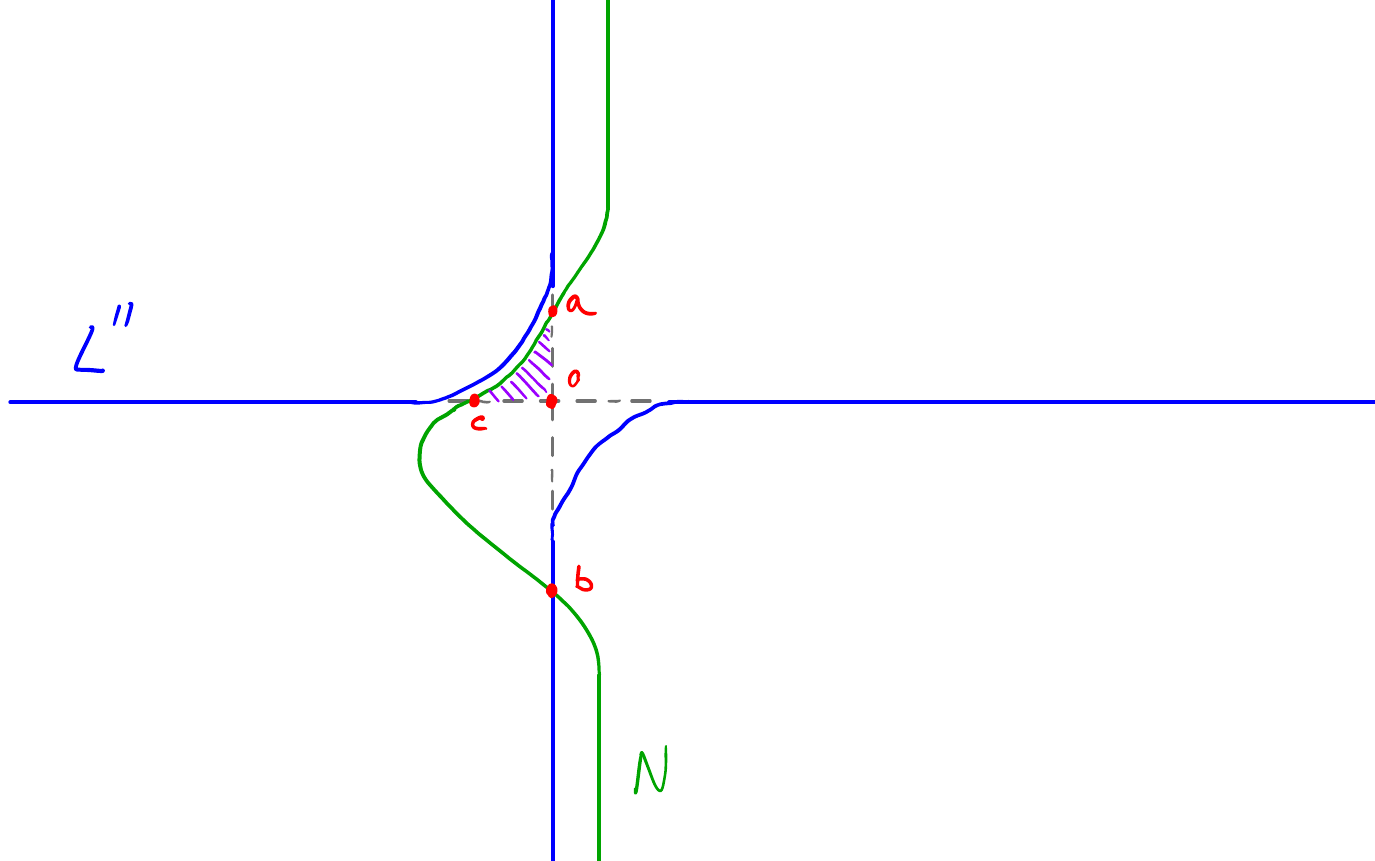}
    \end{center}
    \caption{The triangle $aoc$ is of area $A$ with $\delta> A>\delta'$.}
    \label{fig:ex2}
  \end{figure}

  The properties of $N$ are the following: $N$ is Hamiltonian isotopic
  to $S_{1}$; it intersects $S_{1}$ transversely at precisely two
  points $a$ and $b$ and it intersects $L$ transversely at one point
  $c$ ; $N$ intersects $L''$ transversely at the point $b$; the small
  triangle of vertices $a,c,o$ is of area $A$ with
  $\delta'< A < \delta$.  We use the Lagrangian $N$ as follows. First,
  notice that by assuming $\delta$ small enough and for
  $\mathbb{L}=L\cup S_{1}$, we can find the relevant disks centered at
  $o$ so as to estimate
  $\delta^{\Sigma \mathbb{L}}(\mathbb{L};N)\geq 4A$. By applying the
  point (b) of Theorem \ref{thm:fission} we deduce
  $1=\#(N\cap L'')\geq \dim HF(N, L)+\dim HF(N,S_{1})=3$ which is a
  contradiction and thus proves that $V'$ does not exist.
\end{proof}
 
 \subsection{Algebraic 
 metrics on $\mathcal{L}ag^{\ast}(M)$} \label{subsec:alg-metr-Lag}
 The main purpose of this subsection is to notice that it is possible to define measurements similar to those
 in \S\ref{subsec:shad-metric} but that only exploit the algebraic structures involved and that do not 
 appeal to cobordism.  We emphasize  that, as before, our metrics may take infinite values. The key point is that the proof of the first part of Theorem \ref{thm:fission}  implies not only the non-degeneracy of $\widehat{d}^{\mathcal{F},\mathcal{F}'}$ but also that of its algebraic counterpart. 
When $\mathcal{F}$ generates
 $D\fuk^{\ast}(M)$ some of these algebraic pseudo-metrics are finite by definition, independently of the existence  of cobordisms - see Remark \ref{rem:finite-alg}. While inspired by cobordism constructions, this algebraic approach is independent of them.  We emphasize that we will not 
attempt to develop here this additional algebraic machinery 
 in an extensive way. At the same time, the construction of both the metrics 
 $\widehat{d}^{\mathcal{F},\mathcal{F}'}$ as well as their algebraic counterparts fit a more general, abstract pattern that we will outline. 
 
 \subsubsection{Weighted triangulated categories.}\label{subsubsec:weights1} Let $\mathcal{X}$ be a triangulated category
 and let $\mathcal{X}_{0}$ be a family of objects of $\mathcal{X}$ that generate $\mathcal{X}$ through
 triangular completion.  The purpose of this subsection is to describe a procedure leading to a (pseudo) metric 
 on $\mathcal{X}_{0}$. The pseudo-metrics $d^{\mathcal{F}}$ in \S\ref{subsec:shad-metric} are of this type but, as we shall see further below, other choices are possible.
 
 There is a category denoted by $T^{S}\mathcal{X}$ that was introduced in \cite{Bi-Co:cob1,Bi-Co:lcob-fuk}. This category is monoidal and its objects are finite ordered famillies 
 $(K_{1},\ldots, K_{r})$ with $K_{i}\in \mathcal{O}b(\mathcal{X})$ with the operation given by concatenation. Up to a certain natural equivalence relation, the morphisms in $T^{S}\mathcal{X}$ are direct sums of basic morphisms $\bar{\phi}$ from a family formed of a single object of $\mathcal{X}$ to a general family,  $\bar{\phi}: K\to (K_{1},\ldots, K_{s})$. Such a morphism $\bar{\phi}$ is a triple $(\phi, a, \eta)$ where $a$ is an object in $\mathcal{X}$, $\eta$
 is a cone decomposition of $a$ through iterated  distinguished triangles of the form:
 \begin{equation}
 \label{eq:iterated-cones}
a=\tcn (K_{s}\to \tcn ( K_{s-1}\to \ldots\to \tcn (K_{2}\to K_{1}).. )
 \end{equation}
 and $\phi :K\to a$ is an isomorphism. For such a cone decomposition the family  $(K_{1},\ldots, K_{s})$ is called the linearization of the decomposition.
 In essence, the morphisms in $T^{S}\mathcal{X}$ parametrize all the cone-decompositions of the 
 objects in $\mathcal{X}$. Composition in $T^{s}\mathcal{X}$ comes down to refinement of cone-decompositions. 
 
 Denote by $T^{S}\mathcal{X}_{0}$ the full subcategory of $T^{S}\mathcal{X}$ that has
 objects $(K_{1},\ldots, K_{r})$ with $K_{i}\in \mathcal{X}_{0}$, $1\leq i\leq r$. 
 
 Assume given a weight $w: \mor_{T^{S}\mathcal{X}_{0}}\to [0,\infty]$  so that 
 \begin{equation}\label{eq:weight-tr}
 w(\bar{\phi}\circ \bar{\psi})\leq w(\bar{\phi})+w(\bar{\psi}) \ , \ w(id_{X})=0 \ , \forall \ X
 \end{equation}
  here $id_{X}$ is the identity morphism
 viewed as defined on the family formed by the single object $X$ and with values in the same family.
 We will refer to this $w$ as a weight on $\mathcal{X}$.
 Fix also a family $\mathcal{F}\subset \mathcal{X}_{0}$. 
 
In this setting, we define (compare to (\ref{eq:pseudo-metric})): 
\begin{equation}\label{eq:pseudo-metric2}
s^{\mathcal{F}}(K',K)=\inf \{w(\bar{\phi}) \mid \ \ \bar{\phi} : K'\to (F_{1},\ldots, K,\ldots, F_{r}),\ \ F_{i}\in \mathcal{F}, \forall i \}~.~
\end{equation}

We take $s^{\mathcal{F}}$ to be $=\infty$ in case there is no morphism as in (\ref{eq:pseudo-metric2}).  If $w$ is finite and if $\mathcal{F}$ generates $\mathcal{X}$, then $s^{\mathcal{F}}$ is finite. 
It is clear that $s^{\mathcal{F}}(K',K)$ satisfies the triangle inequality but it is not symmetric in general. We  let
$$\bar{s}^{\mathcal{F}}(K',K)= \frac{s^{\mathcal{F}}(K',K)+s^{\mathcal{F}}(K,K')}{2}$$
thus obtaining a pseudo-metric on the set of objects fo $\mathcal{X}$.

In summary, the pseudo-metrics obtained in this way are based on minimizing the energy needed to split
objects into ``elementary'' pieces belonging to $\mathcal{F}$. We will refer to them as {\em weighted fragmentation} pseudo-metrics.
Obviously, other numbers such as those in  \S\ref{subsec:shad-metric} (basically, weighted notions
of cone-length) can also be defined in the general setting here.

The case of interest in this paper is   $\mathcal{X}=D\fuk^{\ast}(M)$ with  $\mathcal{X}_{0}$ 
consisting of all the Yoneda modules associated to the Lagrangians in $\mathcal{L}ag^{\ast}(M)$. 
In our notation, the category $\fuk^{\ast}(M)$ is defined as described at the beginning of \S\ref{s:floer-theory}, without reference to filtrations. Even in this case, this is not actually a single
$A_{\infty}$ category but rather a family of quasi-equivalent  such categories. Similarly, $D\fuk^{\ast}(M)$ is well defined up to triangulated equivalence.  
With these choices, the pseudo-metric $d^{\mathcal{F}}$ from \S\ref{subsec:shad-metric} 
is a first example of a (class) of weighted fragmentation pseudo-metrics associated to a weight $w$ defined as follows. 

Recall from \cite{Bi-Co:lcob-fuk},\cite{Ch-Co:cob-Seidel} that there is a monoidal cobordism category
$\mathcal{C}ob^{\ast}(M)$ whose objects are families $(L_{1},\ldots, L_{s})$ with $L_{i}\in \mathcal{L}ag^{\ast}(M)$ and with morphisms (formal sums) of cobordisms of the type
$V: L\cobto (L_{1},\ldots, L_{s})$ (modulo an appropriate equivalence relation; the monoidal 
operation is concatenation). There is a monoidal functor, denoted in \cite{Bi-Co:lcob-fuk}
by $\widetilde{\mathcal{F}}$ but that, to avoid confusion in notation, we will
denote here by $\widetilde{\Phi}$ :
\begin{equation}\label{eq:functor-cob-TS}\widetilde{\Phi}:\mathcal{C}ob^{\ast}(M)\to T^{S}(D\fuk^{\ast}(M))~.~
\end{equation}
On objects, this functor associates to  
a Lagrangian $L$ its Yoneda module $\mathcal{L}$ and its properties have 
been used extensively earlier in the paper, starting from \S\ref{sb:wf-icones-cobs}. 

In the setting, $\mathcal{X}=D\fuk^{\ast}(M)$, for a morphism $\bar{\phi}\in \mor_{T^{S}\mathcal{X}_0}$ we define the {\em shadow} weight of $\bar{\phi}$ by: 
\begin{equation}\label{eq:shadow-weight}
w_{\mathcal{S}}(\bar{\phi})=\inf\{\mathcal{S}(V) \mid \ \widetilde{\Phi}(V)=\bar{\phi}\}
\end{equation}
and it is easy to see from the various definitions involved that $d^{\mathcal{F}}$ coincides with the weighted
fragmentation pseudo-metric $\bar{s}^{\mathcal{F}}$ associated to $w_{\mathcal{S}}$. 
Additionally, recall from  Corollary \ref{cor:deblurring}  that, by using an appropriate perturbation $\mathcal{F}'$,
we obtain an actual metric $\widehat{d}^{\mathcal{F},\mathcal{F}'}=\max\{d^{\mathcal{F}} ,d^{\mathcal{F}'}\}$.

\begin{rem}\label{rem:triang-categ} The definition of the category $T^{S}\mathcal{X}$ was inspired by the work on Lagrangian cobordism and might seem artificial in itself. However,  we will remark here that 
(in a slightly modified form) it is the natural categorification of the Grothendieck group $K(\mathcal{X})$.  This group is defined as the quotient of the free abelian group 
 generated by the objects in $\mathcal{X}$ modulo the relations $B=A+C$ whenever $A\to B\to C$ is a distinguished triangle in $\mathcal{X}$. We will work here in a simplified setting and take the identity for the shift functor. As a consequence $K(\mathcal{X})$ is a $\Z/2$ vector space.
 Alternatively, $K(\mathcal{X})$ can also be defined as the free monoid of finite ordered families 
 $(K_{1},\ldots, K_{r})$ where $K_{i}\in \mathcal{O}b(\mathcal{X})$, with the operation being given by
 concatenation of families, modulo the relations $K_{1}+K_{2}+\ldots + K_{s}=0$ 
 whenever there exists a cone decomposition of $0$ with linearization $(K_{1},\ldots, K_{s})$. 
When $\mathcal{X}$ is small, there is a category, $\widehat{T}^{S}\mathcal{X}$, closely associated to $T^{S}\mathcal{X}$, that categorifies $K(\mathcal{X})$ in the usual sense (meaning that it is a monoidal category with the property that the monoid formed by the isomorphism classes of its objects is $K(\mathcal{X})$).  The basic idea is that, in $\widehat{T}^{S}\mathcal{X}$,  families that
are linearizations of acyclic cones are declared isomorphic to $0$.  More formally, $\widehat{T}^{S}\mathcal{X}$ is defined if the category $\mathcal{X}$ is small and is constructed in three steps: first we add to the morphisms in $T^{S}\mathcal{X}$ the morphism $0\to \emptyset$ (and the relevant compositions) thus getting $T^{S}\mathcal{X}^{+}$; secondly, we localize $T^{S}\mathcal{X}^{+}$ at the family of morphisms $$\mathcal{A}=\{\phi\in\mor_{T^{S}\mathcal{X}^{+}} \mid \ \ \phi : 0 \to 
(K_{1},\ldots, K_{s})\}$$ (here $0$ is viewed as a family formed by the single element $0$; this is equivalent to adding inverses to all the morphisms having $0$ as domain and adding relations so that associativity of composition is still  satisfied); finally,  we complete in the monoidal sense by allowing  formal sums for all the new and old
morphisms.  
\end{rem}

 \subsubsection{Energy of retracts of weakly filtered modules.}\label{subsubsec:energy-retr}
Our aim is to define an algebraic weight $w_{\textnormal{alg}}$ that satisfies
 (\ref{eq:weight-tr}) and the associated weighted fragmentation pseudo-metrics (see \S\ref{subsubsec:weights1}). We start in this subsection by introducing a measurement 
 associated to retracts.
 
 Assume that $\mathcal{M}$ is a
weakly filtered module over the weakly filtered $A_{\infty}$ -
category $\mathcal{A}$ with discrepancy $\leq \bm{\epsilon}^{m}$
as in \S \ref{sb:mod} and that $\psi : \mathcal{M}\to \mathcal{M}$ is a weakly
filtered module homomorphism with discrepancy
$\leq \bm{\epsilon}^{h}$ which is null-homotopic.  Following the
terminology in~\S\ref{sec:bdry-depth}, we consider the homotopical
boundary level of $\psi$:

$$B_h(\psi;\bm{\epsilon}^{h}) :=
\beta_{h}(\psi;\bm{\epsilon}^{h})+A(\psi)~.~$$
 Let $f:\mathcal{M}_{0}\to\mathcal{M}_{1}$ be a  morphism of weakly filtered
modules and define:
\begin{equation}\label{eq:retract-weight}
\rho(f)=\inf_{g}\  
(\max\ \{B_h(g\circ f-id;\bm{\epsilon}^{h}), A(g)+A(f), 0\})
\end{equation}
where the infimum is taken over all
weakly filtered module morphisms
$$\ g:\mathcal{M}_{1}\to \mathcal{M}_{0},\ \mathrm{with}\ g\circ f \in \hom^{\epsilon^{h}}(\mathcal{M}_{0},\mathcal{M}_{0})\ ,\ g\circ f\simeq id_{\mathcal{M}_{0}}~.~$$
 In case no such $g$ exists we put 
 $\rho(f)=\infty$. The measurement $\rho$ estimates the 
 minimal energy required to find a left homotopy inverse for $f$.
  \begin{rem} Similar notions  are familiar  in Floer theory, generally to compare two quasi-isomorphic chain complexes, and in that case the infimum above is taken also over all
 morphisms $f$ and one also takes into account a homotopy $f\circ g\simeq id_{\mathcal{M}_{1}}$. For instance, this appears in \cite{Usher-Zhang}.
 \end{rem}
 Two properties of $\rho$ will be useful below.
 \begin{lem}\label{lem:first-ineq} Given $\mathcal{M}_{0}\stackrel{f}{\longrightarrow} \mathcal{M}_{1}$, $\mathcal{M}_{1}\stackrel{f'}{\longrightarrow} \mathcal{M}_{2}$, then:
  \begin{equation}\label{eq:ro-triang}
  \rho(f'\circ f)\leq \rho(f)+\rho(f')~.~
  \end{equation}
  \end{lem}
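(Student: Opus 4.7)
The plan is to produce, given near-optimizers of $\rho(f)$ and $\rho(f')$, a witness for $\rho(f'\circ f)$ by composing them and to estimate everything term by term.

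Fix $\epsilon>0$ and choose weakly filtered module homomorphisms $g_1:\mathcal{M}_1\to\mathcal{M}_0$ and $g_2:\mathcal{M}_2\to\mathcal{M}_1$ such that $g_1\circ f\simeq\id_{\mathcal{M}_0}$, $g_2\circ f'\simeq\id_{\mathcal{M}_1}$, and
\[
\max\{B_h(g_1\circ f-\id;\bmeh),\,A(g_1)+A(f),\,0\}\le\rho(f)+\epsilon,
\]
\[
\max\{B_h(g_2\circ f'-\id;\bmeh),\,A(g_2)+A(f'),\,0\}\le\rho(f')+\epsilon.
\]
Set $g:=g_1\circ g_2:\mathcal{M}_2\to\mathcal{M}_0$. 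As a composition of weakly filtered module maps it is weakly filtered (with some discrepancy $\bme'$; by enlarging the ambient discrepancy sequence $\bmeh$ if necessary, which does not affect any of the $B_h$-quantities we have just bounded, we can assume $g$ and $g\circ f'\circ f$ lie in the chosen $\hom^{\bmeh}$ spaces).

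The key algebraic identity is the telescoping
\[
g\circ(f'\circ f)-\id \;=\; g_1\circ(g_2\circ f'-\id)\circ f \;+\; (g_1\circ f-\id).
\]
Writing $g_2\circ f'-\id=\mu_1^{\tmod}(H)$ with $A(H)\le B_h(g_2\circ f'-\id;\bmeh)+\epsilon$ and $g_1\circ f-\id=\mu_1^{\tmod}(H')$ with $A(H')\le B_h(g_1\circ f-\id;\bmeh)+\epsilon$, and using that $g_1$ and $f$ are cycles in the dg-category $\tmod^{\mathrm{wf}}_{\mathcal{A}}$ (i.e.\ $\mu_1^{\tmod}(g_1)=\mu_1^{\tmod}(f)=0$), the Leibniz rule for $\mu_1^{\tmod}$ together with the associativity of $\mu_2^{\tmod}$ yield
\[
g_1\circ\mu_1^{\tmod}(H)\circ f \;=\; \pm\,\mu_1^{\tmod}\bigl(g_1\circ H\circ f\bigr).
\]
Thus $g\circ(f'\circ f)-\id=\mu_1^{\tmod}\bigl(H'\pm g_1\circ H\circ f\bigr)$, exhibiting a null-homotopy whose action level is bounded by
\[
\max\bigl\{A(H'),\,A(g_1)+A(H)+A(f)\bigr\}\le\max\bigl\{\rho(f),\,(A(g_1)+A(f))+B_h(g_2\circ f'-\id;\bmeh)\bigr\}+\epsilon.
\]
Both $A(g_1)+A(f)$ and $B_h(g_2\circ f'-\id;\bmeh)$ are controlled: the first is $\le\rho(f)+\epsilon$ and the second is $\le\rho(f')+\epsilon$. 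Hence
\[
B_h\bigl(g\circ(f'\circ f)-\id;\bmeh\bigr)\le\rho(f)+\rho(f')+3\epsilon.
\]
The remaining terms in the $\max$ defining $\rho(f'\circ f)$ with this choice of $g$ are handled by subadditivity of action shifts: $A(g)+A(f'\circ f)\le(A(g_1)+A(f))+(A(g_2)+A(f'))\le\rho(f)+\rho(f')+2\epsilon$, and $0\le\rho(f)+\rho(f')$. Taking $\epsilon\to 0$ proves~\eqref{eq:ro-triang}.

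The main bookkeeping obstacle is verifying that the composition $g_1\circ g_2$ and the homotopy $g_1\circ H\circ f$ genuinely land in the prescribed spaces $\hom^{\leq\rho;\bmeh}$ with the correct discrepancy control; this is where the convention (from \S\ref{sbsb:wfm-categ}) that one is allowed to enlarge $\bmeh$ slightly, together with Remark~\ref{r:eh}, becomes important. If infinite values occur (either $\rho(f)=\infty$ or $\rho(f')=\infty$), the inequality~\eqref{eq:ro-triang} is vacuous; otherwise the argument above applies verbatim.
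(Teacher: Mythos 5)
Your proof is correct and follows essentially the same route as the paper's. The paper chooses near-optimal inverses $g,g'$ with homotopies $\eta:g\circ f\simeq\id$, $\eta':g'\circ f'\simeq\id$ and exhibits the homotopy $\bar\eta = g\circ\eta'\circ f+\eta$ for $g\circ g'\circ f'\circ f\simeq\id$, bounding its action shift by $\max\{r+s+k',k\}$ — this is exactly your $H'\pm g_1\circ H\circ f$ with the same estimate, merely written with explicit Leibniz-rule/cycle language in $\mathrm{mod}_{\mathcal{A}}$ rather than directly as a homotopy composition.
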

 \begin{proof}Indeed,  assume $ \mathcal{M}_{1}\stackrel{g}{\longrightarrow}\mathcal{M}_{0}$,
 $\mathcal{M}_{2}\stackrel{g'}{\longrightarrow} \mathcal{M}_{1}$ are weakly filtered module maps
 and $\eta:g\circ f\simeq id_{\mathcal{M}_{0}}$, $\eta:g'\circ f'\simeq id_{\mathcal{M}_{1}}$
 are the respective homotopies.  Assume that $f,g, \eta, f',g',\eta'$ shift filtrations by $\leq s,\ r,\ k,\ s',\ r',\ k'$,
 respectively. These numbers can be taken larger but as close as desired to the respective action levels. 
 Notice that  $f'\circ f$ shifts filtrations by $\leq s+s'$, $g\circ g'$ shifts filtrations by $\leq r+r'$
 and, moreover, the homotopy 
 $$\bar{\eta}=g\circ \eta'\circ f + \eta: g\circ g'\circ f'\circ f\simeq id_{\mathcal{M}_{0}}$$ 
 shifts filtrations by $\leq \max\{r+s+k', k\}$. This implies the claim.  
 \end{proof}
 To state the second property, assume that the weakly filtered module 
$\mathcal{M}_{1}$ can be written as a weakly filtered iterated cone 
$$\mathcal{M}_{1}= \tcn (K_{s}\to  \tcn (K_{s-1}\to\ldots \to \tcn ( \mathcal{N} \to \tcn (K_{i-1}\to \ldots \tcn (K_{2}\to K_{1})\ldots )$$ and that there is another weakly filtered 
module $\mathcal{N}'$ together with weakly filtered maps $u:\mathcal{N}\to \mathcal{N}'$ and $v:\mathcal{N}'\to \mathcal{N}$ and a weakly filtered homotopy $\xi:v\circ u\simeq id_{\mathcal{N}}$. 

\begin{lem}\label{lem:bounds-rho}  There is another
weakly filtered module $\mathcal{M}'_{1}$ that can be written as a filtered  iterated cone of the same 
form as the decomposition for $\mathcal{M}_{1}$ except with $\mathcal{N}'$ replacing $\mathcal{N}$ and
there is an associated map $u':\mathcal{M}_{1} \to\mathcal{M}'_{1}$ so that 
$\rho(u')\leq \max\{A(u)+A(v),A(\xi), 0\}$.
\end{lem}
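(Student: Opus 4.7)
The plan is to construct $\mathcal{M}'_{1}$, together with the map $u'$ and a left homotopy inverse $g$, by induction on the cone layers, starting from the innermost layer at position $i$ (where $\mathcal{N}$ sits) and working outward. At each stage I produce a triple $(u_{j},g_{j},h_{j})$ consisting of a map $u_{j}:C_{j}\to C'_{j}$, a candidate left inverse $g_{j}:C'_{j}\to C_{j}$, and a homotopy $h_{j}: g_{j}\circ u_{j}\simeq\mathrm{id}_{C_{j}}$, with explicitly bounded action shifts. Taking $u':=u_{s}$ and $g:=g_{s}$ in the definition of $\rho$ then yields the desired estimate.

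First I treat the innermost layer. Denote by $\phi_{i}:\mathcal{N}\to C_{i-1}$ the attaching map at position $i$, and set $C'_{i}:=\tcn(\phi_{i}\circ v;\rho_{i}+A(v),\bm{\delta})$. Following the style of Lemma~\ref{l:con-f-g}, I define
\[
u_{i}(n,c)=\bigl(u(n),\,c+\phi_{i}\xi(n)\bigr),\qquad g_{i}(n',c)=\bigl(v(n'),\,c\bigr),\qquad h_{i}(n,c)=\bigl(\xi(n),0\bigr).
\]
A direct calculation (using the $A_{\infty}$-identities on $\mathcal{N}$) verifies that $u_{i},g_{i}$ are module homomorphisms and that $g_{i}u_{i}-\mathrm{id}=\mu_{1}^{\mathrm{mod}}(h_{i})$ in characteristic $2$. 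Tracking levels via the cone filtration formula~\eqref{eq:fcone} gives $A(u_{i})\leq\max\{A(u)+A(v),A(\xi),0\}$, $A(g_{i})\leq 0$ (the shift $A(v)$ present in $v$ is exactly compensated by the shift $A(v)$ built into the filtration of $C'_{i}$), and $A(h_{i})\leq A(\xi)$.

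For the inductive step, suppose $(u_{j-1},g_{j-1},h_{j-1})$ have been constructed with $A(u_{j-1})\leq\max\{A(u)+A(v),A(\xi),0\}$, $A(g_{j-1})\leq 0$, and $A(h_{j-1})\leq\max\{A(\xi),0\}$. Given the next attaching map $\psi_{j}:K_{j}\to C_{j-1}$ I apply Lemma~\ref{l:con-f-g} with $\xi=u_{j-1}$ to define $C'_{j}:=\tcn(u_{j-1}\circ\psi_{j};\rho_{j}+A(u_{j-1}),\bm{\delta}')$ and obtain $u_{j}:C_{j}\to C'_{j}$ of first-order part $u_{j}(k,c)=(k,u_{j-1}(c))$, shifting action by $\leq A(u_{j-1})$. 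The left inverse and homotopy are then forced by the ansatz $g_{j}(k,c')=(k,g_{j-1}(c')+h_{j-1}(\psi_{j}k))$ and $h_{j}(k,c)=(0,h_{j-1}(c))$; verifying the chain-map and chain-homotopy identities reduces to the inductive identity $g_{j-1}u_{j-1}-\mathrm{id}=\mu_{1}(h_{j-1})$ and the chain-map property of $\psi_{j}$. The filtration bookkeeping on $C'_{j}$ (whose $K_{j}$-summand sits at action level shifted by $\rho_{j}+A(u_{j-1})$) shows that the contribution of $\psi_{j}$ cancels against the extra $A(u_{j-1})$-shift, so $A(g_{j})\leq\max\{A(h_{j-1})-A(u_{j-1}),A(g_{j-1}),0\}\leq 0$ and $A(h_{j})\leq A(h_{j-1})\leq\max\{A(\xi),0\}$, while trivially $A(u_{j})\leq A(u_{j-1})$. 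Propagating discrepancies by the standard formula for cones completes the step.

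Setting $\mathcal{M}'_{1}:=C'_{s}$, $u':=u_{s}$, and $g:=g_{s}$, the above bounds give $A(u')\leq\max\{A(u)+A(v),A(\xi),0\}$, $A(g)\leq 0$, and $B_{h}(gu'-\mathrm{id})\leq A(h_{s})\leq\max\{A(\xi),0\}$, so that~\eqref{eq:retract-weight} yields the claimed inequality
\[
\rho(u')\leq\max\{B_{h}(gu'-\mathrm{id}),\,A(g)+A(u'),\,0\}\leq\max\{A(u)+A(v),\,A(\xi),\,0\}.
\]
The principal difficulty is the action bookkeeping in the inductive step: a naive estimate for $g_{j}$ would pick up a positive contribution $A(\psi_{j})$ from the attaching map, which would make the final bound depend on the internal structure of the cone. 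The point of the argument is that this contribution is exactly absorbed by the shift built into the filtration of $C'_{j}$, which is precisely how $C'_{j}$ is defined in terms of $C'_{j-1}$ and $u_{j-1}$. Once this cancellation is established at each layer, the claimed bound follows automatically.
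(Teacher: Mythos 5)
Your proof is correct and follows essentially the same strategy as the paper's. The base-case formulas you write down for the $\mathcal{N}$-layer, namely $u_i(n,c)=(u(n),c+\phi_i\xi(n))$, $g_i(n',c)=(v(n'),c)$, $h_i(n,c)=(\xi(n),0)$, coincide with the paper's $u'=(\bar u,\phi\circ\xi+\mathrm{id}_{K_1})$, $v'=(\bar v,\mathrm{id}_{K_1})$, $\xi'=(\xi,0)$ (with $v'$ and $\xi'$ renamed $g_i$ and $h_i$), and your inductive step is precisely the paper's ``second type of decomposition'' $\tcn(K_2\to\mathcal{N})$, implemented via Lemma~\ref{l:con-f-g}. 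The genuine difference is one of rigor and explicitness: where the paper says ``by recurrence, the proof is easily reduced'' and ``the second [case] being entirely similar'', you spell out the recursion, propagate the full triple $(u_j,g_j,h_j)$ through every cone layer with quantitative action bounds, and keep track of the shifts $\rho_j$ of the attaching maps $\psi_j$ directly rather than normalizing them away as the paper does; this makes visible exactly where the contribution of $\psi_j$ is absorbed. One small precision worth making: the inequality $A(h_{j-1})-A(u_{j-1})\le 0$ used to conclude $A(g_j)\le 0$ holds provided the shift parameter fed into the cone construction of $C'_j$ is taken to be a fixed upper bound such as $\max\{A(u)+A(v),A(\xi),0\}$ (which dominates both $A(u_{j-1})$ and $A(h_{j-1})$ by the inductive hypothesis), rather than the possibly smaller exact shift of $u_{j-1}$; your closing remarks indicate this is the intended reading, but the notation ``$A(u_{j-1})$'' in the definition of $C'_j$ should be understood accordingly.
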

As a corollary we deduce that given $\mathcal{M}_{1}$, $\mathcal{N}$ as well as $\mathcal{N}'$
and a weakly filtered map $u:\mathcal{N}\to \mathcal{N}'$ with $\rho(u)<\infty$, then, for any $\epsilon>0$, there exists
a weakly filtered module $\mathcal{M}'_{1}$ and a map $u':\mathcal{M}_{1}\to\mathcal{M}'_{1}$
as in the Lemma such that:
\begin{equation}
\label{eq:ineq-rho}
\rho(u')\leq \rho (u)+\epsilon~.~
\end{equation}
\emph{Proof of Lemma \ref{lem:bounds-rho}}. By recurrence, the proof is easily reduced 
to showing the statement for two particular types of decompositions: the first is $\mathcal{M}_{1}=
\tcn (\mathcal{N}\stackrel{\phi}{\longrightarrow} K_{1})$; the second case
 is $\mathcal{M}_{1}=\tcn (K_{2}\stackrel{\phi}{\longrightarrow} \mathcal{N})$.
 We will only treat here the first case the second being entirely similar. Without loss of generality, we may assume that $\phi$ does not shift action filtrations. 
 Assume that the map $v:\mathcal{N}'\to \mathcal{N}$ shifts 
 filtrations by $\leq r$, the map $u$ shifts filtrations by $\leq s$ and $\xi$ shifts filtration by $\leq k$. 
 Following the definitions of weakly filtered cones in \S\ref{sb:wf-mc} we construct $\mathcal{M}'_{1}$
 as follows.
 Let $\bar{v}: S^{-r}\mathcal{N}'\to \mathcal{N}$ be given by the map $v$ after shifting the filtration of its domain up by $r$. 
 Define $\phi'=\phi\circ \bar{v}$, $\phi':S^{-r}\mathcal{N}'\to K_{1}$
 and put $\mathcal{M}'_{1}=\tcn (\phi')$. With the notation in \eqref{eq:fcone}, this cone is defined
 by taking the action shift of $\phi'$ to be $0$.  
 There are module morphisms $v':\mathcal{M}'_{1}\to \mathcal{M}_{1}$
 defined as $v'=(\bar{v}, id_{K_{1}})$ and $u':  \mathcal{M}_{1} \to \mathcal{M}'_{1}$,  $u'=(\bar{u},\phi\circ \xi + id_{K_{1}})$ where $\bar{u}:\mathcal{N}\to S^{-r}\mathcal{N}'$ is the map $u$ with its target with a shifted filtration (these equations have to be interepreted component by component, as in the definition of the structure maps of cones of $A_{\infty}$-modules).
 There is also a homotopy $\xi':\mathcal{M}_{1}\to\mathcal{M}_{1}$, $\xi':v'\circ u'\simeq id$ given by the formula $\xi'=(\xi, 0)$.
 Notice that: $v'$ does not shift filtrations; $u'$ shifts action filtrations by $\leq \max\{r+s, k\}$;   $\xi'$ shifts filtration by $\leq k$.  As we can take $r, s,k$ larger but as close as desired to, respectively,  $A(v), A(u), A(\xi)$ this implies the claim. 
 \qed
 
 \subsubsection{Algebraic weights on $T^{S}D\fuk^{\ast}(M)$.}\label{subsubsec:alg-we} We  now use the measurement 
 $\rho$ introduced in \S\ref{subsubsec:energy-retr} to define an algebraic weight $w$, in the sense of \S\ref{subsubsec:weights1}. We will assume here that $\mathcal{X}=D\fuk^{\ast}(M)$ and that
 $\mathcal{X}_{0}$ consists of the the Yoneda modules associated to the Lagrangians in $\mathcal{L}ag^{\ast}(M)$. We will appeal to the constructions from \S\ref{sb:wf-fukaya}. 
 Recall from  Proposition \ref{p:wf-fukaya} that to a system of coherent perturbation data $p\in E'_{\textnormal{reg}}$ we associate a weakly filtered  $A_{\infty}$ category $\fuk(\mathcal{C};p)$. Recall also from \S\ref{sb:wf-fukaya} that $\mathcal{C}$ is the class of the Lagrangians in use, $\mathcal{C}=\mathcal{L}ag^{\ast}(M)$. We also recall that we denote by $\mathcal{N}$ the family of coherent perturbation data $\mathscr{D}=(K,J)$ with $K\equiv 0$.
Proposition  \ref{p:wf-fukaya} also shows that for $p_{0}\in\mathcal{N}$ the discrepancies 
of the categories $\fuk(\mathcal{C};p)$ tend to $0$ when  $p\to p_{0}$.  We will denote by 
$\fuk(\mathcal{C};p)^{\bigtriangleup}$ the category of all (finite) iterated weakly filtered  cones that one can
construct - as in \S\ref{sb:wf-mc} - out of the objects of $\fuk(\mathcal{C};p)$. There is clearly 
a functor  $\fuk(\mathcal{C};p)^{\bigtriangleup}\to D\fuk^{\ast}(M)$ that forgets filtrations on objects
and associates to each morphism its homology class (again, at the same time forgetting the filtration).
We denote the image of an object $X$ through this functor by $[X]$ and similarly for morphisms. The distinguished triangles in $D\fuk^{\ast}(M)$ are associated through this functor to the cone attachements in
$\fuk(\mathcal{C};p)^{\bigtriangleup}$ and there is a  similar correspondence for the iterated cones.

Let 
$\bar{\phi}: \mathcal{L}\to (\mathcal{L}_{1},\ldots, \mathcal{L}_{k})$, $\bar{\phi}=(\phi, a,\eta)$ be a morphism in $T^{S}\mathcal{X}_{0}$ (see \S\ref{subsubsec:weights1}). 
We define:
\begin{eqnarray}\label{eq:weight-p}
w_{p}(\bar{\phi})=\inf\{\ \rho(\alpha) \mid \alpha\in\mor_{\fuk(\mathcal{C};p)^{\bigtriangleup}},\  \alpha:\mathcal{L}\to \mathcal{M},\ \textnormal{such that}\hspace{0.5in}\\ \nonumber
\mathcal{M}\ \textnormal{admits\ an\ iterated cone decomposition} \ \bar{\eta}\ \textnormal{such that}\   [\alpha]=\phi,\ [\mathcal{M}]=a,\ [\bar{\eta}]=\eta\ \}
\end{eqnarray}
In summary, $w_{p}(\bar{\phi})$ infimizes $\rho$ among all the filtered models of the morphism
$\bar{\phi}$ inside $\fuk(\mathcal{C};p)$. By combining Lemma \ref{lem:bounds-rho} and (\ref{eq:ro-triang}) it is not difficult to deduce that $w_{p}$ satisfies (\ref{eq:weight-tr}) and thus this weight
can be used as in \S\ref{subsubsec:weights1} to define a pseudo-metric $\bar{s}^{\mathcal{F}}_{p}$. 
It is useful to define also similar notions for points $p_{0}\in \mathcal{N}$. For this purpose, we set:
$$w_{p_{0}}(\bar{\phi})=\limsup_{p\to p_{0}}(w_{p}(\bar{\phi}))~.~$$
It is easy to see that $w_{p_{0}}$ continues to satisfy (\ref{eq:weight-tr}) and therefore there is 
a corresponding weighted fragmentation pseudo-metric $\bar{s}^{\mathcal{F}}_{p_{0}}$.

In this setting, we deduce from the first part of Theorem \ref{thm:fission} (more precisely, from the proof of this 
theorem):

\begin{cor}\label{cor:alg-weights} Let $\bar{\phi}: \mathcal{L} \to (\mathcal{L}_{1},\ldots, \mathcal{L}_{k})$ be a morphism in $T^{S}D\fuk^{\ast}(M)$.
\begin{itemize}
\item[i.] There exists $p_{0}\in \mathcal{N}$ so that, with the notation in Theorem \ref{thm:fission}, we have
$$w_{p_{0}}(\bar{\phi})\geq \frac{1}{2}\delta(L; S)~.~$$ 
\item[ii.] If there exists a Lagrangian cobordism $V:L\cobto (L_{1},\ldots, L_{k})$ with $\widetilde{\Phi}(V)=\bar{\phi}$ ( where $\widetilde{\Phi}$ is the functor from (\ref{eq:functor-cob-TS})), then 
for any $p\in E'_{\textnormal{reg}}$ we have:
$$\mathcal{S}(V)\geq w_{p}(\bar{\phi})~.~$$
\end{itemize}
\end{cor}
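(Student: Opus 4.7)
For Part (ii), the plan is to exhibit a geometric witness realizing the desired bound. Given $V: L \cobto (L_1, \ldots, L_k)$ and the fixed $p \in E'_{\textnormal{reg}}$, bend the positive end of $V$ clockwise to obtain $W: \emptyset \cobto (L, L_1, \ldots, L_k)$ with $\mathcal{S}(W) = \mathcal{S}(V)$. For every $\epsilon>0$, choose bending curves $\gamma, \gamma'$ and a profile function $h \in \mathcal{H}'_{\textnormal{prof}}(\gamma)$ as in the proof of Theorem~\ref{thm:fission} so that the planar Hamiltonian isotopy taking $\gamma$ to $\gamma'$ has Hofer length $\leq \mathcal{S}(V)+\epsilon/2$. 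By Proposition~\ref{p:icones-M_j} the pullback module $\mathcal{M}_{W;\gamma,p,h}$ is a weakly filtered iterated cone whose factors are the Yoneda modules $\mathcal{L},\mathcal{L}_1,\ldots,\mathcal{L}_k$; stripping off the innermost $\mathcal{L}$-factor and absorbing bounded action shifts into $\epsilon$ via Lemma~\ref{l:action-shift-cone} writes it as $\tcn(\mathcal{N} \to \mathcal{L})$, where $\mathcal{N}$ is the iterated cone over $\mathcal{L}_1,\ldots,\mathcal{L}_k$ realizing the algebraic data $\bar{\eta}$.

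As in the proof of Theorem~\ref{thm:fission}, the Floer-theoretic continuation associated to the isotopy from $\gamma$ to $\gamma'$ produces a weakly filtered null-homotopy $H$ of $\id_{\mathcal{M}_{W;\gamma,p,h}}$ with action shift $\leq \mathcal{S}(V)+\epsilon/2$. The off-diagonal block of $H$ with respect to the decomposition $\mathcal{N}\oplus\mathcal{L}$ gives a weakly filtered chain map $\alpha:\mathcal{L}\to\mathcal{N}$, and the diagonal block gives a weakly filtered homotopy between $g\circ\alpha$ and $\id_\mathcal{L}$, where $g:\mathcal{N}\to\mathcal{L}$ is the attaching map of the cone; the action shifts of all these are bounded by $A(H)$. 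This yields $\rho(\alpha)\leq \mathcal{S}(V)+\epsilon$, and since $\alpha$ realizes $\phi$ and $\mathcal{N}$ realizes $\eta$, letting $\epsilon\to 0$ gives $w_p(\bar{\phi})\leq \mathcal{S}(V)$.

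For Part (i), choose $p_0\in\mathcal{N}$ compatible with a symplectic embedding $e:B(r)\to M$ as in the proof of Theorem~\ref{thm:fission}, with $\pi r^2$ arbitrarily close to $\delta(L;S)$. Suppose for contradiction that $w_{p_0}(\bar{\phi})<\tfrac{1}{2}\delta(L;S)$; then there exist a neighborhood $U$ of $p_0$ and a constant $\rho_0<\tfrac{1}{2}\delta(L;S)$ such that for every $p\in U$ one has weakly filtered witnesses $\alpha_p:\mathcal{L}\to\mathcal{M}_p$ with $\rho(\alpha_p)\leq \rho_0$. Take a sequence $p_n\to p_0$ in $U$ with corresponding witnesses $\alpha_n$ and left inverses $g_n$; the cone $\tcn(\alpha_n)$ is then an acyclic weakly filtered iterated cone over $\mathcal{L},\mathcal{L}_1,\ldots,\mathcal{L}_k$ whose identity admits a null-homotopy of action shift $\leq \rho_0+o_n(1)$, the $o_n(1)$ absorbing the discrepancies of $\fuk(\mathcal{C};p_n)$ as $p_n\to p_0$. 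From this point on the argument of Theorem~\ref{thm:fission} goes through verbatim: apply Theorem~\ref{t:itcones} (whose hypotheses hold by Proposition~\ref{p:wf-fukaya}) to produce a quasi-isomorphic module $\mathcal{M}'_n$ with explicit matrix differential; deduce that $e_L$ is a boundary in $\mathcal{M}'_n(L)$ with boundary level $\leq \rho_0+o_n(1)$; extract a Floer polygon through the chosen Morse maximum $q$ of area $\leq \rho_0+o_n(1)$; and take a Gromov-compactness limit $p_n\to p_0$ to produce a $J^B$-holomorphic polygon in $M$ of area $\leq \rho_0$ through $q\in L$. The Lelong inequality applied inside $e(B(r))$ then gives $\pi r^2 \leq 2\rho_0 < \delta(L;S)$, contradicting the choice of $r$.

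The main obstacle is isolating, in the proof of Theorem~\ref{thm:fission}, the purely algebraic content of the argument --- namely the acyclicity of an iterated cone over $\mathcal{L},\mathcal{L}_1,\ldots,\mathcal{L}_k$ together with a null-homotopy of its identity of controlled action shift --- and observing that the remainder of that proof (Theorem~\ref{t:itcones}, the boundary depth computation of $e_L$, the Floer polygon extraction and Gromov compactness) uses only this algebraic input. The geometric null-homotopy in Theorem~\ref{thm:fission} came from a Hamiltonian isotopy disjoining $\gamma\times L$ from $W$; here it comes from the triple consisting of $\alpha_n$, the left inverse $g_n$, and the chain homotopy witnessing $g_n\circ\alpha_n\simeq\id$, which together certify the acyclicity of $\tcn(\alpha_n)$ with a null-homotopy of the required action bound. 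The two arguments are structurally identical, which is precisely why Corollary~\ref{cor:alg-weights} is a true ``algebraic analogue'' of Theorem~\ref{thm:fission}.
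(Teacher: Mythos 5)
Your treatment of part (i) has a genuine gap. The argument of Theorem~\ref{thm:fission} crucially uses that $\mathcal{L}=\mathcal{L}_0$ is the \emph{base} of the iterated cone, so that $CF(L,L)$ is a filtered \emph{subcomplex} (see point~\eqref{pp:Delta_j-Delta_0} of Theorem~\ref{t:itcones}); it is the boundary level of the cycle $e_L$ inside that subcomplex that is controlled, not the boundary depth of the whole complex. The paper preserves this by setting $\mathcal{M}_1:=\tcn(\mathcal{M}_n\xrightarrow{\,g_n\,}\mathcal{L})$ — so $\mathcal{L}$ sits at the base — and observing that $\bar{\zeta}_n=(\alpha_n,\zeta_n)$ is a null-homotopy of the inclusion $i:\mathcal{L}\to\mathcal{M}_1$, giving $B_h(i)\leq\rho(\alpha_n)$. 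You instead form $\tcn(\alpha_n)=\tcn(\mathcal{L}\xrightarrow{\,\alpha_n\,}\mathcal{M}_n)$; under the paper's conventions for $\tcn$ this has $\mathcal{M}_n$ as the subcomplex and $\mathcal{L}$ as the \emph{top} (quotient) factor, so $CF(L,L)$ is not a subcomplex of $\tcn(\alpha_n)(L)$ and the $e_L$ boundary-depth argument does not apply — the base of your cone is $\mathcal{L}_1$, not $\mathcal{L}$. You also assert that $\tcn(\alpha_n)$ is acyclic and that its identity is null-homotopic with controlled action shift; neither follows from a one-sided homotopy inverse, and the paper explicitly flags that the relevant complex ``might not even be acyclic.'' The key algebraic observation you need is the reverse cone $\tcn(g_n:\mathcal{M}_n\to\mathcal{L})$ and the bound $B_h(i)\leq\rho(\alpha_n)$.

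For part (ii) your argument is plausible but more indirect than the paper's: you route through the bent cobordism $W$, re-associate the iterated cone, and extract $\alpha$ and a left inverse from the block decomposition of the geometric null-homotopy $H$. This requires separately verifying that the extracted $\alpha=h_{12}$ realizes the morphism $\phi=\widetilde{\Phi}(V)_1$, which is not obvious from your construction. The paper avoids all of this by applying the curves $\gamma,\gamma'$ directly to $V$: $\mathcal{M}_{V;\gamma,p,h}$ is the iterated cone over $\mathcal{L}_1,\ldots,\mathcal{L}_k$, $\mathcal{M}_{V;\gamma',p,h}\cong\mathcal{L}$, and the continuation maps $\alpha,\beta$ with their homotopies all shift action by $\leq\mathcal{S}(V)+\epsilon/2$; the identification $[\alpha]=\phi$ is then immediate from the construction of $\widetilde{\Phi}$ in~\cite{Bi-Co:lcob-fuk}.
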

This corollary is a refinement of the first part of Theorem \ref{thm:fission}. Using this  Corollary
we define a family of weighted fragmentation metrics on $\mathcal{L}ag^{\ast}(M)$ that are more algebraic in nature compared to the shadow 
metric $\widehat{d}^{\mathcal{F},\mathcal{F}'}$.

For $\bar{\phi}$ a morphism as in Corollary \ref{cor:alg-weights}, define 
$$w_{\textnormal{alg}}(\bar{\phi})=\sup_{p_{0}\in \mathcal{N}}w_{p_{0}}(\bar{\phi})~.~$$
It is immediate that the weight $w_{\textnormal{alg}}$  still satisfies (\ref{eq:weight-tr}) and, as in  \S\ref{subsubsec:weights1}, there is an associated pseudo-metric, $\bar{s}^{\mathcal{F}}_{\textnormal{alg}}$ on $\mathcal{L}ag^{\ast}(M)$. Point i of \ref{cor:alg-weights}  implies that Corollary \ref{cor:vanishing} remains valid with $\bar{s}^{\mathcal{F}}_{\textnormal{alg}}$ taking the place of $d^{\mathcal{F}}$. Moreover, if $\mathcal{F}$, $\mathcal{F}'$ satisfy the assumption in Corollary \ref{cor:deblurring}, then the formula 
\begin{equation}\label{eq:sum-pseudo}\widehat{s}^{\mathcal{F}, \mathcal{F}'}_{\textnormal{alg}}= \max\{\bar{s}^{\mathcal{F}}_{\textnormal{alg}},
\bar{s}^{\mathcal{F}'}_{\textnormal{alg}}\}
\end{equation}
defines a metric on $\mathcal{L}ag^{\ast}(M)$. 
Point ii of Corollary \ref{cor:alg-weights} shows that the metric $\widehat{s}^{\mathcal{F},\mathcal{F}'}_{\textnormal{alg}}$ is bounded from above by the
 shadow metric $\widehat{d}^{\mathcal{F},\mathcal{F}'}$ from \ref{cor:deblurring}.

\begin{rem} \label{rem:finite-alg} Assume  that $\mathcal{F}$ and $\mathcal{F}'$ satisfy the hypothesis in Corollary \ref{cor:deblurring} and that they both generate $D\fuk^{\ast}(M)$. In this case, the  weights
$w_{p}$ are finite and thus the pseudo-metrics
$\bar{s}^{\mathcal{F}}_{p}$ as well as $\widehat{s}^{\mathcal{F}, \mathcal{F}'}_{p}$ (which is defined by the obvious analogue of (\ref{eq:sum-pseudo})) are also finite. On the other hand,  for a fixed $p$
it is not clear that the pseudo-metric $\widehat{s}^{\mathcal{F}, \mathcal{F}'}_{p}$ is non-degenerate.
By contrast, $\widehat{s}^{\mathcal{F},\mathcal{F}'}_{\textnormal{alg}}$  is non-degenerate but
might be infinite.\end{rem}

\noindent\emph{Proof of Corollary \ref{cor:alg-weights}.} Let $\bar{\phi}=(\phi, a,\eta)$ and consider
a category $\fuk(\mathcal{C},p)$ and a map $\alpha: \mathcal{L}\to \mathcal{M}$ so that $[\alpha]=\phi$,
$[\mathcal{M}]=a$, and so that the cone-decomposition $\eta$ corresponds to the writing of
$\mathcal{M}$ as a weakly filtered iterated cone:
$$\mathcal{M}=\tcn (\mathcal{L}_{k}\to \tcn (\mathcal{L}_{k-1}\ldots \to \tcn (\mathcal{L}_{2}\to \mathcal{L}_{1})\ldots )~.~$$
Let $\beta:\mathcal{M}\to \mathcal{L}$ be another map and assume that $\zeta:\mathcal{L}\to \mathcal{L}$ is a homotopy so that
$\zeta:\beta\circ \alpha \simeq id_{\mathcal{L}}$. Assume that $\alpha$ shifts filtrations by $\leq s$, $\beta$ shifts filtrations by $\leq r$ and $\zeta$ shifts filtrations by $\leq k$. Consider $\mathcal{M}_{1}=Cone(\mathcal{M}\stackrel{\beta}{\longrightarrow}\mathcal{L})$ and the inclusion $i: \mathcal{L}\to \mathcal{M}_{1}$, $i=(0,id_{\mathcal{L}})$.  As described
in \S\ref{sb:wf-mc}, when defininig the cone $\mathcal{M}_{1}$ we use the value $r$ to write
$\mathcal{M}_{1}= S^{-r}\mathcal{M}\oplus \mathcal{L}$. We now notice that
the map $\bar{\zeta}=(\alpha,\zeta): \mathcal{L}\to \mathcal{M}_{1}$ is a homotopy $\bar{\zeta}:i\simeq 0$ and we
 see that $\bar{\zeta}$ shifts filtrations by $\leq \max\{r+s, k\}$. We deduce: 
\begin{equation}\label{eq:b-and-rho}
B_{h}(i)\leq \rho(\alpha)~.~
\end{equation}
Using this remark we now return to the setting of the proof of Theorem \ref{thm:fission}. In particular,
we pick the choice of perturbation data $p$ as in (\ref{eq:CF-L_0L_0}) and, for coherence of notation, we put
$L_{0}=L$.  Instead of the complex $\mathscr{C}_{p,h}$ which has a geometric construction
 we use the complex $\mathcal{M}_{1}(L_{0})$ constructed above.  The inequality   
(\ref{eq:boundary-fund})  is a consequence of (\ref{eq:Bh-C}).  If we replace inequality
 (\ref{eq:Bh-C}) with  (\ref{eq:b-and-rho}) ,  we can still deduce an inequality similar to (\ref{eq:boundary-fund}) but with $\rho(\alpha)$ instead of $\mathcal{S}(W)$. In other words, there is
 \begin{equation}\label{eq:boundary-fund2}
b' \in \mathcal{M}_{1}(L_{0})\ \textnormal{with}\ 
A(b'; \mathcal{M}_{1}(L_{0})) \leq A(e_{L_0};\mathcal{M}_{1}(L_{0})) +
\rho(\alpha)+ \frac{\epsilon}{2}\ .
\end{equation}
 The reason is that we do not need to use in this argument the boundary depth of the chain complex $\mathcal{M}_{1}(L_{0})$ (which in our algebraic setting might not even be acyclic)
 but only the boundary depth of the element $e_{L_{0}}$ wich is controled by the boundary depth of the 
 map $i:\mathcal{L}=\mathcal{L}_{0}\to \mathcal{M}_{1}$ which in turn is controled by $\rho(\alpha)$.  Given that $w_{p}(\bar{\phi})=\inf_{[\alpha]=\phi} \rho(\alpha)$) we may assume that $\rho(\alpha)\leq w_{p}(\bar{\phi})+\epsilon'''$  and by continuing as in the proof of Theorem \ref{thm:fission} we obtain, after making $p\to p_{0}$
 that there is a Floer polygon $v_{0}$ (compare to (\ref{eq:om-v_0})) such that 
 $$\omega(v_{0})\leq w_{p_{0}}(\bar{\phi})+\epsilon/2 +\epsilon'''~.~$$
 The argument ends by the same type of application of the Lelong inequality as in the proof of the Theorem \ref{thm:fission}.
  
 The proof of the second point of the Corollary is again basically contained in  the proof of Theorem \ref{thm:fission}. It uses the isotopy pictured in Figure \ref{f:gamma-gamma'}  but applies the construction there directly to the cobordism $V$ in Figure  \ref{f:cob-v-w} (and not to $W$). As in (\ref{eq:filtered-cob-it-cone-W}) we deduce the existence of a a weakly filtered module 
 \begin{equation} \label{eq:filtered-cob-it-cone-V2}
  \begin{aligned}
    & \mathcal{M}_{V; \gamma, p, h} = \; & \tcn
    (\mathcal{L}_k \xrightarrow{\; \phi_k \;}
    \tcn(\mathcal{L}_{k-1} \xrightarrow{\; \phi_{k-1} \;}
    \tcn( \cdots 
       \tcn(\mathcal{L}_2 \xrightarrow{\; \phi_2 \;}
    \mathcal{L}_1 )) {\cdot}{\cdot}{\cdot}))),
  \end{aligned}
\end{equation}
(where we neglect a small shift that can be made to $\to 0$). There is also
a similar module $\mathcal{M}_{V; \gamma', p, h}$ which is identified with the Yoneda module
of $L$. The isotopy $\gamma'\to \gamma$ of Hofer length $\leq \mathcal{S}(V)+\epsilon/2$ (see above 
(\ref{eq:Bh-C})) induces module homomorphisms (see for instance \cite{FO3:book-vol1} Chapter 5, as least 
for modules over an $A_{\infty}$-algebra, the case of $A_{\infty}$ categories is similar; alternatively, a direct argument based on moving boundary conditions is also possible)  $\alpha : \mathcal{M}_{V; \gamma', p, h}\to 
\mathcal{M}_{V; \gamma, p, h}$, $\beta: \mathcal{M}_{V; \gamma, p, h}\to 
\mathcal{M}_{V; \gamma', p, h}$ as well as homotopies $\eta:\beta\circ \alpha\simeq id$, $\eta':\alpha\circ \beta\simeq id$ that are all shifting actions by not more than $\mathcal{S}(V)+\epsilon/2$. By  the definition
of the functor $\widetilde{\Phi}$ we have that $\widetilde{\Phi}(V)=(\phi, a,\eta)$ and $[\alpha]= \phi$,
$a=[\mathcal{M}_{V; \gamma, p, h}]$ and, as we just indicated, we also have $\rho(\alpha)\leq \mathcal{S}(V)+\epsilon/2$. This means that by definition (\ref{eq:weight-p}), $w_{p}(\bar{\phi})\leq \mathcal{S}(V)+\epsilon/2$.

\qed


\bibliography{bibliography}

%
%
%

\end{document}